\numberwithin{equation}{section} \setcounter{secnumdepth}{4}
\theoremstyle{plain}
\newtheorem{letthm}{Th\'eor\`eme}
\newtheorem{letcor}[letthm]{Corollaire}
\newtheorem{Thm}{Th{\'e}or{\`e}me}[section]
\newtheorem{Prop}[Thm]{Proposition}
\newtheorem{Lem}[Thm]{Lemme}
\newtheorem{Cor}[Thm]{Corollaire}
\newtheorem{Prop-def}[Thm]{Proposition-D{\'e}finition}
\newtheorem{fact}{Fait}
\newtheorem{Lem-intr}{Lemme}
\theoremstyle{definition}
\newtheorem{Exemple}[Thm]{Exemple}
\newtheorem{Rem}[Thm]{Remarque}
\newtheorem*{Rem*}{Remarque}
\newtheorem{conj}{Conjecture}
\newcommand{\C}{{\mathbb{C}}}
\newcommand{\D}{{\mathbb{D}}}
\newcommand{\N}{{\mathbb{N}}}
\newcommand{\R}{{\mathbb{R}}}
\newcommand{\Z}{{\mathbb{Z}}}
\newcommand{\HK}{{\mathbb{H}_K}}
\newcommand{\PK}{{\mathbb{P}^{1}_K}}
\newcommand{\PKber}{{\mathsf{P}^{1}_K}}
\newcommand{\AK}{{\mathbb{A}^{1}_K}}
\newcommand{\AKber}{{\mathsf{A}^{1}_K}}
\newcommand{\Bber}{\mathsf{B}}
\newcommand{\xcan}{{x_{\operatorname{can}}}}
\newcommand{\PLber}{{\mathsf{P}^{1}_L}}
\newcommand{\PC}{{\mathbb{P}^{1}_{\C}}}
\newcommand{\Pu}{{\mathbb{L}}}
\newcommand{\PPu}{{\mathbb{P}^{1}_{\mathbb{L}}}}
\newcommand{\fm}{{\mathfrak{m}}}
\newcommand{\fM}{{\mathfrak{M}}}
\newcommand{\cA}{{\mathcal{A}}}
\newcommand{\cB}{{\mathcal{B}}}
\newcommand{\cC}{{\mathcal{C}}}
\newcommand{\cE}{{\mathcal{E}}}
\newcommand{\cF}{{\mathcal{F}}}
\newcommand{\cH}{{\mathcal{H}}}
\newcommand{\cI}{{\mathcal{I}}}
\newcommand{\cJ}{{\mathcal{J}}}
\newcommand{\cK}{{\mathcal{K}}}
\newcommand{\cL}{{\mathcal{L}}}
\newcommand{\cM}{{\mathcal{M}}}
\newcommand{\cO}{{\mathcal{O}}}
\newcommand{\cP}{{\mathcal{P}}}
\newcommand{\cS}{{\mathcal{S}}}
\newcommand{\cT}{{\mathcal{T}}}
\newcommand{\cU}{{\mathcal{U}}}
\newcommand{\cY}{{\mathcal{Y}}}
\newcommand{\sC}{\mathscr{C}}
\newcommand{\sE}{\mathscr{E}}
\newcommand{\sP}{\mathscr{P}}
\newcommand{\aT}{\mathsf{T}}
\newcommand{\aR}{\mathsf{R}}
\newcommand{\hd}{{\hat{d}}}
\newcommand{\hg}{{\hat{g}}}
\newcommand{\hell}{{\hat{\ell}}}
\newcommand{\hr}{{\hat{r}}}
\newcommand{\hx}{{\hat{x}}}
\newcommand{\hB}{{\widehat{B}}}
\newcommand{\hN}{{\widehat{N}}}
\newcommand{\hvarphi}{{\widehat{\varphi}}}
\newcommand{\hpsi}{{\widehat{\psi}}}
\newcommand{\htau}{{\widehat{\tau}}}
\newcommand{\tx}{{\widetilde{x}}}
\newcommand{\tK}{{\widetilde{K}}}
\newcommand{\tR}{{\tilde{R}}}
\newcommand{\ttau}{{\widetilde{\tau}}}
\newcommand{\hcE}{{\widehat{\cE}}}
\newcommand{\hcL}{{\widehat{\cL}}}
\newcommand{\tcC}{{\widetilde{\cC}}}
\newcommand{\tcL}{{\widetilde{\cL}}}
\newcommand{\kB}{{\check{B}}}
\newcommand{\e}{\varepsilon}
\newcommand{\od}{\overline{d}}
\newcommand{\vv}{{\vec{v}}}
\DeclareMathOperator{\ord}{ord}
\DeclareMathOperator{\supp}{supp}
\DeclareMathOperator{\RFix}{RFix}
\DeclareMathOperator{\diam}{diam}
\DeclareMathOperator{\crit}{\mathcal{C}}
\DeclareMathOperator{\wcrit}{\mathcal{I}}
\DeclareMathOperator{\PGL}{PGL}
\DeclareMathOperator{\PSL}{PSL}
\newcommand{\wdeg}[1]{\deg_{\operatorname{i}, #1}}
\newcommand{\wf}{\operatorname{i}} % fonction d'inséparabilité
\renewcommand{\=}{\coloneqq}
\renewcommand{\:}{\colon}
\newcommand{\dd}{\hspace{1pt}\operatorname{d}\hspace{-1pt}}
\begin{document}
% 
%%%%%%%%%%%%%%%%%%%%%%%%%%%%%%%%%%%%%%%%%%%%%%%%%%%%%%%%%%%%%%%%%% 
% 

\setcounter{tocdepth}{1}

\title[Rigidit{\'e}, expansion et entropie]{Rigidit{\'e}, expansion et entropie en dynamique non-archim\'edienne}

\date{\today}
\author{Charles Favre \and Juan Rivera-Letelier}

\address{CMLS, CNRS, \'Ecole polytechnique, Institut Polytechnique de Paris, 91128 Palaiseau Cedex, France}

\email{charles.favre@polytechnique.edu}

\address{Department of Mathematics, University of Rochester.
  Hylan Building, Rochester, NY 14627, U.S.A.}
\email{riveraletelier@rochester.edu}

% 
%%%%%%%%%%%%%%%%%%%%%%%%%%%%%%%%%%%%%%%%%%%%%%%%%%%%%%%%%%%%%%%%%% 
% 

\begin{abstract}Nous montrons une propriété de rigidité en dynamique non archimédienne, qui rappelle le théorème de Zdunik en dynamique complexe~: toute fraction rationnelle dont la mesure d'équilibre charge un segment de la droite projective de Berkovich est affine Bernoulli.
  La démonstration s’inspire de la construction du modèle affine par morceaux d’une application multimodale de l’intervalle de Parry et de Milnor et Thurston.
  Ce résultat de rigidité nous permet de démontrer que l’entropie topologique de toute fraction rationnelle modérée est le logarithme d’un entier.
  Pour cela, nous analysons la propriété de (sous-)cobord multiplicatif de la dérivée sphérique et établissons un lien entre le signe de l’exposant de Lyapunov et la ramification sauvage.

\medskip

\noindent {\sc Abstract.}
We prove a rigidity property in non-Archimedean dynamics which is reminiscent of Zdunik' theorem in holomorphic dynamics: any rational map whose equilibrium measure puts some mass on a non-trivial segment of the Berkovich projective line is of affine Bernoulli type. 
The proof is inspired by the construction of the piecewise affine model of a multimodal map of the interval by Parry, Milnor and Thurston. This rigidity result allows us to prove that the topological entropy of any tame rational map is the logarithm of an integer 
To that end, we analyze the properties of the multiplicative (sub)-cocycle given by the spherical metric, and establish a link between the sign of the Lyapunov exponent and the phenomenon of wild ramification. 
 \end{abstract}

\maketitle
\tableofcontents

\section*{Introduction}

%%% 
\subsection*{Rigidit\'e}
Une fraction rationnelle de degr{\'e} au moins deux est dite \emph{de Latt{\`e}s} lorsqu'elle est induite par un endomorphisme d'une courbe elliptique.
Les propri\'et\'es dynamiques des applications de Latt\`es sont d\'ecrites en d\'etails dans le cas complexe dans l'article de survol~\cite{milnor-lattes}, et on peut trouver dans~\cite[\S~5.1]{theorie-ergo} une discussion analogue du cas non-archim\'edien, voir aussi~\cite[\S~4.4]{benedetto-book}.
Il existe de nombreuses caract\'erisations de ces applications, que ce soit en termes de leurs propri\'et\'es d'int\'egrabilit\'e~\cite{casale}, de leur commutateur~\cite{ritt} ou de leurs propri\'et\'es plus arithm\'etiques~\cite{buium}.
Un r\'esultat c\'el\`ebre de Zdunik~\cite{zdunik} montre que les applications de Latt\`es sont les seuls endomorphismes de la sph\`ere de Riemann dont la mesure d'\'equilibre est absolument continue par rapport {\`a} la mesure de Lebesgue.
La d\'emonstration originelle de ce th\'eor\`eme a \'et\'e depuis simplifi\'ee~\cite[Theorem~1.1]{mayer}, \cite[Theorem~1.11]{JiXie23}, et un analogue de cet \'enonc\'e est disponible pour des {\'e}tats d'{\'e}quilibre \cite[Corollary~45]{SzoUrbZdu15} ainsi qu'en dimension sup\'erieure~\cite[Th{\'e}or{\`e}me~1]{berteloot-dupont}.

Notre premier r{\'e}sultat est une version non archim\'edienne du r\'esultat de Zdunik.
Pour le reste du papier, nous fixons un corps~${(K,|\cdot|)}$ \emph{alg\'ebriquement clos}, non trivialement valu\'e, non archim\'edien, complet et de corps r{\'e}siduel~$\tK$.
Notons~$\PKber$ la droite projective de Berkovich sur~$K$: c'est le compl\'et\'e en un sens ad\'equat de l'espace des boules de~$K$.
Elle est munie d'une topologie compacte pour laquelle elle porte une structure naturelle d'arbre r\'eel.
L'identification d'un point de~$K$ \`a la boule de rayon nul centr\'e en ce point plonge canoniquement~$\PK$ dans l'ensemble des bouts de~$\PKber$.
Les points de~$\PKber$ associ\'es aux boules dont le rayon appartient au groupe des valeurs~$|K^*|$ sont dit de type~II: ils co\"{\i}ncident avec l'orbite sous~$\PGL(2,K)$ du point associ\'e \`a la boule unit\'e.
L'espace~$\HK$ d{\'e}fini par ${\HK \= \PKber \setminus \PK}$ est alors un arbre muni d'une m\'etrique canonique compl\`ete~$d_\HK$ dite \emph{hyperbolique}.
Nous renvoyons \`a \cite{baker-rumely,berko-livre rouge,survey-jonsson} pour plus de d\'etails sur la g\'eom\'etrie de cet espace.

On fixe d\'esormais~$R$ une fraction rationnelle {\`a} coefficients dans~$K$ et de degr\'e~$d$ au moins deux.
Elle induit une application continue not\'ee encore~$R$ de~$\PKber$ dans elle-m\^eme, et on peut naturellement lui associer un ensemble de Julia $J_R$ compact et de Fatou $F_R$ ouvert comme dans le cas complexe. Ces deux ensembles sont totalement invariants.
Il existe de plus une unique mesure de probabilit\'e ergodique~$\rho_R$ support\'ee sur~$J_R$ et \'equilibr\'ee au sens o\`u ${R^* \rho_R = d \times \rho_R}$ \cite{baker-rumely,FavRiv04,theorie-ergo}.

On introduit maintenant une classe de fractions rationnelles dont l'ensemble de Julia est contenu dans un segment.
On dira que~$R$ est \emph{affine Bernoulli}, s'il existe un entier ${k\ge 2}$ et des segments ferm{\'e}s~$I, I_1, \ldots, I_k$ de~$\HK$ tels que d'une part
\begin{equation}
  \label{eq:1}
  R^{-1}(I)
  =
  I_1 \cup \cdots \cup I_k
  \subseteq
  I~,
\end{equation}
et d'autre part, pour chaque~$j$ dans~$\{1, \ldots, k \}$, l'application~$R \colon I_j \to I$ est affine par rapport {\`a} la distance~$d_{\HK}$ de facteur de dilatation~$d_j\ge2$.
L'entier~$k$ est uniquement d{\'e}termin{\'e} par~$R$, ainsi que la suite d'entiers~$(d_1, \ldots, d_k)$, modulo l'involution renversant l'ordre.
On appelle~$d_1, \ldots, d_k$ les \emph{facteurs de dilatation de~$R$}.
Dans le cas particulier~o{\`u}
$$ d = k^2
\text{ et }
d_1 = \cdots = d_k = k~, $$
on parlera de fraction rationnelle \emph{\`a allure Latt{\`e}s}.
Cette terminologie est justifiée par le fait que toute fraction rationnelle de Latt{\`e}s induite par un endomorphisme d'une courbe elliptique {\`a} mauvaise r{\'e}duction est de cette forme, voir \cite[\S~5.1]{theorie-ergo}.
Nous renvoyons \`a la Proposition~\ref{prop:affine Bernoulli} ci-dessous pour plus de détails sur la dynamique des fractions rationnelles affine Bernoulli et \`a~\cite[\S~5.2]{theorie-ergo} pour des exemples.

Le r{\'e}sultat suivant est l'analogue du r{\'e}sultat de Zdunik sus-mentionn\'e.
Notons que l'ensemble de Julia d'une application {\`a} allure Latt{\`e}s est {\'e}gal {\`a} un segment de~$\HK$ et que sa mesure d'{\'e}quilibre est alors la mesure de probabilit{\'e} proportionnelle {\`a} la mesure de Hausdorff 1-dimensionnelle associ{\'e}e {\`a}~$d_{\HK}$.

\begin{letthm}
  \label{thm:main1}
  Soit~$R$ une fraction rationnelle {\`a} coefficients dans~$K$, de degr{\'e} au moins deux.
  Si~$\rho_R$ ne poss\`ede pas d'atome et charge un segment de~$\HK$, alors~$R$ est affine Bernoulli.
  Si de plus~$\rho_R$ n'est pas singuli{\`e}re par rapport {\`a} la mesure de Hausdorff 1\nobreakdash-dimensionnelle associ{\'e}e {\`a}~$d_{\HK}$, alors~$R$ est {\`a} allure Latt{\`e}s.
\end{letthm}

Rappelons qu'une fraction rationnelle dont la mesure~$\rho_R$ poss{\`e}de un atome a \emph{bonne r\'eduction potentielle}~\cite[Th{\'e}or{\`e}me~E]{theorie-ergo}.
Dans ce cas, $J_R$ est r\'eduit \`a un point fixe de type~II et, quitte \`a conjuguer~$R$, la r\'eduction de~$R$ dans~$\tK$ est de m{\^e}me degr\'e que~$R$.

Il existe des applications rationnelles dont le support de la mesure d'\'equilibre est un arbre ayant une infinit\'e de points de branchement, et qui ne sont donc pas affine Bernoulli.
Une \'etude de tels exemples et le calcul de leur entropie topologique sont donn\'es dans~\cite{benedetto-connected}.
On ne peut donc pas remplacer l'hypoth\`ese~$\rho_R$ charge un segment par~$J_R$ contient un segment dans le th\'eor\`eme pr\'ec\'edent.

Notons que toute perturbation assez petite d'une application {\`a} allure Latt{\`e}s reste de cette forme de telle sorte que l'on ne peut
pas non plus rempla{\c{c}}er ``{\`a} allure Latt{\`e}s'' par ``de Latt{\`e}s'' dans le Th{\'e}or{\`e}me~\ref{thm:main1}, voir \cite[Remarque~5.3]{theorie-ergo}.

La d{\'e}monstration du Th{\'e}or{\`e}me~\ref{thm:main1} s'inspire de la construction du mod{\`e}le affine par morceaux d'une application multimodale de l'intervalle introduite par Parry~\cite{Par66} et {\'e}tudi{\'e}e par \mbox{Milnor} et Thurston~\cite{MilThu88}.
On {\'e}crase~$\PKber$ sur un arbre r{\'e}el compact de longueur finie, où la mesure d'{\'e}quilibre devient la mesure des longueurs, et la fraction rationnelle induit une application uniform{\'e}ment expansive et ergodique.
L'{\'e}tape cl{\'e} consiste {\`a} montrer que ce mod{\`e}le est un segment.
Pour cela, on combine le caract{\`e}re m{\'e}langeant de l'application induite avec le fait que tout arbre r{\'e}el de longueur finie ressemble localement {\`a} un segment.

% 
%%% 
% 

\subsection*{Exposant de \mbox{Lyapunov} et ramification sauvage}
Nous \'etudions les propri\'et\'es ergodiques d'une fraction rationnelle en fonction de la positivité de son exposant de \mbox{Lyapunov}.
Comme nous le verrons, la pr\'esence de ramification sauvage procure de s\'erieuses difficult\'es {\`a} cet {\'e}gard.

Pour {\^e}tre plus pr{\'e}cis, fixons une fraction rationnelle~$R$ {\`a} coefficients dans~$K$ et de degr{\'e} au moins deux.
On appelle \emph{exposant de \mbox{Lyapunov} de~$R$} celui de la mesure d'{\'e}quilibre~$\rho_R$, que l'on note par~$\chi(R)$ et qui est d{\'e}fini par
\[
  \chi(R)
  \=
  \int_{\PKber} \log \| R' \| \, \dd \rho_R~,
\]
o\`u $\| R' \|$ est la d\'eriv\'ee sph\'erique de $R$, voir \S~\ref{sec:derivee-Lyap} pour plus de pr\'ecisions.
Cette quantit{\'e} est finie si~$R$ est s\'eparable, et {\'e}gale {\`a}~$-\infty$ si~$R$ est ins{\'e}parable, voir le Corollaire~\ref{cor:justif-lyap}, ou~\cite[Proposition~3.3]{theorie-ergo} et~\cite[Lemme 4.2]{petits-points}, ainsi que~\cite[Th\'eor\`eme~4.9]{CLT} en dimension sup\'erieure.
Lorsque~$K$ est de caract{\'e}ristique nulle, l'exposant de \mbox{Lyapunov} de~$R$ peut {\^e}tre approxim{\'e} par des moyennes d'exposants de \mbox{Lyapunov} de points p{\'e}riodiques, voir \cite[Theorem~1]{okuyama}, ainsi que \cite[Theorems~1 et~2]{okuyama-approx-lyap} et~\cite[Theorem~A]{GOV2} pour des versions quantitatives.

Un point de~$\PKber$ est \emph{critique pour~$R$}, si $R$ n'est pas un isomorphisme local en ce point.
On note~$\crit_R$ l'ensemble des points critiques de~$R$.
Il contient tous les points de~$\PK$ o\`u la d\'eriv\'ee de $R$ s'annule.
Nous dirons qu'un point critique de~$R$ est \emph{inséparable} s'il appartient {\`a}~$\HK$ et~$R$ est \emph{ins{\'e}parable} en ce point.
On note~$\wcrit_R$ l'ensemble des points critiques inséparables, voir \S~\ref{ss:action} pour plus de détails. En particulier,
nous donnons une caractérisation g{\'e}ométrique des points critiques inséparables qui est importante pour la suite, voir le Corollaire~\ref{c:tres-wild}, ainsi que \cite[Theorem~B]{faber1} et~\cite{temkin} pour d'autres caract{\'e}risations.

\smallskip

Nous dirons qu'un potentiel ${g\colon\PKber \to \R}$ est \emph{divisoriel}, si son Laplacien est support\'e sur un ensemble fini des points de type~II.
Notons que tout potentiel divisoriel est continu et donc born{\'e}. Dans le cadre de la théorie du pluripotentiel sur un corps métrisé non-archimédien, les potentiels divisoriels sont aussi appelés fonctions modèles, voir, e.g., ~\cite[\S~5.4]{BFJ16}, et jouent parfois le rôle des fonctions tests en analyse non-archimédienne.

Rappelons qu'un point fixe~$x$ est dit r\'epulsif s'il appartient \`a ${\crit_R \cap \HK}$, ou s'il appartient \`a~$\PK$ et le multiplicateur de~$R$ en $x$ est de norme strictement plus grand que~$1$.
Pour chaque~$n$ dans~$\N^*$, notons~$\RFix(R^n)$ l'ensemble des points fixes r{\'e}pulsifs de~$R^n$, et ${\RFix(R^n, K) \= \RFix(R^n) \cap \PK}$.
Dans notre r{\'e}sultat suivant, on utilise la fonction ${\diam \: \PKber \to [0, +\infty[}$ d{\'e}finie en~\eqref{eq:def-diam}.
Elle s’annule pr{\'e}cis{\'e}ment sur~$\PK$ et, pour un point~$x$ de type~II, elle correspond au diam{\`e}tre projectif de l’une des composantes connexes du complémentaire de~$x$.
Comme les ensembles~$\PK$ et~$\HK$ forment une partition mesurable de~$\PKber$, l'ergodicit{\'e} de~$\rho_R$ implique que ${\rho_R(\PK) = 1}$ ou ${\rho_R(\HK) = 1}$.
On peut alors r\'esumer nos r\'esultats de la mani\`ere suivante.

\begin{letthm}\label{thm:main3}
  Soit~$R$ une fraction rationnelle {\`a} coefficients dans~$K$, et de degr{\'e}~$d$ au moins deux.
  \begin{enumerate}
  \item
    Nous avons $\chi(R) <0 $ si et seulement si $\rho_R(\wcrit_R) >0$, et lorsque c'est le cas, alors $\rho_R(\HK) = 1$.
  \item
    Si $\chi(R) =0$ et $\rho_R(\HK) = 1$, alors il existe un potentiel divisoriel~$g$ et une constante $C>0$ telles que $\log|R'| = g \circ R - g$ et ${\diam \ge C}$ sur~$J_R$.
  \item
    Si $\chi(R)> 0$, alors $\rho_R(\PK) = 1$, l'entropie m{\'e}trique de~$\rho_R$ et l'entropie topologique de~$R$ sont toutes deux \'egales \`a $\log d$, et l'on a
    \[
      \frac1{d^n} \sum_{x \in \RFix(R^n, K)} \delta_x \to \rho_R
      \text{ lorsque } {n \to +\infty}~.
    \]
  \end{enumerate}
\end{letthm}

Ce r\'esultat tranche avec son analogue complexe.
En effet, toute fraction rationnelle {\`a} coefficients complexes de degr{\'e}~$d$ au moins deux poss\`ede un exposant de \mbox{Lyapunov} plus grand ou {\'e}gal {\`a} $\frac12 \log d$.
De plus, l'un des r{\'e}sultats de rigidit{\'e} de Zdunik \cite[Theorem~2]{zdunik} montre que le cas d'\'egalit\'e caract\'erise les applications de Latt\`es.

Les preuves des points (1) et (2) du Th\'eor\`eme~\ref{thm:main3} reposent de mani\`ere cruciale sur l'in{\'e}galit{\'e}
\begin{equation}
  \label{eq:2}
  \| R' \|
  \le
  \diam \circ R / \diam
\end{equation}
valable sur~$\HK$ (Proposition~\ref{prop:cocycle}). Pour souligner que $\|R'\|$ est dominé par un cobord, nous l'appellerons
par "sous-cobord géométrique".
Ce n'est pas un cobord en général en raison de la ramification sauvage, et on {\'e}tablit un lien entre l'exposant de \mbox{Lyapunov} et la ramification sauvage de~$R$ pour chaque mesure de probabilit{\'e} ergodique et invariante par~$R$ support{\'e}e sur~$\HK$ (Th{\'e}or{\`e}me~\ref{t:exponsants}~(1)).
Nous d{\'e}duisons le Th\'eor\`eme~\ref{thm:main3}~(1), dans le cas plus difficile des fractions rationnelles s{\'e}parables, comme cons{\'e}quence de ce lien.
Nous d{\'e}duisons {\'e}galement une minoration de l'exposant de \mbox{Lyapunov} de toute mesure de probabilit{\'e} sur~$\PKber$, invariante et ergodique par~$R$, qui n'est pas support{\'e}e sur un cycle attractif (Corollaire~\ref{c:exponsants}).
Ce r{\'e}sultat g{\'e}néralise la minoration obtenue par Nie~\cite{nie}, voir aussi~\cite{jacobs-lower}.

Le point (2) est un résultat de rigidité dont la démonstration repose sur une analyse du sous-cobord g{\'e}om{\'e}trique~\eqref{eq:2} qui est r\'eminiscente de celle faite par Zdunik dans le cas complexe.
En effet, lorsque ${\chi(R) = 0}$ et ${\rho_R(\HK) = 1}$, on montre que l'{\'e}galit{\'e} dans~\eqref{eq:2} est satisfaite $\rho_R$-presque partout.
Le point clef, analogue {\`a} \cite[Lemma~2]{zdunik}, consiste {\`a} d{\'e}montrer que la fonction ${\log \diam |_{J_R}}$ se prolonge en une fonction r{\'e}guli{\`e}re (Proposition~\ref{lem:key-cstdiam}).

Dans le cas complexe, o{\`u} l'on a toujours ${\chi(R) > 0}$, l'{\'e}quidistribution des points p{\'e}riodiques r{\'e}pulsifs vers la mesure d'équilibre a {\'e}t{\'e} d{\'e}montr{\'e}e par Ljubich en dimension un~\cite{lyubich}, puis g{\'e}n{\'e}ralis{\'e}e par Briend et Duval {\`a} toute dimension~\cite{briend:Lyap}.
Pour {\'e}tablir ce r{\'e}sultat dans notre contexte non-archim{\'e}dien, nous suivons la stratégie élaborée dans ce dernier article.
Remarquons que ce r{\'e}sultat entra{\^{\i}}ne~:
\begin{equation}
  \label{eq:71}
  d^{-n} \# \RFix(R^n, K) \to 1
  \text{ lorsque } {n \to +\infty}~,
\end{equation}
et, par cons{\'e}quent, on ne peut pas le d\'eduire directement des théorèmes d'équidistribution des points p\'eriodiques de~$\PK$ en caract\'eristi\-que nulle comme énoncés dans~\cite[Th\'eor\`eme~B]{theorie-ergo} et~\cite[Theorem~1.2]{okuyama2}.
Il existe ainsi des fractions rationnelles dont l'exposant de \mbox{Lyapunov} est strictement positif et qui poss{\`e}dent un nombre infini de points p\'eriodiques non-r{\'e}pulsifs dans~$\PK$.
Les m\'ethodes utilis{\'e}es dans ces deux articles r{\'e}posent sur la th\'eorie du potentiel, tandis qu'ici, nous utilisons une approche nettement plus dynamique.

\smallskip

%%% 

\begin{Rem*}
  En adaptant la m\'ethode de Ljubich~\cite{lyubich} pour les fractions rationnelles complexes, reprise par Briend et Duval~\cite{briend:IHES} en dimension quelconque, on peut montrer que, pour toute fraction rationnelle~$R$ {\`a} coefficients dans~$K$ et de degr\'e au moins deux v{\'e}rifiant ${\rho_R(\PK) = 1}$, la mesure d'\'equilibre est l'unique mesure d'entropie m{\'e}trique maximale $\log \deg(R)$.
\end{Rem*}

Nous rassemblons plusieurs exemples dans \S~\ref{ss:exemples}, notamment un exemple qui ne satisfait aucun des points~(1), (2) ou~(3) du Th\'eor\`eme~\ref{thm:main3}, voir l'Exemple~\ref{ex:atypique}.
On montre que de telles fractions rationnelles n'existent que lorsque la caract\'eristique r{\'e}siduelle de~$K$ est strictement positive, voir le Th{\'e}or{\`e}me~\ref{thm:main2} ci-dessous.

\subsection*{Propri\'et\'es ergodiques des applications mod\'er\'ees}
L'ensemble critique d'une fraction rationnelle peut {\^e}tre touffu, ce qui pose des difficultés significatives pour {\'e}tudier ses propri{\'e}t{\'e}s ergodiques.
Nous renvoyons aux travaux de \mbox{Faber} \cite{faber1,faber2} pour une \'etude de la g\'eom\'etrie de l'ensemble critique dans un cadre non dynamique, ainsi qu'{\`a} l'article d'Irokawa~\cite{irokawa} pour le cas des fractions rationnelles de degré~$3$, et {\`a}~\cite{BojPoi18,temkin,temkin2} pour des r{\'e}sultats plus généraux concernant les morphismes entre courbes.

Nous nous proposons maintenant d'{\'e}tudier les propri{\'e}t{\'e}s ergodiques des fractions rationnelles dont l'ensemble critique est le plus simple possible.
Suivant~\cite[\S~1.3]{theorie-ergo}, nous dirons qu'une fraction rationnelle {\`a} coefficients dans~$K$ est \emph{mod\'er\'ee} si son ensemble critique est inclus dans un sous-arbre fini de~$\PKber$.
Faber \cite[Corollaire~7.13]{faber1} a montr{\'e} qu'une fraction rationnelle est mod\'er\'ee si et seulement si elle n'a aucun point critique ins{\'e}parable.
Nous donnons une d{\'e}monstration ind{\'e}pendante de ce r{\'e}sultat, voir la Proposition~\ref{prop:modere}.
Nous renvoyons aussi à~\cite[Propositions~2.9 et~2.11]{trucco} pour le cas des polyn{\^o}mes.
Par cons{\'e}quent, toute fraction rationnelle {\`a} coefficients dans~$K$ est mod{\'e}r{\'e}e lorsque la caract\'eristique r{\'e}siduelle de $K$ est nulle.

Le r{\'e}sultat suivant fournit une classification dynamique des fractions rationnelles mod{\'e}r{\'e}es.
Pour un nombre r\'eel~$x$, notons par~$[x]$ le plus grand entier inf{\'e}rieur ou {\'e}gal {\`a}~$x$.

\begin{letthm}\label{thm:main2}
  Soit~$R$ une fraction rationnelle {\`a} coefficients dans~$K$ mod\'er\'ee, et de degr{\'e}~$d$ au moins deux.
  Alors ${\chi(R) \ge 0}$ et nous sommes dans l'une des trois situations suivantes.
  \begin{enumerate}
  \item
    Soit $\chi(R)>0$.
    Dans ce cas, on a $\rho_R(\PK) =1$, et l'entropie m{\'e}trique de~$\rho_R$ et l'entropie topologique de~$R$ sont toutes deux \'egales \`a~$\log d$.
  \item
    Soit $\chi(R)=0$, et $R$ n'a pas bonne r\'eduction potentielle.
    Alors $R$ est affine \mbox{Bernoulli}, d'entropie topologique~$\log k$ pour un entier $k \in \{ 2, \ldots , [\sqrt{d}] \}$, et admet une unique mesure d'entropie m{\'e}trique maximale.
  \item
    Soit $\chi(R)=0$, et $R$ a bonne r\'eduction potentielle.
    Dans ce cas $\rho_R$ est une mesure atomique support\'ee sur un point fixe r\'epulsif de type II, et l'entropie topologique de~$R$ est nulle.
  \end{enumerate}
\end{letthm}

Dans le cas o{\`u}~$K$ est de caract{\'e}ristique r{\'e}siduelle nulle, toute fraction rationnelle~$R$ {\`a} coefficients dans~$K$ et degr{\'e} au moins deux est mod{\'e}r{\'e}e et satisfait ${\chi(R) \ge 0}$ par le Th{\'e}or{\`e}me~\ref{thm:main2}.
Cette in{\'e}galit{\'e} d{\'e}coule de la combinaison de \cite[Theorem~2]{okuyama} et \cite[Theorem~1.5]{BenIngJonLev14} ou \cite[Theorem~A]{0Riv2601}, ainsi que de \cite[Theorem~A]{GOV2} avec ${r = 1}$.
Plus g{\'e}n{\'e}ralement, on montre que l'exposant de \mbox{Lyapunov} de toute mesure de probabilit{\'e} sur~$\PKber$, invariante et ergodique par~$R$, est positif ou nul, pourvu qu'elle ne soit pas support{\'e}e sur un cycle attractif (Corollaire~\ref{c:exponsants}).

Les Th{\'e}or{\`e}mes~\ref{thm:main1}, \ref{thm:main3} et~\ref{thm:main2} nous permettent d'obtenir les caract\'erisations suivantes des applications mod\'er\'ees d'exposant de \mbox{Lyapunov} nul.

\begin{letcor}
  \label{cor:main4}
  Soit~$R$ une fraction rationnelle {\`a} coefficients dans~$K$ mod\'er\'ee, et de degr{\'e}~$d$ au moins deux.
  Les propri\'et\'es suivantes sont \'equivalentes.
  \begin{enumerate}
  \item
    Ou bien~$R$ est affine Bernoulli, ou bien elle a bonne r\'eduction potentielle.
  \item
    L'entropie topologique de $R$ est strictement plus petite que~$\log d$.
  \item
    L'exposant de \mbox{Lyapunov} de $R$ est nul.
  \item
    L'ensemble de Julia de $R$ est inclus dans $\HK$.
  \item
    Tous les points p\'eriodiques de~$R$ dans~$\PK$ sont attractifs ou indiff\'erents.
  \item
    La mesure d'\'equilibre~$\rho_R$ charge un segment de~$\HK$.
  \end{enumerate}
  Lorsque ces propri{\'e}t{\'e}s {\'e}quivalentes sont satisfaites, l'entropie topologique de~$R$ est le logarithme d'un entier dans~${\{1, \ldots, [\sqrt{d}] \}}$, et l'ensemble de Julia de~$R$ est contenu dans l'ensemble critique de~$R$.
\end{letcor}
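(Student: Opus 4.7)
Mon approche consiste à appliquer la trichotomie du Théorème~\ref{thm:main2}, qui couvre toute fraction rationnelle modérée. Puisque~$R$ est modérée, $\chi(R)\ge 0$, et je distinguerais les trois cas~: (a)~$\chi(R)>0$~; (b)~$\chi(R)=0$ sans bonne réduction potentielle~; (c)~$\chi(R)=0$ avec bonne réduction potentielle. Puisque les cas~(b) et~(c) correspondent exactement à la propriété~(1) du corollaire, il suffit de vérifier que les propriétés~(2)--(6) sont toutes satisfaites dans~(b) et~(c), et toutes en défaut dans~(a), pour obtenir l'équivalence souhaitée.

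Dans le cas~(a), le Théorème~\ref{thm:main2}~(1) donne $\htop(R)=\log d$, donc~(2) tombe~; puis le Théorème~\ref{thm:main3}~(3) fournit $\rho_R(\PK)=1$, l'équidistribution $d^{-n}\sum_{x\in\RFix(R^n,K)}\delta_x \to \rho_R$ et la relation~\eqref{eq:71}. Pour $n$ grand, on obtient ainsi un grand nombre de points fixes répulsifs de~$R^n$ dans ${\PK \subseteq J_R}$, ce qui réfute~(4) et~(5), tandis que ${\rho_R(\HK)=0}$ exclut~(6). Pour réfuter~(1), j'observerais que toute fraction affine Bernoulli a ${J_R \subseteq \HK}$ par la Proposition~\ref{prop:affine Bernoulli}, ce qui contredit la présence de points répulsifs dans~$\PK$, et que toute fraction à bonne réduction potentielle a~$\rho_R$ atomique (voir la remarque suivant le Théorème~\ref{thm:main1}), ce qui contredit ${\rho_R(\PK) = 1}$ joint à l'équidistribution, laquelle exclut tout atome.

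Dans les cas~(b) et~(c), la propriété~(1) est vraie par définition et~(3) par hypothèse. L'entropie topologique vaut~$\log k$ avec~${k \in \{2, \ldots, [\sqrt{d}]\}}$ dans~(b) et~$0$ dans~(c), donc strictement inférieure à~$\log d$, d'où~(2). La Proposition~\ref{prop:affine Bernoulli} et la remarque suivant le Théorème~\ref{thm:main1} donnent ${J_R \subseteq \HK}$ dans les deux cas (segment de~$\HK$ dans~(b), point fixe de type~II dans~(c)), d'où~(4)~; et (5) s'ensuit aussitôt car tout point périodique de~$\PK$ appartient alors à l'ensemble de Fatou. Pour~(6), la Proposition~\ref{prop:affine Bernoulli} décrit~$\rho_R$ dans~(b) comme portée par le segment~$J_R$, tandis que dans~(c) elle est la masse de Dirac sur l'unique point de~$J_R$~; dans les deux cas, $\rho_R$ charge tout segment de~$\HK$ contenant~$J_R$.

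Les assertions finales en découlent~: l'entropie topologique vaut $\log k$ avec ${k \in \{1, \ldots, [\sqrt{d}]\}}$ en combinant les bornes des cas~(b) et~(c). Pour montrer que~$\rho_R$ charge~$\crit_R$, j'observerais que dans~(b) la dilatation affine de facteur~${d_j \ge 2}$ de~$R$ sur chaque~$I_j$ force un degré local~${\ge 2}$ en tout point de~$I_j$, donc ${I_j \subseteq \crit_R}$~; puisque~$\rho_R$ charge~$\bigcup_j I_j$ de manière strictement positive par la Proposition~\ref{prop:affine Bernoulli}, on conclut. Dans~(c), l'unique point fixe de~$J_R$, de type~II et de degré local~${d\ge2}$, appartient à~$\crit_R$, et ${\rho_R(\crit_R)=1}$. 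L'obstacle principal attendu est la vérification soignée, dans le cas~(b), que tous les points de~$\bigcup_j I_j$ sont bien géométriquement critiques et que~$\rho_R$ charge effectivement ce sous-ensemble, éléments qui reposent sur les descriptions fines de la dynamique affine Bernoulli fournies par la Proposition~\ref{prop:affine Bernoulli}.
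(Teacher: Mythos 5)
Votre démonstration est correcte, mais elle suit une route sensiblement différente de celle du papier. Le papier procède par un cycle d'implications~: (1)$\Rightarrow$(2), (1)$\Rightarrow$(4) et les assertions finales via \cite[Th\'eor\`eme~E]{theorie-ergo} et la Proposition~\ref{prop:affine Bernoulli}~; (2)$\Rightarrow$(3) et (3)$\Rightarrow$(1) par le Th\'eor\`eme~\ref{thm:main2}~; puis (4)$\Rightarrow$(5) imm\'ediate, (5)$\Rightarrow$(6) par la Proposition~\ref{prop:expansion} et (6)$\Rightarrow$(1) par le Th\'eor\`eme~\ref{thm:main1}. Vous partez au contraire de la trichotomie du Th\'eor\`eme~\ref{thm:main2} et v\'erifiez, cas par cas, que les six propri\'et\'es valent simultan\'ement dans les cas (b)--(c) et \'echouent simultan\'ement dans le cas (a)~; comme vous r\'efutez aussi~(1) dans le cas~(a), la logique est compl\`ete. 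La diff\'erence de fond porte sur la propri\'et\'e~(5)~: l\`a o\`u le papier utilise directement la Proposition~\ref{prop:expansion} (branches inverses) pour (5)$\Rightarrow$(6), vous invoquez le Th\'eor\`eme~\ref{thm:main3}~(3) (\'equidistribution des points r\'epulsifs, \`a la Briend--Duval) pour produire des points p\'eriodiques r\'epulsifs dans~$\PK$ lorsque ${\chi(R) > 0}$ --- un outil plus lourd mais \'etabli dans le papier, et vous n'avez alors plus besoin d'appeler explicitement la Proposition~\ref{prop:expansion} ni le Th\'eor\`eme~\ref{thm:main1}, qui restent cach\'es dans la preuve du Th\'eor\`eme~\ref{thm:main2}. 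Ce que votre approche gagne en uniformit\'e (une seule disjonction de cas), celle du papier le gagne en \'economie de moyens. Deux retouches mineures~: pour r\'efuter~(4) et~(6) dans le cas~(a), il est plus direct d'utiliser ${\rho_R(\PK) = 1}$, qui donne ${\rho_R(\HK) = 0}$ et interdit ${J_R \subseteq \HK}$ puisque ${\rho_R(J_R) = 1}$, plut\^ot que de passer par l'appartenance des points r\'epulsifs \`a~$J_R$ (fait vrai mais non indispensable ici, et votre inclusion ${\PK \subseteq J_R}$ est un lapsus)~; de m\^eme, pour exclure la bonne r\'eduction potentielle dans le cas~(a), la masse de Dirac en un point de type~II donne ${\rho_R(\HK) = 1}$, ce qui contredit d\'ej\`a ${\rho_R(\PK) = 1}$ sans aucun recours \`a l'\'equidistribution~; l'argument suppl\'ementaire sur l'absence d'atomes est superflu et un peu impr\'ecis.
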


Lorsque~$K$ est de caratéristique résiduelle nulle, l'implication (5)$\Rightarrow$(1) d{\'e}coule des r{\'e}sultats de Luo~\cite[Propositions~11.4 et~11.5]{luo2}, d{\'e}montr{\'e}s avec une m{\'e}thode diff{\'e}rente.
\footnote{Cet argument combiné à la non-négativité de l'exposant de Lyapunov et au Théorème~\ref{thm:main2} donne une démonstration alternative de notre Théorème~\ref{thm:main1} en caratéristique résiduelle nulle.}
Lorsque~$R$ est un polyn{\^o}me et que~$K$ admet un sous-corps {\`a} valuation discr{\`e}te {\`a} cl{\^o}ture alg{\'e}brique dense, l'implication (5)$\Rightarrow$(1) a {\'e}t{\'e} auparavant d{\'e}montr{\'e}e par Trucco \cite[Corollary~C]{trucco}.

\medskip

Le corollaire suivant est une cons{\'e}quence imm{\'e}diate du Th{\'e}o{\`e}me~\ref{thm:main2}.

\begin{letcor}
  L'entropie topologique d'une fraction rationnelle mod\'er\'ee est le logarithme d'un entier.
\end{letcor}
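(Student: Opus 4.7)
Mon plan est de lire le résultat directement sur le Théorème~\ref{thm:main2}, qui fournit la valeur explicite de~$\htop(R)$ pour toute fraction rationnelle modérée~$R$ à coefficients dans~$K$ et de degré~$d \ge 2$, selon une trichotomie sur le signe de~$\chi(R)$ et la présence ou l'absence de bonne réduction potentielle.

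Concrètement, je distingue les trois situations fournies par le Théorème~\ref{thm:main2}. Dans le premier cas, où $\chi(R) > 0$, on a $\htop(R) = \log d$, et~$d$ est un entier. Dans le deuxième cas, où $\chi(R) = 0$ et~$R$ n'a pas bonne réduction potentielle, l'application~$R$ est affine Bernoulli d'entropie topologique $\log k$ pour un certain entier $k \in \{2, \ldots, [\sqrt{d}]\}$. Dans le troisième cas, où $\chi(R) = 0$ et~$R$ a bonne réduction potentielle, $\htop(R)$ est nulle, ce qui coïncide avec $\log 1$.

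Dans chacune de ces trois situations, $\htop(R)$ est donc bien le logarithme d'un entier. Je ne prévois aucun obstacle sérieux dans l'organisation de cette preuve~: tout le contenu substantiel, à savoir la valeur exacte de l'entropie topologique dans chaque cas, le caractère entier des facteurs de dilatation du cas affine Bernoulli et la borne $k \le [\sqrt{d}]$, est déjà encapsulé dans le Théorème~\ref{thm:main2}. L'argument se réduit ainsi à la disjonction des cas ci-dessus, ce qui justifie que ce corollaire se présente comme une conséquence immédiate du théorème précédent.
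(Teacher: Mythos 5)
Votre démonstration est correcte et suit exactement la voie du papier~: le corollaire y est présenté comme une conséquence immédiate du Théorème~\ref{thm:main2}, et votre disjonction des trois cas ($\htop(R)=\log d$, $\log k$ avec $k$ entier, ou $0=\log 1$) ne fait qu'expliciter cette lecture directe.
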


\begin{Rem*}
  Dans~\cite{benedetto-connected} est explicit\'ee une construction d'une fraction rationnelle~$R$ de degr\'e~$6$ {\`a} coefficients dans~$\C_3$ dont l'entropie topolo\-gique est \'egale au logarithme de la plus grande racine r\'eelle du polyn\^ome irr\'eductible $t^3 - 4t^2 -t +6$.
  Cette fraction n'est pas mod\'er\'ee, et v\'erifie $\chi(R)<0$.

  Nous avons exhibé dans~\cite{theorie-ergo} des fractions rationnelles qui n'était pas localement injective
  sur leur ensemble de Julia. Cela rend la construction de partitions mesurables en toute généralité
  extrêmement ardue, et explique les difficultés à calculer l'entropie topologique d'une fraction rationnelle sur un corps quelconque.
\end{Rem*}

Comme toute fraction rationnelle affine Bernoulli est de degr{\'e} au moins~$4$, le corollaire suivant est une cons{\'e}quence imm{\'e}diate du Th{\'e}or{\`e}me~\ref{thm:main2}.

\begin{letcor}
  \label{c:modere-degre-petit}
  Soit~$R$ une fraction rationnelle {\`a} coefficients dans~$K$, mod\'er\'ee et de degr{\'e}~$2$ ou~$3$.
  Alors, soit~$R$ a bonne r{\'e}duction potentielle, auquel cas son entropie topologique est nulle, soit ${\chi(R) > 0}$, auquel cas son entropie topologique est {\'e}gale {\`a}~$\log \deg(R)$.
\end{letcor}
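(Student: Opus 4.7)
La stratégie est d'appliquer directement le Théorème~\ref{thm:main2} et d'écarter son cas~(2) lorsque le degré est $2$ ou $3$. Ce théorème garantit $\chi(R) \geq 0$ et distribue~$R$ dans l'une de trois situations mutuellement exclusives~; les cas~(1) et~(3) correspondent précisément à la dichotomie du corollaire. Il suffit donc de montrer que le cas~(2), c'est-à-dire $R$ affine Bernoulli, ne se produit jamais pour ${\deg(R) \in \{2, 3\}}$.

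Pour cela, je vérifierais que toute fraction rationnelle affine Bernoulli est de degré au moins~$4$. Avec les notations~\eqref{eq:1}, puisque ${R^{-1}(I) = I_1 \cup \cdots \cup I_k}$ et que chaque restriction $R \colon I_j \to I$ est affine de facteur $d_j$ par rapport à la distance hyperbolique $d_{\HK}$, l'application~$R$ est de degré local constant~$d_j$ le long du segment $I_j$. Un point générique de l'intérieur de $I$ admet alors exactement $\sum_{j=1}^k d_j$ antécédents comptés avec multiplicité, tous localisés dans ${I_1 \cup \cdots \cup I_k}$. On en déduit ${\deg(R) = d_1 + \cdots + d_k \geq 2k \geq 4}$, grâce aux contraintes ${k \geq 2}$ et ${d_j \geq 2}$.

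L'unique difficulté est donc l'identification du facteur de dilatation hyperbolique $d_j$ au degré local de~$R$ sur $I_j$, qui relève du dictionnaire entre métrique hyperbolique sur~$\HK$ et dynamique locale aux points de type~II~; ce point devrait faire partie du contenu de la Proposition~\ref{prop:affine Bernoulli}. Une fois cette identification acquise, on conclut immédiatement~: pour ${d \in \{2, 3\}}$, le cas~(2) du Théorème~\ref{thm:main2} est exclu, et il ne reste que le cas~(1), qui donne ${\chi(R) > 0}$ et une entropie topologique égale à ${\log \deg(R)}$, ou le cas~(3), qui donne la bonne réduction potentielle et une entropie topologique nulle. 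C'est exactement l'alternative annoncée.
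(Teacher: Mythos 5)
Votre démonstration est correcte et suit essentiellement la même voie que l'article~: celui-ci déduit le corollaire du Théorème~\ref{thm:main2} en observant que toute fraction rationnelle affine Bernoulli est de degré au moins~$4$, fait qui est précisément le contenu de la Proposition~\ref{prop:affine Bernoulli}~(1) (on y établit $\sum_{j=1}^k d_j = d$ et $k \le \sqrt{d}$, donc $d \ge k^2 \ge 4$). L'identification que vous signalez comme seule difficulté (facteur de dilatation et degré local le long de~$I_j$, à un ensemble fini près) figure bien dans la démonstration de cette proposition, de sorte que votre argument $d = \sum_j d_j \ge 2k \ge 4$ est pleinement justifié.
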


Un r{\'e}sultat analogue est valable pour tout polyn{\^o}me~$P$ {\`a} coefficients dans~$K$, mod{\'e}r{\'e} et de degr{\'e} au moins deux \cite[Corollaire~F]{theorie-ergo}.
La formule de \mbox{Przytycki}, voir la Proposition~\ref{p:pr} ou~\cite[\S~5]{okuyama} dans le cas o{\`u}~$K$ est de caract{\'e}ristique nulle, montre que le cas de bonne r{\'e}duction potentielle correspond {\`a} la situation o{\`u} l'orbite de chaque point critique dans~$K$ est born\'ee, et que l'exposant de \mbox{Lyapunov} est strictement positif d{\`e}s que l'orbite d'au moins un des points critiques finis tend vers l'infini, voir~\cite[Corollary~2.11]{kiwi-cubic}.
De plus, lorsque~$K$ admet un sous-corps {\`a} valuation discr{\`e}te {\`a} cl{\^o}ture alg{\'e}brique dense, l'ensemble ${J_P \setminus \PK}$ est soit vide, soit {\'e}gal {\`a} l'union d'un nombre fini de grandes orbites de points p\'eriodiques r{\'e}pulsifs \cite[Theorem~A]{trucco}.
Voir~\cite[Theorem~1]{kiwi} pour un r{\'e}sultat similaire pour les fractions rationnelles de degr{\'e}~$2$, dans le cas o{\`u}~$K$ est le corps des s{\'e}ries de Puiseux {\`a} coefficients dans~$\C$.

Nous donnons dans \S~\ref{sec:application} une application des Th{\'e}or{\`e}mes~\ref{thm:main3} et~\ref{thm:main2} pour les familles m\'eromorphes de fractions rationnelles complexes~$\{R_t\}_{t \in \D}$ param\'etr\'ees par le disque unit\'e~$\D$, d{\'e}fini par ${\D \= \{ t\in \C, \, |t|<1\}}$.
Nous verrons que pour une telle famille la dichotomie suivante apparaît:
\begin{itemize}
\item
  soit les multiplicateurs des points fixes de~$R_t^n$ sont uniform\'ement born\'es (en $t$ et en $n$)
  et la fraction rationnelle~$\aR$ associ\'ee \`a $\{R_t\}$ d\'efinie sur le corps des s\'eries de Puiseux {\`a} coefficients dans~$\C$ poss\`ede un exposant de \mbox{Lyapunov} nul;
\item
  soit une proportion positive de cycles p\'eriodiques ont un multiplicateur qui explose.
\end{itemize}
Nous renvoyons au Th\'eor\`eme~\ref{thm:main6} et la Proposition~\ref{p:famille-positive} pour plus de d\'etails.

%%% 

\subsection*{Plan}

En dehors de cette introduction, cet article contient~$5$ sections.
Dans \S~\ref{s:preliminaires}, nous rassemblons divers r{\'e}sultats pr{\'e}liminaires et fixons les notations et conventions.

Dans \S~\ref{sec:zdunik}, nous donnons la d\'emonstration du Th\'eor\`eme~\ref{thm:main1}, qui caract\'erise les fractions rationnelles dont la mesure d'\'equilibre charge un segment de~$\HK$.
Nous {\'e}tudions le quotient~$\cT$ de~$\PKber$ par la relation d'\'equivalence identifiant deux points~$x$ et~$x'$ d\`es que ${\rho_R([x,x']) = 0}$.
Nous montrons que~$\cT$ admet une structure naturelle d'arbre r{\'e}el m{\'e}trique compact (\S\S~\ref{sec:arbre quotient}, \ref{sec:compacite de l'arbre}) et que la fraction rationnelle induit une application sur~$\cT$, poss{\'e}dant un degr{\'e} local et {\'e}tant uniform{\'e}ment expansive et ergodique (\S\S~\ref{sec:application quotient}, \ref{sec:phi-deg}, \ref{sec:expansion-uniforme}).
Enfin, nous montrons que~$\cT$ est un segment (\S~\ref{sec:simplicite de l'arbre}) et que l'application induite est affine Bernoulli (\S~\ref{sec:phi}), ce qui permet d'en d{\'e}duire le Th{\'e}or{\`e}me~\ref{thm:main1} (\S~\ref{sec:preuve main1}).

Nous donnons dans \S~\ref{Sec:cocycle} la d\'emonstration du Th\'eor\`eme~\ref{thm:main3}.
Nous commençons par rappeller des propri{\'e}t{\'e}s de la d\'eriv\'ee sph\'erique et montrer que~$\chi(R)$ est fini si et seulement si~$R$ est s{\'e}parable (\S~\ref{sec:derivee-Lyap}).
Nous donnons ensuite une formule pour l’exposant de \mbox{Lyapunov} dans le cas des polyn{\^o}mes (\S~3.2), que nous utilisons dans \S~5.1, mais qui n’est pas n{\'e}cessaire {\`a} la d{\'e}monstration du Th{\'e}or{\`e}me~\ref{thm:main3}.
Ensuite, nous exprimons la d{\'e}riv{\'e}e sph{\'e}rique de~$R$ sous la forme d'un sous-cobord multiplicatif, qui n'est pas un cobord en raison de la ramification sauvage (\S~\ref{sec:derivee}).
Ce sous-cobord motive la d{\'e}finition d'une fonction ${\HK \to \mathopen[ 0, +\infty \mathclose[}$, cohomologue {\`a} ${-\log \| R' \|}$, qui quantifie le caract{\`e}re sauvage ou ins{\'e}parable de~$R$ en chaque point de~$\HK$.
Gr{\^a}ce {\`a} cette fonction, on {\'e}tablit un lien entre l'exposant de \mbox{Lyapunov} et la ramification sauvage de~$R$ pour chaque mesure de probabilit{\'e} ergodique support{\'e}e sur~$\HK$ (\S~\ref{ss:exposants}).
Nous d{\'e}duisons le Th\'eor\`eme~\ref{thm:main3}~(1), dans le cas plus difficile des fractions rationnelles s{\'e}parables, comme cons{\'e}quence de ce lien (\S~\ref{sec:thm31}).

Le pas cl{\'e} pour d{\'e}montrer le Th\'eor\`eme~\ref{thm:main3}~(2) est que la fonction~$\diam$ est constante sur un sous-ensemble de mesure pleine d'une boule de~$\PKber$.
Pour cela, nous {\'e}tablissons tout d'abord l'int\'egrabilit\'e de la fonction~$\log \diam$ {\`a} l'aide de la th{\'e}orie du potentiel et du caract{\`e}re {\'e}quilibr{\'e} de la mesure~$\rho_R$ (\S~\ref{ss:integrable}).
Combin{\'e}e avec une version du th{\'e}or{\`e}me de diff{\'e}rentibilit{\'e} de Lebesgue et une construction de préimages itérées étales, obtenues {\`a} l'aide du Th{\'e}or{\`e}me~\ref{thm:main1} et inspir{\'e}e de la construction des branches inverses des fractions rationnelles complexes introduite indépendamment dans~\cite{FreLopMan83} et~\cite{lyubich}, cela nous permet de montrer que~$\diam$ est constante sur un sous-ensemble de mesure pleine d'une boule de~$\PKber$.
Nous en d{\'e}duisons le Th\'eor\`eme~\ref{thm:main3}~(2) en utilisant le cocycle g{\'e}om{\'e}trique (\S~\ref{sec:thm32}).

Pour d{\'e}montrer le Th\'eor\`eme~\ref{thm:main3}~(3), nous suivons la preuve d'un r{\'e}sultat analogue dans le cas complexe, donn\'ee par Briend et Duval dans~\cite[\S~3]{briend:Lyap}, qui repose sur la th\'eorie de Pesin (\S~\ref{sec:briendduval}).

Dans \S~\ref{s:modere}, nous donnons des applications aux fractions rationnelles mod{\'e}r{\'e}es (\S~\ref{ss:modere}) et aux familles m{\'e}romorphes de fractions rationnelles complexes (\S~\ref{sec:application}).
Apr{\`e}s des rappels sur les applications mod{\'e}r{\'e}es, nous montrons qu'une fraction rationnelle mod\'er\'ee dont la mesure d'\'equilibre ne charge aucun segment de~$\HK$, poss\`ede un point p{\'e}riodique r{\'e}pulsif dans~$\PK$.
La d\'emonstration repose sur la construction des branches inverses introduite ind{\'e}pendamment dans~\cite{FreLopMan83} et~\cite{lyubich}.
Nous concluons \S~\ref{ss:modere} par la d{\'e}monstration du Th\'eor\`eme~\ref{thm:main2} et du Corollaire~\ref{cor:main4}.
Enfin, nous d\'eduisons des informations sur la variation de l'exposant de \mbox{Lyapunov} dans des familles m\'eromorphes de fractions rationnelles complexes, et nous caract\'erisons celles dont les multiplicateurs des points p{\'e}riodiques restent uniform\'ement born\'es (\S~\ref{sec:application}).

Enfin, nous avons regroup\'e dans \S~\ref{sec:open} quelques exemples (\S~\ref{ss:exemples}), ainsi que des conjectures sur la distribution asymptotique des points p\'eriodiques (\S~\ref{ss:questions}).

\subsection*{Remerciements}
Nous remercions les nombreuses institutions qui nous ont financ\'ees tout au long de ce projet initi\'e en 2008,
comme le CNRS, le projet ECOS C07E01, l'ANR-Berko, l'ERC nonarcomp, la Faculdad de Matem\'aticas de la PUC au Chili, l'\'Ecole polytechnique. Nous remercions Laura DeMarco, Xander Faber, et Y\^usuke Okuyama de nous avoir transmis leurs travaux respectifs sur des probl\`emes proches de ceux trait\'es ici, ainsi que pour leurs remarques sur une premi\`ere version de cet article.

\section{Conventions, notations, rappels}
\label{s:preliminaires}

\subsection{Espaces de Berkovich}
\label{ref:def-berko}
Pour tout ce qui concerne les propri\'et\'es de base de la droite projective
au sens de Berkovich nous renvoyons au livre~\cite{baker-rumely}, \`a l'article de survol~\cite{survey-jonsson} et \`a~\cite{theorie-ergo}.

Rappelons que l'on fixe dans tout cet article un corps~${(K,|\cdot|)}$ alg\'ebriquement clos, non trivialement valu\'e, non archim\'edien, complet et de corps r{\'e}siduel~$\tK$.
Notons~$\cO_K$ l'anneau de valuation de~$K$ et~$\fM_K$ l'id\'eal maximal de~$\cO_K$.
On a donc
\begin{equation}
  \label{eq:3}
  \cO_K
  =
  \{ x \in K, |x| \le1\},
  \fM_K
  =
  \{ x \in K, |x| <1\}
  \text{ et }
  \tK
  =
  \cO_K / \fM_K~.
\end{equation}

On note $\AKber$ la droite affine au sens de Berkovich dont les points sont les semi-normes multiplicatives sur l'anneau~$K[T]$ dont la restriction \`a $K$ est \'egale \`a~$|\cdot|$.
On notera~$|P(x)|$ la valeur d'un polyn\^ome $P$ \'evalu\'e en une semi-norme $x \in \AKber$.
De plus, on notera~$\xcan$ le point de~$\AKber$ associ\'e \`a la boule unit\'e, qui correspond {\`a} la norme de Gauss sur l'anneau~$K[T]$.

On munit~$\AKber$ de la topologie de la convergence ponctuelle.
Pour cette topologie, $\AKber$ est localement compact, connexe par arcs, et poss\`ede une structure naturelle d'arbre r\'eel. Pour toute paire de points $x_0\neq x_1 \in\AKber$, il existe un chemin injectif continu $\gamma\colon [0,1] \to \AKber$ tel que $\gamma(0) = x_0$, $\gamma(1) = x_1$, qui est unique \`a reparam\'etrisation pr\`es. L'image $\gamma[0,1]$ est le segment reliant $x_0$ \`a $x_1$ pour la structure d'arbre, et on le note~$[x_0,x_1]$.

L'\'evaluation $P \mapsto |P(x)|$ pour $x \in K$ fournit un plongement
du sous-ensemble des points, dit de type I, not\'e~$\AK$ dans~$\AKber$.
Ces points sont des bouts de l'arbre $\AKber$, c'est-\`a-dire dont le compl\'ementaire reste connexe.

On d\'efinit de mani\`ere analogue la droite projective de Berkovich $\PKber$ en recollant deux copies de $\AKber$ par le morphisme $T\mapsto 1/T$. Cet espace s'obtient topologiquement comme le compactifi\'e d'Alexandorff de la droite affine, $\PKber= \AKber \cup \{ \infty \}$.
C'est \`a nouveau un arbre r\'eel qui contient $\PK= \AK \cup \{ \infty\}$.
Par convention~$\infty$ est un point de type I.

Une boule ferm\'ee de~$\AK$ est un ensemble du type $\{ z, \, |z-y| \le r\}$ pour un point $y \in \AK$ et un r\'eel $r\ge0$.
A toute boule ferm\'ee $B$ est associ\'ee une semi-norme $x_B\in \AKber$ d\'efinie par $|P(x_B)| = \sup_B |P|$.
Un point est dit de type II si il est associ\'e \`a une boule dont le rayon est strictement positif et appartient au groupe des valeurs~$|K^*|$ de~$K$.
Il est de type III si le rayon n'appartient pas \`a~$|K^*|$.
Un point de ${\AKber \setminus \AK}$ qui n'est ni de type~II ni de type~III est dit de type~IV.

On \'etend la notion de boules aux espaces $\PK, \AKber, \PKber$ de la mani\`ere suivante.
Une boule ferm\'ee de $\PK$ est soit une boule ferm\'ee de $\AK$ soit l'image d'une boule ferm\'ee de~$\AK$ par l'inversion $T \mapsto 1/T$.
Une boule ferm\'ee de $\AKber$ (resp. de $\PKber$) est l'adh{\'e}rence d'une boule ferm\'ee de $\AK$ (resp. de $\PK$).
Une boule ouverte de~$\PK$ (resp. de~$\PKber$) est le compl{\'e}mentaire d'une boule ferm{\'e}e, et une boule ouverte de~$\AK$ (resp. de~$\AKber$) est une boule ouverte de~$\PK$ (resp. de~$\PKber$) ne contenant pas~$\infty$.
Dans chacun des espaces~$\PK$ et~$\PKber$, l'ensemble des boules est préserv\'e par~$\PGL(2,K)$, et on obtient donc une action continue de~$\PGL(2,K)$ sur $\PKber$ pour laquelle on a
$|P( \phi(x))| = |(P\circ\phi) (x)|$ d\`es que $P\in K[T]$, $\phi\in \PGL(2,K)$, et $x\in\PKber$ n'est pas un p\^ole de $P\circ \phi$.

Un ouvert fondamental de~$\PKber$ est une composante connexe du compl\'ementaire d'un ensemble fini de points de type~II ou~III.
Ces ouverts forment une base de la topologie de $\PKber$.

La fonction ${|\cdot| \colon \AKber \to \mathopen[ 0, +\infty \mathclose[}$ est d\'efinie par $|x| \= |T|_x$ o\`u~$|\cdot|_x$ d\'esigne la semi-norme sur l'anneau~$K[T]$ associ\'ee au point $x\in\AKber$. De m\^eme, on d\'efinit le diam\`etre (projectif) de $x\in\PKber$ par la formule
\begin{equation}\label{eq:def-diam}\diam(x) \= \frac{\inf_{z \in K} |T-z|_x}{\max\{ 1, |x|^2\}}~.
\end{equation}
On v\'erifie que $\diam (\phi(x)) = \diam(x)$ pour toute $\phi \in \PSL(2,\cO_K)$.
Comme toute application d'\'evaluation est continue on en d\'eduit de plus le

\begin{fact}
  La fonction $x\mapsto \diam(x)$ est semi-continue sup\'erieurement; elle est born\'ee sup\'erieurement par $1$, et s'annule uniquement sur~$\PK$.
\end{fact}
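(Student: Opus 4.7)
The plan is to verify the three assertions directly from the definition~\eqref{eq:def-diam}, treating the upper bound first because it will also serve to extend $\diam$ continuously to~$\infty$. For the upper bound, I evaluate the infimum in the numerator at $z = 0$, giving $\inf_{z \in K} |T-z|_x \le |T|_x = |x|$. When $|x| \le 1$ the denominator of~\eqref{eq:def-diam} equals~$1$, so $\diam(x) \le |x| \le 1$; when $|x| > 1$ the denominator equals $|x|^2$, so $\diam(x) \le 1/|x| < 1$. The second estimate also shows $\diam(x) \to 0$ as $|x| \to +\infty$, and I take this as the definition of $\diam(\infty) \= 0$ (consistent with applying the formula after the coordinate change $T \mapsto 1/T$, compatible with the $\PSL(2,\cO_K)$-invariance announced in the excerpt).

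For upper semi-continuity, I use the fact that for each fixed $z \in K$ the evaluation $x \mapsto |T-z|_x$ is continuous on $\AKber$ by definition of the Berkovich topology. The numerator of~\eqref{eq:def-diam} is therefore an infimum of continuous functions, hence upper semi-continuous; the denominator is continuous and bounded below by~$1$, so the quotient is upper semi-continuous on $\AKber$, and it remains so after extending by~$0$ at~$\infty$ thanks to the decay established above.

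The main substantive step is identifying the vanishing set. If $x \in \AK$ corresponds to $a \in K$, then $|T-a|_x = 0$, so $\diam(x) = 0$, and $\diam(\infty) = 0$ by definition. Conversely, if $x \in \HK \cap \AKber$ is of type~II or~III, then $x = x_B$ for a closed ball $B = \bar B(a, r)$ with $r > 0$, and the ultrametric identity $|T-z|_x = \max(|a-z|, r) \ge r$ forces $\diam(x) > 0$. The only genuinely delicate case, and the main (minor) obstacle of the proof, is type~IV: here $x$ corresponds to a strictly decreasing nested family of closed balls $\bar B(a_n, r_n)$ with empty intersection, and completeness of~$K$ forces $r \= \lim_n r_n > 0$ (otherwise the intersection would be a point of~$\PK$, contradicting the assumption that $x$ is of type~IV). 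Since $|T-z|_x = \lim_n \max(|a_n-z|, r_n) \ge \lim_n r_n = r$ for every $z \in K$, the infimum is bounded below by $r > 0$, which concludes the proof.
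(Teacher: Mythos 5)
Votre démonstration est correcte et suit essentiellement la même voie que le texte, qui déduit le Fait de la continuité des applications d'évaluation (le numérateur de~\eqref{eq:def-diam} est un infimum de fonctions continues, le dénominateur est continu et minoré par~$1$), les autres assertions étant immédiates à partir de la définition. Vous ne faites qu'expliciter les vérifications laissées implicites par le texte — la majoration par~$1$, le prolongement par~$0$ en~$\infty$, et surtout le cas des points de type~IV via la complétude de~$K$ — et ces détails sont traités correctement.
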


L'espace $\HK$ est muni d'une m\'etrique dite hyperbolique $d_\HK$ qui est compl\`ete, invariante par~$\PGL(2,K)$ et pour laquelle $\HK$ est un arbre r\'eel. Si $x_1, x_2$ sont deux points associ\'es \`a des boules de m\^eme centre et de rayon respectivement $r_1$ et $r_2$, alors $d_\HK(x_1, x_2) = \mathopen|\log r_1 - \log r_2\mathclose|$.

Les espaces $\AKber$ et $\PKber$ sont des espaces annel\'es.
Pour tout ouvert~$U$ de~$\PKber$, on d\'efinit l'espace $\cO(U)$ des fonctions analytiques sur $U$ comme le compl\'et\'e (pour la norme du supremum sur $U \cap \PK$) des fractions rationnelles n'ayant pas de p\^oles dans $U$.

\subsection{Fractions rationnelles}
\label{ss:action}
Fixons une fraction rationnelle $R$ {\`a} coefficients dans~$K$.
Elle induit une application continue de~$\PKber$ dans lui-m\^eme qui est ouverte d\`es que~$R$ est non constante.

Rappelons~\cite[\S~2.2]{theorie-ergo} que l'on d\'efinit le degr\'e local de $R$ en un point $x\in \PKber$ comme l'entier
$$ \deg_R(x)
\=
\dim_{\kappa(R(x))} (\cO_x/ \fm_{R(x)} \cO_x)~,$$
o\`u $\cO_x$ d\'esigne l'anneau des germes de fonctions analytiques en $x$, $\fm_x$ l'id\'eal des fonctions s'annulant en $x$, $\kappa (x)$ le corps $\cO_x/\fm_x$, et on utilise~$R$ pour voir~$\cO_x$ comme un sous-module de~$\cO_{R(x)}$.
Nous renvoyons \`a~\cite[4.6-7]{survey-jonsson} et~\cite[\S2]{R1} pour une d{\'e}finition plus g{\'e}om{\'e}trique.

Soit~$x$ un point de type~II de~$\PKber$, et soient~$g$ et~$\hg$ des transformations de M{\"o}bius telles que ${g(\xcan) = x}$ et ${\hg(R(x)) = \xcan}$.
La r{\'e}duction~$\tR$ de~$\hg \circ R \circ g$ est alors de degr{\'e}~$\deg_R(x)$ \cite[Proposition~2.4]{R1}.
Le degr{\'e} d'ins{\'e}parabilit{\'e} de~$\tR$ est ind{\'e}pendant du choix des transformations de M{\"o}bius~$g$ et~$\hg$.
On le note~$\wdeg{R}(x)$ et l'appelle le \emph{degr{\'e} d'ins{\'e}parabilit{\'e} local de~$R$ en~$x$}.
Si l'on note par~$\cH(x)$ la compl{\'e}tion de~$\kappa(x)$ par rapport {\`a} la norme associ{\'e} {\`a}~$x$, alors~$\wdeg{R}(x)$ correspond au degr{\'e} d'ins{\'e}parabilit{\'e} de l'extension du corps r{\'e}siduel de~$\cH(x)$ sur celui de~$\cH(R(x))$, voir \cite[Proposition~4.13]{survey-jonsson}.
Notons que~$\wdeg{R}(x)$ est une puissance de la caract{\'e}ristique r{\'e}siduelle de~$K$ divisant~$\deg_R(x)$.
En particulier, si l'on d{\'e}signe par~$|\wdeg{R}(x)|$ et~$|\deg_R(x)|$ les normes des entiers~$\wdeg{R}(x)$ et~$\deg_R(x)$ calcul{\'e}es dans~$K$, respectivement, alors on~a
\begin{equation}
  \label{eq:4}
  \wdeg{R}(x)
  \le
  \deg_R(x)
  \text{ et }
  |\wdeg{R}(x)|
  \ge
  |\deg_R(x)|~.
\end{equation}

Lorsque~$x$ est de type III ou IV of~$\PKber$, on se ram\`ene au cas pr\'ec\'edent par un changement de base effectu{\'e} de la mani\`ere suivante.
Soit~$L$ une extension alg\'ebriquement et sph\'eriquement close de~$K$, et soit ${\sigma_{L/K}\colon\PKber \to \PLber}$ l'unique application continue qui envoie le point associ\'e \`a une boule ferm{\'e}e~$B$ de~$K$ au point de~$\PLber$ associ\'e \`a la boule de~$L$ contenant~$B$ et de m{\^e}me diam{\`e}tre, voir \cite[\S~4]{faber1} ou \cite{poineau} pour un cadre plus g\'en\'eral.
Alors~$\sigma_{L/K}(x)$ est un point de type II de~$\PLber$, et on d{\'e}finit le \emph{degr{\'e} d'ins{\'e}parabilit{\'e} local~$\wdeg{R}(x)$ de~$R$ en~$x$} comme celui de~$R$ en~$\sigma_{L/K}(x)$.

Rappelons qu'un point de~$\PKber$ est critique pour~$R$ si ${\deg_R \ge 2}$ en ce point, et qu'on note par~$\crit_R$ l'ensemble de points critiques de~$R$.
On note par~$\crit_R(K)$ l'ensemble l'ensemble des points critiques dans~$\PK$, ${\crit_R(K) \= \crit_R \cap \PK}$.
Nous dirons que~$R$ est \emph{s{\'e}parable en un point~$x$} de~$\HK$ si ${\wdeg{R}(x) = 1}$, et que~$R$ est \emph{ins{\'e}parable en~$x$} sinon, cf. \cite[\S~5.2]{rivera-periode} et~\cite{faber1}.
De plus, un \emph{point critique inséparable de~$R$} est un point de~$\HK$ o{\`u}~$R$ est ins{\'e}parable, et on note par~$\wcrit_R$ l'ensemble des points critiques inséparables de~$R$.
Notons que le degr{\'e} local en chaque point critique inséparable de~$R$ est divisible par la caract{\'e}ristique r{\'e}siduelle de~$K$.

\smallskip

Pour chaque fonction $\varphi \colon \PKber \to \R$, on d{\'e}finit
\[R_*\varphi(y) \= \sum_{x\in R^{-1}(y)} \deg_R(x) \varphi(x)~.\]
Si $\varphi$ est continue, alors $R_*\varphi$ l'est encore et $\sup|R_*\varphi| \le \deg(R) \times \sup|\varphi|$.

Si $\rho$ est une mesure de Radon sur $\PKber$, on d\'efinit $R^* \rho$ par dualit\'e en posant pour chaque fonction continue~$\varphi \colon \PKber \to \R$,
\begin{equation}
  \label{eq:5}
  \int\varphi \dd (R^*\rho)
  \=
  \int (R_*\varphi) \dd \rho~.
\end{equation}
Si $\rho$ est une mesure de probabilit\'e, alors $R^*\rho$ est une mesure positive de masse $\deg(R)$.
Notons que l'{\'e}quation~\eqref{eq:5} est encore valable lorsque~$\varphi$ est la fonction indicatrice d'un bor\'elien de~$\PKber$ par \cite[Lemme~4.4~(1)]{theorie-ergo}.

On montre l'existence d'une unique mesure de probabilit\'e $\rho_R$ dite d'\'equilibre, v\'eri\-fiant $R^* \rho_R = \deg(R) \rho_R$, et ne chargeant aucun point de~$\PK$, voir~\cite[Th{\'e}or{\`e}me~A]{theorie-ergo}.
Cette mesure est $R$-invariante, et pour tout bor\'elien~$A\subset\PKber$ on a ${\rho_R(R^{-1}(A)) = \rho_R(A)}$.
En appliquant la propri{\'e}t{\'e} $R^* \rho_R = \deg(R) \rho_R$ et l'\'equation~\eqref{eq:5} avec~$\varphi$ {\'e}gale {\`a} la fonction indicatrice de~$A$, on obtient
\begin{equation}
  \label{eq:6}
  \rho_R(A)
  \le
  \rho_R( R(A))
  \le
  \rho_R(A) \times \deg(R)~.
\end{equation}
La mesure~$\rho_R$ peut charger un point de~$\HK$.
Dans ce cas, $\rho_R$ est la masse de Dirac en un point de type II, et quitte \`a faire un changement de coordonn\'ees ad\'equats, la r\'eduction de $R$ dans~$\tK$ est de degr\'e \'egal \`a celui de~$R$, voir~\cite[Th{\'e}or{\`e}me~E]{theorie-ergo}.
On dira dans ce cas que $R$ a \emph{bonne r\'eduction potentielle}.

Nous utiliserons de plus les deux lemmes g\'en\'eraux suivants.
Le lemme suivant {\'e}tend \cite[Proposition-D{\'e}finition~2.2]{theorie-ergo} et~\cite[Proposition~2.6]{R1} des ouverts fondamentaux aux ensembles connexes.

\begin{Lem}
  \label{lem:deg}
  Pour chaque sous-ensemble connexe~$A'$ de~$\PKber$, et chaque composante connexe~$A$ de~$R^{-1}(A')$, la fonction
  \[y \mapsto \sum_{x\in A \cap R^{-1}\{ y \}} \deg_R(x)\]
  est constante sur~$A'$.
  Si l'on d{\'e}signe par~$\deg_R(A)$ la valeur de cette fonction, alors on a
  $$
  \rho_R(A)
  =
  \frac{\deg_R(A)}{\deg(R)} \times \rho_R(A')~.
  $$
\end{Lem}

\begin{proof}
  On montre que pour toute composante connexe~$A$ de $R^{-1}(A')$, on a $R(A) = A'$.
  Fixons un point~$x$ de~$A$ et un point~$y'$ de~$A'$ diff{\'e}rent de~$R(x)$.
  Comme l'ensemble~$A'$ est connexe, il est connexe par arcs, et on a~$[R(x), y'] \subseteq A'$.
  Par ailleurs, il existe~$y$ dans~$R^{-1}(y')$ tel que~$R([x, y]) = [R(x), y']$ et tel que~$R$ soit injective sur~$[x, y]$, voir~\cite[Lemme~9.2]{rivera-periode}.
  On a donc~$[x, y] \subseteq R^{-1}(A')$, et par cons{\'e}quent~$[x, y] \subseteq A$ et~$y' \in R(A)$, ce qui montre bien que~$R(A) = A'$.

  Comme chaque composante connexe de~$R^{-1}(A')$ s'envoie de fa{\c{c}}on surjective sur~$A'$, l'ensemble~$R^{-1}(A')$ a au plus~$\deg(R)$ composantes connexes.
  On peut trouver alors un sous-ensemble fini~$F_0$ de $\PKber \setminus R^{-1}(A')$ tel que toute g{\'e}od\'esique qui joint deux composantes connexes distinctes de~$R^{-1}(A')$ intersecte~$F_0$.
  L'ensemble fini~$F = R^{-1}(R(F_0))$ est alors disjoint de~$R^{-1}(A')$, et la composante connexe~$V$ de~$\PKber \setminus F$ contenant~$A$ satisfait~$V \cap R^{-1}(A') = A$.
  De plus, si l'on note~$U$ l'ouvert fondamental composante connexe de~$\PKber \setminus R(F)$ contenant~$A'$, alors~$V$ est une composante connexe de~$R^{-1}(U)$.
  La propri{\'e}t{\'e} voulue est alors donn{\'e}e par~\cite[Proposition~2.6]{R1}.
\end{proof}

Nous appellerons \emph{couronne} tout ouvert connexe de $\PKber$ ayant exactement deux points dans son bord.
Nous n'imposons aucune restriction sur les points du bord qui peuvent \^etre de n'importe quel type.
\'Etant donnés deux points distincts $x \neq \tx$, le compl\'ementaire ${\PKber \setminus \{ x, \tx\}}$ poss\`ede une unique composante connexe
dont le bord est $\{ x, \tx\}$: on l'appellera la \emph{couronne comprise entre $x$ et $\tx$}.

\begin{Lem}
  \label{lem:couronne}
  Soit $C$ une couronne de bord $\{x, \tx\}$, et $R$ une fraction rationnelle telle que le compl{\'e}mentaire de~$R(C)$ dans~$\PKber$ est disconnexe.
  Alors~$R(x) \neq R(\tx)$, $R(C)$ est {\'e}gal {\`a} la couronne~$C'$ comprise entre~$R(x)$ et~$R(\tx)$, et~$C$ est l'une des composantes connexes de~$R^{-1}(C')$.
  De plus, $R$ envoie $] x, \tx [$ de fa{\c{c}}on injective sur~$] R(x), R(\tx) [$ et pour tout~$y$ dans~$] x, \tx [$ on a~$\deg_R(y) = \deg_R(C)$.
\end{Lem}

\begin{proof}
  Comme $R$ est ouverte, l'ensemble~$R(C)$ est un ouvert connexe.
  Par hypoth{\`e}se~$\PKber \setminus R(C)$ est disconnexe, et donc
  la fronti{\`e}re de~$R(C)$ contient au moins deux points.
  Comme la fronti{\`e}re de~$C$ est {\'e}gale {\`a}~$\{x, \tx \}$ et que $R$ est ouverte,
  la fronti{\`e}re de~$R(C)$ est contenue dans~$\{ R(x), R(\tx) \}$.
  On conclut que~$R(x) \neq R(\tx)$ et que la fronti{\`e}re topologique de~$R(C)$ est {\'e}gale {\`a}~$\{ R(x), R(\tx) \}$.
  Ceci montre que~$R(C)$ est la couronne~$C'$ comprise entre~$R(x)$ et~$R(x')$ et que~$C$ est l'une des composantes connexes de~$R^{-1}(C')$.

  Il s'ensuit que~$C$ contient l'une des composantes connexes~$\ell$ de~$R^{-1}(]R(x), R(x')[)$.
  Comme $R$ est finie et ouverte,~$\ell$ est un arbre fini dont les bouts sont contenus dans $R^{-1}( \{ R(x), R(x') \})$.
  On a donc
  \[ \ell
    =
    \mathopen] x, x' \mathclose[
    =
    R^{-1}(\mathopen]R(x), R(x')\mathclose[) \cap C~. \]
  On conclut que~$\deg_R(\mathopen]x, x'\mathclose[) = \deg_R(C)$ et, \`a nouveau car~$R$ est ouverte, que la restriction $R \colon \mathopen] x, x' \mathclose[ \to \mathopen] R(x), R(x') \mathclose[$ est aussi ouverte.
  Cette application est localement injective et par cons{\'e}quent une bijection.
  De~$\deg_R(\mathopen]x, x'\mathclose[) = \deg_R(C)$, on tire que~$\deg_R$ est constante sur~$\mathopen] x, x' \mathclose[$ {\'e}gale {\`a}~$\deg_R(C)$.
\end{proof}

% 
%%% 
% 

\subsection{Fraction rationnelles affine Bernoulli}
\label{ss:affine-Bernoulli}

Dans cette section, nous explicitons quel\-ques propri\'et\'es dynamiques des fractions rationnelles affine Bernoulli.
Rappelons que, pour chaque~$n$ dans~$\N^*$, on d{\'e}signe par~$\RFix(R^n)$ l'ensemble des points fixes r\'epulsifs de~$R^n$.

\begin{Prop}
  \label{prop:affine Bernoulli}
  Soit~$R$ une fraction rationnelle {\`a} coefficients dans~$K$ de degr{\'e}~$d \ge 2$.
  Supposons que~$R$ soit affine Bernoulli et consid{\'e}rons l'entier ${k \ge 2}$ et les intervalles $I, I_1, \ldots, I_k$ comme dans la d\'efinition, de telle sorte que ${R^{-1}(I) = I_1 \cup \cdots \cup I_k \subseteq I}$ et que, pour chaque~$j$ dans~$\{1, \ldots, k\}$, l'application ${R \colon I_j \to I}$ soit affine pour~$d_{\HK}$ de facteur de dilatation~$d_j\ge 2$.
  \begin{enumerate}
  \item
    Les propri{\'e}t{\'e}s suivantes sont v\'erifi\'ees:
    \[ 2 \le k \le \sqrt{d},\,
      \sum_{j = 1}^k d_j = d
      \text{ et }
      \sum_{j = 1}^k \frac{1}{d_j} \le 1~.\]
  \item
    Nous avons $J_R = \bigcap_{n = 1}^{+\infty} R^{-n}(I) \subset \crit_R$.
    Lorsque $\sum_{j = 1}^k \frac{1}{d_j} = 1$, $J_R$ est {\'e}gal au segment~$I$, sinon c'est un ensemble de Cantor de longueur nulle. Dans tous les cas l'enveloppe convexe de $J_R$ est un segment $I_*$ dont le bord est constitu\'e de points de type II inclus dans $J_R$ et qui v\'erifie $R(\partial I_*) \subseteq \partial I_*$.
  \item
    Il existe une conjugaison topologique~$\iota \colon \{ 1, \ldots , k \}^\N \to J_R$ entre le d\'ecalage \`a gauche sur $\{ 1, \ldots , k \}^\N$ et~$R|_{J_R}$.
    Chaque point de~$J_R$ poss{\`e}de au plus deux ant\'ec\'edents par~$\iota$ et si l'on note~$D \= \bigcup_{n = 0}^{+\infty} R^{-1}(\partial I_*)$, alors la restriction
    $$ \iota \colon \{ 1, \ldots , k \}^\N \setminus \iota^{-1}(D) \to J_R \setminus D $$
    est une bijection.
  \item
    L'entropie topologique de~$R$ est {\'e}gal {\`a} $\log k$ et~$R$ poss{\`e}de une unique mesure d'entropie m{\'e}trique maximale~$\mu_R$.
    De plus, si l'on munit $\{1, \ldots, k \}$ de la topologie discr\`ete et que l'on note par~$\rho_0$ et~$\mu_0$ les mesures sur~$\{ 1, \ldots, k \}$ donn{\'e}es par
    \begin{displaymath}
      \rho_0 \= \sum_{j = 1}^{k} \frac{d_j}{d} \times \delta_j
      \text{ et }
      \mu_0 \= \sum_{j = 1}^{k} \delta_j,
    \end{displaymath}
    alors~$\rho_R = \iota_* \left( \prod_{\N} \rho_0 \right)$ et~$\mu_R = \iota_* \left( \prod_{\N} \mu_0 \right)$.
    En particulier, $(R, \rho_R)$ est Bernoulli et on a $\mu_R = \rho_R$ si et seulement si~$d_1 = \cdots = d_k$.
  \item
    On a convergence faible des mesures lorsque ${n \to +\infty}$
    \[
      \frac1{d^n}\!\! \sum_{x \in \RFix(R^n)} \deg_{R^n}(x) \delta_x \to \rho_R
      \text{ et }
      \frac1{k^n}\!\! \sum_{x \in \RFix(R^n)} \delta_x \to \mu_R~.
    \]
  \end{enumerate}
\end{Prop}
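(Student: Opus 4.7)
La strat{\'e}gie est de coder symboliquement le compact ${K_* \= \bigcap_{n \ge 0} R^{-n}(I)}$ par le d{\'e}calage unilat{\'e}ral~$\sigma$ sur ${\{1, \ldots, k\}^\N}$, puis de lire toutes les assertions sur ce mod{\`e}le symbolique. Pour le point~(1), comme $R \colon I_j \to I$ est affine de facteur~$d_j$ pour~$d_{\HK}$, la longueur hyperbolique de~$I_j$ vaut $d_{\HK}(I)/d_j$. Les~$I_j$ {\'e}tant disjoints et contenus dans~$I$, on obtient $\sum 1/d_j \le 1$. Par un argument analogue {\`a} celui du Lemme~\ref{lem:couronne}, le degr{\'e} local de~$R$ sur l'int{\'e}rieur de~$I_j$ co\"{\i}ncide avec le facteur de dilatation~$d_j$. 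Un point g{\'e}n{\'e}rique de l'int{\'e}rieur de~$I$ poss{\`e}de exactement~$k$ ant{\'e}c{\'e}dents, un dans chaque~$I_j$ (par $R^{-1}(I) \subseteq I$), d'o{\`u} $\sum d_j = d$. L'in{\'e}galit{\'e} $k \le \sqrt d$ d{\'e}coule alors de Cauchy-Schwarz: ${k^2 \le \bigl(\sum d_j\bigr)\bigl(\sum 1/d_j\bigr) \le d}$.

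Pour les points~(2) et~(3), on note que la suite $R^{-n}(I)$ est d{\'e}croissante, donc $K_*$ est un compact non vide et totalement invariant. {\`A} toute suite ${s = (s_n) \in \{1, \ldots, k\}^\N}$, j'associerai l'intersection d{\'e}croissante des segments $I_{s_0} \cap R^{-1}(I_{s_1}) \cap \cdots \cap R^{-n}(I_{s_n})$, dont la longueur hyperbolique est major{\'e}e par $d_{\HK}(I) \cdot 2^{-n-1}$ puisque chaque $d_j \ge 2$. Cette intersection se r{\'e}duit ainsi {\`a} un unique point $\iota(s) \in K_*$, et l'application~$\iota$ est continue, surjective et semi-conjugue~$\sigma$ {\`a} $R|_{K_*}$. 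Deux suites distinctes ont la m{\^e}me image si et seulement si celle-ci tombe, {\`a} un certain niveau, sur le bord commun de deux~$I_j$, ce qui fournit la description pr{\'e}cise des fibres via $D = \bigcup_{n\ge0} R^{-n}(\partial I_*)$. Lorsque $\sum 1/d_j = 1$, l'union disjointe $\bigsqcup I_j$ remplit~$I$ modulo bord, donc $K_* = I$ par r{\'e}currence; sinon, la longueur totale des segments de niveau~$n$ vaut $(\sum 1/d_j)^n \cdot d_{\HK}(I) \to 0$, et~$K_*$ est un ensemble de Cantor de $d_{\HK}$-longueur nulle. L'enveloppe convexe~$I_*$ est un segment dont le bord, form{\'e} des extr{\'e}mit{\'e}s externes de~$I_1$ et~$I_k$, est constitu{\'e} de points de type~II appartenant {\`a}~$K_*$, et satisfait $R(\partial I_*) \subseteq \partial I_*$. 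Enfin, on identifie $J_R = K_*$: d'une part $\rho_R(I) > 0$ par hypoth{\`e}se et $R^{-1}(I) \subseteq I$, donc par invariance totale et ergodicit{\'e} on a $\rho_R(K_*) = 1$, d'o{\`u} $J_R \subseteq K_*$; d'autre part, les points p{\'e}riodiques du d{\'e}calage sont denses et s'envoient par~$\iota$ sur des points de~$K_*$ qui sont critiques (degr{\'e} local $\ge 2$), donc r{\'e}pulsifs et contenus dans~$J_R$, ce qui donne $K_* \subseteq J_R$. L'inclusion $J_R \subset \crit_R$ en d{\'e}coule.

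Pour le point~(4), la conjugaison symbolique donne $\htop(R|_{J_R}) = \log k$, et comme la dynamique hors de~$J_R$ ne contribue pas {\`a} l'entropie, on obtient $\htop(R) = \log k$. La mesure d'entropie maximale du d{\'e}calage {\'e}tant l'unique mesure de Bernoulli uniforme, son image par~$\iota$ donne~$\mu_R$. Pour identifier $\rho_R$ avec ${\nu \= \iota_*\bigl(\prod_\N \rho_0\bigr)}$, on remarque que $\nu(I_{j_0}) = d_{j_0}/d$ et que $R|_{I_{j_0}}$ est bijective sur~$I$ de degr{\'e} local constant~$d_{j_0}$, ce qui entra{\^i}ne $R^*\nu|_{I_{j_0}} = d\cdot\nu|_{I_{j_0}}$, puis $R^*\nu = d\nu$ sur~$\PKber$ par r{\'e}currence sur les cylindres; l'unicit{\'e} de la mesure d'{\'e}quilibre donne $\nu = \rho_R$. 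Le point~(5) d{\'e}coule alors directement du codage: pour un point $n$-p{\'e}riodique $x \in J_R$ de code $\overline{s_0\cdots s_{n-1}}$ on a $\deg_{R^n}(x) = d_{s_0}\cdots d_{s_{n-1}}$, et les sommes pond{\'e}r{\'e}es (resp. non pond{\'e}r{\'e}es) des masses de Dirac en ces points sont pr{\'e}cis{\'e}ment les sommes de Riemann canoniques pour~$\nu$ (resp. pour la mesure de Bernoulli uniforme), qui convergent faiblement vers~$\rho_R$ (resp. vers~$\mu_R$). L'obstacle principal me semble r{\'e}sider dans l'identification $J_R = K_*$ et dans celle de~$\rho_R$ avec la mesure de Bernoulli pond{\'e}r{\'e}e, qui exigent de combiner soigneusement la densit{\'e} des points p{\'e}riodiques r{\'e}pulsifs issue du codage avec l'unicit{\'e} et l'invariance totale de~$\rho_R$.
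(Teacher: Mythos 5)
Pour les points~(1) {\`a}~(3), votre d{\'e}marche est essentiellement celle du papier~: comptage des longueurs et des degr{\'e}s locaux pour~(1) (avec Cauchy--Schwarz), puis codage symbolique par les intersections embo{\^\i}t{\'e}es $I_{s_0} \cap R^{-1}(I_{s_1}) \cap \cdots$, contract{\'e}es d'un facteur au moins~$2$, et dichotomie segment/Cantor par le calcul des longueurs totales des $R^{-n}(I)$. Pour~(4) et~(5), en revanche, vous suivez une route diff{\'e}rente de celle du texte, qui renvoie {\`a} \cite[Lemme~5.5]{theorie-ergo} pour~(4) et {\`a} \cite[Theorem~A]{cli-thom} pour~(5)~: vous identifiez directement~$\rho_R$ {\`a} $\nu = \iota_*\bigl(\prod_\N \rho_0\bigr)$ en v{\'e}rifiant $R^*\nu = d\,\nu$ sur les cylindres puis en invoquant l'unicit{\'e} de la mesure d'{\'e}quilibre (il faut alors pr{\'e}ciser que~$\nu$ ne charge aucun point de~$\PK$, ce qui est clair puisqu'elle est port{\'e}e par un compact de~$\HK$), et vous traitez~(5) par un comptage des points p{\'e}riodiques. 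Cette voie est plus autonome et tout {\`a} fait viable~; il faudrait toutefois expliciter le comptage (sur un cylindre de longueur~$m$ la somme pond{\'e}r{\'e}e vaut exactement $\prod_{i<m} d_{s_i}/d^m$ d{\`e}s que $n \ge m$, de m{\^e}me pour la somme non pond{\'e}r{\'e}e) et signaler que l'{\'e}galit{\'e} $\deg_{R^n}(x) = d_{s_0}\cdots d_{s_{n-1}}$ ne vaut qu'en dehors d'un nombre fini de points par p{\'e}riode, l{\`a} o{\`u} $\deg_R > d_j$, ce qui est n{\'e}gligeable {\`a} la limite mais doit {\^e}tre dit.

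Deux vrais points faibles subsistent. D'abord, vous {\'e}crivez que $\rho_R(I) > 0$ est une hypoth{\`e}se~: ce n'est pas le cas, la proposition ne suppose que la structure affine Bernoulli, et l'inclusion $J_R \subseteq K_*$ fait partie de ce qu'il faut d{\'e}montrer (le papier l'obtient via le Lemme~\ref{lem:deg} et \cite[Th{\'e}or{\`e}me~A]{theorie-ergo}, par exemple en utilisant que les pr{\'e}images it{\'e}r{\'e}es d'un point de l'int{\'e}rieur de~$I$ restent dans~$I$ et s'{\'e}quidistribuent vers~$\rho_R$). Telle quelle, votre preuve de $J_R \subseteq K_*$ repose donc sur une pr{\'e}misse non justifi{\'e}e~; le rem{\`e}de est de r{\'e}ordonner votre propre argument~: l'identit{\'e} $R^*\nu = d\,\nu$ et l'unicit{\'e} donnent d'abord $\rho_R = \nu$, donc $\supp \rho_R \subseteq K_*$, tandis que la densit{\'e} des points p{\'e}riodiques de~$K_*$, tous dans $\crit_R \cap \HK$ donc r{\'e}pulsifs et contenus dans~$J_R$, donne $K_* \subseteq J_R$. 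Ensuite, pour~(4), l'affirmation selon laquelle la dynamique hors de~$J_R$ ne contribue pas {\`a} l'entropie, ainsi que l'unicit{\'e} de la mesure d'entropie maximale sur~$\PKber$ tout entier, ne vont pas de soi~: il faut montrer ou citer que toute mesure invariante d'entropie strictement positive est port{\'e}e par~$J_R$, et contr{\^o}ler le d{\'e}faut d'injectivit{\'e} de~$\iota$ sur l'ensemble d{\'e}nombrable~$D$ (une mesure ergodique chargeant~$D$ est atomique, donc d'entropie nulle)~; c'est exactement ce que le papier d{\'e}l{\`e}gue {\`a} \cite[Lemme~5.5]{theorie-ergo}. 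Enfin, deux d{\'e}tails pour~(2)~: les extr{\'e}mit{\'e}s de~$I_*$ sont les points extr{\^e}mes de~$J_R$ et non, en g{\'e}n{\'e}ral, les extr{\'e}mit{\'e}s externes de~$I_1$ et~$I_k$~; et leur caract{\`e}re de type~II demande une justification (le papier le d{\'e}duit du fait que ce sont des points p{\'e}riodiques r{\'e}pulsifs de p{\'e}riode au plus deux dans~$\HK$).
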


Pour tout choix d'entiers~$k \ge 2$ et~$d_1, \ldots, d_k$ satisfaisant~$\sum_{j = 1}^k \frac{1}{d_j} \le 1$, on peut trouver une fraction rationnelle affine Bernoulli
$R$ et des segments $I, I_1, \ldots, I_k$ v\'erifiant la d\'efinition,
voir~\cite[\S~5.2]{theorie-ergo}.

\begin{proof}[D{\'e}monstration de la Proposition~\ref{prop:affine Bernoulli}]
  Notons~$\lambda_\HK$ la mesure de Hausdorff $1$-dimension\-nelle sur~$\HK$ induite par~$d_{\HK}$, et posons ${\ell \= \lambda_\HK(I)}$.

  Montrons (1). Le facteur de dilatation $d_j$ de~$R \colon I_j \to I$ est un entier et nous avons
  $\lambda_\HK(I_j)= \ell/d_j$. De l'inclusion~$\bigcup_{j = 1}^{k} I_j \subseteq I$, nous tirons~$\sum_{j = 1}^k \frac{1}{d_j} \le 1$.
  Par ailleurs, pour chaque~$j$ dans~$\{1, \ldots, k \}$ et pour tout~$x$ dans~$I_j$ on a~$\deg_R(x) \ge d_j$ avec {\'e}galit{\'e} en dehors d'un sous-ensemble fini.
  On peut donc choisir un point~$x'$ de~$I$ qui n'est pas l'un des bouts de~$I$ et tel que pour chaque~$j$ dans~$\{1, \ldots, k \}$ l'unique point~$x_j$ de~$R^{-1}(I)$ dans~$I_j$ satisfait~$\deg_R(x_j) = d_j$. Les points~$x_1$, \ldots, $x_k$ sont deux {\`a} deux distincts et on~a
  $$ R^{-1} \{ x' \} = \{ x_1, \ldots, x_k \}
  \text{ de telle sorte que }
  d = \sum_{j = 1}^k \deg_R(x_j) = \sum_{j = 1}^k d_j~. $$
  Final{e}ment, de l'in{\'e}galit{\'e} de Cauchy-Schwartz on tire
  \[
    k = \sum_j \frac1{\sqrt{d_j}} \sqrt{d_j} \le \left(\sum_j \frac1{d_j}\right)^{1/2} \, \left(\sum_j {d_j}\right)^{1/2} = \sqrt{d}~,\] ce qui termine la d{\'e}monstration de~(1).

  Pour le point (2), on observe que~$k \ge 2$ et~$\sum_{j = 1}^k \frac{1}{d_j} \le 1$ implique $d_j \ge 2$  pour tout~$j\in\{1, \ldots, k \}$.
  Combin{\'e} au fait que pour~$x$ dans~$I_j$ on~a $\deg_R(x) \ge d_j$, on obtient
  \[R^{-1}(I) = \bigcup_{j = 1}^d I_j \subset \crit_R~.\]
  Le Lemme~\ref{lem:deg} entra{\^{\i}}ne que~$\bigcap_{n = 1}^{+\infty} R^{-n}(I)$ est {\'e}gal au support de~$\rho_R$ et par cons{\'e}quent que cet ensemble est {\'e}gal {\`a}~$J_R$, voir~\cite[Th{\'e}or{\`e}me~A]{theorie-ergo}.
  Si~$\sum_{j = 1}^{k} \frac{1}{d_j} = 1$, alors
  \begin{equation}\label{eq:fullJ}
    R^{-1}(I) = \bigcup_{j = 1}^d I_j = I
    \text{ et }
    J_R = \bigcap_{n = 1}^{+\infty} R^{-n}(I) = I~.
  \end{equation}
  Montrons que~$J_R$ est un ensemble de Cantor lorsque $\sum_{j = 1}^{k} \frac{1}{d_j} < 1$.
  Notons que~$J_R$ est compact et n'a pas de point isol\'e car~$\rho$ n'a pas d'atome, voir \cite[Corollary~10.60]{baker-rumely} et \cite[Th\'eor\`eme~E]{theorie-ergo}.
  Notons de plus que pour tout entier~$n \ge 1$, l'ensemble~$R^{-n}(I)$ est une union finie d'intervalles de longueur totale~$\ell \times \left(\sum_{j = 1}^{k} \frac{1}{d_j}\right)^n$.
  Ceci entra{\^{\i}}ne que l'ensemble~$J_R = \bigcup_{j = 1}^{k} R^{-n}(I)$ est totalement discontinu, et est par cons{\'e}quent un ensemble de Cantor de longueur nulle.

  Pour montrer la derni{\`e}re assertion de~(2), soit~$I_*$ le plus petit segment de~$\HK$ contenant~$J_R$.
  Alors on a~$I_* \subseteq I$ et~$\partial I_* \subset J_R \subset \crit_R$.
  Par ailleurs, l'enveloppe convexe de~$R^{-1}(I_*)$ est {\'e}gal {\`a}~$I_*$ et par cons{\'e}quent~$\partial I_* \subset \bigcup_{i = 0}^{k} \partial I_i$ et~$R(\partial I_*) \subseteq \partial I_*$.
  Alors chaque point de~$\partial I_*$ est soit p{\'e}riodique r{\'e}pulsif de p{\'e}riode deux, ou son image est un point fixe.
  En particulier, tout point de~$\partial I_*$ est de type~II.

  Pour montrer~(3) notons que pour tout~$i,j\in\{1, \ldots, k \}$, on a
  \[ \lambda_\HK(R^{-1}(I_j) \cap I_i) = \frac{\lambda_\HK(I_j)}{d_i} \le \frac{\lambda_\HK(I_j)}2~.\]
  On en d\'eduit alors ais\'ement les propri{\'e}t{\'e}s suivantes, voir par exemple \cite[Theorem~15.1.5]{katok-hasselblatt}.
  Pour chaque~$\varepsilon = \{ \e_n \}_{n = 0}^{+\infty}$ dans $\{ 1, \ldots , k \}^\N$, l'ensemble~$\bigcap_{n \in \N} R^{-n} (I_{\e_n})$ contient un unique point qu'on note par~$\iota(\varepsilon)$.
  L'application~$\iota \colon \{ 1, \ldots , k \}^\N \to J_R$ ainsi d{\'e}finie est continue, et semi-conjugue le d\'ecalage \`a gauche avec~$R|_{J_R}$. Enfin cette conjugaison est une conjugaison sur le compl\'ementaire de $D \= \bigcup_{n = 0}^{+\infty} R^{-1}(\partial I_*)$.

  L'\'enonc\'e~(4) est donn{\'e}e par~\cite[Lemme~5.5]{theorie-ergo}.

  Finalement pour le point~(5), on note que tous les points p{\'e}riodiques r{\'e}pulsifs de~$R$ sont inclus dans~$J_R$ et que tous les points p\'eriodiques de $J_R$ sont r\'epulsifs.
  L'\'equi\-distribution des points p\'eriodiques vers l'unique mesure d'entropie m{\'e}trique maximale (resp. vers la mesure d'\'equilibre)
  suit de~\cite[Theorem~A]{cli-thom}, en notant que $\mu_R$ (resp. $\rho_R$) est l'image par $\iota$ de l'{\'e}tat d'{\'e}quilibre pour le potentiel constant (resp. pour le potentiel localement constant sur~$\{ j \} \times \{1, \ldots, k \}^{\N^*}$ {\'e}gal {\`a}~$d_j$).
  Ceci compl{\`e}te la d{\'e}monstration de~(5) et de la proposition.
\end{proof}

% 
%%%%% 
% 

\subsection{Rappels de th\'eorie du potentiel}\label{sec:rappel-pot}
La th\'eorie du potentiel est un outil crucial pour d\'efinir la mesure d'\'equilibre.
Nous rappelons donc bri\`evement
quelques d\'efinitions et propri\'et\'es importantes de cette th\'eorie: nous utiliserons les m\^emes
conventions que \cite[\S~2.4]{theorie-ergo} et nous renvoyons \`a cet article pour plus de d\'etails (voir aussi la monographie~\cite{baker-rumely} et l'article de survol~\cite{survey-jonsson}). On note $\mathcal{M}$ l'espace des mesures (sign\'ees) de Radon
de masse finie.
Rappelons qu'on note~$\xcan$ le point de~$\AKber$ associ\'e \`a la boule unit\'e.

\medskip

\'Etant donn\'es trois points $x,y, z\in \HK$, on d\'efinit
$\langle x,y\rangle_z$ comme la distance pour $d_\HK$ entre
$z$ et l'unique point $z' \in [x,y] \cap [y,z] \cap [z,x]$.
\`A toute mesure bor{\'e}lienne $\rho \in \cM$, on associe une fonction
$g_\rho \colon \HK \to \R$ par
\begin{equation}\label{e:def-pot}
  g_\rho(x)
  \=
  - \rho(\PKber) - \int_\PKber \langle x , y \rangle_{\xcan}\, \dd \rho(y),
\end{equation}
et on l'appelle le {\it potentiel} de $\rho$ (sous-entendu bas{\'e} en~$\xcan$). On
a $g_\rho(\xcan) = -\rho (\PKber)$ et par la formule
ci-dessus~$g_{\delta_{\xcan}}$ est la fonction constante {\'e}gale {\`a}
$-1$ sur tout $\PKber$. Plus g{\'e}n{\'e}\-ralement, $g_{\delta_y}(x)
= -1 - \langle x , y \rangle_{\xcan}$.
On d{\'e}signe par $\cP$ l'ensemble de tous les potentiels. C'est un
espace vectoriel qui contient toutes les fonctions d{\'e}finies
sur~$\HK$ et {\`a} valeurs r{\'e}elles de la forme $x \mapsto \langle
x \, , y \rangle_{\xcan}$, et l'application $\rho
\mapsto g_\rho$ induit une bijection entre~$\cM$ et~$\cP$. On peut
donc poser
$$
\Delta g_\rho \= \rho - \rho(\PKber) \times \delta_{\xcan}~.
$$
Ceci d{\'e}finit une application lin{\'e}aire $\Delta \colon \cP \to \cM$ que
l'on appelle {\it le Laplacien}. On v{\'e}rifie $\Delta g =0 $ si et seulement
si $g$ est constante.

Par construction, pour tout $g\in \cP$ on~a $\Delta g ( \PKber) =0$.
R{\'e}ciproquement, toute mesure v{\'e}rifiant $\rho (\PKber ) =0$ est
le Laplacien d'une fonction de $\cP$. Dans toute la suite, on
appellera \emph{potentiel} d'une mesure bor{\'e}lienne $\rho$ toute
fonction $g \in \cP$ telle que $\rho = \Delta g$. Notons que:
$$
\Delta \langle \cdot \, , x \rangle_{\xcan} = \delta_{\xcan} - \delta_x~, \text{ et }
\Delta \log |P| = \sum_{x \in K, P(x) =0} \ord_x(P) \, \delta_x - \deg(P) \delta_{\infty}~,
$$
pour tout polyn\^ome {\`a} coefficients dans~$K$.
De plus, pour tout potentiel $g\in \cP$ et toute fraction
rationnelle non constante~$R$, on~a $R^* (\Delta g) = \Delta (g \circ
R)$ et $R_* \Delta (g) = \Delta (R_*g)$.

\smallskip

On dira qu'une fonction $f$ sur un sous-arbre fini $\cT$ de $\HK$ est affine par morceaux, si elle est continue, et on peut
\'ecrire $\cT$ comme union finie de segments ferm\'es telle que~$f$ est affine sur chacun d'entre eux
lorsqu'on les param\'etrise par la m\'etrique induite par~$d_\HK$.
La proposition suivante est \'equivalente \`a \cite[Proposition~7.56]{valtree}.

\begin{Prop}\label{prop:sympa-sh}~
  \begin{itemize}
  \item
    Soit $\rho$ une mesure atomique support\'ee sur des points de $\HK$,
    et $\cT$ l'enveloppe convexe de son support.
    Alors $\cT$ est un arbre fini, et tout potentiel de $\rho$ est continu, localement constant hors de $\cT$, et
    affine par morceaux sur $\cT$.
  \item
    R\'eciproquement, soit $\cT$ un sous-arbre fini de $\PKber$ dont les bouts sont dans $\HK$,
    Toute fonction de $\PKber$ dans~$\R$ localement constante hors de $\cT$, continue et affine par morceaux sur $\cT$
    est un potentiel dont le Laplacien est une mesure atomique support\'ee sur $\HK$.
  \end{itemize}
\end{Prop}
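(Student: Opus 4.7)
La strat\'egie est d'analyser les fonctions \'el\'ementaires $\phi_y \: x \mapsto \langle x, y \rangle_{\xcan}$ pour $y \in \HK$, qui engendrent l'espace des potentiels. Par la structure d'arbre r\'eel de $\PKber$, chaque $\phi_y$ est continue, affine de pente~$1$ par rapport \`a $d_\HK$ sur le segment $[\xcan, y]$, et constante sur toute composante connexe $C$ de $\PKber \setminus [\xcan, y]$: la valeur prise est $d_\HK(\xcan, p)$ o\`u $p$ est le point d'attache de $C$ au segment. Ceci fournit la brique de base pour les deux sens de la proposition.

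Pour le sens direct, on d\'ecompose $\rho = \sum_{i = 1}^N a_i \delta_{y_i}$ avec $y_i \in \HK$; l'enveloppe convexe $\cT = \conv\{y_1, \ldots, y_N\}$ est alors un sous-arbre fini de $\HK$. La formule~\eqref{e:def-pot} donne $g_\rho = -\rho(\PKber) - \sum_i a_i \phi_{y_i}$, combinaison lin\'eaire finie des briques ci-dessus qui en h\'erite les propri\'et\'es. Comme l'existence d'un potentiel de $\rho$ impose ${\rho(\PKber) = 0}$, sur le segment reliant $\xcan$ \`a $\cT$ les pentes des $\phi_{y_i}$ se compensent, et $g_\rho$ est bien localement constante hors de $\cT$ et affine par morceaux sur $\cT$. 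Tout autre potentiel de $\rho$ diff\'erant de $g_\rho$ par une constante, il h\'erite des m\^emes propri\'et\'es.

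R\'eciproquement, soit $g$ continue sur $\PKber$, localement constante hors d'un sous-arbre fini $\cT \subset \HK$ et affine par morceaux sur $\cT$. L'ensemble $V \subset \cT$ des points de ramification de $\cT$ ou de changement de pente de $g$ est fini et contenu dans $\HK$. En chaque $v \in V$, je d\'efinirais $a_v \in \R$ comme la somme, sur toutes les directions tangentes \`a $\PKber$ en $v$, des pentes sortantes de $g$, les directions pointant hors de $\cT$ ne contribuant pas puisque $g$ y est localement constante. Posant $\rho_g \= \sum_{v \in V} a_v \delta_v$, mesure atomique support\'ee sur $\HK$, le sens direct appliqu\'e \`a $\rho_g$ produit un potentiel $g_{\rho_g}$ pr\'esentant les m\^emes sauts de pente que $g$ en chaque direction tangente en chaque $v \in V$. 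La diff\'erence $g - g_{\rho_g}$ est alors continue, sans saut de pente sur chaque arc de $\cT$ et localement constante hors de $\cT$, donc globalement constante; on en d\'eduit $\Delta g = \Delta g_{\rho_g} = \rho_g - \rho_g(\PKber) \delta_{\xcan}$, atomique support\'ee sur $V \cup \{\xcan\} \subset \HK$.

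Le point d\'elicat sera la v\'erification que les coefficients $a_v$ d\'efinis par somme des pentes sortantes rendent bien compte du Laplacien, ce qui n\'ecessite une analyse pr\'ecise des directions tangentes dans l'arbre de Berkovich en chaque point de $V$. L'hypoth\`ese que les bouts de $\cT$ sont dans $\HK$ est essentielle ici, car elle garantit que la structure tangentielle (et donc la notion de pente de $g$) soit bien d\'efinie au voisinage de chaque point de $V$, ce qui serait mis en d\'efaut si un bout de $\cT$ appartenait \`a $\PK$.
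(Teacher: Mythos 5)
Le papier ne donne pas de démonstration de cette proposition~: il la signale comme équivalente à \cite[Proposition~7.56]{valtree}. Votre tentative doit donc être jugée sur ses propres mérites. Le sens direct est essentiellement correct~: l'analyse des briques $\phi_y = \langle \cdot, y\rangle_{\xcan}$ (affine de pente~$1$ sur $[\xcan, y]$, constante sur chaque composante connexe du complémentaire) est juste, et la compensation des pentes sur le segment joignant $\xcan$ à $\cT$ lorsque $\rho(\PKber)=0$ donne bien la constance locale hors de $\cT$~; notez seulement que l'énoncé présuppose un nombre fini d'atomes (sinon l'enveloppe convexe du support n'est pas un arbre fini), et que la continuité de $\phi_y$ pour la topologie de $\PKber$ mérite un mot (rétraction continue sur un segment de longueur hyperbolique finie, ce qui utilise $y\in\HK$).

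Le sens réciproque, en revanche, contient un trou réel, que vous signalez vous-même comme « le point délicat » sans le traiter. La mesure $\rho_g=\sum_{v\in V}a_v\delta_v$ ne retient que les \emph{sommes} de pentes sortantes~; l'affirmation que $g_{\rho_g}$ présente « les mêmes sauts de pente que $g$ en chaque direction tangente » ne découle donc pas de la construction, et c'est précisément le pont entre le Laplacien abstrait du \S~\ref{sec:rappel-pot} et la donnée combinatoire des pentes qu'il faut établir. Concrètement, il manque~: (i) la vérification que $\rho_g(\PKber)=\sum_v a_v=0$ (chaque arête de $\cT$ contribue $+s$ et $-s$ à ses deux extrémités), sans quoi $\rho_g$ n'admet pas de potentiel~; (ii) le calcul, à partir des $\phi_v$, de la dérivée directionnelle de $g_{\rho_g}$ en tout point $w$ et toute direction $\vec{u}$, qui vaut $-\rho_g(U_{\vec{u}})$ où $U_{\vec{u}}$ est la composante de $\PKber\setminus\{w\}$ correspondante~; (iii) la même formule pour $g$, obtenue par une somme télescopique des divergences $a_v$ sur l'arbre fini $\cT\cap\overline{U_{\vec{u}}}$ (formule de Green discrète, utilisant que les pentes de $g$ sont nulles dans les directions sortant de $\cT$). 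Une fois (i)--(iii) acquis, $g-g_{\rho_g}$ a des dérivées directionnelles nulles partout, donc est constante sur chaque arête de $\cT$ puis sur $\PKber$, et on conclut $\Delta g=\rho_g$ comme vous le faites. Deux retouches encore~: votre ensemble $V$ doit explicitement contenir les bouts de $\cT$ (où la divergence est en général non nulle), et le rôle de l'hypothèse « bouts dans $\HK$ » n'est pas la bonne définition des pentes mais le fait que les atomes du Laplacien aux bouts soient portés par $\HK$ (un bout dans $\PK$ avec pente non nulle rendrait d'ailleurs $g$ non bornée, donc non continue).
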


\begin{Prop}\label{prop:symmetry}
  Soient $g$ un potentiel continu, et $g'$ un potentiel arbitraire. Alors on a
  $$
  \int_{\PKber} g \, \dd \Delta g' =
  \int_{\PKber} g' \, \dd \Delta g~.
  $$
\end{Prop}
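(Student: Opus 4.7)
Le plan est de reconnaître les deux membres de l'identité comme une même intégrale double, à permutation près des variables d'intégration, grâce à la formule~\eqref{e:def-pot}, puis d'invoquer la symétrie du produit de Gromov $\langle x, y\rangle_{\xcan} = \langle y, x\rangle_{\xcan}$.

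Posons $\rho \= \Delta g$ et $\rho' \= \Delta g'$; ce sont des mesures signées de Radon de masse totale nulle, puisque le Laplacien d'un potentiel est par construction de masse nulle. L'ajout d'une constante à $g$ (resp. à $g'$) ne modifie ni son Laplacien ni la valeur de l'intégrale considérée (car $\rho'$ et $\rho$ sont de masse nulle), de sorte qu'on peut supposer $g = g_\rho$ et $g' = g_{\rho'}$. La formule~\eqref{e:def-pot}, combinée aux identités $\rho(\PKber) = \rho'(\PKber) = 0$, donne alors
\[
\int_{\PKber} g \, \dd\Delta g' = -\iint_{\PKber \times \PKber} \langle x, y\rangle_{\xcan} \, \dd\rho(y)\, \dd\rho'(x),
\]
et symétriquement pour $\int_{\PKber} g' \, \dd\Delta g$, les rôles de $\rho$ et $\rho'$ étant simplement échangés. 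La symétrie du produit de Gromov et le théorème de Fubini permettent alors de conclure.

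Le point délicat, et principale difficulté de la preuve, est la justification de l'application de Fubini pour ces mesures signées, le noyau $\langle \cdot, \cdot\rangle_{\xcan}$ pouvant valoir $+\infty$ sur la diagonale de $\PK \times \PK$. On contourne cette difficulté par approximation: on approche $g$ uniformément sur le compact $\PKber$ par une suite $(g_n)$ de potentiels continus affines par morceaux sur des sous-arbres finis de $\HK$, dont les Laplaciens $\rho_n \= \Delta g_n$ sont atomiques par la Proposition~\ref{prop:sympa-sh}. Pour des $\rho$ et $\rho'$ atomiques supportés sur des parties finies de $\HK$, l'identité se réduit en effet à une double somme finie de la forme $-\sum_{i,j} c_i c'_j \langle y_i, y'_j\rangle_{\xcan}$, où la symétrie est immédiate. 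La convergence uniforme $g_n \to g$ donne le passage à la limite du membre de gauche, et la bornitude de $g$ sur $\PKber$, combinée à une approximation analogue de $\rho'$, permet de traiter le membre de droite.
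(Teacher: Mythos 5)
Votre stratégie --- tout ramener à la symétrie du noyau $\langle\cdot,\cdot\rangle_{\xcan}$ via~\eqref{e:def-pot}, puis justifier l'échange des intégrations par approximation par des potentiels à Laplacien atomique --- est différente de celle du texte, qui ne redémontre pas cette symétrie~: la preuve du papier introduit le potentiel auxiliaire $g_0(x)=\int\log\sup\{x,y\}\,\dd(\Delta g)(y)$, observe que $\Delta g_0=\Delta g$ donc $g-g_0=g(\infty)$, et importe l'identité de Fubini correspondante de \cite[Lemmes~4.3 et~4.4]{petits-points}, avant de la transférer de $\AKber$ à $\PKber$. Vous proposez donc en substance de reprouver l'ingrédient cité; c'est légitime, mais c'est précisément à l'endroit délicat que votre rédaction présente une lacune.

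Deux points. D'abord, un point mineur~: la formule~\eqref{e:def-pot} ne définit $g_\rho$ que sur $\HK$, alors que $\Delta g'$ peut charger des points de $\PK$; pour écrire $\int_{\PKber} g\,\dd\Delta g'$ comme intégrale double il faut justifier que le prolongement continu de $g$ en ces points est encore donné par le noyau (par exemple par convergence le long des segments, en utilisant la continuité de $g$). Ensuite, le point essentiel~: le passage à la limite du membre de droite n'est pas justifié. Une fois l'identité acquise pour les approximants $(g_n,g'_m)$ (cas atomique, effectivement élémentaire), il faut soit montrer $\int g'\,\dd\Delta g_n\to\int g'\,\dd\Delta g$, soit intervertir les deux limites en $n$ et $m$. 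Or la convergence uniforme $g_n\to g$ n'entraîne pas la convergence faible des Laplaciens $\Delta g_n$ (aucun contrôle de leur variation totale), et, même en l'admettant, $g'$ est un potentiel arbitraire, en général non borné et non continu sur $\PKber$, qui ne peut donc servir de fonction test; l'invocation de \og la bornitude de $g$ \fg{} n'y remédie pas. C'est exactement la difficulté de type Fubini que vous signalez vous-même, réapparue sous une autre forme~: dans \cite{petits-points} elle est traitée en exploitant le signe constant du noyau (Tonelli pour des mesures positives, puis décomposition), et c'est là que l'hypothèse de continuité de $g$ intervient de manière substantielle --- rôle qu'elle ne joue pas dans votre esquisse. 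Notez en revanche que le cas où l'un des deux Laplaciens est à support fini dans $\HK$ se traite directement (noyau borné, linéarité finie, aucune approximation nécessaire)~: tout le travail consiste à réduire le cas général à celui-ci, et c'est ce pas qui manque.
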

\begin{proof}
  Rappelons que pour~$x$ et~$y$ dans~$\PK$ le point~$x \wedge y$ est d{\'e}fini par la propri{\'e}t{\'e} ${[x, \infty] \cap [y , \infty] = [x \wedge y,\infty]}$ et que ${\sup \{ x, y\} \= \diam (x \wedge y)}$.
  Sous nos hypoth\`eses, \cite[Lemmes~4.3 et~4.4]{petits-points} s'appliquent, et on
  obtient la formule
  $$\int_{\AKber} g_0 \, \dd \Delta g'
  =
  \int_{\AKber} g' \, \dd \Delta g_0$$
  o\`u $g_0 (x) \= \int_{\AKber}\log \sup\{ x, y\}\, \dd(\Delta g)(y)$.
  Comme $\Delta \log \sup\{\cdot, x\} = \delta_x - \delta_\infty$ (voir \cite[(20)]{petits-points}),
  on obtient en int\'egrant $\Delta g_0 = \Delta g$, puis $g - g_0 = g(\infty)$.
  Finalement
  \begin{multline*}
    \int_{\PKber} g \, \dd \Delta g'
    =
    \int_{\AKber} g \, \dd \Delta g' + g(\infty)\Delta g' \{\infty\}
    = \\
    \int_{\AKber} g_0 \, \dd \Delta g'
    =
    \int_{\AKber} g' \, \dd \Delta g_0
    =
    \int_{\AKber} g' \, \dd \Delta g~,
  \end{multline*}
  ce qu'il fallait d\'emontrer.
\end{proof}

% 
%%%%%%%%%%%%%%%%%%%%%%%%%%%%%%%%%%%%%%%%%%%%%%%%%%%%%%%%%%%%%%%%%% 
% 

\section{La mesure d'\'equilibre charge un segment}\label{sec:zdunik}
Le but de cette partie est de d{\'e}montrer le Th{\'e}or{\`e}me~\ref{thm:main1}, {\'e}nonc{\'e} dans l'introduction.
On fixe~$R$ une fraction rationnelle {\`a} coefficients dans~$K$ de degr{\'e} au moins deux, telle que~$\rho_R$ ne poss{\`e}de pas d'atome et charge un segment de~$\HK$.
Pour simplifier les notations, on \'ecrit ${\rho \= \rho_R}$.
Notons~$\simeq$ la relation sur~$\PKber$ d{\'e}finie par:
$$
x \simeq x'
\text{ si et seulement si }
\rho( [x , x']) =0~.
$$
C'est une relation d'\'equivalence, car~$\rho$ ne charge pas les points.
Le point cl{\'e} est de montrer que l'espace quotient~$\PKber/ \simeq$, not{\'e}~$\cT$, est un segment.
On r\'ealise cela en plusieurs \'etapes.
On montre tout d'abord que~$\cT$ a une structure naturelle d'arbre r{\'e}el m{\'e}trique (\S~\ref{sec:arbre quotient}) et que~$R$ induit une application~$\phi$ sur~$\cT$ (\S~\ref{sec:application quotient}).
Apr\`es avoir montr\'e que~$\cT$ \'etait compact (\S~\ref{sec:compacite de l'arbre}), on d{\'e}finit un degr{\'e} local naturel pour~$\phi$ qui v\'erifie les m\^emes propri\'et\'es formelles que pour les fractions rationnelles (\S~\ref{sec:phi-deg}).
En particulier le degr\'e nous permet de contr\^oler la longueur de l'image d'une sous-partie de~$\cT$, et nous utilisons ce fait pour montrer que
$\phi$ est uniform\'ement expansive~(\S~\ref{sec:expansion-uniforme}).
Une fois que ces propri{\'e}t{\'e}s sont {\'e}tabli{e}s, on montre que~$\cT$ est un segment~(\S~\ref{sec:simplicite de l'arbre}).
On d\'ecrit alors la dynamique de~$\phi$ (\S~\ref{sec:phi}) ce qui nous permet de conclure la preuve du Th{\'e}or{\`e}me~\ref{thm:main1} (\S~\ref{sec:preuve main1}).

% 
%%% 
% 

\subsection{L'arbre quotient}
\label{sec:arbre quotient}
Notre premier objectif est de montrer que~$\cT$ poss\`ede une structure naturelle d'arbre m{\'e}trique.
Rappelons qu'un \emph{arbre r\'eel m{\'e}trique} est un espace m{\'e}trique~$(X, d)$ tel que pour tout couple de points~$x,x'\in X$ il existe un unique chemin g\'eod\'esique joignant $x$ \`a~$x'$ dans~$X$, voir par exemple~\cite{GH90}.

Tout arbre r\'eel m{\'e}trique est aussi muni d'une topologie dite faible, dont une base d'ouverts est donn\'ee par les composantes connexes de compl\'ementaire d'ensembles finis.
La topologie induite par la m\'etrique est plus fine que la topologie faible.

\smallskip

On note $\pi \colon \PKber \to \cT$ la projection naturelle.
\begin{Prop}
  \label{prop:arbre quotient}
  La fonction $\hd_{\cT} \colon \PKber \times \PKber \to [0, 1]$ d{\'e}finie par ${\hd_{\cT}(x, x') \= \rho([x, x'])}$ se factorise par une fonction $d_{\cT} \colon \cT \times \cT \to [0, 1]$ qui induit une m\'etrique compl\`e\-te pour laquelle~$\cT$ est un arbre r\'eel m{\'e}trique.

  La mesure de Hausdorff $1$-dimensionnelle~$\lambda$
  associ\'ee \`a~$d_\cT$ est de masse totale $\le 1$, et v\'erifie $\lambda\le \pi_* \rho$.

  L'arbre $\cT$ est \`a la fois compact et s\'equentiellement compact pour la topologie faible. Enfin, pour chaque~$\tau$ en~$\cT$ l'ensemble~$\pi^{-1} \{ \tau \}$ est connexe et ferm{\'e}, et l'application $\pi \colon \PKber \to \cT$ induit une application faiblement continue, qui envoie les segments de~$\PKber$ sur des segments de $\cT$ de mani\`ere monotone.
\end{Prop}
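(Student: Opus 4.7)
La démonstration suit l'ordre de l'énoncé et repose sur deux ingrédients: la structure d'arbre réel de~$\PKber$ et l'absence d'atomes de~$\rho$. On commence par établir que~$\hd_\cT$ est une pseudométrique. La symétrie est immédiate; pour l'inégalité triangulaire, étant donnés ${x,y,z \in \PKber}$, on considère le point médian~$m$ défini par ${[x,y] \cap [y,z] \cap [z,x] = \{m\}}$, pour lequel ${[x,y] = [x,m] \cup [m,y]}$, ${[y,z] = [y,m] \cup [m,z]}$, et ${[x,z] = [x,m] \cup [m,z]}$. Comme ${\rho(\{m\}) = 0}$, on en déduit ${\rho([x,z]) = \rho([x,m]) + \rho([m,z]) \le \rho([x,y]) + \rho([y,z])}$. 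On obtient alors que~$\simeq$ est une relation d'équivalence et que~$\hd_\cT$ descend en une métrique~$d_\cT$ sur~$\cT$. La condition à quatre points pour~$d_\cT$ s'établit par un argument analogue en décomposant les six segments au moyen de l'arbre de Steiner des quatre points dans~$\PKber$.

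Pour la structure d'arbre réel, étant donnés ${x,x' \in \PKber}$, la fonction ${y \mapsto \rho([x,y])}$ est continue (par absence d'atome) et monotone croissante de~$0$ à~${\rho([x,x'])}$ le long de~${[x,x']}$; sa composition avec~$\pi$ fournit un plongement isométrique de ${[0, d_\cT(\pi(x), \pi(x'))]}$ dans~$\cT$, donc une géodésique, dont l'unicité résulte de la condition à quatre points. Ceci entraîne aussi que~$\pi$ envoie les segments sur des segments de manière monotone. Pour la connexité de~${\pi^{-1}(\tau)}$, si ${\pi(x) = \pi(x') = \tau}$ alors ${\rho([x,y]) \le \rho([x,x']) = 0}$ pour tout ${y \in [x,x']}$, donc ${[x,x'] \subseteq \pi^{-1}(\tau)}$. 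Pour la fermeture, étant donnés ${x_0 \in \pi^{-1}(\tau) \cap \HK}$ et ${x \notin \pi^{-1}(\tau)}$, la continuité de ${y \mapsto \rho([x_0,y])}$ sur~${[x_0,x]}$ et la densité des points de type~II dans~$\HK$ permettent de trouver un point de type~II~$y$ dans l'intervalle ouvert~${(x_0,x)}$ avec ${\rho([x_0,y]) > 0}$; la composante de~${\PKber \setminus \{y\}}$ contenant~$x$ est alors un voisinage ouvert de~$x$ disjoint de~${\pi^{-1}(\tau)}$.

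La continuité faible de~$\pi$ provient de ce que les ouverts faibles de base de~$\cT$ sont les composantes de ${\cT \setminus F}$ pour~$F$ fini: les préimages des points de~$F$ étant fermées et connexes, ${\PKber \setminus \pi^{-1}(F)}$ est ouvert, et chacune de ses composantes (ouverte par locale connexité de~$\PKber$) s'envoie dans une unique composante de~${\cT \setminus F}$. Il s'ensuit que ${\cT = \pi(\PKber)}$ est compact pour la topologie faible, et la compacité séquentielle s'obtient en extrayant une sous-suite convergente dans~$\PKber$ puis en appliquant la continuité faible de~$\pi$. Pour la complétude de~$d_\cT$, soit~${(\tau_n)}$ une suite de Cauchy, ${x_n \in \pi^{-1}(\tau_n)}$, et ${x_{n_k} \to x_\infty}$ dans~$\PKber$ par compacité; comme ${\rho(\{x_\infty\}) = 0}$ et que~$\rho$ est Radon, pour tout ${\varepsilon > 0}$ il existe un voisinage ouvert connexe~$U$ de~$x_\infty$ avec ${\rho(U) < \varepsilon}$, et pour~$k$ assez grand on a ${x_{n_k} \in U}$, d'où ${[x_{n_k}, x_\infty] \subseteq U}$ et ${d_\cT(\tau_{n_k}, \pi(x_\infty)) < \varepsilon}$.

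Enfin, pour tout segment~${[\tau,\tau']}$ de~$\cT$ et tous représentants~${x \in \pi^{-1}(\tau)}$, ${x' \in \pi^{-1}(\tau')}$, on a ${\lambda([\tau,\tau']) = d_\cT(\tau,\tau') = \rho([x,x']) \le \rho(\pi^{-1}([\tau,\tau'])) = \pi_*\rho([\tau,\tau'])}$; un argument de recouvrement standard fournit ${\lambda \le \pi_*\rho}$ sur les boréliens, d'où ${\lambda(\cT) \le 1}$. L'étape la plus subtile est la vérification de la condition à quatre points pour~$d_\cT$ — et donc la structure d'arbre réel de~$\cT$ — qui requiert une analyse soigneuse des configurations de l'arbre de Steiner des quatre points dans~$\PKber$ combinée à l'additivité de~$\rho$ le long des concaténations de segments, assurée par l'absence d'atomes.
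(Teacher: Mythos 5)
Votre d\'emonstration est correcte dans sa substance et couvre toutes les assertions, mais elle suit un chemin diff\'erent de celui du papier en deux endroits. Pour la structure d'arbre r\'eel, vous \'etablissez la g\'eod\'esicit\'e via la fonction ${y \mapsto \rho([x,y])}$ (comme le fait le papier), puis invoquez la condition des quatre points --- v\'erifi\'ee en d\'ecomposant les six segments le long de l'arbre de Steiner des quatre points dans~$\PKber$ --- et la caract\'erisation des arbres r\'eels comme espaces g\'eod\'esiques $0$-hyperboliques~; le papier \'evite ce crit\`ere et montre directement, \`a l'aide de la description des composantes connexes de ${\cT \setminus \{\tau_0\}}$ (Lemme~\ref{lem:composantes}), que tout chemin injectif entre deux points a pour support le segment. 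Votre approche est plus standard, mais vous laissez la v\'erification des quatre points \`a l'\'etat d'esquisse (elle est bien routini\`ere~: l'absence d'atome donne l'additivit\'e de~$\rho$ le long des concat\'enations, ce qui ram\`ene tout au calcul usuel d'une m\'etrique d'arbre)~; l'approche du papier est plus autonome et fournit au passage le Lemme~\ref{lem:composantes}, r\'eutilis\'e ensuite pour la continuit\'e faible et la compl\'etude. Pour la compl\'etude justement, votre argument (relever la suite de Cauchy, extraire une sous-suite convergente dans~$\PKber$, puis utiliser la r\'egularit\'e ext\'erieure de~$\rho$ en un point et la locale connexit\'e pour obtenir la convergence m\'etrique vers~$\pi(x_\infty)$) est plus direct que celui du papier, qui raisonne par l'absurde sur un point d'adh\'erence faible~; notez seulement qu'il faut invoquer la compacit\'e \emph{s\'equentielle} de~$\PKber$ (utilis\'ee aussi par le papier), et qu'on peut prendre pour~$U$ un ouvert fondamental, qui est convexe, ce qui rend imm\'ediate l'inclusion ${[x_{n_k},x_\infty] \subseteq U}$. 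Deux retouches mineures enfin~: dans la fermeture des fibres, la restriction ${x_0 \in \pi^{-1}\{\tau\} \cap \HK}$ est inutile (et pourrait \^etre vide si la fibre est r\'eduite \`a un point de~$\PK$) --- tout point~$x_0$ de la fibre convient, et l'hypoth\`ese de type~II sur~$y$ ne sert qu'\`a garantir que la composante de ${\PKber \setminus \{y\}}$ contenant~$x$ est ouverte, ce qui vaut pour tout point par locale connexit\'e de~$\PKber$.
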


Le reste de cette section est d{\'e}di{\'e}e {\`a} la preuve de la proposition.

Soient~$x, x', y$ et $y'$ dans~$\PKber$ tels que~$x \simeq x'$ et~$y \simeq y'$.
Notons que la diff{\'e}rence sym\'etrique de~$[x, y]$ et~$[x', y']$ est contenue dans l'ensemble~$[x, x'] \cup [y, y']$.
Comme ce d{e}rnier ensemble est de mesure nulle par rapport {\`a}~$\rho$ et que chacun des ensembles~$\pi([x, x'])$ et~$\pi([y, y'])$ est r{\'e}duit {\`a} un point, on a
\begin{equation}
  \label{eq:7}
  \rho([x, y]) = \rho([x', y'])
  \text{ et }
  \pi([x, y]) = \pi([x', y'])~.
\end{equation}
Ceci montre que la fonction~$\hd_{\cT}$ d{\'e}finie dans l'\'enonc{\'e} de la Proposition~\ref{prop:arbre quotient} se factorise par une fonction $d_{\cT} \colon \cT \times \cT \to [0, 1]$.
Pour montrer que~$d_{\cT}$ est une m{\'e}trique, notons d'abord que, comme $\rho$ n'a pas d'atome, $d_\cT(\sigma, \sigma) =0$.
Par ailleurs, la d\'efinition m\^eme de la relation d'\'equivalence $\simeq$ sur $\PKber$ implique $d_\cT (\sigma, \sigma') > 0$ d\`es que $\sigma \neq \sigma'$.
Finalement, l'in{\'e}galit{\'e} triangulaire suit du fait que pour tout triplet de points~$x$, $x'$ et~$x''$ dans~$\PKber$ on a~$[x, x'] \subseteq [x, x''] \cup [x'', x']$.

Observons que~\eqref{eq:7} entra{\^{\i}}ne que pour tout~$\tau$ et~$\tau'$ dans~$\cT$, l'ensemble
$$ [\tau, \tau']
\=
\pi([x, x']) $$
ne d{\'e}pend pas du choix de points~$x\in\pi^{-1} \{ \tau \}$ et~$x'\in\pi^{-1} \{ \tau' \}$.

\begin{Lem}
  \label{lem:propriete geodesique}
  Pour tout~$\tau$ et~$\tau'$ dans~$\cT$ et tout~$\tau''$ dans~$[\tau, \tau']$, on a
  \begin{displaymath}
    d_{\cT}(\tau, \tau'') + d_{\cT}(\tau'', \tau')
    =
    d_{\cT}(\tau, \tau')~.
  \end{displaymath}
  Par ailleurs, l'application $\sigma \mapsto d_{\cT}(\tau, \sigma)$ induit une bijection isom{\'e}trique de~$[\tau, \tau']$ sur~$[0, d_{\cT}(\tau, \tau')]$.
\end{Lem}
\begin{proof}
  Fixons~$x$ dans~$\pi^{-1} \{ \tau \}$ et~$x'$ dans~$\pi^{-1} \{ \tau' \}$.
  Pour montrer la premi{\`e}re assertion, notons que l'on peut choisir~$x''$ dans~$[x, x']$ tel que~$\pi(x'') = \tau''$.
  Comme~$\rho$ n'a pas d'atome, on a
  $$  d_{\cT}(\tau, \tau')
  =
  \rho([x, x'])
  =
  \rho([x, x'']) + \rho([x'', x'])
  =
  d_{\cT}(\tau, \tau'') + d_{\cT}(\tau'', \tau')~. $$
  Pour montrer la deuxi{\`e}me assertion, notons que la fonction~$\sigma \mapsto d_\cT(\tau, \sigma)$ restreinte {\`a}~$[\tau, \tau']$ est strictement croissante et donc injective.
  Son image co\"{\i}ncide avec l'image de l'application $y \mapsto \rho ([x, y])$ restreinte {\`a} $[x, x']$, qui est continue car~$\rho$ ne poss\`ede pas d'atome.
  Ceci montre que $\sigma \mapsto d_\cT(\tau, \sigma)$ induit une bijection de $[\tau, \tau']$ sur~$[0, d_{\cT}(\tau, \tau')]$.
  La premi{\`e}re assertion du lemme implique que c'est une isom{\'e}trie.
\end{proof}

\begin{Lem}
  \label{lem:composantes}
  Soit~$\tau_0$ un point de~$\cT$.
  Alors pour chaque~$\tau$ dans $\cT \setminus \{ \tau_0 \}$ la composante connexe de~$\cT \setminus \{ \tau_0 \}$ contenant~$\tau$ est {\'e}gale {\`a} $\{ \tau' \in \cT, \tau_0 \notin [\tau, \tau'] \}$.
\end{Lem}
\begin{proof}
  Pour chaque~$\tau$ dans~$\cT \setminus \{ \tau_0 \}$ on pose~$\cC(\tau) \= \{ \tau' \in \cT, \tau_0 \notin [\tau, \tau'] \}$.

  Notons que par~\eqref{eq:7}, pour tout~$\sigma$, $\sigma'$ et~$\sigma''$ dans~$\cT$ on a~$[\sigma, \sigma'] \subseteq [\sigma, \sigma''] \cup [\sigma'', \sigma']$.
  Ceci entra{\^{\i}}ne que $\{ \cC(\tau), \tau \in \cT \setminus \{ \tau_0 \} \}$ est une partition de~$\cT \setminus \{ \tau_0 \}$ et, au vu du Lemme~\ref{lem:propriete geodesique}, que pour chaque~$\tau$ dans~$\cT \setminus \{ \tau_0 \}$ l'ensemble~$\cC(\tau)$ est connexe.
  Il suffit donc de montrer que chacun de ces ensembles est ouvert.
  Pour cela, fixons~$\tau$ dans~$\cT \setminus \{ \tau_0 \}$ et soit~$x$ dans~$\PKber \setminus \pi^{-1} \{ \tau_0 \}$ tel que~$\pi(x)$ appartient {\`a} $\cC(\tau)$.
  Si~$y$ est un point de ${\PKber \setminus \pi^{-1} \{ \tau_0 \}}$ tel que~$\pi(y)$ n'est pas dans~$\cC(\tau) = \cC(\pi(x))$, alors~$[x, y]$ contient un point~$x_0$ dans~$\pi^{-1}\{ \tau_0 \}$.
  On a donc
  $$ d_{\cT}(\pi(x), \pi(y))
  =
  \rho([x, y])
  \ge
  \rho([x, x_0])
  =
  d_{\cT}(\pi(x), \tau_0)~. $$
  Ceci montre que la boule pour~$d_{\cT}$ de centre~$\pi(x)$ et rayon~$d_{\cT}(\pi(x), \tau_0)$ est contenue dans~$\cC(\tau)$, et par cons{\'e}quent que~$\cC(\tau)$ est ouvert.
  La preuve du lemme est donc compl{\`e}te.
\end{proof}
\begin{Lem}
  L'espace~$(\cT, d_{\cT})$ est un arbre r\'eel m{\'e}trique.
\end{Lem}
\begin{proof}
  Il reste {\`a} montrer que tout chemin injectif reliant~$\tau$ \`a~$\tau'$ a pour support le segment $[\tau, \tau']$.
  Soit $\gamma\colon [0,1] \to \cT$ un chemin continu injectif reliant ces deux points.
  Alors $\Gamma \=\gamma[0,1] \supseteq [\tau, \tau']$
  car sinon on pourrait trouver un point~$\tau''\in[\tau, \tau']$ distinct de~$\tau$ et~$\tau'$,
  pour lequel $\tau$ et~$\tau'$ sont dans la m\^eme composante connexe de~$\cT \setminus \{ \tau'' \}$
  ce qui contredit le Lemme~\ref{lem:composantes}.

  Supposons maintenant que $\gamma$ contienne un point $\tau''=\gamma(t_0)$ hors du segment $[\tau, \tau']$.
  D'apr{\`e}s ce qui pr\'ec\`ede, on a $\gamma[0,t_0] \supseteq [\tau, \tau'']$, et de m\^eme $\gamma[t_0,1] \supseteq [\tau'', \tau']$. L'injectivit\'e de $\gamma$
  implique $[\tau, \tau''] \cap [\tau'', \tau']= \{\tau''\}$. Or, nous avons $[\tau, \tau''] \cup [\tau'', \tau'] \supseteq [\tau, \tau']$. Comme deux compacts
  recouvrant un segment s'intersectent n\'ecessairement, on obtient la contradiction recherch\'ee.
\end{proof}
Soit $\lambda$ la mesure de Hausdorff $1$-dimensionnelle pour la m\'etrique $d_\cT$.
Par d\'efinition,
$$ \lambda ([\tau,\tau'])
=
d_\cT (\tau, \tau')
\le
\rho (\pi^{-1} [\tau,\tau'])
=
(\pi_*\rho )([\tau,\tau'])~. $$
On a donc $\lambda \le \pi_* \rho $. Comme
$\pi_* \rho $ est une mesure de probabilit\'e, on en d\'eduit que $\lambda$ est de masse $\le 1$, et donc que
le diam\`etre de $\cT$ est born\'e (par $1$).

\begin{Lem}
  \label{lem:ssarbre}
  Pour tout~$\tau$ dans~$\cT$, l'ensemble $\pi^{-1}\{ \tau \}$ est connexe et ferm{\'e}.
  En particulier, $\pi$ envoie segments de~$\PKber$ sur segments de $\cT$ de mani\`ere monotone.
\end{Lem}
\begin{proof}
  Si $x$ et $x'$ sont dans $\pi^{-1}\{\tau \}$ alors on a $[x,x'] \subseteq \pi^{-1}\{\tau \}$ par d\'efinition, donc~$\pi^{-1}\{\tau \}$ est connexe.

  Fixons $x_0 \in \pi^{-1}\{\tau \}$, et prenons $x \notin \pi^{-1}\{\tau \}$. On peut donc trouver
  $x'\in \mathopen]x_0, x\mathclose[$ de telle sorte que $\rho [x_0,x'] >0$. La composante connexe $U$ de $\PKber \setminus\{x'\}$
  contenant $x$ est ouverte, contient $x$, et pour tout $y\in U$ on a $\rho [x_0,y] \ge \rho [x_0,x'] >0$.
  On conclut que le compl\'ementaire de $ \pi^{-1}\{\tau \}$ est ouvert.
\end{proof}

\begin{Rem}
  Il n'est pour le moment pas exclu que $\rho (\pi^{-1}\{ \tau \}) >0$.
\end{Rem}

Rappelons que la famille d'ensembles de la forme
\begin{equation}
  \label{eq:8}
  \cU(\tau_0; \tau_1, \ldots, \tau_n)
  \=
  \bigcap_{i=1}^n \{ \tau \in \cT, \, \tau_i \notin [\tau_0, \tau ] \}
\end{equation}
pour un choix fini de points $\tau_0, \ldots, \tau_n$ dans~$\cT$  forme une base d'ouverts
pour la topologie faible sur $\cT$, que celle-ci est s\'epar\'ee et moins fine que celle induite par la m\'etrique.

Montrons que $\pi$ est faiblement continue.
Prenons $\tau_0, \ldots, \tau_n$ dans~$\cT$, et consid\'erons l'ouvert faible $\cU= \cU(\tau_0; \tau_1, \ldots, \tau_n)$.
Sans perte de g{\'e}n{\'e}ralit{\'e}, on peut supposer que les points~$\tau_0, \ldots, \tau_n$ sont distincts deux \`a deux.
Pour chaque~$i\in\{1, \ldots, n \}$ on note~$\cB_i$ la composante connexe de~$\cT \setminus \{ \tau_i \}$ contenant~$\tau_0$.
Le Lemme~\ref{lem:composantes} entra{\^{\i}}ne~$\cU = \bigcap_{i = 1}^n \cB_i$.
Il suffit donc de noter que pour tout~$i\in \{1, \ldots, n \}$ l'ensemble
$$ \pi^{-1} \cB_i
=
\left\{ x \in \PKber, [x_0, x] \cap \pi^{-1} \{ \tau_i \} = \emptyset \right\} $$
est la composante connexe de~$\PKber \setminus \pi^{-1} \{ \tau_i \}$ contenant~$x_0$, laquelle est ouverte par le Lemme~\ref{lem:ssarbre}.
Ceci montre que~$\pi$ est faiblement continue.
Comme~$\PKber$ est faiblement compact et s\'equentiellement compact, il en est de m\^eme pour~$\cT$.

\medskip

Le lemme suivant conclut la preuve de la Proposition~\ref{prop:arbre quotient}.

\begin{Lem}\label{lem:complet}
  L'espace m\'etrique $(\cT, d_\cT)$ est complet.
\end{Lem}

\begin{proof}
  Soit~$\{ \tau_n \}_{n = 1}^{+\infty}$ une suite de Cauchy pour $d_\cT$.
  Comme~$\cT$ est s\'equentiel\-lement compact, on peut trouver un point d'adh\'erence~$\tau$ pour cette suite (pour la topologie faible).
  On fixe $\e >0$, et $N$ assez grand pour
  que $d_\cT(\tau_n, \tau_m) \le \e$ si $n,m \ge N$.
  Supposons que $d_\cT ( \tau_n , \tau) \ge 2\e$ pour un entier $n\ge N$.
  Pour tout $m \ge N$, notons $\sigma_m$ l'unique point de $[\tau_n, \tau]$
  tel que $ [\tau_n , \tau_m] \cap [\tau_n, \tau] = [ \tau_n , \sigma_m]$.
  Par hypoth\`ese $d_\cT( \tau, \sigma_m) \ge \e$.
  Soit $\sigma \in [ \tau_n , \tau]$ le point \`a distance $\e/2$ de $\tau$.
  Alors la composante connexe de $\cT \setminus \{ \sigma\}$ contenant $\tau$ est \'egal \`a $\cU(\tau;\sigma)$ qui est un ouvert faible ne contenant aucun point $\tau_m$ pour $m \ge N$ ce qui est absurde.
\end{proof}

\subsection{L'application quotient}
\label{sec:application quotient}
On va maintenant \'etudier l'application induite par $R$ sur l'espace quotient $\cT$.
Dans le reste de cette section, pour tout point~$\tau$ dans~$\cT$ et tout r\'eel~$r > 0$, on note
$$ B(\tau, r) \= \{ \tau' \in \cT, d_{\cT}(\tau, \tau') < r \}~. $$

\begin{Prop}
  \label{prop:application quotient}
  Il existe une unique application~$\phi \colon \cT \to \cT$ satisfaisant $\phi \circ \pi = \pi \circ R$.
  Cette application est {\`a} fibres finies, $\deg(R)$\nobreakdash-Lipschitz par rapport {\`a}~$d_{\cT}$ et ouverte par rapport {\`a} la topologie induite par cette distance.
  De plus, la mesure de probabilit\'e~$\pi_* \rho$ est $\phi$-invariante et ergodique.
\end{Prop}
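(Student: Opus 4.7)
Le plan est de construire~$\phi$ en passant~$R$ au quotient par la relation~$\simeq$, puis de transf\'erer chacune des propri\'et\'es list\'ees via l'application~$\pi$. Pour la construction et la propri\'et\'e lipschitzienne, on observe que~$\simeq$ est $R$-compatible: si $\rho([x,x'])=0$, alors $R([x,x'])$ est compact, connexe et contient le segment~$[R(x),R(x')]$; l'in\'egalit\'e~\eqref{eq:6} appliqu\'ee {\`a}~$[x,x']$ fournit $\rho([R(x),R(x')]) \le \rho(R([x,x'])) \le \deg(R)\rho([x,x'])=0$, donc $R(x)\simeq R(x')$. La m\^eme majoration, appliqu\'ee {\`a} des paires arbitraires $x,x'$, donne $d_\cT(\phi(\pi(x)),\phi(\pi(x'))) \le \deg(R)\, d_\cT(\pi(x),\pi(x'))$ et montre que~$\phi$ est $\deg(R)$-lipschitzienne pour~$d_\cT$.

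Le passage au quotient des propri\'et\'es mesurables est imm\'ediat: pour tout bor\'elien $B\subseteq\cT$, la $R$-invariance de~$\rho$ donne $(\pi_*\rho)(\phi^{-1}(B)) = \rho(R^{-1}(\pi^{-1}(B))) = \rho(\pi^{-1}(B)) = (\pi_*\rho)(B)$. De plus, si $\phi^{-1}(B)=B$, alors $\pi^{-1}(B)$ est $R$-invariant, donc de $\rho$-mesure~$0$ ou~$1$ par ergodicit\'e de~$\rho_R$, ce qui assure l'ergodicit\'e de $\pi_*\rho$.

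La difficult\'e principale r\'eside dans la finitude des fibres. Fixons $\tau\in\cT$. Comme $\pi^{-1}\{\tau\}$ est ferm\'ee et connexe (Proposition~\ref{prop:arbre quotient}), le Lemme~\ref{lem:deg} d\'ecompose~$R^{-1}(\pi^{-1}\{\tau\})$ en au plus~$\deg(R)$ composantes connexes $A_1,\ldots,A_m$, chacune s'envoyant surjectivement sur~$\pi^{-1}\{\tau\}$. Le point cl\'e est de montrer que chaque~$\pi(A_i)$ est un singleton, ce qui fournira $|\phi^{-1}\{\tau\}|\le m\le \deg(R)$. Pour $y,y'\in A_i$, la g\'eod\'esique $[y,y']\subseteq A_i$ satisfait $R([y,y'])\subseteq \pi^{-1}\{\tau\}$; en invoquant la structure par morceaux de~$R$ le long d'une g\'eod\'esique (dont l'image est une union finie de segments de~$\PKber$) et le fait que chaque segment contenu dans~$\pi^{-1}\{\tau\}$ est de $\rho$-mesure nulle par d\'efinition de~$\simeq$, on obtient $\rho(R([y,y']))=0$. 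L'in\'egalit\'e~\eqref{eq:6} donne alors $\rho([y,y'])\le\rho(R([y,y']))=0$, c'est-{\`a}-dire~$\pi(y)=\pi(y')$.

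Enfin, l'ouverture de~$\phi$ pour~$d_\cT$ s'obtient en combinant l'ouverture de~$R$ sur~$\PKber$ avec la surjectivit\'e de~$R$ sur les composantes connexes fournie par le Lemme~\ref{lem:deg}: \'etant donn\'es $\sigma\in\cT$ et $\tau'$ proche de $\tau=\phi(\sigma)$, on rel{\`e}ve la g\'eod\'esique~$[\tau,\tau']$ en un chemin dans~$\PKber$ issu d'un pr\'eimage de~$\sigma$ dans la bonne composante, puis on projette par~$\pi$; la finitude des fibres prouv\'ee ci-dessus garantit que la projection reste dans une $d_\cT$-boule contr\^ol\'ee. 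Le principal obstacle demeure l'\'enonc\'e structurel sur l'image d'une g\'eod\'esique par~$R$, surtout dans le cas sauvage o{\`u} le lieu critique sur~$[y,y']$ peut {\^e}tre infini; {\`a} d\'efaut, on argumenterait que~$R([y,y'])$ est un sous-arbre compact de~$\pi^{-1}\{\tau\}$ dont tous les segments sont $\rho$-n\'egligeables, et qu'un recouvrement exploitant la non-atomicit\'e de~$\rho$ permet de conclure que $\rho(R([y,y']))=0$.
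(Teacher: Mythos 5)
Votre proposition suit pour l'essentiel la m\^eme route que le texte~: construction de~$\phi$ et caract\`ere $\deg(R)$-lipschitzien via~\eqref{eq:6}, transfert imm\'ediat de l'invariance et de l'ergodicit\'e, fibres finies en d\'ecomposant $R^{-1}(\pi^{-1}\{\tau\})$ en au plus~$\deg(R)$ composantes s'envoyant surjectivement sur la fibre (le texte identifie ces composantes aux $\pi^{-1}(\cC)$ via le Lemme~\ref{lem:tree}, vous travaillez directement dans~$\PKber$, ce qui revient au m\^eme), puis ouverture par rel\`evement de g\'eod\'esiques. Deux justifications m\'eritent toutefois d'\^etre reprises.

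D'abord, votre argument de repli pour obtenir $\rho(R([y,y']))=0$ ne tient pas~: un sous-arbre compact de $\pi^{-1}\{\tau\}$ dont tous les segments sont $\rho$-n\'egligeables peut \^etre de masse strictement positive. La fibre $\pi^{-1}\{\tau\}$ elle-m\^eme est un tel sous-arbre et, \`a ce stade, il n'est pas exclu que $\rho(\pi^{-1}\{\tau\})>0$ (voir la remarque qui suit le Lemme~\ref{lem:ssarbre})~: la masse peut \^etre port\'ee par les bouts ou r\'epartie sur une infinit\'e non d\'enombrable de branches, et la non-atomicit\'e de~$\rho$ ne permet pas de conclure. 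Ce dont vous avez besoin --- et c'est exactement ce qu'utilise le texte, \'egalement sans d\'emonstration --- est le fait standard que l'image d'un segment par~$R$ est un arbre fini~: en chaque point du segment et dans chacune des deux directions le long du segment, $R$ est injective sur un petit sous-segment qu'elle envoie sur un segment (voir \cite[Propositions~4.1, 4.4 et~4.6]{rivera-espace}), et la compacit\'e de $[y,y']$ fournit un recouvrement fini, de sorte que $R([y,y'])$ est une union finie de segments d'extr\'emit\'es dans $\pi^{-1}\{\tau\}$, chacun de mesure nulle par d\'efinition de~$\simeq$. Ensuite, pour l'ouverture, la finitude des fibres ne contr\^ole en rien le diam\`etre du relev\'e~; le contr\^ole vient encore de~\eqref{eq:6}. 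Concr\`etement, avec vos notations ($\sigma$ dans la source, $\tau=\phi(\sigma)$ et $\tau'$ tel que $d_{\cT}(\tau,\tau')<r$), prenez $x\in\pi^{-1}\{\sigma\}$, $y=R(x)$, $z\in\pi^{-1}\{\tau'\}$, et notez~$T$ la composante connexe de $R^{-1}([y,z])$ contenant~$x$~: le Lemme~\ref{lem:deg} donne $R(T)=[y,z]$, puis~\eqref{eq:6} donne $\rho(T)\le\rho(R(T))=d_{\cT}(\tau,\tau')<r$, d'o\`u, $T$ \'etant connexe, $\pi(T)\subseteq B(\sigma,r)$ et $\tau'=\pi(z)\in\phi(\pi(T))\subseteq\phi(B(\sigma,r))$. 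Avec ces deux corrections, votre d\'emonstration co\"{\i}ncide en substance avec celle du texte.
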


\begin{proof}
  Pour tout~$x, x'\in \PKber$ on a par~\eqref{eq:6},
  \begin{equation}
    \label{eq:9}
    \rho([R(x), R(x')])
    \le
    \rho(R([x, x']))
    \le
    \deg(R) \times \rho([x, x'])~.
  \end{equation}
  En particulier, $x \simeq x'$ entra{\^{\i}}ne~$R(x) \simeq R(x')$, et par cons{\'e}quent~$R$ induit une application~$\phi \colon \cT \to \cT$ uniquement determin{\'e}e par la relation $\phi \circ \pi = \pi \circ R$.
  La cha{\^{\i}}ne d'in{\'e}galit{\'e}s~\eqref{eq:9} entra{\^{\i}}ne que~$\phi$ est $\deg(R)$\nobreakdash-Lipschitz par rapport {\`a}~$d_{\cT}$.
  L'invariance et l'ergodicit\'e de $\pi_* \rho$ sont des cons\'equences directes de
  l'invariance et de l'ergodicit\'e de~$\rho$.

  Nous aurons besoin du lemme suivant.
  Rappelons qu'un arbre est dit fini, si il poss\`ede un nombre fini de bouts.

  \begin{Lem}\label{lem:tree}
    Pour tout sous-ensemble connexe
    $\cA'$ de $\cT$ et toute composante connexe~$\cA$ de $\phi^{-1}(\cA')$,
    l'ensemble $\pi^{-1}(\cA)$ est une composante connexe de $R^{-1}(\pi^{-1}(\cA'))$.
  \end{Lem}

  Pour montrer que~$\phi$ est {\`a} fibres finies, on choisit $\tau \in \cT$.
  Par le Lemme~\ref{lem:tree}, pour chaque composante connexe~$\cC$ de~$\phi^{-1} \{ \tau \}$ l'ensemble~$\pi^{-1}(\cC)$ est une composante conne\-xe de~$R^{-1}(\pi^{-1} \{ \tau \})$.
  Le Lemme~\ref{lem:deg} entra{\^{\i}}ne que~$\phi^{-1}(\tau)$ poss{\`e}de au plus~$\deg(R)$ compsantes connexes, et
  il suffit donc de montrer que toute composante connexe~$\cC$ de~$\phi^{-1}(\tau)$ est r{\'e}duite {\`a} un point.
  Notons simplement que pour chaque~$x$ et~$x'$ dans~$\pi^{-1} (\cC)$ l'ensemble~$R([x, x'])$ est un arbre fini contenu dans~$\pi^{-1}\{\tau \}$, et par cons{\'e}quent on a par~\eqref{eq:6}
  $$ \rho([x, x'])
  \le
  \rho(R([x, x']))
  =
  0~,$$
  ce qui termine la d{\'e}monstration du fait que~$\phi$ est {\`a} fibres finies.

  Pour voir que $\phi$ est ouverte, il suffit de montrer que $\phi(B(\tau,r)) \supseteq B(\phi(\tau), r)$ pour tout $\tau$ et $r>0$.
  Soit donc $\sigma \in B(\phi(\tau), r)$, et choisissons $x\in \pi^{-1}\{\tau\}$, $z\in \pi^{-1}\{\sigma\}$, et notons $y= R(x) \in \pi^{-1}\{\phi(\tau)\}$.
  Notons~$T$ la composante connexe de
  $R^{-1} (J)$ contenant $x$
  avec $J=[y,z]$.
  Comme~$J$ est connexe et ferm\'e, nous avons $R(T) = J$ par le Lemme~\ref{lem:deg}, et
  $$ \rho(T)
  \le
  \rho(J)
  =
  d_{\cT}(\phi(\tau), \sigma)
  \le
  r~, $$
  par~\eqref{eq:6}.
  On a donc $\pi(T) \subseteq B(\tau, r)$, et
  $$ \sigma = \pi(z) \in \pi(J) = \pi(R(T)) = \phi(\pi(T)) \subseteq \phi(B(\tau, r))~, $$
  ce qui conclut la preuve.
\end{proof}
\begin{proof}[D\'emonstration du Lemme~\ref{lem:tree}]
  Comme l'image par~$\pi$ d'un segment d'extr\'emit\'es~$x$ et~$x'$ est encore un segment d'extr\'emit\'e $\pi(x)$ et $\pi(x')$,
  l'ensemble $\pi^{-1}(\cA)$ est connexe et par cons\'equent contenu dans l'une des composantes connexes de $R^{-1}(\pi^{-1}(\cA'))$.
  Par ailleurs, pour tous points~$x, x'$ dans la m{\^e}me composante connexe de~$R^{-1}(\pi^{-1}(\cA'))$, on~a
  $$
  \phi ([\pi(x), \pi(x')])
  =
  \phi(\pi[x, x'])
  =
  \pi (R([x,x'])) \subseteq \cA'~.
  $$
  Il s'ensuit que $\pi(x)$ et $\pi(x')$ appartiennent \`a la m\^eme composante connexe de $\phi^{-1}(\cA)$, ce qui compl\`ete la d\'emonstration.
\end{proof}

Le lemme suivant nous sera aussi utile dans la suite.

\begin{Lem}\label{lem:preimage}
  La pr\'eimage par~$\phi$ de tout segment~$\cI$ de~$\cT$ est une union finie d'arbres finis dont les bouts sont inclus dans $\phi^{-1} ( \partial \cI)$.
\end{Lem}
\begin{proof}
  On peut supposer le segment~$\cI=[\tau_0,\tau_1]$ ferm\'e.
  Soit $\cC$ une composante connexe de $\phi^{-1}(\cI)$.
  Comme~$\phi$ est {\`a} fibres finies, il nous suffit de montrer que les bouts de~$\cC$ sont inclus dans $\phi^{-1} ( \partial \cI)$. Soit $\tau$ un des bouts de $\cC$, et supposons que $\phi(\tau) $ appartienne \`a l'int\'erieur de $\cI$.
  Soit $\cC_i$  la composante connexe de $\phi^{-1}[ \tau_i, \phi(\tau)]$ contenant $\tau$
  pour $i =0,1$. Notons que chacune de ces composantes contient un segment non trivial
  d'origine $\tau$.
  Comme $\tau$ est un bout de $\cC$, l'intersection $\cC_0 \cap \cC_1 = \phi^{-1}(\phi(\tau))$
  contient un segment non trivial de bord $\tau$, ce qui est absurde.
\end{proof}

%%%% 
\subsection{Comparaison des topologies}
\label{sec:compacite de l'arbre}
Nous allons maintenant d\'emontrer la
\begin{Prop}\label{prop:compacite de l'arbre}
  La topologie faible co{\"{\i}}ncide avec la topologie sur~$\cT$ induite par~$d_{\cT}$.
  Par ailleurs, la mesure $\pi_* \rho$ est la mesure de Hausdorff $1$-dimensionnelle de~$\cT$, c'est-{\`a}-dire ${\pi_* \rho = \lambda}$.
  En particulier, $\rho$ ne charge aucun ensemble de la forme $\pi^{-1}\{ \tau\}$.
\end{Prop}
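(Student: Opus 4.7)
The plan is to establish first the compactness of $(\cT, d_\cT)$, which yields the coincidence of the two topologies via a standard principle, then the equality $\pi_*\rho = \lambda$ by an exhaustion argument; the assertion on fibers follows immediately.

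For compactness, completeness is given by Lemme~\ref{lem:complet}, so it remains to establish total boundedness. Given $\varepsilon > 0$, any $\varepsilon$-separated subset $\{\tau_1, \ldots, \tau_n\} \subseteq \cT$ has convex hull $\cT_n$---a finite subtree of $\cT$---whose $1$-dimensional Hausdorff measure is bounded below by a quantity of order $n \varepsilon$, since by an iterative construction each new point adds a branch of length at least $\varepsilon/2$ to the existing hull. The inequality $\lambda(\cT_n) \le \lambda(\cT) \le 1$ from Proposition~\ref{prop:arbre quotient} then forces $n$ to be bounded by a constant depending only on $\varepsilon$, yielding total boundedness and thus compactness. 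The coincidence of the two topologies follows formally: since the weak topology is Hausdorff and coarser than $d_\cT$, the identity map is a continuous bijection between compact Hausdorff spaces, hence a homeomorphism.

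For $\pi_*\rho = \lambda$, since $\lambda \le \pi_*\rho$ and $\pi_*\rho(\cT) = 1$, it suffices to show $\lambda(\cT) \ge 1$. A preliminary step uses the $\phi$-invariance and ergodicity of $\pi_*\rho$ (Proposition~\ref{prop:application quotient}) to rule out atoms. If the set $A$ of atoms of $\pi_*\rho$ were nonempty, then $\phi(A) \subseteq A$ (because $\pi_*\rho(\{\phi(\tau)\}) \ge \pi_*\rho(\{\tau\})$), and since $\phi$ is finite-to-one, the backward grand orbit $B = \bigcup_{n \ge 0} \phi^{-n}(A)$ is a countable $\phi$-invariant set of positive $\pi_*\rho$-measure, hence of full measure by ergodicity. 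This would make $\pi_*\rho$ purely atomic, contradicting $\lambda \le \pi_*\rho$ together with the non-atomicity and non-vanishing of $\lambda$ (the latter following from the hypothesis that $\rho$ charges a segment of $\HK$). Hence $\pi_*\rho$ has no atom, which already gives the last assertion of the proposition.

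To conclude $\lambda(\cT) = 1$, choose a countable family $\{\tau_i\} \subseteq \cT$ dense in $\cT$ and lifts $x_i \in J_R \cap \pi^{-1}\{\tau_i\}$ (possible since $\pi(J_R) = \cT$, as $\supp \pi_*\rho = \cT$); denote by $\cT_n$ and $X_n$ the respective convex hulls of $\{\tau_1, \ldots, \tau_n\}$ and $\{x_1, \ldots, x_n\}$. An edge-by-edge decomposition, using the monotone mapping of segments by $\pi$ and the absence of atoms for $\rho$, gives $\rho(X_n) = \lambda(\cT_n)$. Passing to the monotone limit and using the nullity of the fibers of $\pi$ to handle boundary effects, one obtains $\lambda(\cT) = \lim_n \lambda(\cT_n) = \lim_n \rho(X_n) = \rho(\bigcup_n X_n) = 1$. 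The main obstacle is this last equality: one must verify that the set of ``extremal'' points of $J_R$ not lying on any segment $[x_i, x_j]$ is $\rho$-negligible, which rests on the density of $\{\tau_i\}$ in $\cT$ together with the nullity of the fibers of $\pi$ just established.
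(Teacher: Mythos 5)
Your first half is correct and takes a genuinely different route from the paper: you obtain compactness of $(\cT,d_\cT)$ from completeness (Lemme~\ref{lem:complet}) plus total boundedness, and then conclude that the two topologies agree by the continuous-bijection-from-compact-to-Hausdorff principle, whereas the paper deduces this comparison from the dynamical exhaustion of Lemme~\ref{lem:presque-fini}. Two small repairs there: the claim that \emph{each new point adds a branch of length at least $\varepsilon/2$ to the hull} is false as stated (a new point may fall inside the existing hull), but the bound you want does hold because the balls $B(\tau_i,\varepsilon/2)$ of an $\varepsilon$-separated family are pairwise disjoint and each contains an initial subsegment of length $\varepsilon/2$ of some $[\tau_i,\tau_j]$, hence of the hull; and your exclusion of atoms of $\pi_*\rho$ via invariance and ergodicity is fine (it even gives the last assertion of the proposition independently of $\pi_*\rho=\lambda$, which is a nice alternative to the paper).

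The genuine gap is the equality $\rho\bigl(\bigcup_n X_n\bigr)=1$, which you flag but attribute to \emph{density of $\{\tau_i\}$ plus nullity of the fibers}. These ingredients are provably insufficient. Take (non-dynamically) a compact binary subtree $\overline H$ of $\PKber$ whose ends form a Cantor set $K$ of points of type~I, and set $\rho=\tfrac12\hat\lambda+\tfrac12\rho_K$, where $\hat\lambda$ is a finite non-atomic measure on $\overline H$ charging every non-trivial subsegment (e.g. the measure induced by the variation of $\diam$ along the rays, with the Cantor set chosen so that the total variation is finite) and $\rho_K$ is a non-atomic measure on $K$. This $\rho$ is Radon, without atoms, charges segments of $\HK$, every fiber of $\pi$ is $\rho$-null, $\cT\cong\overline H$, and one may choose $\{\tau_i\}$ dense in $\cT$; yet the convex hull of any countable family of lifts misses $K$ up to a countable set, so $\rho(\bigcup_n X_n)\le\tfrac12$ and indeed $\pi_*\rho\neq\lambda$ here. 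So the full-mass statement is exactly where the dynamics must enter: in the paper this is Lemme~\ref{lem:presque-fini}, which uses the ergodicity of $\rho=\rho_R$ and the hypothesis that $\rho$ charges a segment $I$ of $\HK$, so that $\bigcup_{n\ge0}R^{-n}(I)$ has full mass while each $R^{-n}(I)$ is a finite union of finite trees (Lemme~\ref{lem:preimage}); the hulls of these preimages then play the role of your $X_n$, and Lemme~\ref{lem:noloss} (your edge-by-edge identity $\rho(X_n)=\lambda(\cT_n)$, proved there by induction on the number of ends) concludes. Without an argument of this kind, $\lim_n\rho(X_n)=1$ does not follow. A secondary unproved point: you use $\pi(J_R)=\cT$ (i.e. $\supp\pi_*\rho=\cT$) to choose lifts in $J_R$; this is not established at this stage, and becomes unnecessary once the exhaustion is done by the iterated preimages of the charged segment.
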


La d{\'e}monstration de cette proposition d{\'e}pend de plusieurs lemmes.

\begin{Lem}\label{lem:noloss}
  Pour chaque sous-arbre fini~$T$ de $\PKber$, on a $\rho(T) = \lambda (\pi(T))$.
\end{Lem}
\begin{proof}
  On peut supposer l'arbre~$T$ ferm{\'e}.
  On d\'emontre le r\'esultat par r\'ecurren\-ce sur le nombre de bouts~$n$ de~$T$.
  Lorsque~$n=2$, c'est-\`a-dire dans le cas o\`u $T$ est un segment, la propri{\'e}t{\'e} d{\'e}sir{\'e}e est donn{\'e}e par la d{\'e}finition de~$d_{\cT}$.

  Dans le cas g\'en\'eral, on \'ecrit $T$ comme l'union d'un segment ferm\'e $I=[x_0,x_1]$ et d'un arbre fini
  $T'$ ayant un bout de moins que $T$ de telle sorte que
  $T'\cap I$ est r\'eduit \`a~$\{x_0\}$.
  Par l'hypoth\`ese de r\'ecurrence, on a alors $\rho(I) = \lambda (\pi(I))$ et $\rho(T') = \lambda (\pi(T'))$. Mais d'une part $\rho$ ne charge pas les points, et donc
  $\rho(T) = \rho(T') + \rho(I)$. Par ailleurs, $\pi(I)\cap\pi(\rho(T'))=\{\pi(x_0)\}$, et donc
  $\lambda(T) = \lambda(T') + \lambda(I)$, ce qui conclut la preuve.
\end{proof}

\begin{Lem}
  \label{lem:presque-fini}
  Pour tout~$\kappa > 0$, il existe un arbre fini~$\cT_0$ dans~$\cT$ tel que~$\lambda(\cT_0) \ge 1 - \kappa$.
\end{Lem}
\begin{proof}
  Par hypoth\`ese, il existe un segment~$I$ de $\HK$ de masse positive pour~$\rho$.
  L'{\'e}rgodicit{\'e} de~$\rho$ montre que l'ensemble $\bigcup_{n\ge 0} R^{-n}(I)$
  est de masse totale.
  Choisissons un entier $N$ tel que
  $\rho (\bigcup_{0\le n\le N} R^{-n}(I) ) \ge 1 - \kappa$.
  Comme~$R^{-n} (I)$ est une union finie d'arbres finis de~$\PKber$ par le Lemme~\ref{lem:preimage}, il s'ensuit que l'enveloppe convexe $T$ de $\bigcup_{n = 0}^N R^{-n}(I) $ est encore un sous-arbre fini.
  Le Lemme~\ref{lem:noloss} implique alors
  $$ \lambda(\pi(T))
  =
  \rho(T)
  \ge
  \rho \left( \bigcup_{n = 0}^N R^{-n}(I) \right)
  \ge
  1 - \kappa~,$$
  ce qui d\'emontre le lemme avec~$\cT_0 = \pi(T)$.
\end{proof}

\begin{proof}[D{\'e}monstration de la Proposition~\ref{prop:compacite de l'arbre}]
  Pour montrer la premi{\`e}re assertion, choisissons ${\tau\in\cT}$ et~$r > 0$ quelconques, et soit~$\cT_0$ l'arbre fini donn{\'e} par le Lemme~\ref{lem:presque-fini} avec ${\kappa = r/2}$.
  Quitte {\`a} augmenter~$\cT_0$, on peut supposer que~$\cT_0$ contient~$\tau$.
  L'intersection ${B(\tau, r/2) \cap \cT_0}$ est connexe, et par cons{\'e}quent est un arbre fini.
  On note~$\Delta$ l'ensemble fini des bouts de cet arbre, et~$\cU$ la composante connexe de~$\cT \setminus \partial\Delta$ contenant~$\tau$.
  C'est un ouvert fondamental, et on a
  $$ \cU
  \subseteq
  B(\cU \cap \cT_0, \lambda(\cT \setminus \cT_0))
  \subseteq
  B(\cU \cap \cT_0, r/2)
  \subseteq
  B(\tau, r). $$
  Ceci montre que la topologie faible co{\"{\i}}ncide avec la topologie sur~$\cT$ induite par~$d_{\cT}$.

  Pour montrer la deuxi{\`e}me assertion, notons que le Lemme~\ref{lem:presque-fini} entra{\^{\i}}ne que~$\lambda$ est une mesure de masse $\ge 1$.
  Comme~$\rho$ est de probabilit{\'e} et que $\pi_* \rho \ge \lambda$ par la Proposition~\ref{prop:arbre quotient}, on conclut que $\pi_* \rho =\lambda$.
\end{proof}

%%%% 
\subsection{Le degr\'e local}
\label{sec:phi-deg}
On va maintenant s'attacher \`a d\'efinir un degr\'e local pour $\phi$.
Com\-me~$\phi$ est \`a fibres finies (Proposition~\ref{prop:application quotient}), pour chaque~$\tau$ dans~$\cT$ l'ensemble $\{ \tau\}$ est une composante connexe de $\phi^{-1}(\phi(\tau))$.
Le Lemme~\ref{lem:tree} appliqu\'e \`a $\cA = \{ \phi(\tau)\}$ et $\cA' =\{ \tau\}$ montre que~$\pi^{-1} \{ \tau \}$ est une composante connexe de~$R^{-1}(\pi^{-1}\{ \phi(\tau) \})$.
En particulier, l'entier $\deg_R(\pi^{-1}(\tau))\le d = \deg(R)$ est bien d{\'e}fini.
\begin{Prop-def}
  \label{def:phi-deg}
  Soit $D(\phi, \cdot)\colon \cT \to \{ 1, \ldots, d \}$ la fonction d{\'e}finie par
  $$ D(\phi, \tau) \= \deg_R(\pi^{-1} \{ \tau \})~. $$
  Alors pour tout sous-ensemble connexe~$\cA'$ de~$\cT$ et toute composante connexe~$\cA$ de $\phi^{-1}(\cA')$, la quantit\'e $D(\phi,\cA) \= \sum_{\cA \cap
    \phi^{-1}(\tau')} D(\phi,\cdot)$ est ind\'ependante de $\tau' \in \cA'$
  et l'on a:
  \begin{equation}\label{e1}
    \lambda(\cA) = \frac{D(\phi,\cA)}{\deg(R)} \times\lambda (\cA')~.
  \end{equation}
\end{Prop-def}
\begin{Rem}\label{rem:surj-connexe}
  La proposition pr\'ec\'edente implique en particulier que pour toute partie connexe $\cA'$ de $\cT$ et toute composante connexe $\cA$ de $\phi^{-1}(\cA')$
  on a $\phi(\cA) = \cA'$. Par ailleurs si on applique~\eqref{e1} \`a $\cT$, on obtient \[ \sum_{\sigma \in \phi^{-1}(\tau)} D(\phi,\sigma)
    =
    \deg(R)~,\]
  pour tout $\tau\in\cT$.
\end{Rem}
\begin{proof}[D{\'e}monstration de la Proposition-D{\'e}finition~\ref{def:phi-deg}]
  Les Lemmes~\ref{lem:deg} et~\ref{lem:tree} entra\^{\i}nent que pour chaque $x' \in \pi^{-1}(\cA')$, on a
  \begin{multline*}
    D(\phi,\cA)
    \=
    \deg_R(\pi^{-1}(\cA))
    =
    \sum_{x\in \pi^{-1}(\cA) \cap R^{-1}(x')} \deg_R(x)
    \\ =
    \sum_{\sigma \in \cA \cap \phi^{-1} \{ \pi(x') \}} \, \, \sum_{x \in \pi^{-1} \{ \sigma \} \cap R^{-1}(x')} \deg_R(x)
    =
    \sum_{\sigma \in \cA\cap \phi^{-1} \{ \pi(x') \} } D(\phi, \sigma)~.
  \end{multline*}
  On obtient finalement par les Lemmes~\ref{lem:deg} et~\ref{lem:tree} et la Proposition~\ref{prop:compacite de l'arbre}
  $$
  \lambda (\cA)
  =
  \rho (\pi^{-1}(\cA))
  =
  \frac{\deg_R (\pi^{-1}(\cA))}{\deg(R)} \times \rho(\pi^{-1}(\cA'))
  =
  \frac{D(\phi,\cA)}{\deg(R)}\times \lambda (\cA')~,
  $$
  ce qui conclut la d\'emonstration.
\end{proof}

Nous aurons aussi besoin d'une notion de degr\'e local le long d'une
branche en un point.
Avant d'\'enoncer le r\'esultat, rappelons quelques d\'efinitions.

Soit $\tau$ un point de~$\cT$.
Une \emph{branche} en~$\tau$ est une classe d'\'equivalence
de points de $\cT \setminus \{ \tau\}$ pour la relation d'{\'e}quivalence $\tau' \equiv \tau''$ si et seulement si
$\mathopen]\tau, \tau'\mathclose] \cap \mathopen] \tau, \tau'' \mathclose] \neq \emptyset$.
Rappelons que la couronne comprise entre deux points~$\tau\neq \tau'$ de~$\cT$ est par d\'efinition la composante connexe du compl\'ementaire de $\{\tau, \tau'\}$ qui contient le segment ouvert $]\tau, \tau'[$.

\begin{Prop-def}
  \label{def:phi-deg-tgt}
  Soit $\tau \in \cT$ et $\vec{v}$ une branche de $\cT$ en $\tau$. Alors il existe un segment non trivial $[\tau, \tau'[$ repr\'esentant $\vec{v}$ sur lequel $\phi$ est injective, et $D(\phi,\cdot)$ est constant sur $]\tau, \tau'[$.

  En particulier, la branche~$\vec{v}'$ en~$\phi(\tau)$ contenant~$\phi(\tau')$ et la valeur de~$D(\phi, \cdot)$ sur~$] \tau, \tau']$ ne d{\'e}pendent que de la branche~$\vec{v}$. On pose~$\phi(\vec{v}) \= \vec{v}'$ et~$D(\phi, \vec{v})\=D(\phi, \cdot)|_{] \tau, \tau']}$.

  Pour toute branche~$\vec{v}'$ en~$\phi(\tau)$ on a
  \[ \sum_{\substack{\vec{v} \text{ branche en } \tau \\ \phi(\vec{v}) = \vec{v}'}} D(\phi, \vec{v})
    =
    D(\phi, \tau)~. \]
\end{Prop-def}

\begin{proof}
  Soit~$\cU$ l'unique composante connexe de~$\cT \setminus \phi^{-1}(\phi(\tau))$ qui contient~$\tau$ dans son adh{\'e}rence et un point $\htau$
  dont le segment $\mathopen]\tau, \htau\mathclose]$ d\'etermine $\vec{v}$.

  Fixons un point~$\ttau$ de~$\phi(\cU)$ et soit~$\tau'$ un point dans~$\mathopen]\tau, \htau\mathclose]$ qui est suffisamment proche de~$\tau$ de telle fa\c{c}on que la couronne~$\cC$ comprise entre~$\tau$ et~$\tau'$ soit contenue dans ${\cU \setminus \phi^{-1}(\ttau)}$.
  L'image $\phi(\cC)$ contient $\phi(\tau)$ dans son bord et s\'epare
  les deux points~$\phi(\tau)$ et~$\ttau$ qui sont alors dans des composantes connexes distinctes de~$\cT \setminus \phi(\cC)$.
  En particulier, le compl{\'e}mentaire de~$\phi(\cC)$ dans~$\cT$ est disconnexe, et par cons{\'e}quent le compl{\'e}mentaire de~$R(\pi^{-1}(\cC)) =\pi^{-1}(\phi(\cC))$ dans~$\PKber$ n'est pas connexe non plus.

  Soient~$x$ et~$x'$ dans~$\pi^{-1}(\tau)$ et~$\pi^{-1}(\tau')$, respectivement.
  Quitte {\`a} remplacer~$x$ (resp.~$x'$) par le point de~$\pi^{-1}(\tau) \cap [x, x']$ (resp. $\pi^{-1}(\tau') \cap [x, x']$) le plus proche de~$x'$ (resp.~$x$), on peut supposer que~$] x, x' [$ est disjoint de~$\pi^{-1}(\tau)$ (resp. $\pi^{-1}(\tau')$).

  La couronne~$C$ de~$\PKber$ comprise entre~$x$ et~$x'$ est disjointe de~$\pi^{-1}(\tau)$ et~$\pi^{-1}(\tau')$.
  Comme le compl{\'e}mentaire de~$\cC$ poss\`ede exactement deux composantes connexes, on conclut que~$C = \pi^{-1}(\cC)$.
  Par cons{\'e}quent, le compl{\'e}mentaire de~$R(C) = R(\pi^{-1}(\cC))$ dans~$\PKber$ est disconnexe.
  Ceci entra{\^{\i}}ne que~$R(C)$ est une couronne, et~$C$ est une composante connexe de~$R^{-1}(R(C))$. Pour tout~$z\in \mathopen] x, x' \mathclose[$ on a donc~$\deg_R(z) = \deg_R(\cC)$, voir Lemme~\ref{lem:couronne}.
  En particulier~$R$ est injective sur~$\mathopen] x, x' \mathclose[$ et par cons{\'e}quent~$\phi$ est injective sur~$] \tau, \tau' [$ et pour tout~$\sigma$ dans ce d{e}rnier ensemble on a~$D(\phi, \sigma) = \deg_R(\cC)$.

  La derni{\`e}re \'equation est une cons{\'e}quence de la premi{\`e}re partie de la proposition et de la propri{\'e}t{\'e} caract{\'e}ristique du degr{\'e} local.
\end{proof}

%%%%% 
\subsection{Expansion uniforme}
\label{sec:expansion-uniforme}

\begin{Prop}
  \label{prop:expansion-uniforme}
  Il existe des constantes~$C > 0$ et~$\varepsilon_0, \delta\in \mathopen]0, 1\mathclose[$ telles qu'on ait la propri{\'e}t{\'e} suivante.
  Pour tout sous-ensemble~$\cA'$ de~$\cT$ satisfaisant~$\lambda(\cA') \le \varepsilon_0$, tout entier~$n \ge 1$ et toute composante connexe~$\cA$ de~$\phi^{-n}(\cA')$, on a~$\lambda(\cA) \le C \delta^n \times \lambda(\cA')$.
\end{Prop}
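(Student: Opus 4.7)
L'id\'ee est d'it\'erer la formule de la Proposition-D\'efinition~\ref{def:phi-deg} pour exprimer $\lambda(\cA)/\lambda(\cA')$ comme un produit t\'elescopique de degr\'es locaux, puis d'\'etablir une contraction stricte en une \'etape pour les connexes de $\lambda$-mesure suffisamment petite. Quitte \`a remplacer $\cA'$ par sa composante connexe contenant $\phi^n(\cA)$, on peut supposer $\cA'$ connexe. En posant $\cA_k \= \phi^k(\cA)$ pour $0 \le k \le n$, la Remarque~\ref{rem:surj-connexe} assure que chaque $\cA_k$ est une composante connexe de $\phi^{-(n-k)}(\cA')$, et la Proposition-D\'efinition~\ref{def:phi-deg} fournit
\[
\lambda(\cA) = \lambda(\cA') \prod_{k=0}^{n-1} \frac{D(\phi, \cA_k)}{d},
\]
avec $\lambda(\cA_k) \le \lambda(\cA') \le \varepsilon_0$ pour tout $k$. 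La proposition d\'ecoulera alors, avec $\delta \= d'/d$ et $C \= 1$, de l'\'enonc\'e \`a une \'etape suivant : il existe $\varepsilon_0 > 0$ et un entier $d' < d$ tels que pour tout connexe $\cB' \subset \cT$ avec $\lambda(\cB') \le \varepsilon_0$ et toute composante connexe $\cB$ de $\phi^{-1}(\cB')$, on ait $D(\phi, \cB) \le d'$.

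Pour prouver cet \'enonc\'e \`a une \'etape, on raisonne par l'absurde. Comme $D(\phi, \cdot) \le d$ est \`a valeurs enti\`eres, sa n\'egation fournit des suites de connexes $\cB'_n$ avec $\lambda(\cB'_n) \to 0$ et des composantes $\cB_n$ de $\phi^{-1}(\cB'_n)$ satisfaisant $D(\phi, \cB_n) = d$. La relation $\sum_{\sigma \in \phi^{-1}(\tau)} D(\phi, \sigma) = d$ (Remarque~\ref{rem:surj-connexe}) impose alors que $\cB_n$ soit l'unique composante de $\phi^{-1}(\cB'_n)$. Le diam\`etre d'un connexe dans un arbre \'etant major\'e par sa mesure de Hausdorff $1$-dimensionnelle, elle-m\^eme \'egale \`a $\lambda$ d'apr\`es la Proposition~\ref{prop:compacite de l'arbre}, les diam\`etres de $\cB_n$ et $\cB'_n$ tendent vers z\'ero. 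Par compacit\'e de $\cT$, on extrait des sous-suites telles que $\cB_n \to \{\sigma^*\}$ et $\cB'_n \to \{\tau^*\}$ avec $\phi(\sigma^*) = \tau^*$ ; le caract\`ere ouvert et \`a fibres finies de $\phi$ (Proposition~\ref{prop:application quotient}) force $\phi^{-1}(\tau^*) = \{\sigma^*\}$ et $D(\phi, \sigma^*) = d$, un point que l'on dira \emph{totalement ramifi\'e}.

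L'obstacle principal est ainsi d'exclure l'existence d'un tel $\tau^*$ sous les hypoth\`eses du th\'eor\`eme. L'approche envisag\'ee consiste \`a relever la situation \`a $\PKber$ : le ferm\'e connexe ${A \= \pi^{-1}(\tau^*)}$ v\'erifie ${R^{-n}(A) = \pi^{-1}(\phi^{-n}(\tau^*))}$ pour tout $n \ge 1$, qui est une r\'eunion finie de $\pi$-fibres, chacune de $\rho$-mesure nulle par la Proposition~\ref{prop:compacite de l'arbre}. Par cons\'equent, l'orbite inverse compl\`ete $\bigcup_{n\ge 0} R^{-n}(A)$ est contenue dans une r\'eunion d\'enombrable de $\pi$-fibres et est $\rho$-n\'egligeable. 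Combin\'e \`a l'\'equidistribution faible $\frac{1}{d^n}(R^n)_*\delta_y \to \rho$ pour un point non exceptionnel $y \in A$ (Th\'eor\`eme~A de~\cite{theorie-ergo}) et \`a l'hypoth\`ese que $\rho$ charge un segment de $\HK$ (ce qui garantit que $\lambda$ s'\'etend de mani\`ere non d\'eg\'en\'er\'ee sur un sous-segment non trivial de $\cT$), ceci est la voie que nous pr\'evoyons pour aboutir \`a la contradiction recherch\'ee.
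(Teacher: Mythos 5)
La réduction au cas où $\cA'$ est connexe, le produit télescopique tiré de~\eqref{e1}, et l'extraction par compacité qui produit un point $\tau^*$ dont la fibre $\phi^{-1}(\tau^*)$ est réduite à un point de degré $d$ sont corrects. En revanche, votre énoncé \emph{à une étape} — le cœur de la preuve, que votre dernier paragraphe laisse d'ailleurs explicitement inachevé — est faux, et l'objet que vous cherchez à exclure existe réellement sous les hypothèses de cette section. Prenez pour $R$ une application à allure Lattès de degré $4$ issue d'une courbe de Tate (multiplication par $2$) : $\rho_R$ est sans atome et sa masse est la mesure de longueur sur un segment, $\cT$ est un segment, et $\phi$ est l'application \emph{tente} complète à deux branches affines de pente $2$. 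L'image $\tau^* = \phi(\tau_1)$ du point de pliage $\tau_1$ est une extrémité de $\cT$ et vérifie $\phi^{-1}(\tau^*) = \{\tau_1\}$, donc $D(\phi, \tau_1) = d$ par la Remarque~\ref{rem:surj-connexe}. Pour tout segment $\cB'$ contenant $\tau^*$, aussi petit soit-il, $\phi^{-1}(\cB')$ est un segment connexe autour de $\tau_1$, de degré $D(\phi, \cB) = d$ et de même longueur que $\cB'$ par~\eqref{e1} : aucun choix de $\varepsilon_0$ ni de $d' < d$ ne convient, et il n'y a aucune contraction en une étape. Votre piste finale (orbite inverse de $\pi^{-1}(\tau^*)$ négligeable pour $\rho$, plus équidistribution) ne peut pas fournir de contradiction : ces deux faits sont également vrais dans cet exemple.

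Ce qui manque, et que votre argument ne peut contourner, c'est le passage à un itéré : il faut montrer qu'il existe $n_0$ (dépendant de $R$) tel qu'aucun point de $\cT$ n'ait une fibre $\phi^{-n_0}$ réduite à un point — dans l'exemple ci-dessus $n_0 = 2$ convient, car $\tau_1$ est intérieur et possède deux préimages. C'est précisément le contenu des Lemmes~\ref{lem:degre-maximal} et~\ref{lem:degre-itere} du texte : l'ensemble $\cF_j = \{ D(\phi^j, \cdot) = d^j \}$ est connexe et compact, décroît avec $j$, et s'il était non vide pour tout $j$ il contiendrait un point fixe de $\phi$ totalement invariant, dont l'existence contredit l'ergodicité de $\pi_* \rho$ (cycle des composantes de $\cT \setminus \{\tau_*\}$, puis isométrie de $\phi^n$ sur un segment). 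Une fois ce $n_0$ obtenu, un argument de compacité uniforme très proche de votre extraction de limites, mais appliqué à $\phi^{n_0}$ (Lemme~\ref{lem:degre-semi-local}), fournit $\varepsilon_0$ tel que $\phi^{-n_0}(\cA')$ soit disconnexe pour tout connexe $\cA'$ de longueur au plus $\varepsilon_0$, d'où $D(\phi^{n_0}, \cdot) \le d^{n_0} - 1$ sur chaque composante ; votre schéma télescopique redevient alors valable, mais par blocs de $n_0$ itérations, avec $\delta = (d^{n_0} - 1)^{1/n_0}/d$ et une constante $C$ absorbant le reste de la division euclidienne de $n$ par $n_0$.
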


\begin{Lem}
  \label{lem:degre-maximal}
  Posons ${\cF \= \{ \tau \in \cT, D(\phi, \tau) = d \}}$.
  Si~$\cF$ est non-vide, alors il est connexe et compact.
\end{Lem}
\begin{proof}
  Pour montrer que~$\cF$ est connexe, supposons qu'il contiennent deux points distincts~$\tau$ et~$\tau'$.
  Par la Proposition-D{\'e}finition~\ref{def:phi-deg} on a
  $$ \phi(\tau) \neq \phi(\tau'),
  R^{-1} (\pi^{-1} \{ \phi(\tau) \}) = \pi^{-1} \{ \tau \}
  \text{ et }
  R^{-1} (\pi^{-1} \{ \phi(\tau') \}) = \pi^{-1} \{ \tau' \}~. $$
  Soient~$x$ dans~$\pi^{-1}\{ \tau \}$ et~$x'$ dans~$\pi^{-1}\{\tau'\}$.
  Quitte {\`a} remplacer~$x$ (resp.~$x'$) par le point de~$\pi^{-1} \{ \tau \} \cap [x, x']$ (resp.~$\pi^{-1} \{ \tau' \} \cap [x, x']$) le plus proche de~$x'$ (resp.~$x$), on peut supposer que~$] x, x' [$ est disjoint de~$\pi^{-1} \{\tau \}$ (resp.~$\pi^{-1} \{ \tau' \}$).
  Si~$C$ d\'esigne la couronne comprise entre~$x$ et~$x'$,
  alors~$R(C)$ est disjoint de~$\pi^{-1} \{ \phi(\tau) \}$ et~$\pi^{-1} \{ \phi(\tau') \}$ et l'adh{\'e}rence de~$R(C)$ rencontre chacun de ces ensembles.
  On conclut que le compl{\'e}mentaire de~$R(C)$ dans~$\PKber$ est disconnexe et par le Lemme~\ref{lem:couronne} que $\deg_R(y) = d$ pour tout $y\in\mathopen] x, x' \mathclose[$.
  Au vu de la Proposition-D{\'e}finition~\ref{def:phi-deg}, ceci entra{\^{\i}}ne que~$] \tau, \tau' [$ est contenu dans~$\cF$ et termine la d{\'e}monstration que~$\cF$ est connexe.

  Pour montrer que~$\cF$ est compact, soit~$\tau$ dans~$\cT \setminus \cF$.
  Alors il existe~$\tau'\neq \tau$ tel que~$\phi(\tau') = \phi(\tau)$.
  Posons~$r \= d_{\cT}(\tau, \tau')/d$ et soit~$\sigma$ dans~$B(\tau, r)$.
  On note~$\cC$ et~$\cC'$ les composantes connexes de~$\phi^{-1}(\phi[\tau, \sigma]))$ contenant~$\tau$ et~$\tau'$, respectivement.
  Alors~$\cC$ contient~$\sigma$, et par la Proposition-D{\'e}finition~\ref{def:phi-deg}, on a
  $$ \lambda(\cC \cup \cC')
  \le
  \lambda(\phi([\tau, \sigma]))
  \le
  \deg(R) \times \lambda([\tau, \sigma])
  <
  \deg(R) \times r
  =
  d_{\cT}(\tau, \tau')~. $$
  Ceci entra{\^{\i}}ne que~$\cC$ et~$\cC'$ sont disjoints.
  La Proposition-D{\'e}finition~\ref{def:phi-deg} donne ${D(\phi, \cdot) \le d - 1}$ sur~$\cC$, et le point~$\sigma$ n'appartient pas \`a~$\cF$.
  Ceci montre que~$B(\tau, r)$ est disjoint de~$\cF$ et conclut la d{\'e}monstration que~$\cF$ est compact.
\end{proof}

Une partie de la d{\'e}monstration du lemme suivant est inspir{\'e}e de \cite[Lemme~2.12]{theorie-ergo}.

\begin{Lem}
  \label{lem:degre-itere}
  Il existe un entier~$n_0 \ge 1$ tel que pour tout~$\tau$ dans~$\cT$ l'ensemble~$\phi^{-n_0} \{ \tau \}$ contient au moins deux points.
\end{Lem}
\begin{proof}
  Pour chaque entier~$j \ge 1$ on d{\'e}finit
  \[\cF_j
    \=
    \{ \tau \in \cT, D(\phi^j, \tau) = \deg(R)^j \}~.\]
  Notons que~$\cF_1$ est l'ensemble~$\cF$ d{\'e}fini dans l'{\'e}nonc{\'e} du Lemme~\ref{lem:degre-maximal}.
  Il faut donc montrer qu'il existe un entier~$n_0$ tel que~$\cF_{n_0}$ est vide.
  Supposons par l'absurde que pour chaque entier~$j \ge 1$ l'ensemble~$\cF_j$ n'est pas vide.
  Le Lemme~\ref{lem:degre-maximal} entra{\^i}ne que chacun de ces ensembles est connexe et compact.
  De plus~$\cF_j$ est d\'ecroissant avec~$j$ car $D(\phi^{j+1},\tau)= D(\phi^{j},\tau)\, D(\phi, \phi^j(\tau))$.
  L'intersection~$\cF_{\infty} \= \bigcap_{j = 1}^{+\infty} \cF_j$ est donc non-vide, connexe, compacte et invariante par~$\phi$.
  Par cons{\'e}quent~$\cF_{\infty}$ poss{\`e}de un point fixe~$\tau_*$ de~$\phi$, voir~\cite{valtree} ou la ``propri{\'e}t{\'e} de point fixe'' dans \cite{rivera-fatou}.

  Comme le point fixe~$\tau_*$ de~$\phi$ appartient {\`a}~$\cF$, on a~$\phi^{-1} \{ \tau_* \} = \{ \tau_* \}$.
  Ceci entra{\^{\i}}ne que pour chaque componsante connexe~$\cC$ de~$\cT \setminus \{ \tau_* \}$ l'ensemble~$\phi^{-1}(\cC)$ est une r{\'e}union finie de composantes connexes de~$\cT \setminus \{ \tau_* \}$.
  La Proposition-D{\'e}finition~\ref{def:phi-deg} implique que pour chaque composante connexe~$\cC$ de~$\cT \setminus \{ \tau_* \}$ le degr{\'e} $D(\phi, \cC)$ est bien d{\'e}fini et~$\phi(\cC)$ est une composante connexe de~$\cT \setminus \{ \tau_* \}$.
  Par ailleurs, l'ergodicit{\'e} de $\phi$ entra{\^{\i}}ne que les composantes connexes de~$\cT \setminus \{ \tau_* \}$ forment un cycle pour~$\phi$~: on peut {\'e}crire
  $$ \cT \setminus \{ \tau_* \}
  \=
  \{ \cC_0, \ldots, \cC_{n-1} \}~, $$
  de telle fa{\c{c}}on que l'on ait~$\phi(\cC_i) = \cC_{i + 1}$ pour tout $i$ avec la convention $\cC_n = \cC_0$.
  Notons en particulier que pour chaque~$i$ dans~$\{0, \ldots, n-1 \}$ on a~$D(\phi, \cC_i) = \deg(R)$.
  Pour chaque~$i\in \{0, \ldots, n-1 \}$ on note~$\vv_i$ la branche en~$\tau_*$ correspondante {\`a}~$\cC_i$.
  Alors on a~$D(\phi, \vv_i) =d$ et par la Proposition-D{\'e}finition~\ref{def:phi-deg-tgt} on peut choisir ${\tau_{n}= \tau_0, \tau_1, \ldots, \tau_{n-1}}$ dans~$\cC_0, \ldots, \cC_{n-1}$, respectivement, de telle fa{\c{c}}on que pour tout $i \in \{0, \ldots, n-1\}$:
  \begin{itemize}
  \item
    l'application~$\phi$ soit injective sur~$[\tau_*, \tau_i]$ ;
  \item
    pour tout~$\tau\in [\tau_*, \tau_i]$ on ait~$D(\phi, \tau) = d$;
  \item
    et~$\phi([\tau_*, \tau_i]) = [\tau_*, \tau_{i + 1}]$.
  \end{itemize}
  La Proposition-D{\'e}finition~\ref{def:phi-deg} entra{\^{\i}}ne que~$\phi^n$ induit une isom\'etrie de~$[\tau_*, \tau_1]$ sur lui m{\^e}me fixant~$\tau_*$,
  ce qui contredit l'ergodicit{\'e} de $\phi$.
\end{proof}

\begin{Lem}
  \label{lem:degre-semi-local}
  Soit~$n_0 \ge 1$ un entier tel que pour tout~$\tau$ dans~$\cT$ l'ensemble~$\phi^{-n_0} \{ \tau \}$ contienne au moins deux points.
  Alors il existe~$\varepsilon_0 \in \mathopen]0, 1\mathclose[$ tel que pour tout sous-ensemble connexe~$\cA'$ de~$\cT$ satisfaisant~$\lambda(\cA') \le \varepsilon_0$, l'ensemble~$\phi^{-n_0}(\cA')$ est disconnexe.
\end{Lem}
\begin{proof}
  On montre tout d'abord qu'il existe~$\varepsilon_0 \in \mathopen]0, 1\mathclose[$ tel que pour tout~$\tau'$ dans~$\cT$ on peut trouver~$\tau$ et~$\ttau$ dans~$\phi^{-n_0}\{ \tau' \}$ tels que~$d_{\cT}(\tau, \ttau) > \varepsilon_0$.

  On raisonne par l'absurde.
  Il existe alors une suite de points~$\{ \tau_n \}_{n = 1}^{+\infty}$ tels que pour tout~$n$ et tout $\sigma$ et~$\tilde{\sigma}$ dans~$\phi^{-n_0}\{\tau_n\}$ on a~$d_{\cT}(\sigma, \tilde{\sigma}) \le \frac{1}{n}$.
  Quitte {\`a} prendre une sous-suite, on peut supposer que~$\{ \tau_n \}_{n = 1}^{+\infty}$ converge vers un point~$\tau_{\infty}$.
  Soient~$\tau$ et~$\ttau$ dans~$\phi^{-n_0} \{ \tau_{\infty} \}$.
  Alors pour chaque~$n$ les composantes connexes~$\cC$ et~$\tcC$ de~$\phi^{-n_0}([\tau_\infty, \tau_n])$ contenant~$\tau$ et~$\ttau$, respectivement, satisfont~$\lambda(\cC \cup \tcC) \le d_{\cT}(\tau_{\infty}, \tau_n)$.
  Comme chacun des ensembles~$\cC$ et~$\tcC$ rencontre~$\phi^{-n_0}\{ \tau_n \}$, on a
  $$ d_{\cT}(\tau, \ttau)
  \le
  \frac{1}{n} + \lambda(\cC \cup \tcC)
  \le
  \frac{1}{n} + d_{\cT}(\tau_{\infty}, \tau_n)~. $$
  On conclut alors que~$d_{\cT}(\tau, \ttau) = 0$ et par cons{\'e}quent que~$\phi^{-n_0}(\tau_{\infty})$ ne contient qu'un seul point.
  Ceci contredit notre hypoth{\`e}se sur~$n_0$ et montre l'assertion d{\'e}sir{\'e}e.

  Pour montrer le lemme, soit~$\cA'$ un sous-ensemble connexe de~$\cT$ tel que~$\lambda(\cA') \le \varepsilon_0$.
  Soit~$\tau'$ un point de~$\cA'$ et soient~$\tau$ et~$\ttau$ dans~$\phi^{-n_0} \{ \tau' \}$ tels que~$d_{\cT}(\tau, \ttau) > \varepsilon_0$.
  On note~$\cC$ et~$\tcC$ les composantes connexes de~$\phi^{-n_0}(\cA')$ contenant~$\tau$ et~$\ttau$, respectivement.
  Par~\eqref{e1}, on~a
  \[\lambda(\cC \cup \tcC)
    \le
    \lambda(\cA')
    \le
    \varepsilon_0~. \]
  Ceci entra{\^{\i}}ne que~$\cC$ et~$\tcC$ sont disjoints, et termine la d{\'e}monstration du lemme.
\end{proof}

\begin{proof}[D{\'e}monstration de la Proposition~\ref{prop:expansion-uniforme}]
  Soit~$n_0$ et~$\varepsilon_0$ donn{\'e}s par les Lemmes~\ref{lem:degre-itere} et~\ref{lem:degre-semi-local}, respectivement, et posons
  $$ \delta \= \frac{(\deg(R)^{n_0} - 1)^{\frac{1}{n_0}}}{\deg(R)}
  \text{ et }
  C \= \delta^{-(n_0 - 1)}~. $$

  Pour montrer la proposition, soit~$\cA'$ un sous-ensemble connexe de~$\cT$ tel que ${\lambda(\cA') \le \varepsilon_0}$, $n \ge 1$ un entier et~$\cA$ une composante connexe de~$\phi^{-n}(\cA')$.
  Notons que dans le cas ${n \le n_0 - 1}$ on a ${C \delta^n \ge 1}$ et l'assertion d{\'e}sir{\'e}e est alors imm{\'e}diate.
  On peut donc supposer que~$n \ge n_0$.
  L'ensemble~$\phi^j(\cA)$ est une composante connexe de~$\phi^{- (n - j)}(\cA')$ pour tout $j \le n-1$ par la Remarque~\ref{rem:surj-connexe}, et donc
  $\lambda(\phi^j(\cA)) \le \lambda(\cA) \le \varepsilon_0$ pour tout~$j \in \{0, 1, \ldots, n \}$.
  Soient~$q \ge 1$ et~$r\in \{ 0, \ldots, n_0 - 1 \}$ les entiers donn\'es par division euclidienne~$n = q n_0 + r$.
  Le Lemme~\ref{lem:degre-semi-local} entra{\^{\i}}ne que pour tout~$\ell$ dans~$\{0, 1, \ldots, q - 1\}$ on a~$D(\phi^{n_0}, \phi^{\ell n_0}(\cA)) \le \deg(R)^{n_0} - 1$ et par la Proposition-D{\'e}finition~\ref{def:phi-deg}
  $$ \lambda(\phi^{\ell n_0}(\cA))
  \le
  \frac{D(\phi^{n_0}, \phi^{\ell n_0}(\cA))}{\deg(R)^{n_0}} \times \lambda(\phi^{(\ell + 1) n_0}(\cA))
  \le
  \delta^{n_0} \times \lambda(\phi^{(\ell + 1) n_0}(\cA))~. $$
  On a donc par induction
  $$ \lambda(\cA)
  \le
  \delta^{qn_0} \times \lambda(\phi^{q n_0}(\cA))
  \le
  \delta^{qn_0} \times \lambda(\cA')
  \le
  C \delta^n \times \lambda(\cA')~,$$
  ce qui termine la d{\'e}monstration.
\end{proof}

%%%%% 
\subsection{Simplicit{\'e} de $\cT$}
\label{sec:simplicite de l'arbre}
Nous allons maintenant d\'emontrer la
\begin{Prop}\label{prop:simplicite de l'arbre}
  L'arbre $\cT$ est un segment.
\end{Prop}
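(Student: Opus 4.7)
Nous raisonnons par l'absurde en supposant que $\cT$ contient un point $\tau_*$ poss\'edant au moins trois branches. Le plan est d'utiliser l'expansion uniforme (Proposition~\ref{prop:expansion-uniforme}) pour transporter cette ramification dans un voisinage localement isom\'etrique \`a un segment de $\cT$, ce qui donnera une contradiction.

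Premi\`ere \'etape: \emph{la ramification est pr\'eserv\'ee par ant\'ec\'edent}. Si $\sigma \in \cT$ poss\`ede au moins trois branches et $\tau \in \phi^{-1}\{\sigma\}$, alors pour toute branche $\vec{w}$ en $\sigma$ et tout petit segment $[\sigma, \sigma']$ dans la direction de $\vec{w}$, la composante connexe $M$ de $\phi^{-1}([\sigma, \sigma'])$ contenant $\tau$ se surjecte sur $[\sigma, \sigma']$ par la Remarque~\ref{rem:surj-connexe}. Elle contient donc un chemin issu de $\tau$ dans une direction $\vec{v}$ avec $\phi(\vec{v}) = \vec{w}$, au sens de la Proposition-D\'efinition~\ref{def:phi-deg-tgt}. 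Toute branche en $\sigma$ \'etant ainsi l'image d'une branche en $\tau$, le nombre de branches en $\sigma$ est au plus celui en $\tau$; en particulier, $\tau$ poss\`ede lui aussi au moins trois branches.

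Deuxi\`eme \'etape: \emph{existence d'un voisinage segment}. On \'etablit l'existence d'un point $\tau_0 \in \cT$ et d'un r\'eel $r>0$ tels que $B(\tau_0, r)$ soit un segment ouvert de $\cT$. Comme $\rho$ est sans atome, $\cT$ n'est pas r\'eduit \`a un point et contient un segment non trivial $I$; la finitude $\lambda(\cT) \le 1$ force chaque point de $I$ poss\'edant une branche lat\'erale (\`a valeurs hors de $I$) \`a contribuer une quantit\'e strictement positive \`a $\lambda(\cT)$. Une d\'ecomposition selon la longueur minimale des branches lat\'erales permet alors d'extraire un sous-segment ouvert de $I$ enti\`erement disjoint des branches lat\'erales de tous ses points. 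C'est cette \'etape qui constitue la difficult\'e principale du plan.

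Pour conclure, choisissons un voisinage ouvert $V$ de $\tau_*$ avec $\lambda(V) \le \e_0$, la constante de la Proposition~\ref{prop:expansion-uniforme}. Tout sous-ensemble connexe $\cA$ de $\cT$ v\'erifie $\diam(\cA) \le \lambda(\cA)$, puisque la g\'eod\'esique reliant deux points de $\cA$ est incluse dans $\cA$; la Proposition~\ref{prop:expansion-uniforme} implique donc que toute composante connexe de $\phi^{-n}(V)$ a un diam\`etre $\le C \delta^n \lambda(V)$. Par l'ergodicit\'e de $\phi$ par rapport \`a $\lambda$ (Proposition~\ref{prop:application quotient}) et le th\'eor\`eme de Birkhoff, presque tout point $\tau_1 \in B(\tau_0, r/2)$ satisfait $\phi^n(\tau_1) \in V$ pour une infinit\'e de $n$; fixons un tel $\tau_1$ et un $n$ assez grand pour que les composantes connexes de $\phi^{-n}(V)$ soient de diam\`etre strictement inf\'erieur \`a $r/2$. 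La composante $\cC$ contenant $\tau_1$ est alors un ouvert de $\cT$ inclus dans $B(\tau_1, r/2) \subseteq B(\tau_0, r)$, donc contenu dans un segment. La Remarque~\ref{rem:surj-connexe} appliqu\'ee it\'erativement \`a $\phi$ donne $\phi^n(\cC) = V$, et donc $\cC$ contient un ant\'ec\'edent $\tilde{\tau}$ de $\tau_*$; par la premi\`ere \'etape, $\tilde{\tau}$ poss\`ede au moins trois branches. Mais $\cC$ \'etant un voisinage ouvert de $\tilde{\tau}$ dans $\cT$ inclus dans un segment, tout point de $\cC$ a au plus deux branches dans $\cT$, contradiction. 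Il s'ensuit que $\cT$ est un arbre r\'eel compact connexe dont aucun point n'admet trois branches ou plus, donc un segment.
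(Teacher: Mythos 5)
Votre schéma global — les points de branchement se relèvent par $\phi$, l'expansion uniforme de la Proposition~\ref{prop:expansion-uniforme} ramène un antécédent du point de branchement dans une petite boule, et le théorème de Birkhoff fournit les temps de passage — est en fait très proche de celui du texte, et votre première étape est déjà contenue dans la Proposition-Définition~\ref{def:phi-deg-tgt} (la surjectivité de $\vec{v} \mapsto \phi(\vec{v})$ découle de la dernière formule). Le point faible est votre deuxième étape, que vous signalez vous-même comme la difficulté principale mais que vous ne démontrez pas. L'existence d'une boule ouverte $B(\tau_0,r)$ qui soit un segment équivaut à l'existence d'une boule sans point de branchement, et votre argument de longueur ne l'établit pas~: la finitude de $\lambda$ montre seulement que, pour chaque $\varepsilon>0$, au plus un nombre fini de points de $I$ portent une branche latérale de masse au moins $\varepsilon$ (les branches latérales en des points distincts de $I$ sont deux à deux disjointes), donc que l'ensemble des points de $I$ portant une branche latérale est dénombrable. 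Or un ensemble dénombrable peut être dense dans $I$, et aucune ``décomposition selon la longueur minimale'' ne permet alors d'extraire un sous-segment ouvert évitant toutes les branches latérales~: une branche latérale arbitrairement courte en un point intérieur de la boule suffit à détruire la structure de segment, la boule étant ouverte. Ce n'est d'ailleurs pas une propriété générale des arbres réels compacts de longueur finie~: en attachant des segments de longueurs sommables en un ensemble dense de points, on construit un arbre compact de longueur finie dont aucune boule n'est un segment.

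Il faut donc un apport supplémentaire, de nature métrique ou dynamique, et c'est précisément ce que fait le texte~: au lieu de chercher une boule qui soit littéralement un segment, il établit le lemme de densité de Lebesgue (Lemme~\ref{l:besi}), à savoir $\lambda(B(\tau,r))/(2r) \to 1$ en $\lambda$-presque tout point, en s'appuyant sur les arbres finis de masse presque pleine (Lemme~\ref{lem:presque-fini}), puis le contredit par l'estimation quantitative $\lambda(B(\tau,r)) \ge \frac{7}{3}\, r$ au voisinage des préimages itérées d'un tripode (Lemme~\ref{l:estim}), préimages visitées infiniment souvent par ergodicité. Tant que vous n'avez pas un substitut de ce type pour votre étape~2, votre démonstration comporte un trou réel.
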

L'id\'ee principale est la suivante.
En un point $\lambda$-g\'en\'erique, $\cT$ ressemble localement \`a un segment (Lemme~\ref{l:besi}).
Si~$\cT$ avait un point de branchement, alors ses pr\'eimages seraient aussi
des points de branchements et l'ergodicit\'e de $\phi$ impliquerait la
densit\'e de ces points. L'objectif est d'obtenir une contradiction \`a
partir de l\`a en faisant des estimations de
longueur quantitative (Lemme~\ref{l:estim}).

Le premier r\'esultat sur lequel nous nous appuierons est classique.
Celui donne une information quantitative sur le fait que localement en presque tout point~$\cT$ ressemble \`a un segment.

\begin{Lem}\label{l:besi}
  Pour $\lambda$-presque tout point~$\tau$ dans~$\cT$, on a
  $$ \lim_{r\to 0} \frac {\lambda
    (B(\tau,r))}{2r}= 1~.
  $$
\end{Lem}

Nous donnons une d{\'e}monstration de ce lemme ci-dessous.

\begin{proof}[Démonstration de la Proposition~\ref{prop:simplicite de l'arbre}]
  Soient~$C > 0$ et~$\varepsilon_0,\delta\in\mathopen]0, 1\mathclose[$ donn{\'e}s par la Proposition~\ref{prop:expansion-uniforme}.

  On proc\`ede par l'absurde, et on choisit $\tau_0$ un point de
  branchement de $\cT$.
  Fixons ${\e\in \mathopen]0, \e_0/3\mathclose[}$ tel qu'il existe trois segments ferm{\'e}s~$\cJ_1$, $\cJ_2$ et~$\cJ_3$ partant de $\tau_0$ qui soient disjoints deux {\`a} deux et chacun de longueur~$3 \e$.
  On note~$\cI_1$, $\cI_2$ et~$\cI_3$ les trois segments ferm{\'e}s respectivement inclus dans $\cJ_1$, $\cJ_2$ et~$\cJ_3$ partant aussi de $\tau_0$ et de longueur $\e$.
  On note $\cY = \cI_1 \cup \cI_2 \cup \cI_3$.
  On a alors le r\'esultat suivant.
  \begin{Lem}\label{l:estim}
    Pour tout entier~$n\ge 0$ et pour tout~$\tau\in\phi^{-n}(\cY)$ il existe~$r\in \mathopen]0, C \delta^n\mathclose[$ tel que
    \[
      \lambda \left(
        B(\tau,r)
      \right)\ge \frac73 \, r
      ~.
    \]
  \end{Lem}
  Comme $\lambda$ est ergodique, pour $\lambda$-presque tout~$\tau$ dans~$\cT$ l'ensemble des entiers $n$ pour
  lesquels $\tau \in \phi^{-n}(\cY)$ est infini.
  Du Lemme~\ref{l:estim}, on tire
  \[
    \frac{\lambda (B(\tau,r))}{2r}
    \ge
    \frac{7 r}{6r}
    =
    \frac76
    >
    1~,
  \]
  ce pour une infinit\'e de valeurs de $r$ s'accumulant sur $0$.
  Ceci contredit le Lemme~\ref{l:besi}, et conclut la preuve.
\end{proof}

\begin{proof}[D{\'e}monstration du Lemme~\ref{l:besi}]
  Fixons~$\varepsilon > 0$ et soit~$\cT_0$ l'arbre fini donn{\'e} par le Lemme~\ref{lem:presque-fini} avec~$\kappa = \varepsilon^2$.
  On note~$\cB$ l'ensemble fini form{\'e} des bouts et des points de branchement de~$\cT_0$.
  On d{\'e}finit par r\'ecurrence une suite de partitions~$(\sP_{\ell})_{\ell \ge 1}$ de~$\cT_0 \setminus \cB$ en segments comme suit. La partition~$\sP_0$  est celle donn\'ee par les composantes connexes de~$\cT_0 \setminus \cB$. La partition $\sP_\ell$ est obtenue en divisant en deux segments de longueur \'egale chaque segment de $\sP_{\ell -1}$.
  Chaque {\'e}l{\'e}ment~$\cL$ de~$\sP_\ell$ est donc contenu dans un {\'e}l{\'e}ment~$\cL'$ de~$\sP_{\ell - 1}$ satisfaisant~$\lambda(\cL') = 2 \times \lambda(\cL)$.
  On pose~$\sP \= \bigcup_{\ell \ge 0} \sP_\ell$ et pour chaque~$\cL$ en~$\sP$ on note~$\hcL$ le segment obtenu en prenant la fermeture de l'intersection de~$B(\cL, \lambda(\cL))$ et la composante connexe de~$\cT_0 \setminus \cB$ contenant~$\cL$.

  Comme~$\cT_0$ est connexe et ferm{\'e}, la r\'etraction~$p \colon \cT \to \cT_0$ est continue, et la mesure ${\eta \= p_* \left( \lambda|_{\cT \setminus \cT_0} \right)}$ est positive de masse au plus~$\epsilon^2$.
  On note par~$\sE$ le sous-ensemble de~$\sP$ de tous les segments~$\cL$ tels que~$\eta(\cL) \ge \varepsilon \times \lambda(\cL)$ et par~$\sE'$ ceux qui sont maximaux pour l'inclusion.
  Les {\'e}l{\'e}ments de~$\sE'$ sont deux {\`a} deux disjoints et par cons{\'e}quent l'ensemble~$\hcE \= \bigcup_{\cL \in \sE'} \hcL$ satisfait
  $$
  \lambda(\hcE)
  \le
  \sum_{\cL \in \sE'} \lambda(\hcL)
  \le
  3 \times \sum_{\cL \in \sE'} \lambda(\cL)
  \le
  3 \varepsilon^{-1} \times \sum_{\cL \in \sE'} \eta(\cL)
  \le
  3 \varepsilon^{-1} \times \lambda(\cT \setminus \cT_0)
  \le
  3 \varepsilon~.
  $$
  Par cons{\'e}quent, si l'on fixe~$r_0 > 0$ suffisamment petit pour que~$\lambda(B(\cB, 2r_0)) \le \varepsilon$, alors l'ensemble~$\cS_{\varepsilon} \= \cT_0 \setminus \left( B(\cB, 2r_0) \cup \hcE \right)$ satisfait
  \begin{equation}
    \label{eq:10}
    \lambda(\cS_{\varepsilon})
    \ge
    \lambda(\cT_0) - 4 \varepsilon
    \ge
    1 - \varepsilon^2 - 4 \varepsilon~.
  \end{equation}
  Quitte {\`a} r{\'e}duire~$r_0$, on suppose que pour chaque~$\cL$ dans~$\sP_0$ on a~$r_0 < \lambda(\cL)$.

  Fixons~$\tau$ dans~$\cS_{\varepsilon}$ et~$r$ dans~$\mathopen]0, r_0\mathclose[$, et notons qu'il existe un unique {\'e}l{\'e}ment~$\cL_0$ de~$\sP$ contenant~$\tau$ et tel que~$\lambda(\cL_0) \le r < 2 \times \lambda(\cL_0)$.
  Soit~$\tcL_0$ l'{\'e}l{\'e}ment de~$\sP_0$ contenant~$\cL_0$.
  Comme~$\tau$ n'appartient pas {\`a}~$B(\cB, 2r_0)$, la fermeture de l'ensemble~$\cL_0$ est disjointe de~$\cB$.
  Par cons{\'e}quent, il existe des segments~$\cL_1$ et~$\cL_2$ dans~$\sP$ de m{\^e}me longueur que~$\cL_0$ et chacun ayant un bout en commun avec~$\cL_0$.
  On a donc~$p(B(\tau, r)) \subseteq \cL_0 \cup \cL_1 \cup \cL_2 \subseteq \tcL_0$, d'o{\`u}
  \begin{multline}
    \label{eq:11}
    1
    \le
    \frac{\lambda(B(\tau, r))}{2r}
    \le
    \frac{\lambda(B(\tau, r) \cap \tcL_0) + \eta(\cL_0 \cup \cL_1 \cup \cL_2)}{2r}
    \\ =
    1
    +
    \frac{\eta(\cL_0) + \eta(\cL_1) + \eta(\cL_2)}{2r}~.
  \end{multline}
  Pour finir la preuve, on montrera qu'aucun des segments~$\cL_0$, $\cL_1$ ou~$\cL_2$ n'appartient {\`a}~$\sE$.
  Comme~$\cL_0$ contient~$\tau$ et par hypoth{\`e}se~$\tau$ n'est pas dans~$\hcE$, le segment~$\cL_0$ n'appartient pas {\`a}~$\sE$.
  Soit~$i$ dans~$\{1, 2 \}$ et supposons par l'absurde que~$\cL_i$ est dans~$\sE$.
  Si on note par~$\hcL_i'$ l'{\'e}l{\'e}ment de~$\sE'$ contenant~$\cL_i$, alors on a~$\cL_0 \subseteq \hcL_i \subseteq \widehat{\cL_i'} \subseteq \hcE$, ce qui contredit notre choix de~$\tau$.
  On a donc
  $$ \eta(\cL_0) + \eta(\cL_1) + \eta(\cL_2)
  \le
  3 \varepsilon \times \lambda(\cL_0)
  \le
  6 r \times \varepsilon~, $$
  et en combinaison avec~\eqref{eq:10} et~\eqref{eq:11}, ceci compl\`ete la d{\'e}monstration.
\end{proof}

\begin{proof}[D\'emonstration du Lemme~\ref{l:estim}]
  Fixons $n\ge 1$ et $\tau \in \phi^{-n}(\cY)$.
  On peut supposer sans perdre de g\'en\'eralit\'e que $\phi^n(\tau) \in \cI_1$.
  On note $\tau_1\neq \tau_0$ l'autre point du bord de~$\cI_1$, et~$\cA$ la composante connexe de $\phi^{-n}(\cI_1)$ qui contient~$\tau$.
  Notons que par la Proposition~\ref{prop:expansion-uniforme} on~a
  $$ r_n
  \= \lambda(\cA)
  \le
  C\delta^n \times \varepsilon
  \le
  C \delta^n/3~. $$
  Posons~$\cK_1 \= \overline{\cJ_1 \setminus \cI_1}$, $\cK_2 \= \cJ_2$ et~$\cK_3 \= \cJ_3$ et pour~$i\in\{1, 2, 3 \}$.
  Soit~$\sC_i$ la collection des composantes connexes de~$\phi^{-n}(\cK_i)$ qui rencontrent~$\cA$. Pour chaque $i$, on consid\`ere deux cas distincts.
  Soit on peut trouver une composante $\cC\in \sC_i$ qui n'est pas contenue dans~$B(\cA, 2r_n)$ et alors
  $$
  \lambda (B(\cA, 2r_n) \cap \phi^{-n}(\cK_i))
  \ge
  \sum_{\cC' \in \sC_i} \lambda \left(B(\cA, 2r_n) \cap \cC' \right)
  \ge
  \lambda \left(B(\cA,2r_n) \cap \cC \right)
  \ge
  2 r_n~.
  $$
  Soit on a $\cC \subseteq B(\cA, 2r_n)$ pour tout $\cC\in \sC_i$.
  Dans ce cas, pour estimer la longueur~$\lambda (\cC)$, on remarque
  que $\cC$ contient un unique point $\sigma $ dans $ \phi^{-n}(\tau_1) \cap \cA$ si $i=1$, ou dans $ \phi^{-n}(\tau_0) \cap \cA$ sinon car $\cA$ est connexe et $\phi$ est \`a fibres finies.
  Lorsque~$i = 1$ la Proposition-D{\'e}finition~\ref{def:phi-deg} entra{\^{\i}}ne alors
  \begin{equation*}
    \begin{split}
      \lambda \left( B(\cA, 2r_n) \cap \phi^{-n}(\cK_1) \right)
      & \ge
        \sum_{\cC \in \sC_1} \lambda \left( \cC \right)
      \\ & =
           \sum_{\sigma \in \phi^{-n}(\tau_1) \cap \cA} \left(\sum_{\cC \in \sC_1, \sigma \in \cC} \frac{D(\phi^n, \cC)}{\deg(R)^n} \times \lambda (\cK_1)\right)
      \\ & \mathop{=}\limits^{\eqref{e1}}
           \sum_{\sigma \in \phi^{-n}(\tau_1) \cap \cA} \frac{D(\phi^n, \sigma)}{\deg(R)^n} \times 2 \varepsilon
      \\ & =
           \frac{D(\phi^n, \cA)}{\deg(R)^n} \times 2 \lambda(\cI_1)
      \\ & =
           2r_n~.
    \end{split}
  \end{equation*}
  Lorsque~$i = 2$ ou~$3$, on a de fa{\c{c}}on similaire
  $  \lambda \left( B(\cA, 2r_n)\right) \ge 2r_n$ si il existe une composante $\cC\in \cC_i$ qui n'est pas incluse dans
  $ B(\cA, 2r_n)$, ou sinon
  \begin{multline*}
    \lambda \left( B(\cA, 2r_n) \cap \phi^{-n} (\cK_i) \right)
    \ge
    \sum_{\cC \in \sC_i} \lambda \left( \cC \right)
    \\ =
    \sum_{\sigma \in \phi^{-n}(\tau_0) \cap \cA} \left(\sum_{\cC \in \sC_i, \sigma \in \cC} \frac{D(\phi^n, \cC)}{\deg(R)^n} \times \lambda (\cK_i)\right)
    \\ \mathop{=}\limits^{\eqref{e1}}
    \sum_{\sigma \in \phi^{-n}(\tau_0) \cap \cA} \frac{D(\phi^n, \sigma)}{\deg(R)^n} \times 3 \varepsilon
    =
    \frac{D(\phi^n, \cA)}{\deg(R)^n} \times 3 \lambda(\cI_1)
    =
    3r_n~.
  \end{multline*}
  Dans tous les cas, on obtient
  \[
    \lambda\left( B(\tau, 3r_n)\right)
    \ge
    \lambda \left( B(\cA, 2r_n) \right)
    \ge
    \lambda(\cA)
    +
    \sum_{i = 1}^3 \lambda \left( B(\cA, 2r_n) \cap \phi^{-n}(\cK_i) \right)
    \ge
    7r_n~,
  \] ce qui conclut la preuve.
\end{proof}

% 
%%%%% 
% 

\subsection{Simplicit{\'e} de $\phi$}
\label{sec:phi}
Maintenant que l'on sait que~$\cT$ est un segment, on peut d\'eterminer la structure de l'application induite~$\phi$.
On fixe~$\tau_0$ un bout de~$\cT$, de telle sorte que la distance $d_\cT(\cdot ,\tau_0)$ induit une isom\'etrie du segment r\'eel $[0,1]$
sur $\cT$. En particulier $\cT$ porte une structure affine, et un ordre $\le$ pour lequel $\tau_0$ est minimal.

\begin{Prop}
  \label{prop:phi}
  Il existe des entiers~$k$, $d_1$, \ldots, $d_k \ge 2$ tels que~$\sum_{i = 1}^k d_i = d$ et tels que, si l'on note par $\tau_0 < \tau_1 < \cdots < \tau_k$ les points satisfaisant~$d_{\cT}(\tau_i, \tau_{i + 1}) = d_i/d$ pour chaque~$i\in\{0, \ldots, k - 1 \}$, alors l'application restreinte $\phi\colon [\tau_i, \tau_{i+1}] \to \cT$ est affine de facteur de dilatation~$d/{d_i}$ et surjective.
\end{Prop}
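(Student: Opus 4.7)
L'id\'ee est d'identifier $\cT$ avec un intervalle ferm\'e via l'isom\'etrie $\tau \mapsto d_\cT(\tau, \tau_0)$, puis de d\'ecouper $\cT$ selon l'ensemble fini $F \= \phi^{-1}\{\tau_0, \tau_*\}$ o\`u $\tau_*$ d\'esigne le bout oppos\'e. Sur chaque morceau de la partition r\'esultante, je montrerai que $\phi$ est une bijection affine sur $\cT$ de pente d\'etermin\'ee par le degr\'e local. Les ingr\'edients principaux sont les formules de degr\'e par branche des Propositions-D\'efinition~\ref{def:phi-deg} et~\ref{def:phi-deg-tgt}, ainsi que l'expansion uniforme de la Proposition~\ref{prop:expansion-uniforme}.

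Je montrerais d'abord que $\phi$ envoie $\{\tau_0,\tau_*\}$ dans lui-m\^eme et que $\phi$ est localement injective en tout point int\'erieur $\tau$ avec $\phi(\tau)$ \'egalement int\'erieur. Les deux assertions d\'ecoulent de la formule $\sum_{\vec v\to\vec v'}D(\phi,\vec v)=D(\phi,\tau)$ de la Proposition-D\'efinition~\ref{def:phi-deg-tgt}~: comme chaque branche en $\phi(\tau)$ doit recevoir au moins une branche en $\tau$, le fait d'avoir une unique branche en $\tau$ force $\phi(\tau)$ \`a avoir lui aussi une unique branche (donc \`a \^etre un bout), et si $\tau$ et $\phi(\tau)$ sont tous deux int\'erieurs, les deux branches en $\tau$ s'envoient bijectivement sur les deux branches en $\phi(\tau)$.

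L'ensemble $F$ est fini (les fibres de $\phi$ le sont) et contient les deux bouts; \'enum\'erons-le $\tau_0=\sigma_0<\sigma_1<\cdots<\sigma_k=\tau_*$. Sur chaque $(\sigma_{i-1},\sigma_i)$, $\phi$ prend ses valeurs dans l'int\'erieur de $\cT$ et est localement injective par l'\'etape pr\'ec\'edente, donc monotone. Les valeurs aux bords $\phi(\sigma_{i-1}),\phi(\sigma_i)\in\{\tau_0,\tau_*\}$ sont distinctes --- sinon la monotonie jointe \`a la finitude des fibres forcerait $\phi$ \`a \^etre constante --- donc $\phi\colon[\sigma_{i-1},\sigma_i]\to\cT$ est une bijection monotone. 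Pour tout sous-intervalle $[\alpha,\beta]\subseteq[\sigma_{i-1},\sigma_i]$, la monotonie fait de $[\alpha,\beta]$ une composante connexe de $\phi^{-1}(\phi([\alpha,\beta]))$ dont la fibre au-dessus de tout $\tau'\in\phi([\alpha,\beta])$ est r\'eduite \`a un point; la Proposition-D\'efinition~\ref{def:phi-deg} force alors $D(\phi,\cdot)$ \`a \^etre constante sur $(\sigma_{i-1},\sigma_i)$, valant un entier $d_i$, et la formule \'equilibr\'ee~\eqref{e1} donne $\lambda([\alpha,\beta])=(d_i/d)\lambda(\phi([\alpha,\beta]))$. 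Ainsi $\phi|_{[\sigma_{i-1},\sigma_i]}$ est affine de facteur $d/d_i$ et $d_\cT(\sigma_{i-1},\sigma_i)=d_i/d$; en sommant, on obtient $\sum_{i=1}^k d_i=d$.

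Il reste \`a \'etablir les bornes enti\`eres. Pour $k=1$, $\phi$ serait une isom\'etrie de $\cT$, ce qui contredit l'expansion uniforme de la Proposition~\ref{prop:expansion-uniforme}; d'o\`u $k\ge 2$. La borne inf\'erieure $d_i\ge 2$ sera, je pense, l'obstacle principal de la preuve : la borne sup\'erieure $d_i<d$ est imm\'ediate via l'expansion appliqu\'ee aux itin\'eraires constants, mais exclure $d_i=1$ requiert davantage, puisqu'une telle valeur correspondrait \`a $\phi$ agissant comme un hom\'eomorphisme local sur un morceau de mesure positive. Le plan est de combiner la formule de pliage $D(\phi,\sigma_j)=d_j+d_{j+1}$ aux points de coupure int\'erieurs (issue de la Proposition-D\'efinition~\ref{def:phi-deg-tgt}) avec une analyse fine des itin\'eraires it\'er\'es, pour produire une branche inverse dont le taux de contraction le long de $\phi^{-n}$ ne d\'ecro\^{\i}t pas g\'eom\'etriquement \`a moins que chaque $d_i\ge 2$.
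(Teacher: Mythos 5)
Votre découpage de $\cT$ le long de $F=\phi^{-1}(\partial\cT)$, l'alternative bout/pli tirée de la Proposition-Définition~\ref{def:phi-deg-tgt}, la monotonie et la surjectivité des branches, puis la constance de $D(\phi,\cdot)$ et le caractère affine de pente $d/d_i$ via~\eqref{e1} suivent pour l'essentiel la démonstration du papier (qui exclut $k=1$ par ergodicité là où vous invoquez la Proposition~\ref{prop:expansion-uniforme}, variante légitime~; l'imprécision sur les sous-intervalles $[\alpha,\beta]$ touchant un point de pli se répare en prenant $[\alpha,\beta]\subset\mathopen]\sigma_{i-1},\sigma_i\mathclose[$ puis en passant à la limite, et la somme $\sum_i d_i=d$ utilise $\lambda(\cT)=1$, c'est-à-dire la Proposition~\ref{prop:compacite de l'arbre}).

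En revanche, la borne $d_i\ge 2$, qui fait partie de l'énoncé et qui est indispensable à la démonstration du Théorème~\ref{thm:main1} (la définition d'affine Bernoulli exige des facteurs de dilatation au moins~$2$), n'est pas démontrée~: vous la signalez vous-même comme l'obstacle principal et n'en donnez qu'un plan, et ce plan ne peut pas aboutir tel quel. D'une part, une branche avec $d_i=1$ est la plus dilatante possible (pente $d$), de sorte que les branches inverses qui la traversent contractent d'un facteur $1/d$~: dès que $k\ge2$ on a $d_i\le d-1$ pour tout~$i$, donc toute branche inverse itérée contracte au taux géométrique $\max_i d_i/d\le (d-1)/d<1$, et aucune dichotomie du type \og la contraction cesse d'être géométrique si un $d_i$ vaut $1$\fg{} ne peut apparaître. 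D'autre part, aucun argument mené uniquement dans le quotient, avec les seules données $(\cT,\lambda,\phi,D)$, les relations~\eqref{e1} et les formules de branches, ne peut exclure $d_i=1$~: l'application continue de $[0,1]$ définie par $t\mapsto 3t$ sur $[0,\tfrac13]$ et $t\mapsto 1-\tfrac32(t-\tfrac13)$ sur $[\tfrac13,1]$, munie des poids $D=1$ et $D=2$ sur les deux branches (et $D=3$ au pli), préserve la mesure de longueur, est ergodique, uniformément dilatante et vérifie~\eqref{e1} ainsi que les formules de branches, avec pourtant $d_1=1$. L'inégalité $d_i\ge2$ doit donc faire intervenir le fait que $D(\phi,\tau)=\deg_R(\pi^{-1}\{\tau\})$ est un degré local de la fraction rationnelle~$R$~; c'est ainsi que le papier conclut à cet endroit, en s'appuyant sur $k\ge2$~: une branche de degré~$1$ forcerait $R$ à envoyer isométriquement, pour $d_{\HK}$, la portion du segment situé au-dessus de $[\tau_{i-1},\tau_i]$ sur le segment tout entier, ce qui ne laisse pas de place aux $k-1\ge1$ autres branches. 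Sans un argument de ce type, remontant à la géométrie de $\PKber$, votre preuve reste incomplète sur ce point.
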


\begin{proof}
  L'ensemble $\phi^{-1} (\partial \cT)$ contient $\partial \cT$ par le Lemme~\ref{lem:preimage}, et est fini par~\eqref{e1}.
  On note par ordre croissant $\tau_0 < \tau_1 < \cdots < \tau_k$ les {\'e}l{\'e}ments de~$\phi^{-1}(\partial \cT)$.

  Prenons $\tau$ dans $\cT$ qui n'est pas un bout.
  Par la Proposition-D{\'e}finition~\ref{def:phi-deg-tgt}, on peut trouver $\tau_- < \tau$, et $\tau_+ > \tau$ ainsi que deux entiers $d_+$ et $d_-$ tels que $D(\phi,\cdot) = d_\pm$ sur $] \tau, \tau_\pm [$, et la restriction de $\phi$ \`a $\mathopen] \tau, \tau_\pm \mathclose[$ est injective.
  Deux \'eventualit\'es sont alors possibles.
  Soit~$\phi$ est localement monotone en $\tau$.
  Dans ce cas, $\phi$ est injective sur $] \tau_-, \tau_+ [$ et~\eqref{e1} implique $d_+ = d_-$.
  Soit~$\phi$ n'est pas monotone en $\tau$, et $\phi(\tau) \in \partial \cT$, car $\phi$ est ouverte (Proposition~\ref{prop:application quotient}).

  Ceci montre que pour tout~$i$ dans~$\{1, \ldots, k \}$ l'application~$\phi$ est injective et que~$D(\phi,\cdot)$ est constante sur $] \tau_{i - 1}, \tau_i [$.
  Notons que~$k = 1$ entra{\^{\i}}ne que~$\phi$ est l'identit{\'e} sur~$\cT$, ce qui contredit l'ergodicit{\'e}.

  Si l'on note~$d_i$ la valeur de~$D(\phi, \cdot)$ sur~$] \tau_{i - 1}, \tau_i [$, alors on a ${d_{\cT}(\tau_i, \tau_{i + 1}) = d_i/d}$ par~\eqref{e1}, et par cons{\'e}quent~$d_i \ge 2$, car~$k \ge 2$, et~$\sum_{i = 1}^{k} d_i = d$.
  Notons aussi que~\eqref{e1} entra{\^{\i}}ne
  $$ d_\cT(\phi(\tau), \phi(\tau'))
  =
  \frac{d}{d_i} \times d_\cT(\tau, \tau')~,$$pour chaque~$i$ dans~$\{1, \ldots, k \}$ et tout~$\tau$ et~$\tau'$ dans~$\mathopen] \tau_{i - 1}, \tau_i \mathclose[$.
\end{proof}

% 
%%%%% 
% 

\subsection{D\'emonstration du Th\'eor\`eme~\ref{thm:main1}}
\label{sec:preuve main1}

Nous allons montrer que~$R$ est affine Bernoulli.
Le quotient~$\cT$, d\'efini au d\'ebut de cette section, est un segment non trivial par la Proposition~\ref{prop:phi}.
Soient~$x_0$ et~$x_1$ dans~$\PKber$ tels que~$\pi(x_0)$ et~$\pi(x_1)$ sont les extr{\'e}mit{\'e}s de~$\cT$.
On peut toujours supposer que~$x_0$ et~$x_1$ sont dans $J_R$, car ce dernier ensemble est ferm{\'e} et~$\rho$ ne charge pas~$F_R$ par \cite[Th\'eor\`eme~A]{theorie-ergo}.
Nous allons montrer par l'absurde que
\begin{equation}\label{e:julia}
  J_R \subseteq [x_0,x_1]~.
\end{equation}
Soit donc~$x$ dans~$J_R \setminus [x_0,x_1]$ et soit~$U$ un ouvert fondamental contenant~$x$ et n'intersectant pas~$[x_0,x_1]$.
De~\cite{theorie-ergo}, on tire l'existence d'un entier positif non nul $n$ et d'\'el\'ements ${x'_i\in R^{-n}(x_i)\cap U}$, pour $i=0,1$.
Le segment~$[x'_0,x'_1]$ est inclus dans $U$ et son image
par $R^n$ contient $[x_0,x_1]$ dont la masse pour $\rho$ est
positive.
On en d\'eduit que $\rho( [x'_0,x'_1]) >0$ par~\eqref{eq:6} ce qui
implique que l'un des deux points $\pi(x'_0)$ ou $\pi(x'_1)$ n'appartient pas au segment
$[\pi(x_0), \pi(x_1)]$ ce qui est contradictoire.

Dans la suite de la preuve, on note $I= [x_0,x_1]$ qui contient
donc $J_R$.
\begin{Lem}\label{l:3}
  Pour tout $\tau \in \cT$, on a l'alternative suivante: soit $\pi^{-1} \{ \tau \} \cap I$ contient un unique point et ce point appartient {\`a}~$J_R$~; soit~$\pi^{-1} \{ \tau \} \cap I$ est un intervalle dont les seuls points appartenant {\`a}~$J_R$ sont ses extr{\'e}mit{\'e}s.
  De plus, l'ensemble des~$\tau$ pour lesquels $\pi^{-1} \{ \tau \} \cap I$ est un intervalle est au plus d{\'e}nombrable.
\end{Lem}
\begin{proof}
  Comme~$\pi^{-1} \{ \tau \}$ est connexe, son intersection avec~$I$ est soit r\'eduit \`a un point ou est un intervalle.
  Soit~$x$ un point de~$\pi^{-1} \{ \tau \}\cap I$ appartenant {\`a}~$F_R$.
  Alors on peut trouver~$x'$ et~$x''$ dans~$I$ tels que l'intervalle~$\mathopen] x', x'' \mathclose[$ contient~$x$ et est contenu dans~$F_R$.
  Comme~$J_R \subset I$, ceci entra{\^{\i}}ne que la couronne comprise entre~$x'$ et~$x''$ est contenue dans~$F_R$ et par cons{\'e}quent que~$\rho(\mathopen] x', x'' \mathclose[) = 0$ et~$\pi([x', x'']) = \{ \tau \}$.
  Ceci montre que dans le cas o{\`u}~$\pi^{-1} \{ \tau \} \cap I$ contient un unique point ce point est dans~$J_R$, et que dans le cas o{\`u}~$\pi^{-1} \{ \tau \} \cap I$ est un intervalle, les extr{\'e}mit{\'e}s sont contenues dans~$J_R$.

  Il reste {\`a} montrer que si~$\pi^{-1} \{ \tau \} \cap I$ est un intervalle d'extr{\'e}mit{\'e}s~$x$ et~$x'$, alors ${] x, x' [ \subset F_R}$.
  Comme~$x$ et~$x'$ d\'efinissent le m\^eme point dans~$\cT$, on a~$\rho ( [x, x']) =0$.
  Par ailleurs, $J_R \subset I$, et donc la couronne comprise entre~$x$ et~$x'$ est de masse nulle et donc incluse dans~$F_R$.
  En particulier, on a ${] x, x' [ \subset F_R}$.

  On peut identifier $I$ \`a un segment r\'eel.
  Lorsque $\tau$ poss\`ede deux ant\'ec\'edents ${x, x'\in J_R}$,
  alors le segment $]x, x'[$ est non-trivial et contient donc un
  rationnel.
  L'ensemble de ces points est donc d\'enombrable.
\end{proof}

\begin{proof}[D\'emonstration du Th\'eor\`eme~\ref{thm:main1}]
  Soient~$x_0$, $x_1$ et~$I$ comme ci-dessus et soient~$k$, $d_1, \ldots, d_k$, $\tau_0, \ldots, \tau_k$ donn{\'e}s par la Proposition~\ref{prop:phi}.
  Posons~$y_0 \= x_0$ et $y_k' \= x_1$ et pour chaque~$j$ dans~$\{2, \ldots, k - 1 \}$ soit~$y_j$ (resp.~$y_j'$) le point de~$\pi^{-1} \{ \tau_j \} \cap I$ le plus proche de~$y_0$ (resp.~$y_k'$).
  Comme $x_0, x_1\in J_R$, le Lemme~\ref{l:3} donne \[R^{-1}(\{x_0, x_1\}) = \{y_0, \ldots, y_{k - 1}, y_1', \ldots, y_k' \}~.\]
  Fixons~$j$ dans~$\{1, \ldots, k \}$, posons~$I_j = [y_{j - 1}, y_j']$ et notons~$C_j$ la couronne comprise entre~$y_{j - 1}$ et~$y_j'$.
  Clairement~$R(C_j)$ contient~$\mathopen]x_0, x_1\mathclose[$ par connexit{\'e} et~$R(C_j)$ est disjoint de~$\{x_0, x_1 \}$.
  On conclut que le compl\'ementaire de~$R(C_j)$ poss{\`e}de aux moins deux composantes connexes.
  Le Lemme~\ref{lem:couronne} implique que~$R(C_j)$ est la couronne comprise entre~$x_0$ et~$x_1$, que~$R$ envoie~$\mathopen]y_{j - 1}, y_j'\mathclose[$ de fa\c{c}on bijective sur~$]x_0, x_1[$ et que~$\deg_R$ est constante sur~$\mathopen]y_{j - 1}, y_j'\mathclose[$.
  La Proposition~\ref{prop:phi} entra{\^{\i}}ne alors qu'on a~$\deg_R(C_j) = d_j$ et par cons{\'e}quent que pour tout~$y$ dans~$\mathopen] y_{j - 1}, y_j' \mathclose[$ on a~$\deg_R(y) = d_j$.
  En particulier, $R \colon I_j \to I$ est affine et surjective de facteur de dilatation~$d_j$.
  Par ailleurs, l'{\'e}quation $\sum_{j = 1}^k d_j = d$, donn{\'e}e par la Proposition~\ref{prop:phi}, entra{\^{\i}}ne qu'on a~$R^{-1}(\mathopen] x_0, x_1 \mathclose[) = \bigcup_{i = 1}^{k} \mathopen] y_{i - 1}, y_i' \mathclose[$ et par cons{\'e}quent~$R^{-1}(I) = \bigcup_{j = 1}^{k} I_j \subseteq I$.
  Ceci montre que~$R$ est affine Bernoulli.

  Pour montrer la derni{\`e}re assertion du th{\'e}or{\`e}me, supposons que~$\rho_R$ n'est pas \'etrang\`ere {\`a} la mesure de probabilit{\'e}~$\rho_I$ sur~$I$ proportionelle {\`a} la mesure de Hausdorff $1$\nobreakdash-dimension\-nelle associ{\'e}e a~$d_{\HK}$.
  Alors~$J_R$ est de longueur positive et nous avons ${J_R = I = \bigcup_{j = 1}^{k} I_j}$ par la Proposition~\ref{prop:affine Bernoulli}~(2).
  Comme les deux mesures invariantes~$\rho_I$ et $\rho_R$ sont ergodiques, elles sont \'egales et donc, pour chaque~$j$ dans~$\{1, \ldots, k \}$, on a
  \begin{equation}
    \label{eq:12}
    \frac{d_j}d
    =
    \rho_R(I_j)
    =
    \rho_I(I_j)
    =
    \frac1{d_j}
    \text{ et }
    d_j^2
    =
    d~.
  \end{equation}
  On conclut que $R$ est \`a allure Latt\`es.
\end{proof}

% 
%%%%%%%%%%%%%%%%%%%%%%%%%%%%%%%%%%%%%%%%%%%%%%%%%%%%%%%%%%%%%%%%%% 
% 

\section{Exposants de \mbox{Lyapunov} et le sous-cobord g{\'e}om{\'e}trique}\label{Sec:cocycle}
Le but de cette section est de d\'emontrer le Th\'eor\`eme~\ref{thm:main3}, {\'e}nonc{\'e} dans l'introduction.
Apr{\`e}s avoir calcul{\'e} l'exposant de \mbox{Lyapunov} {\`a} l'aide de la th{\'e}orie du potentiel (\S\S~\ref{sec:derivee-Lyap}, \ref{ss:pr}), on exprime la d{\'e}riv{\'e}e sph{\'e}rique d'une fraction rationnelle sous la forme d'un sous-cobord multiplicatif (\S~\ref{sec:derivee}) et on {\'e}tablit un lien entre l'exposant de \mbox{Lyapunov} et la ramification sauvage (\S~\ref{ss:exposants}), ce qui permet de d{\'e}duire le Th\'eor\`eme~\ref{thm:main3}~(1) (\S~\ref{sec:thm31}).
Apr{\`e}s une estimation pr{\'e}liminaire (\S~\ref{ss:integrable}), {\'e}galement utilis{\'e}e dans la preuve du Th{\'e}or{\`e}me~\ref{thm:main2}, nous donnons la d{\'e}monstration du Th{\'e}or{\`e}me~\ref{thm:main3}~(2) (\S~\ref{sec:thm32}).
Enfin, on ach{\`e}ve cette partie avec la d{\'e}monstration du Th{\'e}or{\`e}me~\ref{thm:main3}~(3) (\S~\ref{sec:briendduval}).

\smallskip

Rappelons que, pour une fraction rationnelle~$R$ {\`a} coefficients dans~$K$, non constante, un point critique de~$R$ est un point de~$\PKber$ o{\`u} ${\deg_R \ge 2}$.
Nous d{\'e}signons par~$\crit_R$ l'ensemble des points critiques de~$R$ et par~$\crit_R(K)$ ceux qui appartiennent {\`a}~$\PK$.
De plus, un point critique de~$R$ est inséparable s'il appartient {\`a}~$\HK$ et~$R$ est ins{\'e}parable en ce point, et on note~$\wcrit_R$ l'ensemble des points critiques inséparables.
Voir \S~\ref{ss:action} pour de pr{\'e}sicions.

Dans cette section, pour tout~$x$ dans~$\PK$ et~$r$ dans~$\mathopen] 0, 1 \mathclose]$, on note~$B(x, r)$ la boule ouverte de~$\PK$ de diam{\`e}tre projectif~$r$.

% 
%%% 
% 

\subsection{D\'eriv\'ee sph\'erique et exposant de \mbox{Lyapunov}}
\label{sec:derivee-Lyap}

Apr{\`e}s des rappels sur la d{\'e}riv{\'e}e sph{\'e}rique, on calcule dans cette section l'exposant de \mbox{Lyapunov} de la mesure d'{\'e}quilibre et on d{\'e}termine quand il est fini (Corollaire~\ref{cor:justif-lyap}).

Soit~$R$ une fraction rationnelle non constante {\`a} coefficients dans~$K$.
On suppose~$R$ s\'eparable, de telle sorte que sa d\'eriv\'ee $R'$ est non identiquement nulle.
On introduit alors la d\'eriv\'ee sph\'erique en un point~$x$ dans ${\AKber\setminus R^{-1}(\infty)}$ par la formule:
\begin{equation} \label{eq:def-der-spherique}
  \| R' \|(x)
  \=
  |R'(x)|\, \frac{\max \{ 1 , |x|\}^2}{\max \{ 1 , |R(x)|\}^2}~.
\end{equation}
On montre que la fonction $\|R'\|$ se prolonge contin\^ument \`a $\PKber$.
On a de plus la formule de composition
\[
  \| (R_1\circ R_2) ' \|(x)
  =
  \| (R'_1) \|(R_2(x)) \times \| R'_2 \|(x)~,
\]
ce qui montre que la d\'eriv\'ee sph\'erique est invariante par post- et pr\'e-composition par une transformation de M\"obius
fixant~$\xcan$, c'est-\`a-dire par tout \'el\'ement de~$\PSL(2,\cO_K)$.

\begin{Prop}
  \label{prop:delta}
  Pour toute fraction rationnelle~$R$ {\`a} coefficients dans~$K$, non constante et s\'eparable, la fonction $\log \| R' \|$ est un
  potentiel.
  De plus, on a
  \begin{equation*}
    \Delta \log \|R' \|
    =
    \sum_{x\in \crit_R(K)} \ord_{R'}(x) \delta_x + 2\delta_{\xcan} - 2\, \sum_{x \in R^{-1}(\xcan)} \deg_R (x) \delta_x~,
  \end{equation*}
  et la variation totale de~$\Delta \log \|R' \|$ est au plus $4\deg(R)$.
  En particulier, $\|R'\|$ est localement constante hors de l'enveloppe convexe de l'union de~$\crit_R(K)$, $\{ \xcan \}$, et~$R^{-1}(\xcan)$.
\end{Prop}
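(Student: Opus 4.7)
\emph{Plan de démonstration.} L'idée est de décomposer $\log\|R'\|$ en trois morceaux à l'aide de la définition~\eqref{eq:def-der-spherique}:
\[
  \log\|R'\|(x)
  =
  \log|R'(x)| + 2\log\max\{1,|x|\} - 2\log\max\{1,|R(x)|\},
\]
puis de calculer le Laplacien de chacun séparément. Chacun de ces trois termes est un potentiel au sens de~\S~\ref{sec:rappel-pot}: le premier est une différence de potentiels associés au numérateur et au dénominateur de la fraction rationnelle $R'$ via la formule $\Delta\log|P|=\sum_x\ord_x(P)\delta_x - \deg(P)\delta_\infty$; le deuxième s'identifie au produit de Gromov $\langle x,\infty\rangle_{\xcan}$ (vérification directe sur les points de type~II), donc c'est un potentiel de Laplacien $\delta_{\xcan}-\delta_\infty$; et le troisième est le tiré en arrière du deuxième par $R$, donc aussi un potentiel, de Laplacien $R^*(\delta_{\xcan}-\delta_\infty)$ d'après la propriété $\Delta(g\circ R)=R^*\Delta g$ rappelée dans~\S~\ref{sec:rappel-pot}. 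Comme les combinaisons linéaires de potentiels sont des potentiels, on en déduit que $\log\|R'\|$ en est un.

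Le cœur du calcul consiste à expliciter ces trois Laplaciens et à regrouper les termes. On écrit $R=P/Q$ avec $P,Q\in K[T]$ premiers entre eux, ce qui donne $R'=(P'Q-PQ')/Q^2$. Le Laplacien de $\log|R'|$ s'écrit alors $\sum_{x\in\PK}\ord_x(R')\delta_x$ comme diviseur, et le pull-back se développe en
\[
  R^*(\delta_{\xcan}-\delta_\infty)
  =
  \sum_{x\in R^{-1}(\xcan)}\deg_R(x)\delta_x - \sum_{x\in R^{-1}(\infty)}\deg_R(x)\delta_x.
\]
En sommant les trois contributions, les termes au point $\infty$ et aux pôles de $R$ dans $\PK$ se combinent avec les $\ord_x(R')$ correspondants pour reconstituer exactement le poids de ramification $\ord_{R'}(x)$ au sens du diviseur de ramification (c'est-à-dire $\deg_R(x)-1$ dans le cas modéré, plus une contribution sauvage en caractéristique résiduelle positive). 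Cela fournit la formule annoncée. L'obstacle principal est précisément la vérification de cette compensation: en un pôle $x\in K$ de $R$ de degré local $m$, on a $\ord_x(R')=-m-1$ (dans le cas tame), et le terme $+2m$ issu du pull-back vient compenser pour donner $m-1=\deg_R(x)-1$; on vérifie de même au point $\infty$ selon que $\infty$ soit un pôle de $R$ ou non.

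Pour la borne sur la variation totale, j'invoque la formule de Riemann-Hurwitz appliquée au revêtement séparable $R\colon\PK\to\PK$: le diviseur de ramification est de degré total $2d-2$. La partie positive de $\Delta\log\|R'\|$ est donc de masse au plus $(2d-2)+2=2d$, la partie négative est de masse $2d$ par conservation de la masse (nulle pour un Laplacien), et la variation totale est au plus $4d$. Enfin, le support de $\Delta\log\|R'\|$ est inclus dans l'ensemble fini $\crit_R(K)\cup\{\xcan\}\cup R^{-1}(\xcan)$; son enveloppe convexe est donc un sous-arbre fini de $\PKber$ et en dehors de cette enveloppe, le potentiel $\log\|R'\|$ est de Laplacien nul, donc localement constant sur chaque composante connexe du complémentaire. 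Il en va de même pour $\|R'\|$.
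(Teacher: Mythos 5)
Votre démonstration est correcte et suit pour l'essentiel la même voie que l'article~: même décomposition $\log\|R'\|=\log|R'|+2\log\max\{1,|\cdot|\}-2\log\max\{1,|R|\}$, mêmes identités pour les trois Laplaciens (diviseur de $R'$, $\Delta\log\max\{1,|\cdot|\}=\delta_{\xcan}-\delta_\infty$, et $\Delta(g\circ R)=R^*\Delta g$). La seule différence est de détail~: là où l'article se restreint à $\AKber\setminus R^{-1}(\infty)$ et invoque l'invariance de $\|R'\|$ par inversion pour traiter les pôles et le point $\infty$, vous effectuez la compensation explicitement sur le diviseur de $R'$ (votre calcul $-m-1+2m=m-1$ en un pôle, et son analogue en $\infty$, est exact et précise au passage le sens de $\ord_{R'}$ en ces points), et vous rendez explicite, via Riemann--Hurwitz pour les revêtements séparables, le comptage $2\deg(R)-2$ donnant la borne $4\deg(R)$, points que l'article laisse implicites.
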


\begin{proof}
  Il suffit de calculer le laplacien de $\log \|R'\|$ restreint à $\AKber\setminus R^{-1}(\infty)$ par invariance par l'inversion.
  Ceci r\'esulte du fait que dans $\AKber\setminus R^{-1}(\infty)$ nous avons~:
  \begin{equation}
    \label{eq:13}
    \Delta \log |R'(x)|
    =
    \sum_{x \in \crit_R(K)} \ord_{R'}(x) \delta_x
    \text{ et }
    \Delta \log \max \{ 1, |R(x)|\}
    =
    R^* \delta_{\xcan}~,
  \end{equation}
  et qu'on a
  $\Delta \log \max \{ 1, |x|\} = \delta_{\xcan}$ dans $\AKber$.
\end{proof}

Rappelons que l'exposant de \mbox{Lyapunov} de~$\rho_R$  est défini par~$\chi(R)= \int \log \| R' \| \dd \rho_R$.
Cette intégrale est bien d{\'e}finie et appartient {\`a}~$\R \cup \{ -\infty \}$, car~$\| R' \|$ est born{\'e} supérieurement.

\begin{Cor}\label{cor:justif-lyap}
  Soit~$R$ une fraction rationnelle {\`a} coefficients dans~$K$, non constante.
  Alors, $\chi(R)$ est fini si et seulement si~$R$ est s\'eparable, et dans ce cas on a
  \begin{equation}
    \label{eq:15}
    \chi(R)
    =
    \int \log |R'| \dd \rho_R~.
  \end{equation}
\end{Cor}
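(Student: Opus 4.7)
The proof splits according to the separability of $R$. If $R$ is inseparable, then $\mathrm{char}(K) > 0$ and the formal derivative $R' \in K(T)$ vanishes identically, so the defining formula~\eqref{eq:def-der-spherique} forces $\|R'\| \equiv 0$ on $\PKber$; hence $\chi(R) = -\infty$ and cannot be finite, which settles the ``only if'' direction.

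Assume now that $R$ is separable. I would derive both the finiteness of $\chi(R)$ and the formula by applying the symmetry formula of Proposition~\ref{prop:symmetry} to the continuous bounded potential $g_{\rho_R}$ of $\rho_R$. By Proposition~\ref{prop:delta}, $\Delta\log\|R'\|$ is a signed atomic measure of total variation at most $4\deg(R)$, so Proposition~\ref{prop:symmetry} yields
\[\int \log\|R'\|\,\dd\rho_R \;=\; \log\|R'\|(\xcan) + \int g_{\rho_R}\,\dd\Delta\log\|R'\| \;\in\; \R,\]
establishing the finiteness of $\chi(R)$. Similarly, since $\log\max\{1,|\cdot|\}$ is a potential with atomic Laplacian $\delta_{\xcan}-\delta_\infty$, another application of Proposition~\ref{prop:symmetry} gives
\[\int \log\max\{1,|y|\}\,\dd\rho_R(y) \;=\; g_{\rho_R}(\xcan) - g_{\rho_R}(\infty) \;\in\; \R,\]
using that $\log\max\{1,|\xcan|\} = 0$. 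By the $R$-invariance of $\rho_R$ applied to the non-negative function $\log\max\{1,|\cdot|\}$ (via monotone convergence), the same integral equals $\int \log\max\{1,|R(x)|\}\,\dd\rho_R(x)$.

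To conclude, I would integrate against $\rho_R$ the pointwise identity
\[\log\|R'\|(x) \;=\; \log|R'(x)| + 2\log\max\{1,|x|\} - 2\log\max\{1,|R(x)|\},\]
valid on the $\rho_R$-full-measure set $\AKber\setminus R^{-1}\{\infty\}$: the two ``$\log\max$'' terms cancel by the previous step, yielding $\chi(R) = \int \log|R'|\,\dd\rho_R$, as desired. The principal technical input, and the only real obstacle, is the continuity (hence boundedness) of $g_{\rho_R}$ on all of $\PKber$, which is a standard property of the equilibrium measure (see~\cite{theorie-ergo}); once this is granted, the argument reduces to a direct application of Proposition~\ref{prop:symmetry}.
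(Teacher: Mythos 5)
Your proof is correct and follows essentially the same route as the paper: inseparability forces $\|R'\|\equiv 0$, while in the separable case the continuity of the equilibrium potential combined with Proposition~\ref{prop:symmetry} gives finiteness, and the identity~\eqref{eq:15} follows because $\log\|R'\|-\log|R'|$ is the coboundary of the potential $2\log\max\{1,|\cdot|\}$, whose $\rho_R$-integrability you obtain exactly as the paper does from the symmetry formula. Your explicit appeal to invariance and monotone convergence merely unpacks the paper's terse final invocation of Proposition~\ref{prop:symmetry}.
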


\begin{proof}
  Lorsque~$R$ est ins{\'e}parable, la fonction~$\| R' \|$ est identiquement nulle et ${\chi(R) = -\infty}$.
  Supposons que~$R$ est s{\'e}parable et soit~$g$ un potentiel tel que ${\rho_R = \delta_{\xcan} +\Delta g}$.
  Ce potentiel est continu, voir~\cite[Proposition~3.3]{theorie-ergo}.
  L'exposant~$\chi(R)$ est alors la somme des quantit\'es finies~$\int g \dd \Delta \log \| R '\|$ et~$\log \| R'\| (\xcan)$.
  Par ailleurs, comme ${\log\|R'\| - \log |R'|}$ est un cobord dont la fonction de cobord est un potentiel, on a l'{\'e}galit{\'e} d{\'e}sir{\'e}e par la Proposition~\ref{prop:symmetry}.
\end{proof}

\subsection{Exposant de \mbox{Lyapunov} des polynômes}
\label{ss:pr}
Comme dans le cas complexe \cite[\S5]{Prz85a}, l'exposant de \mbox{Lyapunov} d'un polyn\^ome~$P$ est reli\'e {\`a} son degr{\'e} et au taux d'\'echappement des points critiques.
On rappelle que pour un polyn{\^o}me~$P$ {\`a} coefficients dans~$K$ et de degr{\'e}~$d$ au moins deux, la suite de potentiels $(d^{-n}\log\max\{1, |P^n|\})_{n = 1}^{+\infty}$ converge uniform\'ement sur~$\AKber$ vers un potentiel~$g_P$ v\'erifiant
\begin{equation}
  \label{eq:14}
  g_P
  \ge
  0,
  g_P \circ P
  =
  d g_P
  \text{ et }
  \Delta g_P
  =
  \rho_P - \delta_\infty~.
\end{equation}
On appelle~$g_P$ la \emph{fonction de Green de~$P$}.
La formule suivante est d\'emontr\'ee dans le cas de caract\'eristique nulle dans~\cite[\S5]{okuyama}.

\begin{Prop}[Formule de Przytycki]
  \label{p:pr}
  Soit~$P$ un polyn{\^o}me {\`a} coefficients dans~$K$, de degr{\'e}~$d$ au moins deux et s{\'e}parable.
  Notons~$g_P$ la fonction de Green de~$P$, $d'$ le degr{\'e} de~$P'$, et~$a$ (resp. $\gamma$) le coefficient dominant de~$P$ (resp. de~$P'$).
  Alors on a
  \begin{equation}
    \label{e:pr}
    \chi(P)
    =
    \log |\gamma| - \frac{d' \log|a|}{d-1} + \sum_{c \in \crit_P(K)\setminus\{\infty\}} \ord_{P'}(c) g_P(c)~.
  \end{equation}
  En particulier, on a
  \begin{equation}
    \label{eq:73}
    \chi(P)
    \ge
    \log |\gamma| - \frac{d' \log|a|}{d-1}~,
  \end{equation}
  avec {\'e}galit{\'e} si et seulement si aucun point critique de~$P$ dans~$K$ ne s’{\'e}chappe vers l’infini par it{\'e}ration.
\end{Prop}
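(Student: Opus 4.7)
Le plan est de combiner le Corollaire~\ref{cor:justif-lyap} avec la th{\'e}orie du potentiel sur~$\PKber$.
Commen{\c{c}}ons par {\'e}crire, gr{\^a}ce {\`a} la factorisation $P'(T) = \gamma \prod_c (T-c)^{\ord_{P'}(c)}$ sur les points critiques finis de~$P$,
\[
  \chi(P)
  =
  \int \log|P'| \, \dd \rho_P
  =
  \log|\gamma|
  +
  \sum_{c \in \crit_P(K) \setminus \{\infty\}} \ord_{P'}(c)
  \int \log|z-c| \, \dd \rho_P(z).
\]
Comme $\sum_c \ord_{P'}(c) = d'$, la formule~\eqref{e:pr} se ram{\`e}ne alors {\`a} l'identit{\'e}
\[
  \int \log|z-c| \, \dd \rho_P(z)
  =
  g_P(c) - \frac{\log|a|}{d-1}
\]
valable pour tout $c \in K$.

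Pour {\'e}tablir cette identit{\'e}, on exploite le comportement asymptotique de la fonction de Green {\`a} l'infini. L'{\'e}quation fonctionnelle $g_P \circ P = d g_P$ combin{\'e}e {\`a} l'identit{\'e} ultram{\'e}trique $|P|(z) = |a| \cdot |z|^d$ valable d{\`e}s que $|z|$ est suffisamment grand fournit, par it{\'e}ration, $g_P(z) = \log|z| + \frac{\log|a|}{d-1}$ sur un voisinage de l'infini dans~$\AKber$. La fonction
\[
  w_c(z) \= \log|z-c| - g_P(z) + \frac{\log|a|}{d-1}
\]
est donc identiquement nulle au voisinage de~$\infty$, et son laplacien $\Delta w_c = \delta_c - \rho_P$, calcul{\'e} gr{\^a}ce {\`a}~\eqref{eq:13} et~\eqref{eq:14}, est de masse totale nulle, de sorte que~$w_c$ co{\"{\i}}ncide avec un potentiel {\`a} constante additive pr{\`e}s. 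En int{\'e}grant contre $\rho_P = \Delta g_P + \delta_\infty$, puis en appliquant la Proposition~\ref{prop:symmetry} au couple~$(g_P, w_c)$, o{\`u} le potentiel~$g_P$ est continu, on obtient
\[
  \int w_c \, \dd \rho_P
  =
  w_c(\infty) + \int g_P \, \dd \Delta w_c
  =
  g_P(c) - \int g_P \, \dd \rho_P,
\]
ce qui {\'e}quivaut {\`a} l'identit{\'e} voulue.

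L'in{\'e}galit{\'e}~\eqref{eq:73} en r{\'e}sulte imm{\'e}diatement puisque $g_P \ge 0$ sur~$\PKber$ et $\ord_{P'}(c) \ge 1$. De plus, le cas d'{\'e}galit{\'e} correspond {\`a} $g_P(c) = 0$ pour tout $c \in \crit_P(K) \setminus \{\infty\}$, ce qui {\'e}quivaut au fait que l'orbite de chaque point critique fini de~$P$ reste born{\'e}e dans~$K$. La principale difficult{\'e} technique sera de justifier rigoureusement l'application de la Proposition~\ref{prop:symmetry} {\`a}~$w_c$, qui poss{\`e}de une singularit{\'e} logarithmique en~$c$~; on peut contourner cet obstacle en d{\'e}composant $w_c$ comme somme du potentiel $u_c(z) \= \log|z-c| - \log\max\{1, |z|\}$ de laplacien $\delta_c - \delta_{\xcan}$ et d'un potentiel continu.
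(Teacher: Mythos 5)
Votre strat{\'e}gie globale est correcte et s'organise un peu diff{\'e}remment de celle du texte~: au lieu de tronquer la fonction de Green (la preuve de la Proposition~\ref{p:pr} travaille avec $\min\{g_P, \log M + \beta\}$, dont le Laplacien est $\rho_P - \delta_{x(M)}$, et avec l'{\'e}valuation $|P'|(x(M)) = |\gamma| M^{d'}$), vous factorisez $P'$ et ramenez~\eqref{e:pr} {\`a} la formule $\int \log|z - c| \, \dd\rho_P(z) = g_P(c) - \frac{\log|a|}{d-1}$ pour chaque point critique fini. Cette formule est exacte, la factorisation est l{\'e}gitime (pour $P$ s{\'e}parable, les points critiques finis sont exactement les z{\'e}ros de $P'$, et $\sum_c \ord_{P'}(c) = d'$), et le cas d'{\'e}galit{\'e} s'obtient bien de $g_P \ge 0$ et de la caract{\'e}risation usuelle des points dont l'orbite reste born{\'e}e.

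En revanche, l'{\'e}tape o{\`u} vous invoquez la Proposition~\ref{prop:symmetry} pour le couple $(g_P, w_c)$ en d{\'e}clarant $g_P$ continu n'est pas justifi{\'e}e telle quelle~: dans la terminologie du texte, un potentiel continu est une fonction continue, donc born{\'e}e, sur le compact~$\PKber$, alors que $g_P(\infty) = +\infty$~; c'est pr{\'e}cis{\'e}ment pour contourner ce p{\^o}le que la preuve du texte tronque~$g_P$. Votre remarque finale identifie d'ailleurs mal l'obstruction~: la singularit{\'e} logarithmique de~$w_c$ en~$c$ est sans cons{\'e}quence, car $w_c$ occupe la place du potentiel arbitraire~$g'$ et $\Delta g_P = \rho_P - \delta_\infty$ ne charge pas~$c$~; le probl{\`e}me vient uniquement du comportement de~$g_P$ {\`a} l'infini. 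La r{\'e}paration est toutefois imm{\'e}diate~: ou bien on tronque comme dans le texte, ou bien on fait jouer le r{\^o}le du potentiel continu {\`a} $h \= g_P - \log\max\{1, |\cdot|\}$, qui v{\'e}rifie $\Delta h = \rho_P - \delta_{\xcan}$ et est continu (c'est le potentiel utilis{\'e} dans la preuve du Corollaire~\ref{cor:justif-lyap}, voir \cite[Proposition~3.3]{theorie-ergo}). En appliquant alors la Proposition~\ref{prop:symmetry} avec~$h$ dans le r{\^o}le du potentiel continu et $\log|{\cdot} - c|$ dans celui du potentiel arbitraire, on obtient
\[
  \int \log|z - c| \, \dd\rho_P
  =
  \log\max\{1, |c|\} + \int \log|z - c| \, \dd\Delta h
  =
  \log\max\{1, |c|\} + h(c) - h(\infty)
  =
  g_P(c) - \frac{\log|a|}{d-1}~,
\]
puisque $\rho_P = \delta_{\xcan} + \Delta h$, $\Delta \log|{\cdot} - c| = \delta_c - \delta_\infty$ et $h(\infty) = \frac{\log|a|}{d-1}$. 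Avec cette modification, votre argument est complet et redonne bien~\eqref{e:pr}.
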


Lorsque~$d$ n'est pas divisible par la caract\'eristique de~$K$, on a ${d' = d - 1}$ et ${\gamma = da}$ et donc ${\chi(P) \ge \log |d|}$, o{\`u}~$|d|$ est la norme de l'entier~$d$ calcul{\'e}e dans~$K$.
Si de plus~$d$ n'est pas divisible par la caract\'eristique r{\'e}siduelle de~$K$, on a ${|d| = 1}$ et on en d{\'e}duit que ${\chi(P) \ge 0}$.

\begin{proof}[D{\'e}monstration de la Proposition~\ref{p:pr}]
  Pour chaque~$M$ dans~$\mathopen] 0, +\infty \mathclose[$, notons~$x(M)$ le point de~$\AKber$ associ{\'e} {\`a} la boule~$\{ z \in K, |z| \le M\}$.

  Pour tout~$z$ dans~$K$ tel que~$|z|$ est suffisament grand, on a ${g_P(z) = \log |z|+\beta}$, avec ${\beta \= \frac{\log |a|}{d-1}}$.
  Alors, pour tout~$M$ suffisamment grand on a
  \begin{equation}
    \label{eq:16}
    \Delta \min \{ g_P, \log M +\beta \}
    =
    \rho_P - \delta_{x(M)}
    \text{ et }
    |P'|(x(M))
    =
    |\gamma| M^{d'}~.
  \end{equation}
  On en d{\'e}duit
  \begin{multline}
    \label{eq:17}
    \chi(P)
    =
    \int \log |P'| \dd \Delta \min \{ g_P, \log M+\beta \} + \log |P'|(x(M))
    \\ =
    \int \min \{ g_P, \log M+\beta \} \dd \Delta \log |P'| + \log |\gamma| + d' \log M~,
  \end{multline}
  voir la Proposition~\ref{prop:symmetry} et le Corollaire~\ref{cor:justif-lyap}.
  On obtient enfin
  \begin{multline}
    \label{eq:18}
    \int \min \{ g_P, \log M +\beta \} \dd \Delta \log |P'|
    =
    \sum_{c \in \crit_P(K)\setminus\{\infty\}} \ord_{P'}(c) g_P(c) - d'( \log M +\beta)~.
  \end{multline}
  Combin{\'e} {\`a}~\eqref{eq:17}, cela entra{\^{\i}}ne l'{\'e}galit{\'e} d{\'e}sir{\'e}e.
\end{proof}

% 
%%% 
% 

\subsection{Sous-cobord g{\'e}om{\'e}trique et ramification sauvage}
\label{sec:derivee}
Dans cette section, on exprime la d{\'e}riv{\'e}e sph{\'e}rique comme un sous-cobord multiplicatif (Proposition~\ref{prop:cocycle}) et on donne une caract{\'e}risation g{\'e}om{\'e}trique des points critiques ins{\'e}parables (Corollaire~\ref{c:tres-wild}).
Rappelons que pour un point~$x$ de~$\HK$, le degr{\'e} d'ins{\'e}parabilit{\'e}~$\wdeg{R}(x)$ de~$R$ en~$x$ est d{\'e}fini dans \S~\ref{ss:action}.

\begin{Prop}
  \label{prop:cocycle}
  Pour toute fraction rationnelle~$R$ {\`a} coefficients dans~$K$, non constante et s\'eparable, et tout~$x$ dans~$\HK$ on a
  \begin{equation}
    \label{e:diam1}
    |\wdeg{R}(x)|
    \le
    \frac{\|R'\|(x) \times \diam(x)}{\diam (R(x))}
    \le
    \exp(-d_{\HK}(x, \HK \setminus \crit_R))~.
  \end{equation}
\end{Prop}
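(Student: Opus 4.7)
The plan is to reduce both inequalities to statements about the Gauss norm of~$R'$ at~$\xcan$, and then to analyze this Gauss norm using the inseparable decomposition of the reduction~$\tR$. First, I will exploit the invariance of $|\wdeg{R}(x)|$, $\|R'\|(x)\diam(x)/\diam(R(x))$ and $d_{\HK}(x, \HK \setminus \crit_R)$ under pre- and post-composition by elements of $\PSL(2,\cO_K)$ (which stabilizes~$\xcan$ and preserves each of the three quantities appearing in the statement) to reduce to the case where $x$ is of type~II and $x = R(x) = \xcan$; for $x$ of type~III or~IV, a preliminary base change via $\sigma_{L/K}$ as in \S\ref{ss:action} lands us on a type~II point of~$\PLber$, which is the setting in which $\wdeg{R}(x)$ was originally defined. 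In this normalized situation $\diam(\xcan) = 1$ and $\|R'\|(\xcan) = |R'|(\xcan)$ is simply the Gauss norm of~$R'$, so the two claimed inequalities become
\begin{equation*}
  |\wdeg{R}(\xcan)|
  \le
  |R'|(\xcan)
  \le
  \exp\bigl(-d_{\HK}(\xcan, \HK \setminus \crit_R)\bigr).
\end{equation*}

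For the lower bound I will write $R = P/Q$ with $P, Q \in \cO_K[T]$ coprime and of Gauss norm~$1$---possible since $R(\xcan) = \xcan$---and set $p^k = \wdeg{R}(\xcan)$. The maximality of the inseparable degree translates to $\tR \in \tK(X^{p^k}) \setminus \tK(X^{p^{k+1}})$. Because~$\tK$ is algebraically closed and hence perfect, $\tK(X^{p^k})$ coincides with the subfield of $p^k$-th powers of~$\tK(X)$, and one gets $\tR = (\tilde C/\tilde D)^{p^k}$ for coprime $\tilde C, \tilde D \in \tK[X]$ with $\tilde C/\tilde D$ separable. Choosing lifts $C_1, D_1 \in \cO_K[T]$ of Gauss norm~$1$ and setting $R_0 := (C_1/D_1)^{p^k}$, the chain rule combined with the separability of~$\tilde C/\tilde D$ yields $|R_0'|(\xcan) = |p^k|$. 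The conclusion $|R'|(\xcan) \ge |p^k|$ will then follow by contradiction: assuming $|R'|(\xcan) < |p^k|$ and writing the coefficients $(i+1) c_{i+1}$ of~$R'$ (after clearing the denominator~$Q$), one forces $|c_{p^k m}| < 1$ for every integer~$m$ coprime to~$p$, placing~$\tR$ in $\tK(X^{p^{k+1}})$ and contradicting the maximality of~$k$.

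For the upper bound I will examine the action of~$R$ on tangent directions at~$\xcan$. If $\xcan \notin \crit_R$ or if it is merely tamely critical (i.e.\ $\wdeg{R}(\xcan) = 1$), then a generic tangent direction at~$\xcan$ already corresponds to a non-critical value of the reduction~$\tR$, so $d_{\HK}(\xcan, \HK \setminus \crit_R) = 0$ and the bound reduces to the a priori estimate $|R'|(\xcan) \le 1$ coming from $R \in \cO_K(T)$. The nontrivial case is a wild critical point, where I will use the factorization $\tR = \tilde S(X^{p^k})$ from the previous paragraph to describe the action of~$R$ on a small annulus centered on~$\xcan$ as a perturbation of $T \mapsto T^{p^k}$. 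A Newton-polygon analysis along a geodesic leaving~$\xcan$ in a generic direction shows that the local degree of~$R$ drops from~$p^k$ to~$1$ at the precise radius where the dominant term of~$R$ transitions from $T^{p^k}$ to $|R'|(\xcan)\, T$, translating into a $d_{\HK}$-distance of exactly $-\log|R'|(\xcan)$ from~$\xcan$ and producing a point of $\HK \setminus \crit_R$ at that distance.

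The main obstacle is the contradiction argument for the lower bound, since naively comparing $R'$ with $R_0'$ leaves open the possibility of an additive cancellation between the two that would push $|R'|(\xcan)$ below~$|p^k|$. The cleanest way around this is to avoid the direct comparison and to argue purely at the level of coefficients of a normalized lift of~$R$: the explicit formula $(i+1) c_{i+1}$ for the coefficients of~$R'$ converts the hypothesis $|R'|(\xcan) < |p^k|$ into vanishing conditions on the coefficients of~$\tR$ that are incompatible with~$\tR$ genuinely attaining the inseparable degree~$p^k$.
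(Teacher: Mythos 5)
Your strategy (normalize to the Gauss point, then read both bounds off a coefficient analysis) is close in spirit to the paper's proof of the left inequality, but as written it has genuine gaps. First, the normalization: $\PSL(2,\cO_K)$ fixes $\xcan$, so pre- and post-composition by its elements cannot move an arbitrary type~II point $x$ and its image $R(x)$ to $\xcan$; for that you need general elements of $\PGL(2,K)$, under which $\|R'\|$ and $\diam$ are \emph{not} separately invariant. Only the ratio $\|R'\|\,\diam/(\diam\circ R)$ is, and proving that invariance is exactly the degree-one case of the statement --- true, but it must be argued, not assumed (the paper only uses $\PSL(2,\cO_K)$ to reduce to $|x|\le1$, $|R(x)|\le1$, and otherwise works at a general $x$). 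Second, and more seriously, the lower bound: a rational map fixing $\xcan$ in general has poles in the closed unit ball, so there is no expansion $R=\sum c_iT^i$ for which $|R'|(\xcan)=\sup_i|ic_i|$ and whose coefficientwise reduction is $\tR$. Here $|R'|(\xcan)$ is the Gauss norm of the Wronskian $(P'Q-PQ')/Q^2$, and ``clearing the denominator'' does not convert the hypothesis $|R'|(\xcan)<|p^k|$ into vanishing conditions on coefficients of $\tR$ (note also that $\tP$ and $\tQ$ may acquire a common factor). This is precisely the cancellation problem you flag, and your proposed fix presupposes the power-series coefficient formula, so it begs the question. To repair it you must localize: pick a residue class avoiding the poles of $R$ and the finitely many directions where $|R'|$ drops below its Gauss norm, expand $R$ there as a power series with coefficients in $\cO_K$, and identify the reduced Taylor coefficients with those of $\tR$ at the corresponding point of $\mathbb{P}^1(\tK)$. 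That localization is the missing idea, and it is in substance what the paper does: it works on a pole-free annulus in a direction where, by Rivera-Letelier's results on directional degrees, $\deg_R=\wdeg{R}(x)$ along the segment, uses the dominance of the coefficient $a_{\od}$ in the Laurent expansion there, and then lets the auxiliary point tend to $x$.

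Your upper bound is also incorrect as stated: the local degree along a generic geodesic need not drop ``from $p^k$ to $1$ at the precise radius where the dominant term becomes $|R'|(\xcan)T$''; intermediate exponents can dominate, and the transition radius is in general only $\ge\|R'\|(\xcan)$. For instance, over $\C_2$, $R(z)=z^4+az$ with $|4|<|a|<1$ becomes non-critical at radius $|a|^{1/3}$, not $|a|=\|R'\|(\xcan)$. What is true, and suffices for~\eqref{e:diam1}, is weaker: in a generic residue direction the linear coefficient of the local expansion has norm $\|R'\|(\xcan)$ while all coefficients have norm at most~$1$, so the exponent~$1$ strictly dominates at every radius $\rho<\|R'\|(\xcan)$; this produces non-critical points of $\HK$ within distance $-\log\|R'\|(\xcan)+\varepsilon$ of $\xcan$ for every $\varepsilon>0$, hence the bound. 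The paper avoids any case distinction by arguing at an arbitrary $x\in\HK$: it gets the Schwarz-lemma equality $\diam(R(\hx_0))=\|R'\|(\hx_0)\diam(\hx_0)$ at a nearby point and then uses $d_{\HK}(R(x),R(\hx_0))=\int_0^{d_{\HK}(x,\hx_0)}\deg_R(y(t))\dd t$, whose excess over the length is at least $d_{\HK}(x,\HK\setminus\crit_R)$.
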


Notons que $|\wdeg{R}(x)|$ est la norme de l'entier $\wdeg{R}(x)$ calculée dans $K$.
Si $K$ est de caractéristique positive, il peut arriver que ce nombre soit nul, auquel cas la borne {\`a} gauche dans~\eqref{e:diam1} est triviale.

Le corollaire suivant est une cons{\'e}quence de la Proposition~\ref{prop:cocycle} et de \cite[Proposition~10.2~(1)]{rivera-periode}.

\begin{Cor}
  \label{c:tres-wild}
  Soit~$R$ fraction rationnelle {\`a} coefficients dans~$K$, non constante et s\'eparable.
  Alors, pour tout point~$x$ de~$\PKber$ on a
  \begin{equation}
    \label{eq:19}
    \|R'\|(x) \times \diam(x)
    \le
    \diam (R(x))~,
  \end{equation}
  et les propri{\'e}t{\'e}s suivantes sont {\'e}quivalentes~:
  \begin{enumerate}
  \item
    ${\|R'\|(x) \times \diam(x) < \diam (R(x))}$;
  \item
    $x$ appartient {\`a} l'intérieur de~$\crit_R$ pour la topologie fine de~$\PKber$;
  \item
    $x$ est un point critique inséparable de~$R$.
  \end{enumerate}
\end{Cor}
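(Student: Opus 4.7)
Le plan est de déduire ce corollaire de la Proposition~\ref{prop:cocycle} combinée avec \cite[Proposition~10.2~(1)]{rivera-periode}. J'établirais d'abord l'inégalité~\eqref{eq:19}: pour $x \in \HK$, elle découle immédiatement de la majoration de droite dans~\eqref{e:diam1} puisque $\exp(-d_{\HK}(x, \HK \setminus \crit_R)) \le 1$; pour $x \in \PK$, les deux membres s'annulent car $R$ préserve $\PK$ et $\diam$ s'annule précisément sur $\PK$.

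Je démontrerais ensuite l'implication (1)$\Rightarrow$(3) directement à partir de la minoration de gauche dans~\eqref{e:diam1}. L'inégalité stricte~(1) force $\diam(R(x)) > 0$, donc $x \in \HK$. Si, par contraposition, on suppose $\wdeg{R}(x) = 1$, alors $|\wdeg{R}(x)| = 1$ et la Proposition~\ref{prop:cocycle} entraîne $\|R'\|(x)\,\diam(x) \ge \diam(R(x))$; combiné à~\eqref{eq:19}, ceci donne l'égalité et contredit~(1).

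Pour fermer la boucle, j'établirais (3)$\Rightarrow$(2)$\Rightarrow$(1). L'implication (3)$\Rightarrow$(2) repose sur \cite[Proposition~10.2~(1)]{rivera-periode}, qui décrit localement les points critiques inséparables: un tel point admet un voisinage fin sur lequel $R$ se factorise à travers un morphisme purement inséparable, voisinage donc entièrement inclus dans $\crit_R$. Pour (2)$\Rightarrow$(1), on observe qu'un point à l'intérieur fin de $\crit_R$ satisfait nécessairement $d_{\HK}(x, \HK \setminus \crit_R) > 0$ (en particulier il appartient à $\HK$, les points critiques de $\PK$ étant finis et non intérieurs pour la topologie fine), et la majoration de droite dans~\eqref{e:diam1} donne alors immédiatement~(1).

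Le principal obstacle réside dans le recours à \cite[Proposition~10.2~(1)]{rivera-periode} pour l'implication (3)$\Rightarrow$(2), qui encapsule l'information géométrique non triviale selon laquelle l'inséparabilité est une condition ouverte dans la topologie fine; les autres implications se ramènent à un examen direct des deux bornes de~\eqref{e:diam1}.
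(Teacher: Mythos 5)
Votre preuve est correcte et suit pour l'essentiel la même démarche que le texte : l'inégalité~\eqref{eq:19} et l'implication (2)$\Rightarrow$(1) se déduisent de la borne de droite de~\eqref{e:diam1} jointe au fait qu'un point intérieur fin de~$\crit_R$ appartient à~$\HK$ et est à distance hyperbolique strictement positive de~$\HK \setminus \crit_R$, l'implication (1)$\Rightarrow$(3) se déduit de la borne de gauche, et (3)$\Rightarrow$(2) de \cite[Proposition~10.2~(1)]{rivera-periode}. La seule nuance est que le texte applique cette dernière référence aux points de type~II et ramène les points de type~III ou~IV à ce cas par un changement de base (c'est d'ailleurs ainsi que $\wdeg{R}$ y est défini en ces points), détail que vous laissez implicite.
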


L'{\'e}quivalence des propri{\'e}t{\'e}s (2) et~(3) a {\'e}t{\'e} montrée par Faber \cite[Theorem~B]{faber1}.
On donne ici une d{\'e}monstration diff{\'e}rente.
Voir~\cite{temkin} pour d'autres caract{\'e}risations des points critiques inséparables.

La d{\'e}monstration du corollaire suit celle de la proposition.

\begin{proof}[D{\'e}monstration de la Proposition~\ref{prop:cocycle}]
  Comme la d\'eriv\'ee sph\'erique est invariante par l'action de~$\PSL(2,\cO_K)$, il suffit de montrer~\eqref{e:diam1} dans le cas o\`u ${|x| \le 1}$ et ${|R(x)| \le 1}$.
  On commence par d{\'e}montrer l'in{\'e}galit{\'e} {\`a} droite dans~\eqref{e:diam1}.
  Par continuit\'e, il suffit de traiter le cas o\`u~$x$ est de type~II et distinct de~$\xcan$.
  Soit~$B$ un composante connexe de ${\PKber \setminus \{ x \}}$ disjointe de ${R^{-1}(\xcan) \cup \{ \xcan \}}$ et de~$\crit_R(K)$.
  Alors~$B$ et~$R(B)$ sont des boules ouvertes de~$\AKber$ contenues dans la boule unit{\'e} ferm{\'e}e, et~$\| R' \|$ est constante sur~$B$ par la Proposition~\ref{prop:delta}.
  Soit~$x_0$ un point de ${B \cap \PK}$, et~$r$ dans~$\mathopen] 0, 1 \mathclose[$ suffisamment petit pour que la boule ferm{\'e}e~$B_0$ de~$\AK$ de centre~$x_0$ et diam{\`e}tre~$r$ soit contenue dans~$B$ et que~$R$ soit univalente sur~$B_0$.
  Alors le point~$\hx_0$ associ{\'e} {\`a}~$B_0$ v{\'e}rifie
  \begin{equation}
    \label{eq:20}
    \diam(R(\hx_0))
    =
    \diam(R(B_0))
    =
    |R'(x_0)| \times r
    =
    \| R' \|(\hx_0) \times \diam(\hx_0)
  \end{equation}
  par le lemme de Schwarz, voir \cite[\S~1.3.1]{R1}.
  Pour chaque~$t$ dans ${\mathopen[ 0, d_{\HK}(x, \hx_0) \mathclose]}$, notons~$y(t)$ le point de~$[x, \hx_0]$ {\`a} distance~$t$ de~$x$.
  Comme~$R$ est injective sur~$[x, \hx_0]$ et ${d_{\HK}(x, \hx_0) \ge d_{\HK}(x, \HK \setminus \crit_R)}$, on a
  \begin{align*}
    \label{eq:21}
    \log \frac{\diam(R(x))}{\diam(R(\hx_0))}
    &=
      d_{\HK}(R(x), R(\hx_0))
    \\ & =
         \int_0^{d_{\HK}(x, \hx_0)} \deg_R(y(t)) \dd t
    \\ &=
         \log \frac{\diam(x)}{\diam(\hx_0)} + \int_0^{d_{\HK}(x, \hx_0)} \deg_R(y(t)) - 1 \dd t
    \\ &\ge
         \log \frac{\diam(x)}{\diam(\hx_0)} + d_{\HK}(x, \HK \setminus \crit_R)~,
  \end{align*}
  voir \cite[Corollaire~4.8]{rivera-espace}.
  Combin{\'e} avec~\eqref{eq:20}, ceci entra{\^{\i}}ne l'in{\'e}galit{\'e} de droite dans~\eqref{e:diam1}.

  Pour d{\'e}montrer l'in{\'e}galit{\'e} de gauche dans~\eqref{e:diam1}, soit~$x$ un point arbitraire de~$\HK$, posons
  \begin{equation}
    \label{eq:22}
    d
    \=
    \deg_R(x),
    r
    \=
    \diam(x)
    \text{ et }
    r'
    \=
    \diam(R(x))~,
  \end{equation}
  et pour chaque~$\rho$ dans~$\mathopen] 0, +\infty \mathclose[$ notons~$x(\rho)$ le point de~$\AKber$ associ\'e ${\{z \in K, |z| \le \rho\}}$.
  On commence par traiter le cas particulier o{\`u}~$x$ se situe entre deux points~$x_0$ et~$x_1$ tels que ${R(x_0) \neq R(x_1)}$, et o{\`u}~$R$ envoie la couronne comprise entre~$x_0$ et~$x_1$ sur celle comprise entre~$R(x_0)$ et~$R(x_1)$.
  En prenant les points~$x_0$ et~$x_1$ plus proches de~$x$ et en changeant de coordonn{\'e}es si n{\'e}cessaire, on suppose que~$x_0$ et~$x_1$ sont de type~II, et qu'il existe~$r_0, r_1, r_0'$ et~$r_1'$ dans~$|K^*|$ tels que ${r_0 < r_1}$, ${x_0 = x(r_0)}$, ${x_1 = x(r_1)}$, ${R(x_0) = x(r_0')}$ et~${R(x_1) = x(r_1')}$.
  On a donc ${x = x(r)}$ et ${R(x) = x(r')}$.
  Par ailleurs, si l'on pose ${\od \= d}$ lorsque ${r_0' < r_1'}$ et ${\od \= -d}$ lorsque ${r_0' > r_1'}$, alors on peut d\'evelopper~$R$ en s\'erie sur la couronne ${\{ y \in K, r_0 < |y| < r_1 \}}$, $R(z) = \sum_{k = -\infty}^{+\infty} a_k z^k$, et on~a
  \begin{equation}
    \label{eq:23}
    r'
    =
    |a_{\od}| r^{\od}
    >
    \sup \{ |a_k| r^k, k \in \Z, k \neq \od \}~,
  \end{equation}
  voir la d{\'e}monstration de \cite[Lemme~5.3]{rivera-periode}.
  On a donc
  \begin{equation}
    \label{eq:24}
    \| R' \|(x)
    \ge
    |\od| \times |a_{\od}| r^{\od - 1}
    =
    |\od| \times r'/r
    =
    |\deg_R(x)| \times \diam(R(x)) / \diam(x)~.
  \end{equation}
  Ceci d{\'e}montre l'in{\'e}galit{\'e} de gauche dans~\eqref{e:diam1} dans ce cas particulier.
  Pour d{\'e}montrer cette in{\'e}galit{\'e} dans le cas g{\'e}n{\'e}ral, supposons~$x$ soit un point arbitraire de~$\PKber$.
  En changeant de base si n{\'e}cessaire, on suppose que~$x$ est de type~II.
  Prenons~$x'$ distinct de~$x$, tel que ${|x| \le 1}$, tel que~$R$ soit injective et que l'on ait ${\deg_R = \wdeg{R}}$ sur ${] x, x' ]}$, et tel que~$R$ envoie la couronne comprise entre~$x$ et~$x'$ sur celle comprise entre~$R(x)$ et~$R(x')$, voir \cite[Propositions~4.1, 4.4 et~4.6]{rivera-espace}.
  En appliquant le cas particulier {\`a} chaque~$x''$ entre~$x$ et~$x'$, on obtient
  \begin{equation}
    \label{eq:25}
    |\wdeg{R}(x)|
    \le
    \|R'\| (x'') \times \diam(x'') / \diam(R(x''))~.
  \end{equation}
  On obtient l'in{\'e}galit{\'e} {\`a} gauche dans~\eqref{e:diam1} en faisant tendre~$x''$ vers~$x$.
\end{proof}

\begin{proof}[D{\'e}monstration du Corollaire~\ref{c:tres-wild}]
  Lorsque~$x$ est dans~$\HK$, l'in{\'e}galit{\'e} {\`a} droite de~\eqref{e:diam1} entra{\^{\i}}ne~\eqref{eq:19}, avec une in{\'e}galit{\'e} stricte lorsque~$x$ appartient {\`a} l'intérieur de ${\HK \cap \crit_R}$ dans~$(\HK, d_{\HK})$.
  Il s'ensuit que l'in{\'e}galit{\'e}~\eqref{eq:19} est stricte lorsque~$x$ appartient {\`a} l'inté\-rieur de~$\crit_R$ pour la topologie fine de~$\PKber$.
  Par ailleurs, lorsque~$x$ est dans~$\PK$, l'{\'e}galit{\'e} est satisfaite dans~\eqref{eq:19}, et~$x$ n'appartient pas {\`a} l'intérieur de~$\crit_R$ pour la topologie fine de~$\PKber$, car~$R$ est suppose s{\'e}parable.
  Ceci d{\'e}montre~\eqref{eq:19} et l'implication (2)$\Rightarrow$(1) dans tous les cas.

  Pour montrer l'implication (1)$\Rightarrow$(3), notons que la propri{\'e}t{\'e}~(1) entra{\^{\i}}ne que~$x$ est dans~$\HK$, et donc que~$x$ est un point critique inséparable de~$R$ par l'in{\'e}galit{\'e} {\`a} droite de~\eqref{e:diam1}.

  Lorsque~$x$ est de type~II, l'implication (3)$\Rightarrow$(2) est donn{\'e}e par \cite[Proposition~10.2~(1)]{rivera-periode}.
  Lorsque~$x$ est de type~III ou~IV, on se ram{\`e}ne au cas pr{\'e}c{\'e}dent par un changement de base, voir \cite[\S~4]{faber1} ou~\cite{poineau} pour un cadre plus g{\'e}n{\'e}ral.
\end{proof}

% 
%%% 
% 

\subsection{Les exposants de \mbox{Lyapunov} ne sont pas trop n{\'e}gatifs}
\label{ss:exposants}
Dans cette section, on {\'e}tablit un lien entre l'exposant de \mbox{Lyapunov} et la ramification sauvage de~$R$ pour chaque mesure de probabilit{\'e} ergodique support{\'e}e sur~$\HK$ (Th{\'e}or{\`e}me~\ref{t:exponsants}~(1)).
On en d{\'e}duit une minoration pour l'exposant de \mbox{Lyapunov} de toute mesure de probabilit{\'e} ergodique qui n'est pas support{\'e}e sur un cycle attractif (Corollaire~\ref{c:exponsants}).

Soit~$R$ une fraction rationelle {\`a} coefficients dans~$K$ et degr{\'e} au moins deux, et~$\nu$ une mesure de probabilit{\'e} sur~$\PKber$ invariante par~$R$.
La fonction~$\| R' \|$ {\'e}tant born{\'e}e supérieurement, l'int{\'e}grale~$\int \log \| R' \| \dd \nu$ est d{\'e}finie.
On la d{\'e}signe par~$\chi_{\nu}(R)$.
Lorsque ${\nu = \rho_R}$ est la mesure d'équilibre, on a ${\chi_{\rho_R}(R) = \chi(R)}$.
Notons que~$\chi_{\nu}(R)$ est un nombre r{\'e}el ou~$-\infty$.
Par exemple, lorsque~$R$ est ins{\'e}parable on a ${\log \| R' \| \equiv -\infty}$ et ${\chi_\nu(R) = -\infty}$.

\begin{Thm}
  \label{t:exponsants}
  Soit~$R$ une fraction rationelle {\`a} coefficients dans~$K$, s{\'e}parable et degr{\'e} aux moins deux, et soit ${\wf_R \: \HK\to \R}$ la fonction d{\'e}finie par
  \begin{equation}
    \label{eq:26}
    \wf_R
    \=
    -\log (\| R' \| \times \diam / \diam \circ R)~.
  \end{equation}
  Alors~$\wf_R$ est continue par rapport {\`a}~$d_{\HK}$, et on a
  \begin{equation}
    \label{eq:27}
    0
    \le
    \wf_R
    \le
    -\log |\wdeg{R}|
    \text{ et }
    \{ \wf_R > 0 \}
    =
    \wcrit_R~.
  \end{equation}
  De plus, pour toute mesure de probabilit{\'e}~$\nu$ sur~$\PKber$ invariante et ergodique par~$R$, les propri{\'e}t{\'e}s suivantes sont vérifiées.
  \begin{enumerate}
  \item
    Si $\nu(\HK)=1$, alors
    \begin{equation}
      \label{eq:J2}
      \chi_{\nu}(R)
      =
      -\int \wf_R \dd \nu~.
    \end{equation}
    En particulier, on a ${\chi_{\nu}(R) \le 0}$ avec {\'e}galit{\'e} si et seulement si~$\nu(\wcrit_R)=0$.
  \item
    Si $\nu(\PK)=1$, alors on a ${\chi_{\nu}(R) \ge 0}$ sauf si~$\nu$ est support{\'e}e sur un cycle attractif.
  \end{enumerate}
  En particulier, on a ${\chi_{\nu}(R) \ge \log |\max_{x \in \HK} \wdeg{R}(x)|}$, ou bien~$\nu$ est support{\'e}e sur un cycle attractif.
\end{Thm}

Dans le cas des fractions rationelles complexes, le point~(2) a {\'e}t{\'e} montr{\'e}e par \mbox{Przytycki} \cite[Theorem~A]{Prz93}.
Ici on suit la d{\'e}monstration dans le cas des applications de l'intervalle dans~\cite[Proposition~A.1]{rivera-expand-smooth}.

Le corollaire suivant est une cons{\'e}quence imm{\'e}diate du Th{\'e}or{\`e}me~\ref{t:exponsants}.
Lorsque~$R$ est un polynôme et~$\nu$ satisfait une propri{\'e}t{\'e} d'int{\'e}grabilit{\'e} suppl{\'e}mentaire ce corollaire est~\cite[Theorem~1.1]{nie}.

\begin{Cor}
  \label{c:exponsants}
  Soit~$R$ une fraction rationnelle {\`a} coefficients dans~$K$, s{\'e}parable et de degr{\'e} au moins deux.
  Alors, toute mesure de probabilit{\'e}~$\nu$ sur~$\PKber$, invariante et ergodique par~$R$, qui n'est pas support{\'e}e sur un cycle attractif satisfait
  \begin{equation}
    \label{eq:J21}
    \chi_{\nu}(R)
    \ge
    \log \min \{ |\ell| , \ell \in \{ 1, \ldots, \deg(R) \} \}~.
  \end{equation}
  % En particulier, lorsque la caractéristique r{\'e}siduelle de~$K$ est nulle, toute mesure de probabilit{\'e}~$\nu$ sur~$\PKber$, invariante et ergodique par~$R$, qui n'est pas support{\'e}e sur un cycle attractif satisfait ${\chi_{\nu}(R) \ge 0}$.
\end{Cor}

L'in{\'e}galit{\'e}~\eqref{eq:J21} est {\'e}galement satisfaite si~$\nu$ est support{\'e}e sur une orbite p{\'e}riodique attractive n'attirant aucun point critique de~$R$ dans~$\PK$ \cite[Corollary~1.2]{0Riv2601}.

On utilise le lemme suivant dans la d{\'e}monstration du Th{\'e}or{\`e}me~\ref{t:exponsants}.

\begin{Lem}
  \label{lem:univalent}
  Pour toute fraction rationnelle~$R$ {\`a} coefficients dans~$K$, non constante et s\'eparable, il existe une constante $C>0$ et une fonction ${\theta \: \PK \to \mathopen[ 0, 1 \mathclose]}$, telles que ${\theta \ge C \, \|R'\|}$, et que, pour tout~$x$ en dehors de~$\crit_R(K)$, l'application~$R$ est univalente sur~$B(x , \theta(x))$ et envoie cette boule sur~$B(R(x), \theta(x) \|R'\|(x))$.
\end{Lem}

\begin{proof}
  Soit~$C_0$ dans~$\mathopen] 0, 1 \mathclose[$, strictement plus petit que le diam\`etre de chaque pr\'eimage de~$\xcan$.
  On montre que le lemme est satisfait avec~$\theta$ d{\'e}finie par
  \begin{equation}
    \label{eq:28}
    \theta(x)
    \=
    C_0 \min \left\{ C_0 \| R' \|(x), 1 \right\}~.
  \end{equation}
  Notons d'abord que, en posant ${C \= C_0 / \max \{ \sup_{\PK} \| R' \|, 1 \}}$, on a ${\theta \ge C \| R' \|}$.

  Fixons~$x$ dans ${\PK \setminus \crit_R(K)}$.
  Par notre choix de~$C_0$, l'image de~$B(x, C_0)$ par~$R$ est contenue dans~$B(R(x), 1)$.
  En changeant de coordonn{\'e}es au d{\'e}part et {\`a} l'arriv{\'e}e si nécessaire, on peut supposer que ${x = 0}$ et ${R(x) = 0}$.
  On a alors
  \begin{equation}
    \label{eq:29}
    \| R' \|(0)
    =
    |R'(0)|
    \text{ et }
    R(B(0, C_0))
    \subset
    B(0,1)~.
  \end{equation}
  D\'eveloppons~$R$ en s\'erie sur la boule $B(0, C_0)$, ${R(z) = \sum_{k = 1}^{+\infty} a_k z^k}$.
  Alors ${|a_1| = |R'(0)|}$, et pour tout~$k$ dans~$\N^*$ on a ${|a_k| \le C_0^{-k}}$.
  Par ailleurs, pour tout~$r$ dans~$\mathopen[ 0, \theta(0) \mathclose[$ et tout entier~$k$ v{\'e}rifiant ${k \ge 2}$, on a
  \begin{equation}
    \label{eq:30}
    |a_k| r^k
    <
    C_0^{-k} r \theta(0)^{k - 1}
    \le
    |a_1| r~,
  \end{equation}
  et par con{\'e}quent $\sup_{B(0, r)} |R| = |a_1| r$.
  Ceci montre que~$R$ est univalente sur la boule $B(0, \theta(0))$ et l'envoie sur ${B(0, \theta(0) \, |R'(0)|)}$.
\end{proof}

\begin{proof}[D{\'e}monstration du Th{\'e}or{\`e}me~\ref{t:exponsants}]
  Comme~$\diam$ est continue pour la topologie fine et~$R$ et~$\| R' \|$ le sont pour la topologie faible, $\wf_R$ est continue pour~$d_{\HK}$.
  Les assertions dans~\eqref{eq:27} sont une cons{\'e}quence directe de la Proposition~\ref{prop:cocycle} et du Corollaire~\ref{c:tres-wild}.

  Pour démontrer le point (1), on fixe~$\varepsilon\in \mathopen]0, 1\mathclose[$ tel que ${\nu(\{\diam \ge \varepsilon \}) > 0}$.
  Comme $\log \| R' \|$ et~$-\wf_R$ sont born{\'e}es sup{\'e}rieurement, le th{\'e}or{\`e}me de {Birkhoff} combin{\'e} au th{\'e}or{\`e}me de convergence monotone donne l'{e}xistence de~$x\in\HK$ tel que:
  \begin{align*}
    \chi_{\nu}(R) &=
                    \lim_{n \to +\infty} \frac{1}{n} \sum_{i = 0}^{n - 1} \log \| (R^i)' \|(x)~, \\
    \int \wf_R \dd \nu &=
                         \lim_{n \to +\infty} \frac{1}{n} \sum_{i = 0}^{n - 1} \wf_R(R^i(x))~,
  \end{align*}
  et il existe une suite d'entiers strictement positifs~$(n_j)_{j = 1}^{+\infty}$ telle que~${n_j \to +\infty}$ lorsque ${j \to +\infty}$, et telle que pour tout~$j$, on a ${\diam(R^{n_j}(x)) \ge \varepsilon}$.
  On obtient alors
  \begin{multline*}
    \label{eq:J4}
    \chi_{\nu}(R)
    =
    \lim_{j \to +\infty} \frac{1}{n_j} \log \| (R^{n_j})' \|(x)
    \\ =
    \lim_{j \to +\infty} \frac{1}{n_j} \left( \log \diam (R^{n_j}(x)) - \log \diam(x) - \sum_{i = 0}^{n_j - 1} \wf_R(R^i(x)) \right)
    \\ =
    - \lim_{j \to +\infty} \frac{1}{n_j} \sum_{i = 0}^{n_j - 1} \wf_R(R^i(x))
    =
    -\int \wf_R \dd \nu~.
  \end{multline*}
  La derni{\`e}re assertion du point~(1) est donc une cons{\'e}quence de~\eqref{eq:27}.

  \smallskip

  On démontre maintenant le point (2).
  Supposons donc $\nu(\PK)=1$ et ${\chi_{\nu}(R) < 0}$.
  Par le th{\'e}or{\`e}me de convergence monotone, il existe $L\in \mathopen]0, +\infty\mathclose[$ tel que, si l'on {\'e}crit
  \begin{equation}
    \label{eq:J5}
    \varphi
    \=
    \max \{ \log \| R' \|, -L \}
    \text{ et }
    I
    \=
    \int \varphi \dd \nu~,
  \end{equation}
  alors ${I < 0}$.
  Posons ${\varphi_0 \equiv 0}$ et ${\varphi_n \= \varphi + \cdots + \varphi \circ R^{n - 1}}$ pour tout $n\in\N^*$.
  Par le th{\'e}or{\`e}me de {Birkhoff}, on peut trouver un point $x\in\PK$ d'orbite dense dans $\supp(\nu)$ tel que ${\lim_{n \to +\infty} \frac{1}{n} \varphi_n(x) = I}$.
  Fixons~$\chi\in \mathopen]0, -I\mathclose[$.

  \smallskip

  On va montrer qu'il existe~$\tau$ et~$\varepsilon$ dans~$\mathopen]0, 1\mathclose[$ tel que pour tout entier positif ou nul~$n$, on a
  \begin{equation}
    \label{eq:J6}
    R^n(B(x, \tau))
    \subseteq
    B(R^n(x), \varepsilon \exp(-\chi n))~.
  \end{equation}
  Soit~$\delta\in\mathopen]0, 1\mathclose[$ tel que pour tout~$x'\in B(\crit_R, \delta)$ et~$r\in\mathopen]0, \delta\mathclose]$, on a
  \begin{equation}
    \label{eq:J7}
    R(B(x', r))
    \subseteq
    B(R(x'), \exp(-L)r)~.
  \end{equation}
  Par ailleurs, soit~$\varepsilon\in\mathopen]0, \delta\mathclose]$ tel que pour tout~$x'\in{\PK \setminus B(\crit_R, \delta)}$ et~$r\in \mathopen]0, \varepsilon\mathclose]$ on a
  \begin{equation}
    \label{eq:J8}
    R(B(x', r))
    =
    B(R(x'), \| R' \|(x) r),
  \end{equation}
  voir Lemme~\ref{lem:univalent}.
  Par notre choix de~$x$ et~$\chi$, il existe~$\tau\in\mathopen]0, 1\mathclose[$ tel que pour tout $n\in\N^*$, on a
  \begin{equation}
    \label{eq:31}
    \tau \exp(\varphi_n(x) + n\chi)
    \le
    \varepsilon~.
  \end{equation}
  On {\'e}crit ${B_n \= B(R^n(x), \tau \exp(\varphi_n(x)))}$ et note que
  \begin{equation}
    \label{eq:J10}
    \diam(B_n)
    =
    \tau \exp(\varphi_n(x))
    \le
    \varepsilon \exp(-n\chi)
    \le
    \varepsilon
    \le
    \delta~.
  \end{equation}
  Si~$R^n(x)$ n'est pas dans~$B(\crit_R(K), \delta)$, alors par~\eqref{eq:J8} on a
  \begin{equation}
    \label{eq:J11}
    R(B_n)
    =
    B(R^{n + 1}(x), \tau \exp(\varphi_n(x)) \| R' \|(R^n(x)))
    \subseteq
    B_{n + 1}~.
  \end{equation}
  Autrement, par~\eqref{eq:J7} on a
  \begin{equation}
    \label{eq:J12}
    R(B_n)
    \subseteq
    B(R^{n + 1}(x), \tau \exp(\varphi_n(x)) \exp(-L))
    \subseteq
    B_{n + 1}~.
  \end{equation}
  On a donc dans tous les cas ${R(B_n) \subseteq B_{n + 1}}$ et on obtient~\eqref{eq:J6} par récurrence.

  \smallskip

  Pour conclure la démonstration du point (2), on remarque que~\eqref{eq:J6} fournit l'existence d'un entier strictement positif~$n$ tel que ${R^n(B(x, \tau)) \subseteq B(x, \tau/2)}$.
  Il s'ensuit que $R^n$ possède un point fixe attractif~$x_0\in B(x, \tau)$ et que tout point de~$B(x, \tau)$ converge vers~$x_0$ sous it{\'e}ration par~$R^n$.
  Comme l'orbite de~$x$ par~$R$ est dense dans~$\supp(\nu)$, on conclut que la mesure~$\nu$ est supportée sur l'orbite de~$x_0$.
\end{proof}

% 
%%%%% 
% 

\subsection{D\'emonstration du Th\'eor\`eme~\ref{thm:main3}~(1)}\label{sec:thm31}
Soit~$R$ une fraction rationnelle {\`a} coefficients dans~$K$ et de degr\'e aux moins deux.
On cherche \`a montrer que $\chi(R)<0$ si et seulement si $\rho_R(\wcrit_R) >0$, et que $\rho_R(\HK) =1$ dans ce cas.
Comme~$\rho_R$ n'a pas d'atomes dans~$\PK$, le cas séparable d{\'e}coule directement du Th{\'e}or{\`e}me~\ref{t:exponsants}.
Le cas inséparable est traité par l'énoncé suivant.

\begin{Prop}\label{prop:cas-nonsep}
  Soit~$R$ une fraction rationnelle {\`a} coefficients dans~$K$, ins\'eparable.
  Alors $\chi(R) = -\infty$, $\rho_R(\wcrit_R) = 1$, et il existe une constante $A>0$ telle que
  \begin{equation}
    \label{eq:72}
    J_R
    \subseteq
    \{ x \in \HK, \, d_\HK(x, \xcan) \le A\}~.
  \end{equation}
\end{Prop}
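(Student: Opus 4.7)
Le corps $K$ est nécessairement de caractéristique $p>0$, puisque $R$ est inséparable, et l'on peut écrire $R=S\circ F^k$ où $F(T)=T^p$ est le Frobenius et $S\in K(T)$ est séparable, avec $k\ge 1$ et $\deg(S)=\deg(R)/p^k$. La dérivée $R'$ s'annule identiquement, car $(F^k)'(T)=p^kT^{p^k-1}=0$ en caractéristique $p$, donc $\|R'\|\equiv 0$ et $\chi(R)=-\infty$. De plus, en tout point $x\in\HK$, la multiplicativité du degré d'inséparabilité local donne $\wdeg{R}(x)=\wdeg{S}(F^k(x))\cdot\wdeg{F^k}(x)\ge p^k$, puisque $F^k$ est purement inséparable de degré $p^k$. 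Ainsi $\wcrit_R=\HK$ tout entier, et il suffira d'établir la borne $J_R\subseteq\{x\in\HK:d_\HK(x,\xcan)\le A\}$ pour obtenir à la fois l'affirmation géométrique et l'égalité $\rho_R(\wcrit_R)=\rho_R(\HK)=1$ (la mesure étant supportée sur $J_R$).

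L'outil géométrique central s'appuie sur deux propriétés du Frobenius. D'une part, l'identité $(T-a)^{p^k}=T^{p^k}-a^{p^k}$ en caractéristique $p$ entraîne que $F^k$ envoie une boule $B(a,r)$ sur la boule $B(a^{p^k},r^{p^k})$; en particulier $F^k$ fixe $\xcan$ et dilate $d_\HK$ d'un facteur $p^k$. D'autre part, $K$ étant parfait, $F^k$ est bijective sur $\PKber$, et son inverse, bien défini comme homéomorphisme, contracte $d_\HK$ par le facteur $1/p^k$. On utilise en outre l'\emph{inégalité de relèvement} suivante: pour toute fraction rationnelle $T$ à coefficients dans $K$ et tout $y\in\HK$,
\[
  d_\HK(T^{-1}(y),\xcan)\le d_\HK(y,\xcan)+D_T,\quad D_T\=\max_{w\in T^{-1}(\xcan)} d_\HK(w,\xcan).
\]
On l'obtient en relevant une géodésique joignant $y$ à $\xcan$ par une branche inverse de $T$: le degré local de $T$ étant au moins $1$ le long du relèvement $\tilde\gamma$, la formule intégrale de la longueur majore la longueur de $\tilde\gamma$ par $d_\HK(y,\xcan)$, puis l'inégalité triangulaire conclut via l'extrémité de $\tilde\gamma$ dans $T^{-1}(\xcan)$.

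Appliquant ces deux ingrédients à $R^{-1}(y)=F^{-k}(S^{-1}(y))$, on obtient, pour tout $y\in\HK$,
\[
  d_\HK(R^{-1}(y),\xcan)\le\frac{d_\HK(y,\xcan)+D_S}{p^k},
\]
et une récurrence immédiate donne, pour tout $n\ge 0$,
\[
  d_\HK(R^{-n}(y),\xcan)\le\frac{d_\HK(y,\xcan)}{p^{nk}}+\frac{D_S}{p^k-1}.
\]
Ainsi $\bigcup_{n\ge 0}R^{-n}(y)$ est inclus dans $\{x\in\HK:d_\HK(x,\xcan)\le A_y\}$ avec $A_y\=\max(d_\HK(y,\xcan),D_S/(p^k-1))$. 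En choisissant $y\in\HK$ hors de l'ensemble exceptionnel (fini) de $R$, les propriétés classiques des ensembles de Julia berkovichiens entraînent $J_R\subseteq\overline{\bigcup_n R^{-n}(y)}$, et la fonction $d_\HK(\cdot,\xcan)=-\log\diam$ étant semi-continue inférieurement (d'après le fait rappelé sur la semi-continuité supérieure de $\diam$), la boule $\{x:d_\HK(x,\xcan)\le A_y\}$ est fermée dans $\PKber$; on en déduit $J_R\subseteq\{x\in\HK:d_\HK(x,\xcan)\le A_y\}$, achevant la démonstration. La difficulté principale du plan réside dans la justification soigneuse de l'inégalité de relèvement, en particulier de la formule intégrale pour la longueur d'un chemin relevé, qui requiert un traitement attentif du comportement près des points critiques de $T$.
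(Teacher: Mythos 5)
Votre démonstration est correcte, mais elle suit une route réellement différente de celle du texte. Le texte s'appuie sur le Lemme~\ref{lem:cocycle-general} : en écrivant $R$ comme composée d'une application séparable et d'une puissance du Frobenius, on obtient $\diam \circ R \le C\,\diam^{\delta}$ avec $\delta \ge p \ge 2$, donc pour $r$ assez petit l'ouvert $U(r) = \{\diam < r\}$ est envoyé dans lui-même ; ceci force $U(r) \subseteq F_R$, d'où $J_R \subseteq \{\diam \ge r\}$, puis $\rho_R(\wcrit_R) = 1$ via $\wcrit_R = \HK$. Vous partez de la même décomposition $R = S \circ F^k$, mais vous contrôlez les orbites inverses plutôt que les images directes : $F^{-k}$ contracte $d_\HK$ d'un facteur $p^{-k}$ en fixant $\xcan$, tandis que $S^{-1}$ n'augmente $d_\HK(\cdot, \xcan)$ que d'une constante additive $D_S$ (votre inégalité de relèvement, qui découle des résultats déjà utilisés ailleurs dans le texte, à savoir le relèvement injectif des segments \cite[Lemme~9.2]{rivera-periode} et la formule intégrale de longueur \cite[Corollaire~4.8]{rivera-espace} ; la difficulté que vous signalez n'est donc pas un obstacle réel) ; la densité de $\bigcup_{n \ge 0} R^{-n}(y)$ dans~$J_R$ pour $y$ non exceptionnel et la fermeture des boules hyperboliques, via la semi-continuité de~$\diam$, concluent. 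L'argument du texte est plus court et évite la discussion de l'ensemble exceptionnel ; le vôtre fournit en prime une borne quantitative sur le rayon hyperbolique de l'ensemble des préimages itérées. Deux retouches mineures : la constante finale doit être $A_y = d_\HK(y, \xcan) + D_S/(p^k - 1)$, car la majoration $d_\HK(y,\xcan)/p^{nk} + D_S/(p^k-1)$ n'est pas dominée par le maximum que vous écrivez ; et l'identité $d_\HK(\cdot, \xcan) = -\log \diam$ sur~$\HK$, que vous utilisez pour la fermeture, mérite une ligne de vérification à partir de la normalisation~\eqref{eq:def-diam} (elle est exacte).
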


La preuve de cette proposition suit le lemme suivant.

\begin{Lem}\label{lem:cocycle-general}
  Soit~$R$ une fraction rationnelle {\`a} coefficients dans~$K$, non constante, et soit~$\delta$ son degr{\'e} d'ins\'eparabilit\'e.
  Alors, il existe une constante $C>0$ telle que
  \begin{equation}\label{e:diam2}
    \diam \circ R
    \le
    C\, \diam^{\delta}~.
  \end{equation}
\end{Lem}
\begin{proof}
  Si $R$ est s\'eparable, cel\`a resulte du fait que $\wf_R$ et $\|R'\|$ sont born\'ees sup\'erieurement.
  Si $R$ est ins\'eparable, notons~$p$ la caract{\'e}ristique de~$K$.
  Alors, il suffit d'\'ecrire~$R$ comme une composition d'une application s\'eparable et d'une puissance du morphisme de Frobenius $F$, et de remarquer que $\diam \circ F = \diam^p$.
\end{proof}

\begin{proof}[D\'emonstration de la Proposition~\ref{prop:cas-nonsep}]
  La propri\'et\'e ${\chi(R) = -\infty}$ est automatique par définition.
  Pour chaque~$r$ dans~$\mathopen] 0, 1 \mathclose[$, soit~$U(r)$ l'ouvert d{\'e}finit par
  \begin{equation}
    \label{eq:32}
    U(r)
    \=
    \{ x\in \PKber, \, \diam (x) < r \}~.
  \end{equation}
  Le Lemme~\ref{lem:cocycle-general} implique que pour tout~$r$ assez petit, on a ${R(U(r)) \subseteq U(r)}$.
  En particulier, $\bigcup_{n = 0}^{+\infty} R^n(U(r))$ ne peut contenir~$\HK$, et par cons{\'e}quent ${U(r) \subseteq F_R}$.
  L'{\'e}galit{\'e} ${\rho_R(\wcrit_R) = 1}$ d{\'e}coule alors de ${\wcrit_R = \HK}$.
\end{proof}

\subsection{Fractions rationnelles {\`a} exposant de \mbox{Lyapunov} nul}
\label{ss:integrable}

Le but de cette section est de d{\'e}montrer le r{\'e}sultat suivant, utilis{\'e} dans la preuve du Th{\'e}or{\`e}me~\ref{thm:main3}~(2) dans \S~\ref{sec:thm32}, ainsi que celle du Th{\'e}or{\`e}me~\ref{thm:main2} dans \S~\ref{ss:modere}.

\begin{Prop}
  \label{prop:integrable}
  Soit $R$ une fraction rationnelle {\`a} coefficients dans~$K$, s\'eparable de degr\'e au moins deux, et telle que ${\chi(R) = 0}$.
  Alors on a ${\rho_R(\HK) = 1}$ si et seulement si
  \begin{equation}
    \label{eq:33}
    \bigcup_{n = 0}^{+\infty} R^n(\wcrit_R)
    \neq
    \HK~.
  \end{equation}
  Lorsque ces propri{\'e}t{\'e}s {\'e}quivalentes sont satisfaites, on a $\log \diam \in L^1(\rho_R)$.
\end{Prop}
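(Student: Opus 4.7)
Le plan consiste {\`a} traiter la direction directe comme cons{\'e}quence imm{\'e}diate du Th{\'e}or{\`e}me~\ref{t:exponsants}~(1), puis {\`a} {\'e}tablir la r{\'e}ciproque et l'int{\'e}grabilit{\'e} simultan{\'e}ment, via une analyse quantitative des pr{\'e}images {\'e}tales d'un point g{\'e}n{\'e}rique de~$\HK$ combin{\'e}e avec la th{\'e}orie du potentiel. Pour la direction ($\Rightarrow$), supposant $\rho_R(\HK) = 1$, le Th{\'e}or{\`e}me~\ref{t:exponsants}~(1) appliqu{\'e} {\`a} la mesure ergodique $\rho_R$ donnera $\chi(R) = -\int \wf_R \dd\rho_R$; combin{\'e} avec $\chi(R) = 0$ et avec $\{\wf_R > 0\} = \wcrit_R$ (voir~\eqref{eq:27}), cela entra\^inera $\rho_R(\wcrit_R) = 0$. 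L'in{\'e}galit{\'e}~\eqref{eq:6} fournira alors $\rho_R(R^n(\wcrit_R)) \le d^n\rho_R(\wcrit_R) = 0$ pour tout~$n$, d'o{\`u} $\rho_R(\bigcup_n R^n(\wcrit_R)) = 0 < 1 = \rho_R(\HK)$, soit l'in{\'e}galit{\'e} stricte $\bigcup_n R^n(\wcrit_R) \ne \HK$.

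Pour la r{\'e}ciproque, on fixera $x_0 \in \HK \setminus \bigcup_n R^n(\wcrit_R)$. L'observation cl{\'e} est que pour tout $y \in R^{-n}(x_0)$, aucun des it{\'e}r{\'e}s $y, R(y), \ldots, R^{n-1}(y)$ n'appartient {\`a}~$\wcrit_R$ (sinon $x_0$ serait dans un $R^i(\wcrit_R)$), d'o{\`u} $\wf_{R^n}(y) = 0$ et l'identit{\'e} exacte $\log\diam(y) = \log\diam(x_0) - \log\|(R^n)'\|(y)$. Posant $\mu_n \coloneqq d^{-n}(R^n)^*\delta_{x_0}$, mesure de probabilit{\'e} sur~$\HK$, on obtient
\[
\int \log\diam \dd\mu_n = \log\diam(x_0) - \int\log\|(R^n)'\|\dd\mu_n,
\]
et tout le probl{\`e}me se ram{\`e}nera {\`a} majorer $\bigl|\int\log\|(R^n)'\| \dd\mu_n\bigr|$ uniform{\'e}ment en~$n$.

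L'estimation cl{\'e} reposera sur la th{\'e}orie du potentiel. Soient $G$ le potentiel continu de $\rho_R - \delta_{\xcan}$ (existant par~\cite[Proposition~3.3]{theorie-ergo}) et~$g_n$ celui de $\mu_n - \delta_{\xcan}$. La formule de pullback $\Delta(G\circ R^n/d^n) = \rho_R - \mu_n$ et un calcul en~$\xcan$ donneront l'identit{\'e} $g_n = G - G\circ R^n/d^n + G(R^n(\xcan))/d^n$, et donc la convergence uniforme exponentielle $\|g_n - G\|_\infty \le 2\|G\|_\infty/d^n$; en particulier, $\mu_n \to \rho_R$ faiblement. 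La Proposition~\ref{prop:symmetry} appliqu{\'e}e aux potentiels continus $g_n$, $G$ et au potentiel $\log\|R'\|$, combin{\'e}e avec la borne $\le 4d$ sur la variation totale de $\Delta\log\|R'\|$ (Proposition~\ref{prop:delta}) et avec $\chi(R) = 0$, fournira $\bigl|\int \log\|R'\| \dd\mu_n\bigr| \le 8d\|G\|_\infty/d^n$. Via la relation $R^j_*\mu_n = \mu_{n-j}$ et la d{\'e}composition $\int\log\|(R^n)'\|\dd\mu_n = \sum_{k=1}^n \int\log\|R'\|\dd\mu_k$, la somme g{\'e}om{\'e}trique fournira la borne uniforme $\bigl|\int\log\|(R^n)'\|\dd\mu_n\bigr| \le C \coloneqq 8d\|G\|_\infty/(d-1)$.

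Il s'ensuivra $\int\log\diam\dd\mu_n \ge \log\diam(x_0) - C$ pour tout~$n$. La tronqu{\'e}e $u_N \coloneqq \max(\log\diam, -N)$ {\'e}tant semi-continue sup{\'e}rieurement et born{\'e}e avec $u_N \ge \log\diam$, le th{\'e}or{\`e}me de Portmanteau (cas u.s.c.) donnera $\int u_N\dd\rho_R \ge \limsup_n \int u_N\dd\mu_n \ge \log\diam(x_0) - C$, puis la convergence monotone ($u_N \downarrow \log\diam$ lorsque $N \to \infty$) {\'e}tablira $\int\log\diam\dd\rho_R \ge \log\diam(x_0) - C > -\infty$, soit l'int{\'e}grabilit{\'e} $\log\diam \in L^1(\rho_R)$; il en d{\'e}coulera automatiquement $\rho_R(\PK) = 0$, c'est-{\`a}-dire $\rho_R(\HK) = 1$. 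L'obstacle principal sera l'{\'e}tablissement du taux exponentiel $\|g_n - G\|_\infty = O(d^{-n})$, s'appuyant crucialement sur la continuit{\'e} du potentiel de~$\rho_R$ et sur la formule de pullback $\Delta(G\circ R^n/d^n) = \rho_R - \mu_n$.
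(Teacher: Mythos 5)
Votre d\'emonstration suit pour l'essentiel la m\^eme strat\'egie que celle du texte~: le sens direct est identique, et pour la r\'eciproque vous exploitez exactement l'identit\'e $\log\diam(y) = \log\diam(x_0) - \log\|(R^n)'\|(y)$ aux pr\'eimages it\'er\'ees d'un point $x_0 \in \HK\setminus\bigcup_n R^n(\wcrit_R)$, puis un contr\^ole potentiel-th\'eorique de $\int\log\|(R^n)'\|\,\dd\mu_n$ via la Proposition~\ref{prop:symmetry} et la borne sur la variation totale de $\Delta\log\|R'\|$ --- c'est pr\'ecis\'ement le contenu du Lemme~\ref{lem:key-comp}. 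Deux diff\'erences de forme seulement~: vous t\'elescopez l'estim\'ee en une s\'erie g\'eom\'etrique sur les it\'er\'es simples (via $R^j_*\mu_n=\mu_{n-j}$) au lieu de majorer d'un coup la variation totale de $\Delta\log\|(R^n)'\|$ par $4d^n$, et vous concluez simultan\'ement l'int\'egrabilit\'e et $\rho_R(\HK)=1$ par la troncature s.c.s. $\max(\log\diam,-N)$, Portmanteau et convergence monotone, l\`a o\`u le texte obtient d'abord $\rho_R(\HK)\ge \tfrac12$ par Markov puis $1$ par ergodicit\'e, et traite l'int\'egrabilit\'e s\'epar\'ement par des potentiels tronqu\'es sur des arbres finis~; votre variante est correcte et un peu plus directe sur ce dernier point.

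Un point de d\'etail \`a corriger~: avec $G$ potentiel de $\rho_R-\delta_{\xcan}$, la formule de tir\'e en arri\`ere donne $\Delta(G\circ R^n/d^n)=\rho_R - d^{-n}R^{n*}\delta_{\xcan}$ et non $\rho_R-\mu_n$~; la mesure qui appara\^it porte sur les pr\'eimages de $\xcan$ et non de $x_0$, et l'on ne peut pas simplement prendre $x_0=\xcan$, car $\xcan$ peut appartenir \`a $\bigcup_n R^n(\wcrit_R)$. Il suffit de baser la construction en $x_0$~: comme $x_0\in\HK$ et le potentiel de $\rho_R$ est continu, il existe un potentiel continu et born\'e $G$ tel que $\Delta G=\rho_R-\delta_{x_0}$ (c'est exactement le choix fait dans le Lemme~\ref{lem:key-comp}), et alors $\Delta(G\circ R^n/d^n)=\rho_R-\mu_n$~; toutes vos estim\'ees (sym\'etrie, borne en $4d\,\sup|G|/d^k$ pour chaque terme, sommation g\'eom\'etrique) se d\'eroulent ensuite sans changement, et la convergence faible $\mu_n\to\rho_R$ se d\'eduit de cette identit\'e ou directement de \cite[Th\'eor\`eme~A]{theorie-ergo}.
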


\begin{Rem}
  L'existence d'un point p{\'e}riodique dans~$\wcrit_R \cap \HK$ implique l'{\'e}galit{\'e} ${\bigcup_{n = 0}^{+\infty} R^n(\wcrit_R) = \HK}$, voir \cite[Proposition~11.1]{rivera-periode}.
  Si de plus on a ${\chi(R) = 0}$, alors on obtient ${\rho_R(\PK) = 1}$ par la Proposition~\ref{prop:integrable}.
\end{Rem}

La preuve de la Proposition~\ref{prop:integrable} suit le lemme suivant.

\begin{Lem}
  \label{lem:key-comp}
  Soit $R$ une fraction rationnelle {\`a} coefficients dans~$K$, s\'eparable de degr\'e~$d$ aux moins deux.
  Pour tout $x \in \HK$, on a
  \begin{equation}
    \label{eq:key-comp}
    0 \le -\sum_{R^{-n}(x)} \deg_{R^n} \, \log \diam =
    \sum_{R^{-n}(x)} \deg_{R^n} \, \wf_{R^n} +
    n d^n \chi (R)+ \cO(d^n)~.
  \end{equation}
\end{Lem}

\begin{proof}
  Soit $g$ un potentiel tel que $\delta_x = \rho_R + \Delta g$.
  Cette fonction est continue et donc born{\'e}e.
  On \'ecrit
  \[
    - \log \diam (y) + \log \diam (x)
    =
    \wf_{R^n}(y) + \log \| (R^n)'\|(y)~,
  \]
  pour chaque $y\in R^{-n}(x)$.
  En multipliant chaque terme par $\deg_{R^n}(y)$, et en sommant tous les termes ainsi obtenus, on obtient
  \begin{multline*}
    -\sum_{R^{-n}(x)} \deg_{R^n}(y) \, \log \diam (y) + d^n \log \diam (x)
    \\
    \begin{aligned}
      &= \sum_{R^{-n}(x)} \deg_{R^n} \,\wf_{R^n} + \int \log \| (R^n)' \| \, \dd R^{n*} \delta_x
      \\ & =
           \sum_{R^{-n}(x)} \deg_{R^n} \, \wf_{R^n} + \int \log \| (R^n)' \| \, \dd R^{n*} \rho_R - \int \log \| (R^n)' \| \dd \Delta g \circ R^n
      \\ & =
           \sum_{R^{-n}(x)} \deg_{R^n} \, \wf_{R^n} + n d^n\chi (R)- \int g \circ \dd R^n \, \dd \Delta \log \| (R^n)' \|~.
    \end{aligned}
  \end{multline*}
  On conclut alors la preuve en utilisant le fait que $g$ est born\'ee, et que la variation totale de $\Delta \log \| (R^n)' \|$ est au plus \'egale \`a $4d^n$,
  voir Proposition~\ref{prop:delta}.
\end{proof}

\begin{proof}[D\'emonstration de la Proposition~\ref{prop:integrable}]
  Si ${\rho_R(\HK) = 1}$, alors on a ${\rho_R(\wcrit_R) = 0}$ par le Th{\'e}or{\`e}me~\ref{t:exponsants}, et donc ${\rho_R (\bigcup_{n = 0}^{+\infty} R^n(\wcrit_R)) = 0}$ par~\eqref{eq:6}.
  Ceci entra{\^\i}ne~\eqref{eq:33}.

  Supposons~\eqref{eq:33} et soit~$x$ dans~${\HK \setminus \bigcup_{n = 0}^{+\infty} R^n(\wcrit_R)}$.
  On a donc $\wf_R =0$ pour toute pr\'eimage it{\'e}r{\'e}e de $x$.
  Comme $\chi (R) = 0$, le Lemme~\ref{lem:key-comp} nous donne
  \begin{equation}
    \label{eq:34}
    - \frac1{d^n}\, \int \log \diam\, \dd R^{n*} \delta_x \le C
  \end{equation}
  pour tout entier positif~$n$, et une constante $C>0$ indépendante de $n$.
  Par cons{\'e}quent
  \begin{equation}
    \label{eq:35}
    d^{-n} R^{n*}\delta_x(\HK)
    \ge
    d^{-n} R^{n*}\delta_x (\{ \diam \ge \exp(-2C) \})
    \ge
    \frac{1}{2}~.
  \end{equation}
  Or $\{ \diam \ge \exp(-2C) \} $ est ferm\'e, et $d^{-n} R^{n*}\delta_x $ tend faiblement vers~$\rho_R$ lorsque ${n \to +\infty}$.
  On conclut donc ${\rho_R(\HK) \ge \frac{1}{2}}$, et ${\rho_R(\HK) = 1}$ par ergodicit{\'e}.

  Pour montrer la derni{\`e}re assertion, soit $g$ un potentiel tel que $\rho_R = \delta_{\xcan} +\Delta g$.
  Le potentiel~$g$ est continu et donc born{\'e}.
  Par ailleurs, consid\'erons $\cT$ l'adh\'erence de l'enveloppe convexe de l'union des pr\'eimages it{\'e}r{\'e}es de~$x$, du support de $\rho_R$, et de~$\xcan$.
  Pour chaque entier positif~$k$, soit~$\cT_k$ l'enveloppe convexe de~$\xcan$ et de $\bigcup_{i \in \{0, \ldots, k\}} R^{-i}(x)$.
  On peut \'ecrire $\cT$ comme l'adh\'erence de l'union croissante des arbres finis $\cT_k$.
  Notons $\varphi_k$ le potentiel \'egal \`a $\log \diam $ sur $\cT_k$ et localement constant en dehors.
  Alors $\varphi_k$ d\'ecroit et converge ponctuellement vers $\log \diam$ sur $\cT$.
  Pour tout entier positif~$n$, on a alors
  \begin{multline*}
    0 \le - \int \varphi_k\, \dd \rho_R
    =
    - \frac1{d^n} \int \varphi_k\, \dd R^{n*} \delta_x - \int \varphi_k \, \dd \Delta \frac{g\circ R^n}{d^n}
    \\ \le
    - \frac1{d^n} \int \log \diam \, \dd R^{n*} \delta_x + |\Delta \varphi_k| \,
    \frac{ \sup |g|}{d^n}
  \end{multline*}
  ce qui implique $0 \le - \int \varphi_k\, \dd \rho_R \le C$.
  On peut alors appliquer le th\'eor\`eme de convergence monotone pour conclure.
\end{proof}

\subsection{D\'emonstration du Th\'eor\`eme~\ref{thm:main3}~(2)}\label{sec:thm32}

Le point clef est le r\'esultat suivant.

\begin{Prop}
  \label{lem:key-cstdiam}
  Soit~$R$ une fraction rationnelle {\`a} coefficients dans~$K$, et de degr{\'e} au moins deux.
  Si ${\chi(R) = 0}$, ${\rho_R(\HK) = 1}$, et~$\rho_R$ ne charge aucun segment de~$\HK$, alors il existe une boule ouverte~$B$ de~$\PKber$ et une constante $c>0$, telles que $\rho_R(B)>0$, et $\log \diam =c $
  pour $\rho_R$-presque tout point appartenant \`a~$B$.
\end{Prop}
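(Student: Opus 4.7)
My proof would follow Zdunik's strategy in the complex case, adapted via the geometric subcoboundary of Proposition~\ref{prop:cocycle}. The hypotheses $\chi(R) = 0$ and $\rho_R(\HK) = 1$ combined with Theorem~\ref{t:exponsants}~(1) and $\wf_R \ge 0$ give the coboundary
\[
  \log \diam \circ R - \log \diam = \log \|R'\| \quad \rho_R\text{-a.e.},
\]
while Proposition~\ref{prop:integrable} provides $\log \diam \in L^1(\rho_R)$ and $\bigcup_n R^n(\wcrit_R) \neq \HK$. By $R$-invariance of $\rho_R$, the iterated cocycle $\log \diam \circ R^n - \log \diam = \log \|(R^n)'\|$ holds $\rho_R$-a.e.\ for every $n$ on a common full-measure set.

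The technical heart of the argument is a construction of étale iterated preimages. For a $\rho_R$-generic base point $y_0$ lying outside the thin exceptional set $\bigcup_n R^n(\wcrit_R)$ and in a small open neighborhood $U$ with $\rho_R(U) > 0$, I would produce, for $\rho_R$-almost every $x_0 \in \PKber$, a subsequence $n_j \to \infty$ and étale inverse branches $\psi_{n_j} \colon U \to V_{n_j}$ of $R^{n_j}$ with $x_0 \in V_{n_j}$. This adapts the Freire--Lopes--Ma\~n{\'e}/Ljubich counting: the equidistribution $R^{n*}\rho_R = \deg(R)^n \rho_R$ together with the uniform expansion of the induced quotient map $\phi$ on $\cT$ furnished by Proposition~\ref{prop:expansion-uniforme} shows that a positive proportion of the $\deg(R)^n$ preimages of $y_0$ admit étale branches on $U$ for infinitely many $n$. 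The hypothesis that $\rho_R$ charges no segment of $\HK$ is essential here: combined with the contrapositive of Theorem~\ref{thm:main1} and Proposition~\ref{prop:affine Bernoulli}, it rules out the affine Bernoulli case in which $J_R$ would collapse to a one-dimensional set and the inverse branches would degenerate.

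Combining the étale branches with a Lebesgue differentiation theorem for $\rho_R$-integrable functions on $\PKber$, which I would establish from the tree structure via a Vitali-type argument on fundamental open sets, one concludes as follows. On each $V_{n_j}$ the cocycle reads
\[
  \log \diam \circ \psi_{n_j} = \log \diam - \log \|(R^{n_j})'\| \circ \psi_{n_j} \quad \rho_R\text{-a.e.\ on } U,
\]
and on an étale branch the right-hand correction term is controlled purely by the hyperbolic geometry of the branch. Choosing $x_0$ to be a Lebesgue point of $\log \diam$, the oscillation of $\log \diam$ over the shrinking sets $V_{n_j}$ tends to zero; transporting this through the cocycle and integrating against $\rho_R|_U$ forces the oscillation of $\log \diam$ over $U$ to vanish as $j \to \infty$. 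Thus $\log \diam$ is $\rho_R$-a.e.\ constant on $U$, which is the sought conclusion.

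The main obstacle is the second step, the étale inverse branch construction. In contrast with the complex setting where $\crit_R$ is finite, the Berkovich critical set may be an infinite subtree of $\HK$, so the preimage-counting argument must make full use of the compactness of $\cT$ from Section~\ref{sec:zdunik} and of Theorem~\ref{thm:main1} to exclude the affine Bernoulli situation in which the inverse branches would collapse onto a one-dimensional invariant segment.
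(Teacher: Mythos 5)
Your overall architecture is the paper's: the a.e.\ coboundary $\log\diam\circ R^n=\log\|(R^n)'\|+\log\diam$ from le Th\'eor\`eme~\ref{t:exponsants}~(1), integrability from la Proposition~\ref{prop:integrable}, a Freire--Lopes--Ma\~n\'e/Ljubich-type construction of iterated preimages, a Lebesgue differentiation lemma along those preimages, and transport of small oscillation back to a fixed ball. But the step you yourself call the technical heart has a genuine gap as you propose to carry it out. You justify the counting of good preimages by the uniform expansion of the induced map on the quotient tree~$\cT$ (Proposition~\ref{prop:expansion-uniforme}); that machinery is constructed in \S~\ref{sec:zdunik} under the standing hypothesis that $\rho_R$ has no atom and \emph{charges} a segment of~$\HK$, which is exactly what is excluded here. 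Under the present hypotheses every segment is $\rho_R$-null, so the relation $x\simeq x'$ identifies all points and $\cT$ collapses to a point: the cited expansion statement says nothing. Similarly, Th\'eor\`eme~\ref{thm:main1} is not needed to ``rule out branch degeneration'' -- the affine Bernoulli case is excluded by the hypothesis itself. The actual role of ``charges no segment'' is more concrete: it makes every finite subtree $\aT_p$ $\rho_R$-null, so that for a.e.\ $x$ and every $n$ there is a preimage component $B_n(x)\ni x$ of a component of $\PKber\setminus\aT_p$ whose forward images $R^j(B_n(x))$, $j<n$, avoid the fixed finite tree~$\aT_1$. The quantitative proportion then comes from a direct degree count (Lemme~\ref{lem:big-return}): absence of potentially good reduction gives $\deg_{R^N}\le d^N-1$ on balls off a finite tree, and the number of ``bad'' components meeting that tree is bounded by its number of ends, yielding mass at least $(1-\varepsilon)\rho_R(B_0)$ of good components \emph{for every} $n$ -- a fixed-$n$ statement one needs in order to intersect, at the same time $n$, with the set where the differentiation estimate holds.

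Second, you insist on honest univalent inverse branches $\psi_{n_j}\colon U\to V_{n_j}$ and rightly note that this is problematic since $\crit_R$ can be an infinite subtree of~$\HK$; but your only proposed remedy is the inapplicable quotient-tree expansion. The paper's resolution is to drop univalence entirely: the components $B_n(x)$ may meet the Berkovich critical locus and carry local degree $>1$. What is used is only (i) that avoiding $\aT_1$ at all intermediate times forces, via la Proposition~\ref{prop:delta} and connexity, the exact constancy of $\|(R^n)'\|$ on $B_n(x)$, and (ii) the constant-Jacobian transport $R^n_*\bigl(\rho_R|_{B_n(x_0)}\bigr)=\frac{\deg_{R^n}(B_n(x_0))}{d^n}\,\rho_R|_{R^n(B_n(x_0))}$ furnished by le Lemme~\ref{lem:deg}. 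Your formulation that the correction term is ``controlled purely by the hyperbolic geometry of the branch'' is too weak: to convert the Lebesgue-point estimate at $x_0$ into an oscillation bound on the big ball you need this exact constancy, not a mere bound. Finally, the differentiation lemma must be proved along the nested dynamical family $B_n(x)$ itself (as in le Lemme~\ref{lem:quasi-cst}, where the Vitali property is automatic because two such balls are disjoint or nested), not for an arbitrary basis of fundamental opens; your sketch points in that direction but leaves the covering argument, including the treatment of boundary points, unaddressed. With these repairs your argument becomes the paper's; as written, its central step does not go through.
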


La d{\'e}monstration de cette proprosition est ci-dessous.

\begin{proof}[D\'emonstration du Th\'eor\`eme~\ref{thm:main3}~(2) en admettant la Proposition~\ref{lem:key-cstdiam}]
  Soit~$R$ une fraction rationnelle {\`a} coefficients dans~$K$ de degr{\'e} au moins deux, et v{\'e}rifiant ${\chi(R) = 0}$ et ${\rho_R(\HK) = 1}$.
  Notre but est de construire un potentiel divisoriel~$g$ et de montrer l'existence d'une constante $C>0$ tels que ${\log |R'| = g \circ R - g}$  et $\diam \ge C$ sur~$J_R$.

  Remarquons tout d'abord que le Th{\'e}or{\`e}me~\ref{t:exponsants}~(1) implique
  \begin{equation}
    \label{eq:36}
    \rho_R(\wcrit_R)
    =
    0
    \text{ et }
    \rho_R(\{ \wf_R = 0 \})
    =
    1~.
  \end{equation}
  Par cons{\'e}quent, il existe un sous-ensemble~$J_0$ de~$J_R$ de mesure pleine pour~$\rho_R$ sur lequel, pour tout~$n$ dans~$\N^*$, on a {\'e}galit{\'e}
  \begin{equation}
    \label{eq:37}
    \log \diam \circ R^n
    =
    \log \| (R^n)' \| + \log \diam~.
  \end{equation}

  Supposons que~$\rho_R$ charge un segment de $\HK$.
  Si~$\rho_R$ ne possède pas d'atome, alors~$J_R$ est contenu dans un segment de~$\HK$ par le Th\'eor\`eme~\ref{thm:main1} et la Proposition~\ref{prop:affine Bernoulli}~(2).
  Sinon, $J_R$ est r{\'e}duit {\`a} un point de type~II par \cite[Th{\'e}or{\`e}me~E]{theorie-ergo}.
  En changeant de coordonn\'ees si n{\'e}cessaire, supposons que ${J_R \subseteq \mathopen] 0, \xcan \mathclose[}$ dans les deux cas.
  Soit~$C$ dans ${\mathopen] 0, 1 \mathclose[ \cap |K^*|}$ tel que, si l'on note par~$x_0$ le point de~$\AKber$ associ{\'e} {\`a} la boule~$\{z, |z| \le C\}$, on ait ${J_R \subseteq [x_0, \xcan]}$.
  Alors, $\log \diam$ co{\"\i}ncide sur~$J_R$ avec le potentiel divisoriel~$g$, d{\'e}fini par
  \begin{equation}
    \label{eq:38}
    g(z)
    \=
    \log \max \{ |z|, C \} - \log \max \{ |z|, 1 \}~.
  \end{equation}
  Comme on a~\eqref{eq:37} sur~$J_0$ et ${\rho_R(J_0) = 1}$, on d{\'e}duit qu'on a ${\log\| R' \| = g \circ R - g}$ sur un ensemble de mesure pleine pour~$\rho_R$, et donc {\'e}galement sur tout~$J_R$ par continuit{\'e}.

  \smallskip

  Dans la suite, on supposera donc que~$\rho_R$ ne charge aucun segment de $\HK$.
  Soit~$B$ la boule ouverte de~$\PKber$ donn{\'e}e par la Proposition~\ref{lem:key-cstdiam}.
  En r{\'e}duisant~$B$ si n{\'e}cessaire, supposons que le point~$x_B$ dans le bord de~$B$ est de type~II.
  Comme l'ouvert~$B$ intersecte~$J_R$, il existe un entier positif ou nul~$l$ tel que ${R^l(B) \supseteq J_R}$.
  La fonction $\log \diam$ \'etant semi-continue sup\'erieurement pour la topologie faible, on d{\'e}duit de la Proposition~\ref{lem:key-cstdiam} que $\log \diam \ge c$ sur $J_R \cap B$.
  De plus, la fonction $\log \| (R^l)'\|$ \'etant lipschitzienne pour la m\'etrique hyperbolique, il en r{\'e}sulte que $\log \| (R^l)'\|$
  est uniform\'ement minor\'ee sur $J_R \cap B$ par une constante $c'$.
  On d{\'e}duit alors de~\eqref{eq:19} dans le Corollaire~\ref{c:tres-wild} que
  $$\log \diam \circ R^l
  \ge
  \log \| (R^l)'\| + \log \diam
  \ge
  c + c' $$
  sur~$J_R \cap B$.
  On conclut donc que ${\log \diam \ge c + c'}$ sur~$J_R$.

  Soit~$J_1$ l'ensemble des~$x$ dans~$J_0$ tel que pour tout~$x'$ dans ${R^{-l}(x) \cap B}$, on ait
  \begin{equation}
    \label{eq:39}
    \log \diam (x)
    =
    \log \| (R^l)' \|(x') + c~.
  \end{equation}
  Comme on a~\eqref{eq:37} sur~$J_0$, que ${\rho_R(J_0) = 1}$, et que la mesure de probablité~$\rho_R$ ne charge pas~${R^l(\{x \in B, \log \diam \neq c\})}$ par~\eqref{eq:6}, on en d{\'e}duit que ${\rho_R(J_1) = 1}$.

  Pour construire~$g$, on commence par construire un potentiel~$g_0$ dont le support de~$\Delta g_0$ est une union finie de points de type~I ou~II, et tel que ${\{ g_0 = \log \diam \}}$ sur~$J_1$.
  Une fois ce potentiel~$g_0$ obtenu, on choisit une constante~$c''$ dans ${\mathopen] -\infty, c + c' \mathclose]}$ telle que~$\exp(c'')$ appartienne {\`a}~$|K^*|$, et on d{\'e}finit le potentiel divisoriel~$g$ par ${g \= \max \{ g_0, c'' \}}$.
  Alors, on a ${g = \log \diam}$ sur~$J_1$.
  Comme on a~\eqref{eq:37} sur~$J_0$ et ${\rho_R(J_0) = 1}$, on d{\'e}duit qu'on a ${\log \| R' \| = g \circ R - g}$ sur un ensemble de mesure pleine pour~$\rho_R$, et donc {\'e}galement sur tout~$J_R$ par continuit{\'e}.

  Pour construire~$g_0$, notons~$p_B$ la projection canonique de~$\PKber$ sur l'arbre ferm{\'e} ${B \cup \{ x_B \}}$, et~$\varphi$ le potentiel d{\'e}fini par
  \begin{equation}
    \label{eq:40}
    \varphi
    \=
    \log \| (R^l)' \| \circ p_B + c~.
  \end{equation}
  Le point~$x_B$ est de type II, et on a ${\Delta \varphi = (p_B)_*(\Delta \log \| (R^l)' \|)}$. Par cons{\'e}quent le support de~$\Delta \varphi$ est un ensemble fini de points de type~I ou~II.
  Par \cite[Lemme~4.2]{rivera-espace}, il existe une boule ouverte~$B'$ de~$\PKber$ et deux entiers~$N > \hN \ge 0$, tels que le point~$x_{B'}$ dans le bord de~$B'$ est {\'e}gal {\`a} $R^l(x_B)$, et de telle sorte que pour tout~$x$ dans~$\PKber$ on ait
  \begin{equation}
    \label{eq:41}
    \sum_{x' \in R^{-l}(x) \cap B} \deg_{R^l}(x')
    =
    \begin{cases}
      N
      &\text{si } x \in B', \text{ et}
      \\
      \hN
      &\text{si } x \notin B'~.
    \end{cases}
  \end{equation}
  Soit~$\psi$ le potentiel d{\'e}fini par
  \begin{equation}
    \label{eq:42}
    \psi
    \=
    \frac{1}{N} ( R^l_* \varphi - (d^l - N) \varphi(x_B) )~.
  \end{equation}
  Le support de~$\Delta \psi$ est une union finie de points type~I ou~II, et pour tout~$x$ dans~$J_1 \cap B'$ on a
  \begin{equation}
    \label{eq:43}
    \psi(x)
    =
    \frac{1}{N} \sum_{x' \in R^{-l}(x) \cap B} \deg_{R^l(x')}(x') \varphi(x')
    =
    \log \diam(x)~,
  \end{equation}
  par~\eqref{eq:41} et les d{\'e}finitions de~$J_1$ et de~$\varphi$.

  Si~$\hN = 0$, alors ${J_1 \subseteq J_R \subseteq R^l(B) = B'}$ et le potentiel ${g_0 = \psi}$ satisfait les propri{\'e}t{\'e}s d{\'e}sir{\'e}es.
  D{\'e}sormais, on suppose ${\hN > 0}$.
  Choisissons un point~$\hx$ de type~II dans~$B$, tel que l'image par~$R^l$ de la couronne~$A$ comprise entre~$x_B$ et~$\hx$ est une couronne, et tel qu'on ait ${\deg_R(A) = N - \hN}$, voir \cite[Proposition~4.1]{rivera-espace}.
  En rapprochant~$\hx$ de~$x_B$ si n{\'e}cessaire, supposons que ${\Delta (R^l_* \log \|(R^l)'\|)}$ ne charge pas~$R^l(A)$.
  Posons ${A' \= R^l(A)}$ et ${\hx' \= R^l(\hx)}$, et notons que ${\hx' \neq x_{B'}}$, que~$A'$ est la couronne comprise entre~$x_{B'}$ et~$\hx'$, et que~$\psi$ est harmonique sur~$A'$.
  On considère la boule fermée~$\kB$ de~$\PKber$ d{\'e}finie par ${\kB \= B\setminus A}$, et on note~$\hB'$ la composante connexe de ${\PKber \setminus \{ \hx' \}}$ contenant~$x_{B'}$.
  Alors, ${B' \cup \hB' = \PKber}$, ${B' \cap \hB' = A'}$, et pour tout~$x$ dans~$\hB'$ on~a
  \begin{equation}
    \label{eq:44}
    \sum_{x' \in R^{-l}(x) \cap \kB} \deg_{R^l}(x')
    =
    \hN
  \end{equation}
  par~\eqref{eq:41}.
  Soit~$p_{\kB}$ la projection canonique de~$\PKber$ sur l'arbre ferm{\'e}~$\kB$, et soient~$\hvarphi$ et~$\hpsi$ les potentiels d{\'e}finis par
  \begin{equation}
    \label{eq:45}
    \hvarphi
    \=
    \log \| (R^l)' \| \circ p_{\kB} + c
    \text{ et }
    \hpsi
    \=
    \frac{1}{\hN} ( R^l_* \hvarphi - (d^l - \hN) \hvarphi(\hx) )~.
  \end{equation}
  Le support de chacune des mesures~$\Delta \hvarphi$ et~$\Delta \hpsi$ est une union finie de points de type~I ou~II.
  Par ailleurs, $\hpsi$ est harmonique sur~$A'$, et pour tout~$x$ dans~$J_1 \cap \hB'$ on a
  \begin{equation}
    \label{eq:46}
    \hpsi(x)
    =
    \frac{1}{\hN} \sum_{x' \in R^{-l}(x) \cap \kB} \deg_{R^l(x')}(x') \hvarphi(x')
    =
    \log \diam(x)~,
  \end{equation}
  par~\eqref{eq:44} et les d{\'e}finitions de~$J_1$ et de~$\hvarphi$.

  Pour conclure, on pose ${\ell' \= [x_{B'}, \hx']}$ et on note~$p_{\ell'}$ la projection canonique de~$\PKber$ {\`a}~$\ell'$.
  Les fonctions~$\psi$ et~$\hpsi$ {\'e}tant harmoniques sur~$A'$, leurs restrictions {\`a}~$\ell'$ sont affines pour la distance hyperbolique, et on a ${\psi \circ p_{\ell'} = \psi}$ et ${\hpsi \circ p_{\ell'} = \hpsi}$.
  Comme ${\psi = \hpsi}$ sur ${J_1 \cap A'}$ par~\eqref{eq:43} et~\eqref{eq:46}, si ${p_{\ell'}(J_1 \cap A')}$ contient au moins deux points, alors on a ${\psi = \hpsi}$ sur~$A'$.
  Dans ce cas, la fonction~$g_0$ {\'e}gale {\`a}~$\psi$ sur~$B'$ et {\`a}~$\hpsi$ sur~$\hB'$ co{\"{\i}}ncide avec~$\log \diam$ sur~$J_1$ par~\eqref{eq:43} et~\eqref{eq:46}, et est telle que~$\Delta g_0$ est une union finie de points de type~I ou~II.
  La fonction~$g_0$ satisfait donc les propri{\'e}t{\'e}s d{\'e}sir{\'e}es.
  Il reste {\`a} traiter les cas o{\`u} ${p_{\ell'}(J_1 \cap A')}$ contient au plus un point.
  On choisit un point~$x_{\bullet}$ de type~II entre~$x_{B'}$ et~$\hx'$ tel que ${p_{\ell'}(J_1 \cap A')}$ ne contient aucun point entre~$x_{B'}$ et~$x_{\bullet}'$.
  La couronne~$A_{\bullet}$ comprise entre~$x_{B'}$ et~$x_{\bullet}'$ est alors disjointe de~$J_1$.
  Notons~$B_{\bullet}$ la composante connexe de ${\PKber \setminus A_{\bullet}}$ distincte de~$B'$, et~$g_0$ l'unique potentiel harmonique sur~$A_{\bullet}$, {\'e}gal {\`a}~$\psi$ sur~$B_{\bullet}$ et {\`a}~$\hpsi$ sur ${\PKber \setminus B'}$.
  Cette fonction co{\"{\i}}ncide avec~$\log \diam$ sur~$J_1$ par~\eqref{eq:43} et~\eqref{eq:46}, et son Laplacien est une union finie de points de type~I ou~II.
  Ceci montre que la fonction~$g_0$ satisfait les propri{\'e}t{\'e}s d{\'e}sir{\'e}es et termine la d\'emonstration du Th\'eor\`eme~\ref{thm:main3}~(2).
\end{proof}

Le reste de cette partie est consacr\'e \`a la d\'emonstration de la Proposition~\ref{lem:key-cstdiam}.
Fixons une fraction rationnelle~$R$ {\`a} coefficients dans~$K$, de degr{\'e} aux moins deux, et telle que~$\rho_R$ ne charge aucun segment de~$\HK$.
On commence par construire des pr{\'e}images it{\'e}r{\'e}es {\'e}tales, en adaptant la construction des branches inverses des fractions rationnelles complexes introduite ind{\'e}pendamment dans~\cite{FreLopMan83} et~\cite{lyubich}.

Pour chaque~$n$ dans~$\N^*$, notons~$\cT_n$ l'enveloppe convexe de~$\crit_R(K)$, $\xcan$, et~$R^{-n}(\xcan)$.
Notons que~$\| (R^n)' \|$ est localement cons\-tante en dehors de $\cT_n$, voir la Proposition~\ref{prop:delta}.
Par ailleurs, on note~$\aT_n$ l'image par~$R^n$ de l'enveloppe convexe de~$R^{-n}(\cT_n)$.
C'est un arbre fini qui contient $\cT_n$ et tel que $R^{-n}(\aT_n)$ est connexe.
Par cons{\'e}quent, pour toute boule ouverte~$B$ de~$\PKber$ disjointe de~$\aT_n$, chaque composante connexe~$B'$ de~$R^{-n}(B)$ est une boule ouverte de~$\PKber$, et pour chaque~$j$ dans~$\{ 0, \ldots, n - 1 \}$ on a ${R^j(B') \cap \aT_1 = \emptyset}$.
Notons de plus que les suites~$\{ \cT_n \}_{n = 0}^{+\infty}$ et~$\{ \aT_n \}_{n = 0}^{+\infty}$ sont croissantes pour l'inclusion.

\begin{Lem}\label{lem:big-return}
  Pour tout $\varepsilon>0$, il existe~$p \in \N^*$ tel que, pour tout $n\in \N$ et toute boule ouverte~$B_0$ de~$\PKber$ disjointe de~$\aT_p$ et intersectant~$J_R$, l'union des composantes connexes~$B$ de~$R^{-n}(B_0)$ telles que pour chaque~$j$ dans~$\{ 0, \ldots, n - 1 \}$ on ait ${R^j(B) \cap \aT_1 = \emptyset}$ poss{\`e}\-de une masse d'au moins ${(1- \e)\rho_R(B_0)}$.
\end{Lem}

\begin{proof}
  Notons~$d$ le degr{\'e} de~$R$.
  Comme~$R$ n'a pas bonne r{\'e}duction potentielle, la d{\'e}monstration de~\cite[Lemme~2.12]{theorie-ergo} conduit {\`a} deux cas~:
  \begin{enumerate}
  \item
    Si~$R$ n'est pas conjugu\'ee \`a un polyn\^ome, alors il existe un entier positif~$N$ tel que ${\deg_{R^N} < d^N}$ sur~$\PKber$.
  \item
    Si~$R$ est conjugu{\'e}e {\`a} un polyn{\^o}me, alors il existe un point~$x$ de type~II tel que~$J_R$ est contenu dans un nombre fini de composantes connexes de~${\PKber \setminus \{ x \}}$, sur lesquelles on a ${\deg_R < d}$.
    Dans ce cas, on pose ${N = 1}$ et on change de coordonn{\'e}es pour que ${R(x) = \xcan}$.
  \end{enumerate}
  Dans tous les cas, on a ${\deg_{R^N} \le d^N - 1}$ sur toute boule de~$\PKber$ disjointe de~$\cT_N$ et intersectant~$J_R$.

  Notons~$\beta$ le nombre d'extr{\'e}mit{\'e}s de~$\cT_N$ et posons ${\delta \= (d^N - 1)/d^N}$.
  {\'E}tant donn{\'e}~$\varepsilon$ dans~$\mathopen] 0, +\infty \mathclose[$, soit~$p_0$ le plus petit entier tel que ${p_0 \ge 1}$ et ${\beta \sum_{j = p_0}^{+\infty} \delta^j < \varepsilon}$, et posons ${p \= N p_0}$.
  Fixons une boule ouverte~$B_0$ de~$\PKber$ disjointe de~$\aT_p$ et intersectant~$J_R$.
  Pour chaque~$n$ dans~$\N^*$, notons $\mathcal{B}_n$ l'ensemble des composantes connexes de~$R^{-n}(B_0)$ tel que pour chaque~$j$ dans~$\{ 0, \ldots, n - 1 \}$ on ait ${R^j(B) \cap \aT_1 = \emptyset}$, et posons
  \[d_n
    \=
    \sum_{B\in \mathcal{B}_n} \deg_R(B)~.
  \]
  Il suffit de montrer que pour tout~$n$ on a ${d_n \ge (1 - \varepsilon) d^n}$.

  Par construction, pour chaque~$n$ dans~$\{1, \ldots, p\}$ on a $d_n = d^n$.
  Par ailleurs, pour chaque~$n$ et~$B\in \mathcal{B}_n$, ainsi que pour toute composante connexe $B'$ de $R^{-N}(B)$, nos choix ci-dessus impliquent $\deg_R(B') \le d^N - 1$.
  De plus, si~$B'$ est disjointe de $\aT_N$, alors~$B' \in \mathcal{B}_{n+N}$.
  Comme toute boule ouverte de~$\PKber$ intersectant~$\cT_N$ contient une extr{\'e}mit{\'e} de~$\cT_N$, on en d\'eduit
  \[
    d_{n+N}
    \ge
    d_n d^N - \beta \delta^{\left\lfloor \frac{n}{N} \right\rfloor} d^n~.
  \]
  Par récurrence, pour tout~$n$ satisfaisant ${n > p}$ on obtient
  \begin{equation*}
    d_n
    \ge
    (1 - \beta \sum_{j = p_0}^{+\infty} \delta^j) d^n
    \ge
    (1-\varepsilon) d^n~.
    \qedhere
  \end{equation*}
\end{proof}

Fixons $n$ dans~$\N^*$.
Pour tout $p$ dans~$\N^*$, notons~$\mathcal{B}_n^p$ l'ensemble des composantes connexes de $R^{-n}(\PKber \setminus \aT_p)$ intersectant~$J_R$ et tel que pour chaque~$j$ dans ${\{ 0, \ldots, n - 1 \}}$ on ait ${R^j(B) \cap \aT_1 = \emptyset}$.
Notons que chaque {\'e}l{\'e}ment~$B$ de~$\mathcal{B}_n^p$ est une boule ouverte de~$\PKber$ o{\`u}~$\|(R^n)'\|$ est constante, et que~$R^n(B)$ est une composante connexe de~${\PKber \setminus \aT_p}$.
Comme $\rho_R$ ne charge aucun segment de~$\HK$, on a ${\rho_R \left( J_R \setminus \bigcup_{p = 1}^{+\infty} \aT_p \right) = 1}$, et le Lemme~\ref{lem:big-return} montre que ${\lim_{p \to +\infty} \rho_R(\bigcup \mathcal{B}_n^p) = 1}$.

Pour $\rho_R$-presque tout $x\in J_R$, il existe donc un entier $p$ et une boule
de $\mathcal{B}_n^p$ contenant~$x$.
On notera $B_n(x)$ la boule obtenue de la sorte pour $p$ minimal.
Les propri\'et\'es suivantes sont satisfaites:
\begin{itemize}
\item[(P1)]
  $R(B_{n + 1}(x))\subseteq B_n(R(x))$;
\item[(P2)]
  $B_{n + 1}(x) \subseteq B_n(x)$;
\item[(P3)]
  $\bigcap_{n = 1}^{+\infty} B_n(x) \setminus \{x\}\subseteq F_R$.
\end{itemize}
Pour le dernier point, notons que $\bigcap_{n = 1}^{+\infty} B_n(x)$ est une boule contenant $x$ dont l'intérieur
est dans~$F_R$ car toutes ses images par les it\'er\'es de $R$ \'evitent $\aT_1$.

On s'appuie maintenant sur la version suivante du lemme de diff\'erentiation de \mbox{Lebesgue}.

\begin{Lem}
  \label{lem:quasi-cst}
  Soit $\phi \in L^1(\rho_R)$. Alors
  pour $\rho_R$-presque tout $x$, on a
  \[
    \lim_{n \to +\infty} \frac1{\rho_R(B_n(x))} \int_{B_n(x)} |\phi(x) - \phi(y)| \, \dd \rho_R(y)
    =
    0~.
  \]
\end{Lem}

\begin{proof}
  La d\'emonstration suit tr\`es pr\'ecis\'ement celle du th\'eor\`e\-me de diff\'erentiabilit\'e de Lebesgue donn\'ee dans~\cite{mattila}.
  On peut toujours supposer $\phi \ge0$, et on introduit
  \[
    \underline{D}(x) = \liminf_n \frac{\int_{B_n(x)} \phi \, \dd \rho_R}{\rho_R(B_n(x))},
    \text{ et }
    \overline{D}(x) = \limsup_n \frac{\int_{B_n(x)} \phi \, \dd\rho_R}{\rho_R(B_n(x))}.
  \]
  On veut montrer l'analogue de~\cite[Lemma~2.13]{mattila}~:
  si $A$ est un bor\'elien quelconque et que $\underline{D}(x) \le t$ (resp. $\overline{D}(x)\ge t$) pour tout $x \in A$, alors $\int_A \phi\, \dd \rho_R \le t \rho_R(A)$ (resp. $\int_A \phi\, \dd \rho_R \ge t \rho_R(A)$). A partir de l\`a, la preuve de~\cite[Theorem~2.12]{mattila} s'adapte \`a notre contexte sans difficult\'e, puis les arguments de~\cite[Remark~2.15 (3)]{mattila} s'appliquent, et notre lemme s'ensuit.

  Notons qu'il suffit de traiter la situation $\underline{D}(x) \le t$ sur $A$.
  Pour tout $\e>0$ fix\'e, et pour tout entier $p\ge1$, on consid\`ere la famille de boules
  \[
    \mathcal{G}_p
    =
    \left\{ B, \, B = B_n(x) \text{ avec } n\ge p,\, x \in A, \, \int_B \phi \, \dd \rho_R \le (t+ \e) \rho_R(B) \right\}~.
  \]
  Par hypoth\`ese, pour tout point $x\in A$, et pour tout entier $p$, il existe un entier $n\ge p$ tel que $B_n(x) \in \mathcal{G}_p$ et donc
  $A \subseteq \bigcup \mathcal{G}_p$.

  Comme $\rho$ est une mesure de Radon, il existe un ouvert fondamental $U\supseteq A$  dont le bord est fini tel que $\rho(A) \ge \rho(U) - \epsilon$.

  Soit $x$ un point du bord de $U$, et supposons que $B_n(x)$ soit bien d\'efini pour tout~$n$.
  Alors $B \= \bigcap_{n = 1}^{+\infty} B_n(x)$ est une boule (ouverte ou ferm\'ee) contenant~$x$.
  Comme on~a ${R^n(B) \cap \aT_1 = \emptyset}$ pour tout $n\ge 0$, l'ensemble $B\setminus \partial B$ est inclus dans~$F_R$ et par suite $\rho_R(B) = 0$.
  Si le bord de $B_n(x)$ est dans $U$ pour tout $n$, alors $U\cup B$ est un ouvert dont le bord est $\partial U \setminus \{x\}$. Dans ce cas, on remplace $U$ par $U\cup B$.
  Sinon $B_n(x)$ contient $U$ pour tout~$n$, et on remplace~$U$ par $U\cup B'$ o\`u $B'$ est la composante connexe de l'int\'erieur de~$B$ contenant~$U$.
  Notons alors que $U$ est une boule ouverte de bord~$x$.

  Quitte \`a faire cette op\'eration un nombre fini de fois, on obtient que soit $U$ est une boule ouverte de bord $x$ et $U\subseteq B_p(x)$ pour tout~$p$;
  soit $U$ est une union finie d'ouverts fondamentaux $U\supseteq A$
  telle que pour chaque point du bord $x$ il existe un entier $p$ tel que $x \notin B_p(y)$ pour tout $y$.
  Dans ce dernier cas, on en d\'eduit que $\cup \mathcal{G}_p\subseteq U$ pour $p$ assez grand, et donc $\rho_R(\cup \mathcal{G}_p) \le \rho_R(U) \le \rho_R(A) + \epsilon$.
  Dans le premier cas, on a aussi $\cup \mathcal{G}_p\subseteq U$ car sinon on pourrait trouver deux boules $B_p(x)$ et $B_p(y)$ recouvrant $\PKber$ ce qui est impossible.

  On remarque maintenant que deux boules dans $\mathcal{G}_p$ sont soit disjointes soit incluses l'une dans l'autre. Par cons\'equent, la famille $\mathcal{G}'_p$ des boules de $\mathcal{G}_p$ maximales pour l'inclusion
  et de masse positive forme un recouvrement d\'enombrable par boules disjointes de $A$.
  On obtient alors
  \[\int_A \phi \, \dd \rho_R
    \le
    \sum_{\mathcal{B}'_p} \int_B \phi \, \dd \rho_R
    \le
    (t + \e) \, \rho_R(\cup\mathcal{B}'_p)
    \le (t+ \e) \, (\rho_R(A)+\epsilon)
  \]
  et on conclut en faisant tendre $\e$ vers $0$.
\end{proof}

\begin{proof}[D\'emonstration de la Proposition~\ref{lem:key-cstdiam}]
  Soit~$R$ une fraction rationnelle {\`a} coefficients dans le corps~$K$, de degr{\'e} au moins deux, et v{\'e}rifiant ${\chi(R) = 0}$, ${\rho_R(\HK) = 1}$, et~$\rho_R$ ne charge aucun segment de~$\HK$.
  Posons ${\phi \= \log \diam}$ et notons qu'on a ${\rho_R(\{ \phi > 0 \}) = 1}$, car~$\rho_R$ ne charge pas~$\xcan$, et que ${\phi \in L^1(\rho_R)}$ par la Proposition~\ref{prop:integrable}.
  Par le Th{\'e}or{\`e}me~\ref{t:exponsants}~(1), on a ${\rho_R(\wcrit_R) = 0}$ et ${\rho_R(\{ \wf_R = 0 \}) = 1}$.
  Il existe donc un sous-ensemble~$J_0$ de~$J_R$ de mesure pleine pour~$\rho_R$, sur lequel, pour tout~$n$ dans~$\N^*$, on a l'{\'e}galit{\'e}
  \[{\phi \circ R^n = \log \| (R^n)' \| + \phi}.\]
  Fixons $\eta >0$ arbitraire, et pour chaque~$n$ dans~$\N^*$ posons
  \[
    E_n \= \left\{ x \in J_R\cap \HK, \,
      \frac1{\rho_R(B_n(x))} \int_{B_n(x)} |\phi(x) - \phi(y)| \, \dd \rho_R(y) \le \eta^2\right\}~.
  \]
  Soit~$p$ donn{\'e} par le Lemme~\ref{lem:big-return} avec ${\varepsilon = \frac{1}{2}}$, et soit~$B_0$ une composante connexe de ${\PKber \setminus \aT_p}$ intersectant~$J_R$.
  Pour tout~$n$, l'union des composantes connexes de $R^{-n}(B_0)$ appartenant \`a $\mathcal{B}^p_n$ est de masse au moins~$\rho_R(B_0)/2$.
  Par le Lemme~\ref{lem:quasi-cst}, il existe~$n$ tel que ${\rho_R(E_n) > 1 - \rho_R(B_0)/2}$.
  Par suite, on peut trouver un point $x_0 \in E_n \cap J_0$ et~$q$ dans~$\{1, \ldots, p\}$ tel que $B_n(x_0)$ est dans~$\mathcal{B}^q_n$ et~$R^n(B_n(x_0))$ est la composante connexe~$\hB_0$ de ${\PKber \setminus \aT_q}$ contenant~$B_0$.
  Notons que pour tout~$y$ dans ${B_n(x_0) \cap J_0}$ on~a
  \[
    \phi (R^n(x_0)) - \phi (R^n(y))
    =
    \phi (y) -\phi (x_0)~.
  \]
  Par ailleurs, on a
  \begin{equation}
    \label{eq:47}
    R_*^n \rho_R|_{B_n(x_0)}
    =
    \frac{\deg_{R^n}(B_{n}(x_0))}{d^n} \rho_R|_{\hB_0}
    =
    \frac{\rho_R(B_{n}(x_0))}{\rho_R(\hB_0)} \rho_R|_{\hB_0}
  \end{equation}
  par le Lemme~\ref{lem:deg} et \cite[Lemme~4.4~(2)]{theorie-ergo}.
  On en d\'eduit que
  \begin{multline*}
    \rho_R \left\{ y \in B_0, \, |\phi( R^n(x_0)) - \phi(y)| \ge \eta \right\}
    \\
    \begin{aligned}
      &\le
        \frac1{\eta}\, \int_{\hB_0} |\phi(R^n(x_0)) - \phi(y)| \, \dd \rho_R(y)
      \\
      &=
        \frac{\rho_R(\hB_0)}{\eta\, \rho_R(B_{n}(x_0))}\, \int_{B_{n}(x_0)} |\phi(R^n(x_0)) - \phi(R^n(z))| \, \dd \rho_R(z)
      \\&=
      \frac{\rho_R(\hB_0)}{\eta}\, \frac{\int_{B_{n}(x_0)} |\phi(x_0) - \phi(z)| \, \dd \rho_R(z)}
      {\rho_R(B_{n}(x_0))}
      \\ & \le
           \eta~.
    \end{aligned}
  \end{multline*}
  En faisant tendre $\eta$ vers $0$, on conclut que~$\phi$ est constante $\rho_R$-presque partout sur $B_0$.
\end{proof}

%%%%%%%%%%%%%%%%% 

\subsection{D\'emonstration du Th\'eor\`eme~\ref{thm:main3}~(3)}\label{sec:briendduval}
Fixons une fraction rationnelle~$R$ {\`a} coefficients dans~$K$, de degr{\'e}~$d$ au moins deux, et v{\'e}rifiant ${\chi(R) > 0}$.
Alors~$R$ est s\'eparable, et l'entropie m{\'e}trique de~$\rho_R$ et l'entropie topologique de~$R$ sont toutes deux \'egales \`a $\log d$ \cite[Th\'eor\`emes~C et~D]{theorie-ergo}.
Pour chaque~$n$ dans~$\N^*$, notons~$P_n$ la mesure de comptage sur~$\RFix(R^n, K)$, d{\'e}finie sur la tribu des bor{\'e}liens de~$\PKber$.
Notre but est de montrer que~$(d^{-n} P_n)_{n \in \N^*}$ converge vaguement vers la mesure de probabilité~$\rho_R$.
Nous suivons la preuve d'un r{\'e}sultat analogue dans le cas complexe donn\'ee par Briend et Duval dans~\cite[\S~3]{briend:Lyap}, en d\'etaillant quelques points.
Dans notre situation non-archim\'edienne, certaines estimations sont plus simples.

Remarquons que l'on a $\rho_R(\PK) = 1$ par le Th{\'e}or{\`e}me~\ref{t:exponsants}~(1).
Notons ${\sigma \: \hat{J} \to \hat{J}}$ l'extension naturelle de~$R$ restreinte {\`a} ${J_R \cap \PK}$, o{\`u}
\begin{equation}
  \label{eq:48}
  \hat{J}
  \=
  \{ (x_i)_{i \in \Z} \in (J_R \cap \PK)^\Z, \, R(x_i) = x_{i+1}\}
\end{equation}
et~$\sigma$ est le d\'ecalage \`a droite.
Notons aussi $\pi\colon \hat{J}\to J_R \cap \PK$ la projection d{\'e}finie par $\pi((x_i)_{i \in \Z}) \= x_0$, de telle sorte que $ R \circ \pi = \pi \circ \sigma$, et~$\hat{\rho}$ l'unique mesure de probabilit\'e invariante par~$\sigma$ se projetant sur $\rho_R$.

\begin{Lem}
  \label{l:comptage-repulsif}
  Pour tout~$\epsilon$ dans~$\mathopen] 0, 1 \mathclose[$, il existe~$\tau_{\epsilon}$ dans~$\mathopen] 0, 1 \mathclose[$ et un sous-ensemble~$E_\epsilon$ de~$\hat{J}$, tels que ${\hat{\rho}(E_{\varepsilon}) \ge 1 - \epsilon}$, et pour toute boule ferm{\'e}e~$B$ de~$\PK$ de diam{\`e}tre projectif au plus~$\tau_{\epsilon}$, nous avons
  \[
    \liminf_{n \to +\infty} d^{-n}P_n(B)
    \ge
    \hat{\rho} ( \pi^{-1} (B) \cap E_\epsilon)~.
  \]
\end{Lem}

La d{\'e}monstration de ce lemme est ci-dessous.
Tout d'abord, nous d{\'e}duisons le Th\'eor\`eme~\ref{thm:main3}~(3).

\begin{proof}[D\'emonstration du Th\'eor\`eme~\ref{thm:main3}~(3) en admettant le Lemme~\ref{l:comptage-repulsif}]
  Pour chaque~$r$ dans~$\mathopen] 0, 1 \mathclose[$, notons~$\cB(r)$ la collection des boules ferm{\'e}es de~$\PK$ de diam{\`e}tre projectif~$r$.
  De plus, pour chaque sous-ensemble~$X$ de~$\PKber$, notons~$\overline{X}$ sa fermeture dans~$\PKber$.

  Soit~$\nu$ une mesure obtenue comme limite d'une sous-suite de~$(d^{-n}P_n)_{n = 1}^{+\infty}$, et soient~$r_0$ dans~$\mathopen] 0, 1 \mathclose[$ et~$B_0$ dans~$\cB(r_0)$.
  De plus, soit~$\epsilon$ dans~$\mathopen] 0, 1 \mathclose[$, et soient~$\tau_\epsilon$ et~$E_\epsilon$ fournis par le Lemme~\ref{l:comptage-repulsif}.
  Alors, pour tout~$r$ dans~$\mathopen] 0, r_0 \mathclose[$ v{\'e}rifiant ${r \le \tau_{\epsilon}}$, nous avons
  \begin{equation}
    \label{eq:49}
    \sum_{B \in \cB(r), B \subset B_0} \nu(\overline{B})
    \ge
    \sum_{B \in \cB(r), B \subset B_0} \hat{\rho} ( \pi^{-1} (B) \cap E_\epsilon)
    =
    \hat{\rho} ( \pi^{-1} (B_0) \cap E_\epsilon)~,
  \end{equation}
  et par cons{\'e}quent
  \begin{equation*}
    \nu(B_0)
    =
    \lim_{r \to 0} \sum_{B \in \cB(r), B \subset B_0} \nu(\overline{B})
    \ge
    \lim_{\varepsilon \to 0} \hat{\rho} ( \pi^{-1} (B_0) \cap E_\epsilon)
    =
    \hat{\rho} ( \pi^{-1} (B_0))
    =
    \rho_R(B_0)~.
  \end{equation*}
  Comme ${\nu(\PKber) \le 1}$ et ${\sum_{B_0 \in \cB(r_0)} \rho(B_0) = \rho_R(\PK) = 1}$, on en d{\'e}duit que pour chaque~$B_0$ dans~$\cB(r_0)$, on a ${\nu(B_0) = \rho_R(B_0)}$.
  {\'E}tant donn{\'e} que ceci est valable pour tout~$r_0$ dans~$\mathopen] 0, 1 \mathclose[$, il en r{\'e}sulte que ${\nu = \rho_R}$.
  Comme~$\nu$ est une mesure qui est un point d'accumulation arbitraire de~$(d^{-n}P_n)_{n = 1}^{+\infty}$, cette suite converge vaguement vers~$\rho_R$.
\end{proof}

La preuve du Lemme~\ref{l:comptage-repulsif} suit le lemme suivant.
Posons ${\kappa \= \exp(-\chi(R)/2)}$.

\begin{Lem}
  \label{l:branch}
  Il existe un sous-ensemble~$\hat{J}_0$ de~$\hat{J}$, de mesure pleine pour~$\hat{\rho}$, et deux fonctions mesurables ${\tau, L \: \hat{J}_0 \to \mathopen] 0, +\infty \mathclose[}$, telles que pour tous~$\hat{x}$ dans~$\hat{J}_0$ et~$n$ dans~$\N^*$, la fonction $R^n$ admet une branche inverse envoyant~$x_0$ dans~$x_{-n}$, d{\'e}finie sur $B(x_0, \tau(\hat{x}))$ et {\`a} valeurs dans~$B(x_n, L(\hat{x}) \kappa^n)$.
\end{Lem}

\begin{proof}
  Comme~$\log \| R'\|$ est intégrable pour~$\rho_R$, la fonction $\hat{x} \mapsto \log \| R' \| \circ \pi$ est aussi int\'egrable pour la mesure ergodique $\hat{\rho}$, et le th\'eor\`eme de Birkhoff appliqu\'e \`a~$\sigma^{-1}$ donne que pour tout~$\hat{x}$ dans un sous-ensemble~$\hat{J}_0$ de mesure pleine pour~$\hat{J}$ on a
  \begin{equation}
    \label{eq:50}
    \lim_{n \to +\infty} \frac{1}{n} \log \| (R^n)' \| (x_{-n})
    =
    \chi(R)~.
  \end{equation}

  Soit~$C$ fournie par le Lemme~\ref{lem:univalent}.
  Pour chaque~$\hat{x}$ dans~$\hat{J}_0$ et~$n$ dans~$\N^*$, posons
  \begin{equation}
    \label{eq:51}
    L_n(\hat{x})
    \=
    C \| R' \| (x_n) \times \kappa^{n/2}
    \text{ et }
    \tau_n(\hat{x})
    \=
    L_n(\hat{x}) \times \| (R^n)' \| (x_n) \times \kappa^n~.
  \end{equation}
  Par ailleurs, notons ${L, \tau \: \hat{J}_0 \to \mathopen[ 0, +\infty \mathclose]}$ les fonctions mesurables d{\'e}finies par
  \begin{equation}
    \label{eq:52}
    L(\hat{x})
    \=
    \sup_{n \in \N^*} L_n(\hat{x})
    \text{ et }
    \tau(\hat{x})
    \=
    \inf_{n \in \N^*} \tau_n(\hat{x})~.
  \end{equation}
  Pour tout~$\hat{x}$ dans~$\hat{J}_0$, on a
  \begin{equation}
    \label{eq:53}
    \lim_{n \to +\infty} \frac1n \log \| R' \|(x_{-n})
    =
    0~,
    \lim_{n \to +\infty} L_n(\hat{x})
    =
    0
    \text{ et }
    \lim_{n \to +\infty} \tau_n(\hat{x})
    =
    +\infty
  \end{equation}
  par~\eqref{eq:50}.
  Par cons{\'e}quent, ${0 < L(\hat{x}), \tau(\hat{x}) < +\infty}$.

  Fixons~$\hat{x}$ dans~$\hat{J}_0$, et pour chaque~$n$ dans~$\N$ posons
  \begin{equation}
    \label{eq:54}
    r_n(\hat{x})
    \=
    \tau(\hat{x}) \| (R^n)' \| (x_n)^{-1}
    \text{ et }
    B_n(\hat{x})
    \=
    B(x_n, r_n(\hat{x}))~,
  \end{equation}
  et notons qu'on~a
  \begin{equation}
    \label{eq:55}
    r_n(\hat{x})
    \le
    L(\hat{x}) \kappa^n
    \text{ et }
    r_n(\hat{x})
    \le
    C \| R' \| (x_n)~.
  \end{equation}
  En combinaison avec le Lemme~\ref{lem:univalent}, ceci entra{\^\i}ne que~$R$ est univalente sur~$B_n(\hat{x})$ et que ${R(B_n(\hat{x})) = B_{n - 1}(\hat{x})}$.
  Par récurrence, $R^n$ est univalente sur~$B_n(\hat{x})$ et satisfait ${R^n(B_n(\hat{x})) = B_0(\hat{x})}$.
\end{proof}

\begin{proof}[D{\'e}monstration du Lemme~\ref{l:comptage-repulsif}]
  Pour chaque~$L$ dans ${\mathopen] 0, +\infty \mathclose[}$ et~$\tau$ dans~$\mathopen] 0, 1 \mathclose[$, notons~$E_{L, \tau}$ l'ensemble des $\hat{x}\in \hat{J}$ tels que pour tout~$n$ dans~$\N^*$, $R^{n}$ admet une branche inverse~$R^{-n}_{\hat{x}}$
  envoyant $x_0$ sur~$x_{-n}$, d{\'e}finie sur $B(x_0, \tau)$, et {\`a} valeurs dans~$B(x_{-n}, L \kappa^n)$.
  L'ensemble $\bigcup_{L,\tau >0} E_{L,\tau}$ est de mesure totale pour~$\hat{\rho}$ par le Lemme~\ref{l:branch}.

  {\'E}tant donn{\'e}~$\epsilon$ dans~$\mathopen] 0, 1 \mathopen[$, fixons~$L$ assez grand et~$\tau$ assez petit, pour que ${\hat{\rho} (E_{L,\tau}) \ge 1 - \epsilon}$.
  Soit~$r$ dans~$\mathopen] 0, \tau \mathclose[$, soit~$B$ une boule ferm{\'e}e de~$\PK$ de diam{\`e}tre projectif~$r$, et posons ${\hat{B} \= \pi^{-1} (B)}$.
  La mesure~$\rho_R$ \'etant m\'elangeante \cite[Proposition~3.5]{theorie-ergo}, la mesure $\hat{\rho}$ l'est aussi \cite[p.241]{CFS}, et
  on a donc
  \begin{equation}
    \label{eq:56}
    \lim_{n \to +\infty} \hat{\rho} \left( \sigma^{-n} (\hat{B}) \cap\hat{B} \cap E_{L,\tau}\right)
    =
    \hat{\rho} (\hat{B})\, \hat{\rho} (\hat{B}\cap E_{L,\tau})
    =
    \rho_R(B) \, \hat{\rho} (\hat{B}\cap E_{L,\tau})~.
  \end{equation}

  Pour chaque~$n$ dans~$\N^*$ satisfaisant $L \kappa^{n} < r$ et $\hat{y} \in \sigma^{-n} (\hat{B}) \cap \hat{B} \cap E_{L,\tau}$, consid{\'e}rons la branche inverse~$R^{-n}_{\hat{y}}$ de $R^{n}$ fournie par la d{\'e}finition de~$E_{L, \tau}$.
  On a
  \[
    R^{-n}_{\hat{y}} (B)
    \subseteq
    B(y_{-n}, L \kappa^n)
    \subseteq
    B
    \subseteq
    B(y_0, \tau)
    \text{ et }
    \rho_R(R^{-n}_{\hat{y}} (B))
    =
    d^{-n} \rho_R(B)~.
  \]
  Or le lemme de Schwarz produit un point fixe attractif de~$R^{-n}_{\hat{y}}$, voir \cite[\S~1.3.1]{R1}.
  C'est un point fixe r\'epulsif de~$R^n$ dans~$R^{-n}_{\hat{y}} (B)$.
  Si l'on note
  \begin{equation}
    \label{eq:57}
    \mathcal{P}_n
    \=
    \{ R^{-n}_{\hat{y}} (B), \hat{y} \in \sigma^{-n} (\hat{B}) \cap \hat{B} \cap E_{L,\tau} \}~,
  \end{equation}
  alors les {\'e}léments de~$\mathcal{P}_n$ sont disjoints deux {\`a} deux et par cons{\'e}quent ${P_n(B) \ge \# \mathcal{P}_n}$.
  Par ailleurs, on a
  \begin{equation}
    \label{eq:58}
    \bigcup_{B' \in \cP_n} \sigma^n(\pi^{-1}(B'))
    =
    \sigma^{-n} (\hat{B}) \cap \hat{B} \cap E_{L,\tau}~,
  \end{equation}
  et par cons{\'e}quent
  \begin{equation}
    \label{eq:59}
    d^{-n} P_n(B) \rho_R(B)
    =
    \sum_{B' \in \cP_n} \hat{\rho}(\pi^{-1}(B'))
    =
    \hat{\rho} \left( \sigma^{-n} (\hat{B}) \cap \hat{B} \cap E_{L,\tau}\right)~.
  \end{equation}
  Combin{\'e} avec~\eqref{eq:56}, ceci implique le lemme avec ${\tau_{\epsilon} = \tau/2}$ et ${E_{\epsilon} = E_{L, \tau}}$.
\end{proof}

%%%%%%%%%%%%%%%%%%%%%%%%%%%%%%%%%%%%%%%%%%%%%%%%%%%%%%%%%%%%%%%%%%%%%%%%%%%%% 
\section{Applications}
\label{s:modere}

Dans cette section, nous donnons des applications des résultats précédents aux fractions rationnelles mod{\'e}r{\'e}es (\S~\ref{ss:modere}) ainsi qu'aux familles m{\'e}romorphes de fractions rationnelles complexes (\S~\ref{sec:application}).

%%% 
\subsection{Fractions rationnelles mod\'er\'ees}
\label{ss:modere}

Dans cette section on traite la dynamique des fractions rationnelles mod\'er\'ees.
Nous commencons par rappeler la d\'efinition et les principales propri\'et\'es de ces applications (Proposition~\ref{prop:modere} et Corollaire~\ref{cor:critere simple}).
Nous montrons ensuite qu'une fraction rationnelle mod\'er\'ee, dont la mesure d'\'equilibre ne charge aucun segment de~$\HK$, poss\`ede un point p{\'e}riodique r{\'e}pulsif dans~$\PK$ (Proposition~\ref{prop:expansion}).
Nous expliquons ensuite comment ce r{\'e}sultat, combin\'e aux r\'esultats des sections pr\'ec\'edentes, implique le Th\'eor\`eme~\ref{thm:main2} et le Corollaire~\ref{cor:main4} {\'e}nonc{\'e}s dans l'introduction.

Rappelons que, pour une fraction rationnelle~$R$ {\`a} coefficients dans~$K$, non constante, $\crit_R$ est l'ensemble o{\`u} ${\deg_R \ge 2}$, qu'on note ${\crit_R(K) = \crit_R \cap K}$, et que~$R$ est mod\'er\'ee si~$\crit_R$ est inclus dans un sous-arbre fini de~$\PKber$.
Par ailleurs, un point critique inséparable de~$R$ est un point de~$\HK$ o{\`u}~$R$ est ins{\'e}parable, et la fonction ${\wf_R \: \HK \to ] 0, +\infty [}$ est d{\'e}finie par
\begin{equation}
  \label{eq:60}
  \wf_R
  =
  -\log (\| R' \| \times \diam \circ R / \diam)~.
\end{equation}

\begin{Prop}\label{prop:modere}
  Pour toute fraction rationnelle $R$ {\`a} coefficients dans~$K$, non constante, les propri\'et\'es suivantes sont \'equivalentes~:
  \begin{enumerate}
  \item
    $\wf_R$ est identiquement nulle;
  \item
    l'intérieur de~$\crit_R$ pour la topologie fine de~$\PKber$ est vide;
  \item
    $R$ est s{\'e}parable en chaque point de type~II;
  \item
    tout segment de~$\HK$ contient au plus un nombre fini de points o{\`u}~$\deg_R$ est divisible par la caract{\'e}risque r{\'e}siduelle de~$K$;
  \item
    $R$ est mod\'er\'ee.
  \end{enumerate}
  Lorsque ces propri{\'e}t{\'e}s {\'e}quivalentes sont v{\'e}rifi{\'e}es, $R$ est s{\'e}parable et poss{\`e}de les propri{\'e}t{\'e}s suivantes~:
  \begin{enumerate}
  \item [(i)]
    Chaque composante connexe de~$\crit_R$ est l'enveloppe convexe d'un sous-ensemble de~$\crit_R(K)$ contenant au moins deux points.
    En particulier, $\crit_R$ est une union finie de sous-arbres finis de l'enveloppe convexe de~$\crit_R(K)$.
  \item [(ii)]
    La fraction rationnelle $R$ n'a aucun point critique inséparable.
    De plus, chaque point de~$\PKber$ o{\`u}~$\deg_R$ est divisible par la caract{\'e}ristique r{\'e}siduelle de~$K$ est de type~II, et il y en au plus un nombre fini.
  \end{enumerate}
\end{Prop}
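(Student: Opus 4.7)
Le plan consiste d'abord à établir les équivalences (1)$\Leftrightarrow$(2)$\Leftrightarrow$(3) comme conséquences directes du Corollaire~\ref{c:tres-wild}. L'équivalence (1)$\Leftrightarrow$(2) est immédiate: par ce corollaire, le lieu $\{\wf_R > 0\}$ coïncide avec l'intérieur de $\crit_R$ pour la topologie fine, donc $\wf_R \equiv 0$ équivaut à l'absence d'intérieur. Pour (2)$\Leftrightarrow$(3), le même corollaire identifie cet intérieur à l'ensemble des points critiques inséparables, c'est-à-dire aux points de $\HK$ où $\wdeg{R} \geq 2$. Si (3) est vérifiée, alors $\wdeg{R} = 1$ en chaque point de type~II, et par la définition par changement de base $\sigma_{L/K}$ décrite en~\S~\ref{ss:action}, on obtient aussi $\wdeg{R} = 1$ en tout point de type~III ou~IV; donc aucun point critique inséparable n'existe. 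Réciproquement, (2) entraîne (3) puisque tout point de type~II où $\wdeg{R} \geq 2$ serait un point critique inséparable dans l'intérieur de $\crit_R$.

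L'implication (5)$\Rightarrow$(2) se traite rapidement: un sous-arbre fini de $\PKber$ est d'intérieur vide pour la topologie fine, car tout ouvert fondamental est une composante connexe du complémentaire d'un ensemble fini et s'étend donc jusqu'à une infinité d'extrémités dans $\PK$, ce qui contredirait son inclusion dans un arbre fini. L'obstacle principal est l'implication réciproque (3)$\Rightarrow$(5), due à Faber~\cite[Corollaire~7.13]{faber1}. Le plan est d'exploiter la séparabilité de la réduction $\tR$ en chaque point de type~II fournie par (3): la formule de Riemann-Hurwitz appliquée à $\tR$ (valable en caractéristique résiduelle positive pour un morphisme séparable modulo un terme de ramification sauvage positif) borne la somme des indices de ramification de $\tR$ par $2\deg_R(x) - 2$, ce qui limite à un nombre fini les directions tangentielles ramifiées en chaque point de type~II. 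Le long de toute branche de $\HK$, le Lemme~\ref{lem:couronne} montre que $\deg_R$ est constant sur les couronnes ouvertes génériques, et ne peut sauter qu'en des sommets correspondant soit à un point critique dans $\PK$, soit à une ramification "remontée" depuis un tel point. Comme $\crit_R(K)$ est de cardinal au plus $2\deg(R) - 2$, un argument d'induction arborescente basé sur la décroissance du degré local le long des branches issues de $\xcan$ permet de conclure que $\crit_R$ est contenu dans l'enveloppe convexe de $\crit_R(K)$ augmentée d'un nombre fini de sommets de type~II, donc dans un sous-arbre fini.

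Il reste à insérer (4) dans la chaîne d'équivalences. Sous (5), l'analyse ci-dessus identifie les points de $\PKber$ où $\deg_R \geq 2$ aux sommets et arêtes d'un sous-arbre fini, et ceux où la caractéristique résiduelle $p$ divise $\deg_R$ sont en nombre fini parmi les sommets de type~II (car le long d'une arête, $\deg_R$ est constant par le Lemme~\ref{lem:couronne}), ce qui donne (ii) et a fortiori (4). Réciproquement, si (3) fait défaut, il existe un point critique inséparable $x \in \HK$ et, par le Corollaire~\ref{c:tres-wild}, $x$ appartient à l'intérieur pour la topologie fine de $\crit_R$; cet intérieur contient un ouvert fondamental, donc une couronne ouverte issue de $x$ incluse dans $\crit_R$ et formée de points où $\wdeg{R} \geq 2$, donc où $p \mid \deg_R$, ce qui fournit un segment de $\HK$ contenant une infinité de tels points et contredit (4). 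La propriété~(i) découle enfin de la description structurelle établie dans la preuve de (3)$\Rightarrow$(5): chaque composante connexe de $\crit_R$ est un sous-arbre fini dont les extrémités sont dans $\crit_R(K)$, et doit contenir au moins deux telles extrémités car un seul point critique de $\PK$ ne peut pas produire un degré local $\geq 2$ sur un point de type~II intérieur. La propriété~(ii) résulte de la finitude du sous-arbre $\crit_R$, combinée au fait que sous (3), le degré d'inséparabilité local est trivial partout, de sorte que les points où $p \mid \deg_R$ sont exactement en nombre fini, tous de type~II.
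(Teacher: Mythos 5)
Les parties faciles de votre proposition sont correctes et co\"{\i}ncident avec l'article~: l'\'equivalence (1)$\Leftrightarrow$(2)$\Leftrightarrow$(3) via le Corollaire~\ref{c:tres-wild} (notez toutefois que pour passer des points de type~II aux points de type~III et~IV, le changement de base $\sigma_{L/K}$ ne suffit pas tel quel, puisque (3) ne donne la s\'eparabilit\'e qu'aux points de type~II de $\PKber$ et non de $\PLber$~; l'argument correct est la densit\'e des points de type~II pour la topologie fine, que l'article utilise), l'implication (5)$\Rightarrow$(2), et, pour l'essentiel, l'implication (4)$\Rightarrow$(3) par contrapos\'ee --- en remarquant qu'un voisinage fin d'un point de $\HK$ ne contient en g\'en\'eral ni ouvert fondamental ni couronne, mais contient bien un segment non trivial, ce qui suffit \`a contredire~(4).

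En revanche, le c{\oe}ur de la proposition n'est pas d\'emontr\'e. Votre implication (3)$\Rightarrow$(5) --- \'equivalente \`a l'implication (4)$\Rightarrow$(i) de l'article, c'est-\`a-dire au th\'eor\`eme de Faber --- repose sur une \og induction arborescente bas\'ee sur la d\'ecroissance du degr\'e local le long des branches issues de $\xcan$ \fg{} qui n'est pas un argument~: le degr\'e local n'est pas monotone le long des branches, et l'affirmation que $\deg_R$ \og ne peut sauter qu'en des sommets correspondant soit \`a un point critique dans $\PK$, soit \`a une ramification remont\'ee \fg{} est pr\'ecis\'ement ce qu'il faut prouver. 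La borne de Riemann--Hurwitz pour la r\'eduction en un point de type~II limite le nombre de directions ramifi\'ees en ce point, mais n'exclut ni une infinit\'e de points de branchement de $\crit_R$, ni qu'une composante connexe de $\crit_R$ poss\`ede un bout de type~IV ou de type~II~; ce sont ces exclusions qui constituent le vrai travail, et elles sont absentes de votre texte (dans l'article~: un d\'eveloppement en s\'erie pour \'ecarter les bouts de type~IV, et, pour les bouts de type~II, l'argument montrant qu'une r\'eduction ayant un unique point critique a un degr\'e divisible par la caract\'eristique r\'esiduelle, via la constance de $P'Q-PQ'$, ce qui contredit la mod\'eration). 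De m\^eme, votre passage de~(5) \`a~(ii) et~(4) est lacunaire~: le fait que $\deg_R$ soit constant le long d'une ar\^ete (ce que le Lemme~\ref{lem:couronne} ne donne d'ailleurs que sous une hypoth\`ese sur l'image) ne dit pas que cette constante est premi\`ere \`a~$p$~; pour exclure une ar\^ete sur laquelle $\deg_R$ est constant et divisible par~$p$, il faut le calcul de d\'eveloppement en s\'erie sur une couronne montrant qu'alors $\wf_R>0$ sur cette ar\^ete --- c'est exactement l'implication (1)$\Rightarrow$(4) de l'article --- calcul qui n'appara\^{\i}t nulle part dans votre proposition. En l'\'etat, la cha\^{\i}ne d'\'equivalences n'est donc pas ferm\'ee.
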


Le corollaire suivant est une cons{\'e}quence imm{\'e}diate de l'implication (3)$\Rightarrow$(5).

\begin{Cor}
  \label{cor:critere simple}
  Soit~$R$ une fraction rationnelle {\`a} coefficients dans~$K$, non constante.
  Si la caract\'eristique r{\'e}siduelle de~$K$ est nulle ou strictement plus grande que~$\deg(R)$, alors~$R$ est mod{\'e}r{\'e}e.
\end{Cor}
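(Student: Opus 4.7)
The plan is to apply the implication (3)$\Rightarrow$(5) of Proposition~\ref{prop:modere}, which asserts that if $R$ is separable at every type~II point of $\PKber$, then $R$ is moderate. It therefore suffices to verify that, under either of the two hypotheses on the residual characteristic, the local inseparability degree $\wdeg{R}(x)$ equals~$1$ for every type~II point $x$ of $\PKber$.

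Recall from \S~\ref{ss:action} that, for any type~II point~$x$, the integer $\wdeg{R}(x)$ is a power of the residual characteristic of~$K$, and that it divides $\deg_R(x)$. In particular, we always have $\wdeg{R}(x) \le \deg_R(x) \le \deg(R)$. I will split according to which of the two hypotheses of the corollary is in force.

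If the residual characteristic of~$K$ is zero, then the only power of the residual characteristic that is a positive integer is~$1$, so $\wdeg{R}(x) = 1$ at every type~II point. If instead the residual characteristic~$p$ satisfies $p > \deg(R)$, then any positive power $p^k$ with $k \ge 1$ strictly exceeds $\deg(R)$ and so cannot divide $\deg_R(x) \le \deg(R)$; hence again $\wdeg{R}(x) = 1$ at every type~II point.

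In either case, condition~(3) of Proposition~\ref{prop:modere} is satisfied, so the equivalence (3)$\Leftrightarrow$(5) in that proposition yields that~$R$ is moderate, which is the conclusion of the corollary. There is no substantive obstacle here: the only subtlety is the elementary observation that a prime power strictly larger than $\deg(R)$ cannot divide any local degree, together with the convention that in residual characteristic zero the inseparability degree is trivial.
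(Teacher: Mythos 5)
Votre démonstration est correcte et suit exactement la voie du papier, qui présente ce corollaire comme une conséquence immédiate de l'implication (3)$\Rightarrow$(5) de la Proposition~\ref{prop:modere}. La vérification que vous explicitez — à savoir que $\wdeg{R}(x)$, étant une puissance de la caractéristique résiduelle divisant $\deg_R(x)\le\deg(R)$, vaut nécessairement~$1$ sous chacune des deux hypothèses — est précisément l'argument laissé implicite dans le texte.
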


L'{\'e}quivalence des propri{\'e}t{\'e}s (2) et~(3) dans la Proposition~\ref{prop:modere} r{\'e}sulte de \cite[Proposition~10.2~(1)]{rivera-periode}.
Celle des propri{\'e}t{\'e}s~(2), (5) et~(i) a {\'e}t{\'e} d{\'e}montr{\'e}e par Faber dans \cite[Corollary~7.13]{faber1}.
Nous en donnons ici une d{\'e}monstration indép{e}ndante.
Voir aussi \cite[Proposition~2.9 et~2.11]{trucco} pour le cas des polyn{\^o}mes.

Une condition n{\'e}cessaire pour qu'une fraction rationnelle soit mod{\'e}r{\'e}e est que le degr{\'e} local en chaque point critique dans~$\PK$ ne soit pas divisible par la caract\'eristique r{\'e}siduelle de~$K$, voir Proposition~\ref{prop:modere}~(ii).
L'Exemple~\ref{ex:rigidiment-non-sauvage} montre que cette condition n'est pas suffisante.

\begin{proof}[D{\'e}monstration de la Proposition~\ref{prop:modere}]
  Rappelons que nous notons~$\tK$ le corps r{\'e}siduel de~$K$ et soit~$p$ sa caract{\'e}ristique.
  Pour chaque~$\rho$ dans~$\mathopen] 0, +\infty \mathclose[$, notons~$x(\rho)$ le point de~$\AKber$ associ\'e \`a la boule ${\{z \in K, |z| \le \rho\}}$.

  Les implications (i)$\Rightarrow$(5) et (5)$\Rightarrow$(2) sont imm{\'e}diates.
  Par le Corollaire~\ref{c:tres-wild} et la densit{\'e} des points de type~II pour la topologie fine, les propri{\'e}t{\'e}s (1), (2) et~(3) sont {\'e}quivalentes, et elles impliquent que~$R$ n'a aucun point critique inséparable et donc que~$R$ est s{\'e}parable.
  Par ailleurs, la propri{\'e}t{\'e}~(4) implique que tout point de~$\PKber$ o{\`u}~$\deg_R$ est divisible par~$p$ est de type~II par \cite[Propositions~4.5 et~4.6]{rivera-espace}.
  Combin{\'e}e {\`a} la propri{\'e}t{\'e}~(i), ceci entra{\^{\i}}ne la propri{\'e}t{\'e}~(ii).
  Pour achever la d{\'e}monstration de la proposition, il suffit donc de montrer les implications (1)$\Rightarrow$(4) et (4)$\Rightarrow$(i).

  Pour montrer l'implication (1)$\Rightarrow$(4), supposons que la propri{\'e}t{\'e}~(1) soit v{\'e}rifi{\'e}e mais que la propri{\'e}t{\'e}~(4) ne le soit pas.
  Il existe alors un multiple~$d$ de~$p$ et un segment~$\ell$ de~$\HK$ contenant une infinit{\'e} de points o{\`u} ${\deg_R = d}$.
  Soit~$\hell$ le plus petit segment ferm{\'e} de~$\PKber$ contenant~$\ell$.
  Il existe donc un point de~$\hell$ qui est accumul{\'e} par des points o{\`u}~$\deg_R$ est {\'e}gal {\`a}~$d$.
  On peut alors trouver des points de type~II distincts, $x$ et~$\hx$, appartenant {\`a}~$\ell$, tels qu'on ait ${\deg_R = d}$ sur~$\mathopen] x, \hx \mathclose[$, ${R(x) \neq R(\hx)}$, et que~$R$ envoie la couronne comprise entre~$x$ et~$\hx$ sur celle comprise entre~$R(x)$ et~$R(\hx)$, voir \cite[Propositions~4.5 et~4.6]{rivera-espace}.
  En changeant de coordonn{\'e}es si n{\'e}cessaire, on suppose qu'il existe~$r$, $\hr$, $r'$ et~$\hr'$ dans~$\mathopen] 0, 1 \mathclose[$ tels que ${x = x(r)}$, ${\hx = x(\hr)}$, ${R(x) = x(r')}$ et ${R(\hx) = x(\hr')}$.
  Alors on peut d\'evelopper~$R$ en s\'erie sur la couronne ${\{ y \in K, r < |y| < \hr \}}$, $R(z) = \sum_{k = -\infty}^{+\infty} a_k z^k$, et pour tout~$\rho$ dans~$\mathopen] r, \hr \mathclose[$ on a
  \begin{equation}
    \label{eq:61}
    \diam(R(x(\rho)))
    =
    |a_d| \rho^d
    >
    \sup \{ |a_k| r^k, k \in \Z, k \neq d \}~,
  \end{equation}
  voir la d{\'e}monstration de \cite[Lemme~5.3]{rivera-periode}.
  Comme~$d$ est divisible par~$p$, on~a
  \begin{equation}
    \label{eq:62}
    \| R' \|(x(\rho))
    <
    |a_d| \rho^d
    =
    \diam(R(x(\rho))) / \diam(x(\rho))
    \text{ et }
    \wf_R(x(\rho))
    >
    0~,
  \end{equation}
  ce qui est absurde.
  Cette contradiction montre l'implication (1)$\Rightarrow$(4).

  Il reste {\`a} montrer l'implication (4)$\Rightarrow$(i).
  Supposons que la propri{\'e}t{\'e}~(4) soit v{\'e}rifi{\'e}e.
  On commence par d{\'e}montrer que~$\crit_R$ ne contient aucun point de type~IV.
  Soit~$x_0$ un point de type~IV.
  En changeant de coordonn{\'e}es si n{\'e}cessaire, on suppose ${|x_0| \le 1}$ et ${|R(x_0)| \le 1}$.
  Posons ${r_0 \= \diam(x_0)}$ et ${d_0 \= \deg_R(x_0)}$, et notons que ${r_0 < 1}$.
  Par ailleurs, $d_0$ n'est pas divisible par~$p$ par la propri{\'e}t{\'e}~(5) et \cite[Proposition~4.6]{rivera-espace}.
  Pour chaque~$r$ dans~$\mathopen] r_0, 1 \mathclose[$, soit~$\Bber(r)$ la boule ouverte de~$\AKber$ contenant~$x_0$ et de diam{\`e}tre~$r$.
  Fixons~$r$ suffisamment proche de~$r_0$ de sorte que~$\Bber(r)$ ne contienne aucun point critique ni p{\^o}le de~$R$ et que le point~$x$ de~$\AKber$ associ{\'e} {\`a}~$\Bber(r)$ v{\'e}rifie ${\deg_R(x) = \deg_R(x_0)}$, voir \cite[Proposition~4.1]{rivera-espace}.
  En changeant de coordonn{\'e}es si nécessaire, on peut supposer que~$\Bber(r)$ contient~$0$ et que ${R(0) = 0}$.
  Alors, $R$ admet un d{\'e}veloppement en s{\'e}rie sur~$B(0, r)$, ${R(z) = \sum_{k = 1}^{+\infty} a_k z^k}$, tel que pour tout~$\rho$ dans ${\mathopen] 0, r \mathclose[}$ suffisamment proche de~$r$, on~a
  \begin{equation}
    \label{eq:63}
    \diam(R(x(\rho)))
    =
    |a_{d_0}| \rho^{d_0}
    >
    \sup \{ |a_k| \rho^k, k \in \N, k \neq d_0 \}~.
  \end{equation}
  Comme~$d_0$ n'est pas divisible par~$p$, on obtient
  \begin{equation}
    \label{eq:64}
    |d_0 a_{d_0}| \rho^{d_0}
    >
    \sup \{ |k a_k| \rho^k, k \in \N, k \neq d_0 \}~.
  \end{equation}
  Si l'on avait ${d_0 > 1}$, alors~$R'$ aurait~$d_0$ z{\'e}ros dans~$B(0, \rho)$, et donc dans~$B$, compt{\'e}s avec multiplicit{\'e}, ce qui contredirait notre choix de~$B$.
  On en d{\'e}duit que ${d_0 = 1}$ et que~$x_0$ n'appartient pas {\`a}~$\crit_R$.

  Pour achever la d{\'e}monstration de l'implication (4)$\Rightarrow$(i), soit~$\cT$ une composante con\-nexe de~$\crit_R$.
  Alors~$\cT$ est ferm{\'e}e et n'est pas r{\'e}duite {\`a} un point, voir \cite[Propositions~4.4, 4.5 et~4.6]{rivera-espace}.
  Il suffit donc de d{\'e}montrer que tout bout de~$\cT$ est dans~$\PK$.
  Aucun bout de~$\cT$ n'est de type~III, car tout point de type~III appartient {\`a} un segment ouvert de~$\HK$ o{\`u}~$\deg_R$ est constant, voir \cite[Proposition~4.6]{rivera-espace}.
  Comme~$\cT$ ne contient pas de point de type~IV, il ne reste plus qu'{\`a} montrer qu'aucun bout de~$\cT$ n'est de type~II.
  Supposons qu'il y avait un bout~$x_0$ de~$\cT$ de type~II.
  En changeant de coordonn{\'e}es si n{\'e}cessaire, on suppose ${R(x) = x = \xcan}$.
  La r{\'e}duction~$\tR$ de~$R$ est de degr{\'e} au moins deux, car~$\xcan$ appartient {\`a}~$\crit_R$, et elle poss{\`e}de un unique point critique, car~$\xcan$ est un bout de~$\cT$, voir \cite[Proposition~2.4]{R1}.
  En changeant de coordonn{\'e}es si n{\'e}cessaire, on suppose que~$\infty$ est le seul point critique de~$\tR$ et que ${\tR(\infty) = \infty}$.
  Soient~$P$ et~$Q$ des polyn{\^o}mes {\`a} coefficients dans~$\tK$, sans facteur commun, et tels que ${\tR(\zeta) = P(\zeta)/Q(\zeta)}$.
  Posons
  \begin{equation}
    \label{eq:65}
    d
    \=
    \deg_{\tR}(\infty),
    e
    \=
    \deg(P)
    \text{ et }
    f
    \=
    \deg(Q)~,
  \end{equation}
  et notons que ${d = e - f}$.
  Comme~$\infty$ est le seul point critique de~$\tR$, tout z{\'e}ro de~$Q$ est simple, et tout z{\'e}ro de ${P'Q - PQ'}$ est aussi un z{\'e}ro de~$Q$.
  Il s'ensuit que ${P'Q - PQ'}$ est constant.
  Par cons{\'e}quent, si l'on note par~$a$ et~$b$ les coefficients dominants de~$P$ et~$Q$, respectivement, alors on a ${eab =fab}$ et donc ${e = f}$ dans~$\tK$.
  C'est-{\`a}-dire, $d$ est divisible par~$p$.
  Ceci contredit la propri{\'e}t{\'e}~(4) par \cite[Propositions~4.6]{rivera-espace}, et termine la d{\'e}monstration de l'implication (4)$\Rightarrow$(i) ainsi que celle de la proposition.
\end{proof}

Les d\'emonstrations du Th\'eor\`eme~\ref{thm:main2} et du Corollaire~\ref{cor:main4} reposent sur la proposition suivante, qui s'appuie sur la construction des pr{\'e}images it{\'e}r{\'e}es {\'e}tales donn{\'e}e par le Lemme~\ref{lem:big-return}.
Lorsque~$K$ est de caractéristique résiduelle nulle, ce r{\'e}sultat d{\'e}coule des r{\'e}sultats de Luo~\cite[Propositions~11.4 et~11.5]{luo2}, d{\'e}montr{\'e}s avec une m{\'e}thode diff{\'e}rente.

\begin{Prop}\label{prop:expansion}
  Soit~$R$ une fraction rationnelle {\`a} coefficients dans~$K$, mod\'er\'ee et de degr{\'e} au moins deux.
  Si~$\rho_R$ ne charge aucun segment de~$\HK$, alors~$R$ poss{\`e}de un point p{\'e}riodique r{\'e}pulsif dans~$\PK$.
\end{Prop}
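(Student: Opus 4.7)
Le plan consiste à construire, pour un certain $n$ assez grand, une branche inverse univalente \'etale de $R^n$ envoyant une boule bien choisie $B_0 \subseteq \PKber$ strictement \`a l'int\'erieur d'elle-m\^eme, puis \`a invoquer le lemme de Schwarz non-archim\'edien pour produire un point p\'eriodique r\'epulsif de $R$ dans $B_0 \cap \PK$.

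On fixe d'abord $\varepsilon > 0$ petit et on note $p$ l'entier fourni par le Lemme~\ref{lem:big-return}. Comme $\rho_R$ ne charge aucun segment de $\HK$, elle n'a aucun atome et ne charge aucun des segments et points constituant l'arbre fini $\aT_p$; en particulier $\rho_R(\aT_p) = 0$. Une des (en nombre fini) composantes connexes $B_0$ de $\PKber \setminus \aT_p$ v\'erifie donc $\rho_R(B_0) > 0$; c'est une boule ouverte disjointe de $\aT_p$ rencontrant $J_R$.

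Pour toute bonne composante $B' \in \mathcal{B}_n^p$ de $R^{-n}(B_0)$, l'orbite $B', R(B'), \ldots, R^{n-1}(B')$ \'evite $\aT_1$, qui contient $\crit_R$ par la Proposition~\ref{prop:modere}~(i) appliqu\'ee \`a la mod\'eration de $R$. L'application $R^n$ est donc \'etale sur $B'$, et la simple connexit\'e de la boule de Berkovich $B'$ entra\^{\i}ne que $R^n|_{B'}\colon B' \to B_0$ est univalente; en particulier $\deg_{R^n}(B') = 1$, et le Lemme~\ref{lem:deg} donne $\rho_R(B') = \rho_R(B_0)/d^n$. En combinant le Lemme~\ref{lem:big-return} (qui fournit une masse totale des bonnes composantes au moins $(1-\varepsilon)\rho_R(B_0)$) avec la propri\'et\'e de m\'elange de $\rho_R$ (\cite[Proposition~3.5]{theorie-ergo}), laquelle donne $\rho_R(R^{-n}(B_0) \cap B_0) \to \rho_R(B_0)^2$ quand $n \to \infty$, on obtient, pour $\varepsilon$ assez petit et $n$ assez grand, l'existence d'au moins une bonne composante $B' \subseteq B_0$. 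Comme $d^n \geq 2$, on a $\deg_{R^n}(B') = 1 \neq d^n$, ce qui exclut $B' = B_0$, et par cons\'equent $B' \subsetneq B_0$.

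Enfin, $\psi_n \= (R^n|_{B'})^{-1}\colon B_0 \to B'$ est une bijection analytique univalente avec $\psi_n(B_0) = B' \subsetneq B_0$; la mod\'eration et le Corollaire~\ref{c:tres-wild} donnent $|\psi_n'| = \diam(B')/\diam(B_0) < 1$. Le lemme de Schwarz non-archim\'edien \cite[\S~1.3]{R1} produit alors un unique point fixe attractif $x_* \in B_0 \cap \PK$ de $\psi_n$; c'est un point p\'eriodique de $R$ de multiplicateur $|(R^n)'(x_*)| = \diam(B_0)/\diam(B') > 1$, donc r\'epulsif. La difficult\'e principale r\'eside dans l'estimation combin\'ee (m\'elange et pr\'eimages \'etales) assurant l'existence d'une bonne composante dans $B_0$; la mod\'eration y est essentielle pour garantir $\deg_{R^n}(B') = 1$ via la simple connexit\'e, et l'hypoth\`ese que $\rho_R$ ne charge aucun segment fournit le $B_0$ initial, de masse positive et disjoint de $\aT_p$.
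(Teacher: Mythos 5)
Votre schéma général est correct et coïncide pour l'essentiel avec celui du texte~: boule $B_0$ disjointe de $\aT_p$ et rencontrant $J_R$, composantes de $R^{-n}(B_0)$ dont l'orbite évite $\aT_1 \supseteq \crit_R$ (modération, Proposition~\ref{prop:modere}), univalence de $R^n$ sur ces composantes, identité de masse via le Lemme~\ref{lem:deg}, puis lemme de Schwarz. En revanche, l'étape pivot --- produire une bonne composante contenue dans $B_0$ --- est chez vous obtenue par mélange, et c'est là que se trouve un vrai problème de quantificateurs. Pour que $\rho_R(R^{-n}(B_0) \cap B_0) \approx \rho_R(B_0)^2$ dépasse la masse des mauvaises composantes, majorée seulement par $\varepsilon\, \rho_R(B_0)$, il faut essentiellement $\varepsilon < \rho_R(B_0)$. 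Or $\varepsilon$ doit être choisi \emph{avant} $B_0$~: l'entier $p$ du Lemme~\ref{lem:big-return} dépend de $\varepsilon$, et $B_0$ est une composante de $\PKber \setminus \aT_p$. Quand $\varepsilon \to 0$, $p \to +\infty$, les arbres $\aT_p$ (qui contiennent les enveloppes convexes des $R^{-p}(\xcan)$) s'accumulent sur $J_R$, et la masse maximale d'une composante de $\PKber \setminus \aT_p$ tend vers $0$, typiquement exponentiellement vite en $p$, alors que $p(\varepsilon)$ ne croît que logarithmiquement en $1/\varepsilon$. Votre \emph{pour $\varepsilon$ assez petit} est donc circulaire, et rien ne garantit que l'inégalité requise soit réalisable. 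Le texte contourne précisément cette difficulté en n'utilisant que l'invariance~: avec $\varepsilon = 1/2$ fixé une fois pour toutes, les unions $G_n$ de bonnes préimages vérifient $\rho_R(G_n) \ge \rho_R(B_0)/2$ pour tout $n$, donc deux d'entre elles, $G_k$ et $G_n$ avec $k < n$, se rencontrent (tiroirs en temps), et l'image par $R^k$ de la bonne composante de niveau $n$ passant par un point de l'intersection fournit la boule $B' \subseteq B_0$, $B' \neq B_0$, à laquelle appliquer Schwarz~; aucune comparaison entre $\varepsilon$ et $\rho_R(B_0)$ n'est alors nécessaire.

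Deux points secondaires. Même en admettant une bonne composante $B'$ rencontrant $B_0$, la conclusion $B' \subseteq B_0$ demande un argument~: $B'$ évite $\aT_1$, mais le bord de $B_0$ est dans $\aT_p$, pas nécessairement dans $\aT_1$. Il faut dire que $B'$ et $B_0$ sont deux boules dont la réunion évite $\aT_1$, donc n'est pas $\PKber$ tout entier, de sorte qu'elles sont emboîtées, puis exclure $B_0 \subseteq B'$ grâce à l'identité $\rho_R(B') = \rho_R(B_0)/d^n < \rho_R(B_0)$ que vous avez déjà établie (ou parce que $B_0$ rencontre $J_R$). Cette même identité exclut $B' = B_0$~; en revanche votre justification par $\deg_{R^n}(B') = 1 \neq d^n$ n'en est pas une, car une composante de $R^{-n}(B_0)$ égale à $B_0$ n'aurait aucune raison d'être de degré $d^n$. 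Enfin, l'univalence de $R^n$ sur $B'$ doit s'argumenter via le degré local de Berkovich (absence de points où $\deg_{R^n} \ge 2$ le long de l'orbite de $B'$, grâce à la modération), et non via le caractère étale et la simple connexité topologique, qui ne suffiraient pas en caractéristique résiduelle positive~; ce dernier point est toutefois au même niveau de détail que l'assertion correspondante du texte.
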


\begin{proof}
  On reprend les notations introduites dans le paragraphe pr{\'e}c{\'e}dant le Lemme~\ref{lem:big-return}.
  Notre hypoth{\`e}se que~$R$ est mod{\'e}r{\'e}e implique qu'on a ${\crit_R \subseteq \aT_1}$, voir Proposition~\ref{prop:modere}.
  Soit~$p$ l'entier fourni par le Lemme~\ref{lem:big-return} avec ${\varepsilon = \frac{1}{2}}$, et~$B_0$ une composante connexe de ${\PKber \setminus \aT_p}$ intersectant~$J_R$.
  Pour chaque~$n$ dans~$\N^*$, soit~$G_n$ l'union des composantes connexes~$B$ de~$R^{-n}(B_0)$, tels que pour tout~$j$ dans ${\{0, \ldots, n - 1\}}$ on ait ${R^j(B) \cap \aT_1 = \emptyset}$.
  Notons que~$B$ est une boule, et que~$R^n$ envoie~$B$ sur~$B_0$ de fa\c{c}on univalente.

  Pour tout~$n$ dans~$\N^*$, on a ${\rho_R(G_n) \ge \rho_R(B_0)/2}$ par le Lemme~\ref{lem:big-return}.
  Il existe alors~$k$ et~$n$ dans~$\N^*$, tels que ${k < n}$ et ${G_k \cap G_n \neq \emptyset}$.
  Fixons~$x$ dans ${G_k \cap G_n}$, notons~$B$ la composante conexe de~$R^{-n}(B_0)$ contenant~$x$, et posons ${B' \= R^k(B)}$.
  Alors~$B'$ est une boule disjointe de~$\aT_1$ qui intersecte~$B_0$.
  Par cons{\'e}quent, on a ${B' \subseteq B_0}$, car~$B_0$ est un composante connexe de ${\PKber \setminus \aT_1}$, et ${B' \neq B_0}$, car~$B_0$ intersecte~$J_R$.
  Comme~$R^{n - k}$ envoie~$B'$ sur~$B_0$ de fa\c{c}on univalente, le Lemme de Schwarz implique alors que~$B'$ contient un point p{\'e}riodique r{\'e}pulsif dans~$\PK$ de~$R$, voir \cite[\S~1.3.1]{R1}.
\end{proof}

\begin{proof}[D\'emonstration du Th\'eor\`eme~\ref{thm:main2}]
  Soit~$R$ une fraction rationnelle {\`a} coefficients dans~$K$ mod\'er\'ee, et de degr{\'e} au moins deux.
  Notons que $\chi(R) \ge 0$ par le Th\'eor\`eme~\ref{thm:main3}~(1).
  Si cette in{\'e}galit{\'e} est stricte, alors le point~(1) d{\'e}coule du Th\'eor\`eme~\ref{thm:main3}~(3).

  Supposons que ${\chi(R) = 0}$.
  Si $R$ a bonne r\'eduction potentielle, alors le point~(3) d{\'e}coule de~\cite[Th\'eor\`eme~E]{theorie-ergo}.
  Supposons maintenant que~$R$ n'a pas bonne r\'eduction potentielle.
  Alors on a $\rho_R(\HK) = 1$ par la Proposition~\ref{prop:integrable}, ${J_R \subset \HK}$ par le Th\'eor\`eme~\ref{thm:main3}~(2), et~$\rho_R$ charge un segment de~$\HK$ par la Proposition~\ref{prop:expansion}.
  Comme~$\rho_R$ n'a pas d'atomes d'apr{\`e}s~\cite[Th\'eor\`eme~E]{theorie-ergo}, la fraction rationelle~$R$ est affine Bernoulli par le Th{\'e}or{\`e}me~\ref{thm:main1}, et le point~(3) d{\'e}coule de la Proposition~\ref{prop:affine Bernoulli}~(1, 4).
\end{proof}

\begin{proof}[D\'emonstration du Corollaire~\ref{cor:main4}]
  Lorsque le point~(1) est v{\'e}rifié, les points~(2) et~(4), ainsi que les assertions {\`a} la fin du corollaire, d{\'e}coulent de \cite[Th{\'e}or{\`e}me~E]{theorie-ergo} si~$R$ a bonne r{\'e}duction potentielle, et de la Proposition~\ref{prop:affine Bernoulli}~(1, 2) si~$R$ est affine Bernoulli.

  Les implications (2)$\Rightarrow$(3) et (3)$\Rightarrow$(1) d{\'e}coulent du Th{\'e}or{\`e}me~\ref{thm:main2}.
  Ceci d{\'e}montre l'{\'e}quivalence des points~(1), (2) et~(3).
  Par ailleurs, l'implication (4)$\Rightarrow$(5) est imm{\'e}diate, (5)$\Rightarrow$(6) d{\'e}coule de la Proposition~\ref{prop:expansion}, et (6)$\Rightarrow$(1) du Th{\'e}or{\`e}me~\ref{thm:main1}.
  Ceci d{\'e}montre l'{\'e}quivalence des points~(1), (4), (5) et~(6), et compl{\`e}te la d{\'e}monstration du corollaire.
\end{proof}

%%% 
\subsection{Familles m\'eromorphes de fractions rationnelles complexes}
\label{sec:application}

Dans cette section, nous {\'e}tudions la variation de l'exposant de \mbox{Lyapunov} dans des familles m\'eromorphes complexes.
L'\'etude de la d\'eg\'en\'erescence des exposants de \mbox{Lyapunov} dans des familles m\'eromorphes \`a une variable a \'et\'e initi\'ee par \mbox{DeMarco} et Faber \cite{demarco-faber1,demarco-faber2}, puis poursuivie en toute dimension par le premier auteur~\cite{favre-degeneration}, voir aussi~\cite{bianchi-okuyama} et~\cite{poineau22}.
Le comportement des multiplicateurs des points p\'eriodiques dans des familles m\'eromorphes de fractions rationnelles a \'et\'e {\'e}tudi{\'e} par Luo~\cite{luo2}, et pr{\'e}c{\'e}demment par DeMarco et McMullen~\cite{demarco-mcmullen} dans le cas des polyn{\^o}mes.
Nous nous appuierons sur le travail de Gauthier, Okuyama et Vigny~\cite{GOV1}, qui fournit une approximation effective des exposants de \mbox{Lyapunov} en termes des multiplicateurs aux points périodiques.

Pour {\'e}noncer nos r{\'e}sultats, nous introduisons quelques notations.
Rappelons que la d\'eriv\'ee sph\'erique d'une fraction rationnelle complexe~$R$ non constante est donn\'ee par
\[
  \|R'\|(z)
  \=
  \frac{|R'(z)|}{1 +|R(z)|^2} \, (1 + |z|^2)~.\]
Cette fonction se prolonge en une fonction continue et strictement positive, d{\'e}finie sur la sph\`ere de Riemann~$\PC$.
Si le degr\'e de~$R$ est au moins deux, alors pour tout entier~$n$ v{\'e}rifiant ${n \ge 1}$, on pose
\[
  \lambda_n(R)
  \=
  \frac1n \log\left(\sup_{R^n(z) =z} \|(R^n)'\|(z) \right)~.
\]
On remarque que $\lambda_n(R) \le \sup_{\PC} \log \|R'\|$.

\medskip

On note de plus ${\D \= \{ |t| < 1, t \in \C \}}$ et ${\D^* \= \{ t \in \D, t \neq 0 \}}$.
Pour un entier~$d$ v{\'e}rifiant ${d \ge 2}$, nous dirons que $\{R_t\}_{t \in \D}$ est une \emph{famille m\'eromorphe de fractions rationnelles de degr\'e~$d$} (sous-entendu
param\'etr\'ee par le disque unit\'e et avec au plus un p\^ole en $t=0$),
si
\begin{itemize}
\item
  $R_t$ est le quotient de deux polyn\^omes $P_t, Q_t$ dont les coefficients sont holomorphes sur~$\D^*$ et m\'eromorphes en $0$;
\item
  $P_t$ et $Q_t$ n'ont pas de z\'ero commun si $t \neq0$;
\item
  $\max \{ \deg(P_t), \deg(Q_t) \} = d$ si $t \neq 0$.
\end{itemize}

Il n'est pas difficile de voir que si $\{R_t\}_{t \in \D}$ est une famille m\'eromorphe de fractions rationnelles, alors la fonction
$\lambda_n(t) \= \lambda_n(R_t)$ est sous-harmonique et continue sur~$\D^*$ et v\'erifie
$\lambda_n(t) \le C \log|t|^{-1} + C'$ pour des constantes $C, C' >0$ ind\'ependantes de $n$.

\medskip

\`A toute famille m\'eromorphe $\{R_t\}_{t \in \D}$ de fractions rationnelles de degr\'e~$d$ est canoniquement associ\'ee une fraction rationnelle de m\^eme degr\'e sur le corps des s\'eries de Laurent~$\C(\!(t)\!)$.
Ce corps est complet pour la norme $t$-adique, mais il n'est pas alg\'ebriquement clos.
Suivant~\cite{kiwi-cubic}, nous noterons~$\Pu$ le compl\'et\'e de la clot\^ure alg\'ebri\-que de $\C(\!(t)\!)$: c'est un corps m\'etris\'e non-archim\'edien complet et alg\'ebriquement clos.
Nous noterons par~$\aR$ la fraction rationnelle {\`a} coefficients dans~$\Pu$ de degr\'e~$d$ induite par~$\{R_t\}_{t \in \D}$.
Comme $\C(\!(t)\!)$ est de caract\'eristique r{\'e}siduelle nulle, $\aR$ est mod\'er\'ee, voir le Corollaire~\ref{cor:critere simple}.

\begin{Thm}
  \label{thm:main6}
  Soit $\{R_t\}_{t \in \D}$ une famille m\'eromorphe de fractions rationnelles de degr\'e au moins deux.
  Alors, les assertions suivantes sont \'equivalentes.
  \begin{enumerate}
  \item
    La fonction $t \mapsto \chi(R_t)$ est born\'ee en $0$.
  \item
    L'exposant $\chi(\aR)$ est nul.
  \item
    Pour tout~$r$ dans ${\mathopen] 0, 1 \mathclose[}$, il existe une constante $C>0$ telle que
    $$
    \sup_{n \in \N^*} \sup_{t \in \D^*, |t|\le r} \lambda_n(R_t) \le C~.
    $$
  \end{enumerate}
  Lorsque ces assertions {\'e}quivalentes sont satisfaites, la fonction ${t \mapsto \chi(R_t)}$ se prolonge en une fonction sous-harmonique born{\'e}e d{\'e}finie sur~$\D$, et~$\aR$ est soit affine Bernoulli, soit a bonne r\'eduction potentielle.
\end{Thm}

L'{\'e}quivalence (1)$\Leftrightarrow$(2) r{\'e}sulte de la combinaison de~\cite[Theorem~3.1]{GOV1} et~\cite[Theorem~C]{favre-degeneration}.
La d{\'e}monstration que l'assertion~(3) est {\'e}quivalente aux assertions~(1) et~(2) s'appuie sur les Th{\'e}or{\`e}mes~\ref{thm:main3} et~\ref{thm:main2}.

Luo~\cite{luo1,luo2} a r\'ecemment donn\'e une caract\'erisation g\'eom\'etrique
des composantes hyperboliques de l'espace des modules des fractions rationnelles dans lesquelles les multiplicateurs restent born\'es.
Le Th\'eor\`eme~\ref{thm:main6} s'applique sans hypoth\`ese sur le lieu de bifurcation.

Comme toute fraction rationnelle affine Bernoulli est de degr\'e au moins~$4$, on en d\'eduit le corollaire suivant.

\begin{Cor}\label{cor:degen}
  Soit $\{R_t\}_{t \in \D}$ une famille m\'eromorphe de fractions rationnelles de degr\'e~$2$ ou~$3$.
  Si $\{R_t\}_{t \in \D}$ diverge dans l'espace des modules des fractions rationnelles lorsque ${t \to 0}$, alors pour tout~$N$ dans~$\N^*$ il existe un cycle p\'eriodique dont le multiplicateur est born\'e inf\'erieu\-rement par~$|t|^{-N}$.
\end{Cor}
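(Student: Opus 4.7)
Le plan est de procéder par contraposition : supposons qu'il existe $N_0\in\N^*$ tel que, pour tout cycle périodique de~$R_t$, son multiplicateur $\mu(t)$ vérifie $|\mu(t)|\le C\,|t|^{-N_0}$ au voisinage de $t=0$ pour une constante $C>0$, et montrons que $\{R_t\}_{t\in\D}$ converge dans l'espace des modules des fractions rationnelles de degré~$d$. Nous considérons pour cela la fraction rationnelle~$\aR$ à coefficients dans~$\Pu$ associée à la famille.

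Comme~$\Pu$ est de caractéristique résiduelle nulle, $\aR$ est modérée par le Corollaire~\ref{cor:critere simple}. Chaque point périodique de~$\aR$ de période~$n$ provient d'une famille algébrique de points périodiques de~$R_t$ (au besoin après changement de base par un revêtement ramifié de~$\D^*$), et son multiplicateur vu dans~$\Pu$ a pour norme non archimédienne la norme $t$\nobreakdash-adique du multiplicateur complexe correspondant. L'hypothèse se traduit ainsi en $\|(\aR^n)'\|(x)\le e^{N_0}$ pour tout point périodique~$x$ de~$\aR$ de période~$n$, soit
\[
  \lambda_n(\aR) \= \tfrac{1}{n} \log \max_{\aR^n(x)=x}\|(\aR^n)'\|(x) \le \tfrac{N_0}{n}.
\]

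En caractéristique résiduelle nulle, le théorème d'approximation d'Okuyama~\cite{okuyama} affirme que $\chi(\aR)$ est limite d'une moyenne de log-multiplicateurs aux points périodiques de~$\aR$. Comme cette moyenne est majorée par le maximum $\lambda_n(\aR)$, on en déduit $\chi(\aR)\le \liminf_n \lambda_n(\aR)\le 0$. Le Théorème~\ref{thm:main2} fournissant par ailleurs $\chi(\aR)\ge 0$ pour~$\aR$ modérée, on obtient $\chi(\aR)=0$. Puisque $\deg(\aR)=d\in\{2,3\}$ et qu'une fraction rationnelle affine Bernoulli est de degré au moins~$4$ (Proposition~\ref{prop:affine Bernoulli}), le Corollaire~\ref{c:modere-degre-petit} force~$\aR$ à avoir bonne réduction potentielle.

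On conclut en invoquant le dictionnaire classique, détaillé en \S~\ref{sec:application}, entre la bonne réduction potentielle d'une fraction rationnelle sur~$\Pu$ et la convergence dans l'espace des modules d'une famille méromorphe complexe : l'existence d'une transformation de Möbius~$M$ à coefficients dans~$\Pu$ telle que $M \circ \aR \circ M^{-1}$ ait bonne réduction de degré~$d$ s'interprète comme l'existence d'une famille méromorphe~$M_t$ de transformations de Möbius telle que $M_t \circ R_t \circ M_t^{-1}$ s'étende holomorphiquement à $t=0$ en une fraction rationnelle de degré~$d$, d'où la convergence de~$[R_t]$ dans l'espace des modules, contredisant l'hypothèse. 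L'obstacle principal est le recours précis au théorème d'Okuyama, qui fournit via l'inégalité moyenne-maximum l'estimation cruciale $\chi(\aR)\le\liminf\lambda_n(\aR)$; le reste découle formellement des résultats établis dans les sections précédentes.
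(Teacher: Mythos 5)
Votre stratégie par contraposition est légitime et suit une route réellement différente de celle du texte. La démonstration originale distingue deux cas~: si $\chi(\aR)>0$, la Proposition~\ref{p:famille-positive} (reposant sur~\cite{GOV1} et~\cite{favre-degeneration}) fournit directement une proportion positive de cycles dont le multiplicateur croît comme $|t|^{-n(\chi(\aR)-\epsilon)}$, ce qui donne l'énoncé~; si $\chi(\aR)=0$, la bonne réduction potentielle en degré $2$ ou $3$ et l'argument de troncature des coefficients de la conjugaison montrent que la famille ne diverge pas. Vous remplacez l'ingrédient quantitatif complexe par un transfert dans l'autre sens~: la borne sur les multiplicateurs complexes devient, via les paramétrisations de Puiseux convergentes des cycles, une borne $t$-adique sur les multiplicateurs de~$\aR$, et le théorème d'Okuyama sur~$\Pu$ (dont la caractéristique est nulle) donne $\chi(\aR)\le 0$, puis $\chi(\aR)=0$ par le Théorème~\ref{thm:main2} et la bonne réduction potentielle par le Corollaire~\ref{c:modere-degre-petit}. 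C'est un raccourci viable, qui évite~\cite{GOV1} et~\cite{favre-degeneration}~; en contrepartie, il ne livre que l'énoncé qualitatif, alors que l'argument du texte fournit en prime une proportion définie de cycles à multiplicateur explosif.

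Il y a cependant une lacune concrète dans votre étape quantitative. La majoration $\lambda_n(\aR)\le N_0/n$ est fausse telle quelle~: le maximum porte sur tous les points fixes de~$\aR^n$, y compris ceux de période exacte $m<n$, pour lesquels $(\aR^n)'(x)=\mu^{n/m}$ où $\mu$ est le multiplicateur du cycle de période~$m$~; l'hypothèse ne donne alors que $\tfrac1n\log\|(\aR^n)'(x)\|\le N_0/m\le N_0$, donc seulement $\lambda_n(\aR)\le N_0$, et l'inégalité moyenne--maximum ne fournit que $\chi(\aR)\le N_0$, ce qui ne suffit pas (comparez avec le Théorème~\ref{thm:main6}~(3), où la borne sur~$\lambda_n$ est uniforme en~$n$, pas en $O(1/n)$). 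La réparation est facile mais doit être écrite~: dans la moyenne d'Okuyama sur les points fixes de~$\aR^n$, les points de période exacte $m\le n/2$ sont en nombre $O(d^{n/2})$ et contribuent chacun au plus~$N_0$, donc leur part est $o(1)$ après division par~$d^n$, tandis que les points de période exacte~$n$ contribuent chacun au plus $N_0/n$~; on obtient bien $\chi(\aR)\le 0$. Enfin, votre dictionnaire final est énoncé trop fortement~: la conjugaison réalisant la bonne réduction a ses coefficients dans~$\Pu$ et peut exiger un changement de base ramifié $t=s^k$ --- pour $R_t(z)=t z^{3}$, la bonne réduction n'est atteinte qu'après conjugaison par $z\mapsto t^{-1/2}z$, et aucune famille méromorphe~$M_t$ sur~$\D$ ne convient. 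Il faut donc soit l'argument de troncature du texte (après descente à une extension finie), soit remarquer que la divergence dans l'espace des modules est insensible à un changement de base fini~; cette précaution, que le texte passe lui aussi rapidement, ne remet pas en cause la conclusion mais mérite d'être détaillée.
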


Ce corollaire implique qu'il n'existe aucune courbe alg\'ebrique de l'espace des modules des fractions rationnelles de degr\'e~$2$ ou~$3$ sur laquelle la famille est stable au sens de \cite{mcmullen}.
En degr\'e $2$ ce r\'esultat est une cons\'equence facile de la param\'etrisation de Milnor des fractions quadratiques en termes des multiplicateurs de leurs points fixes~\cite{milnor-quadra}.
En degr\'e~$3$, nous n'avons pas de param\'etrisation explicite en termes de multiplicateurs de points p\'eriodiques, voir cependant~\cite{lloyd}.
Notons que la rigidit\'e de Thurston implique la non-existence de famille alg\'ebrique stable en tout degr\'e non carr\'e, voir~\cite[Theorem~2.2]{mcmullen}, ce qui n'est pas une cons\'equence directe du Corollaire~\ref{cor:degen}.
Nous renvoyons {\`a} l'article récent~\cite{JiXie23} pour une approche non-archimé\-dienne à la rigidité de McMullen.

Pour d{\'e}montrer le Th{\'e}or{\`e}me~\ref{thm:main6} et le Corollaire~\ref{cor:degen}, on utilise la cons{\'e}quence suivante de la combinaison de~\cite[Theorem~3.1]{GOV1} et de~\cite[Theorem~C]{favre-degeneration}.

\begin{Prop}
  \label{p:famille-positive}
  Soit $\{R_t\}_{t \in \D}$ une famille m\'eromorphe de fractions rationnelles de degr\'e au moins deux, telle que ${\chi(\aR) > 0}$.
  Alors, pour tout $\epsilon>0$ il existe $\eta>0$ tel que pour tout~$n$ assez grand et tout~$t$ assez petit, la fraction rationnelle $R_t^n$ admet au moins~$\eta d^n$ points fixes~$z$ v\'erifiant
  \[
    \frac1n \log \|R_t^n\|(z)
    \ge
    (\chi(\aR) - \epsilon) \, \log |t|^{-1}~.
  \]

\end{Prop}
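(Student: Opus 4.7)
Le plan suit la référence donnée dans l'énoncé, en combinant l'approximation effective de l'exposant de \mbox{Lyapunov} complexe par la moyenne des log-normes des dérivées aux points périodiques due à Gauthier, Okuyama et Vigny~\cite[Theorem~3.1]{GOV1} avec le théorème de dégénérescence de Favre~\cite[Theorem~C]{favre-degeneration}. Ce dernier affirme que ${\chi(R_t) = \chi(\aR)\log|t|^{-1} + o(\log|t|^{-1})}$ lorsque~${t \to 0}$. En combinant ces deux résultats, pour tout ${\delta > 0}$, tout~$n$ assez grand et tout~$t$ assez petit, on aura
\[
  \frac{1}{n d^n} \sum_{R_t^n(z)=z} \log \|(R_t^n)'\|(z)
  \ge
  (\chi(\aR)-\delta)\log|t|^{-1}~.
\]

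Il s'agit ensuite d'extraire une proportion positive de points fixes à grand multiplicateur grâce à une borne supérieure uniforme et un argument de tiroirs. Les coefficients de~$R_t$ étant méromorphes en~$0$, il existe une constante ${M > 0}$ telle que $\sup_{\PC} \log\|R_t'\| \le M\log|t|^{-1} + O(1)$ pour~$t$ proche de~$0$; par multiplicativité de la dérivée sphérique le long des orbites, pour tout point fixe~$z$ de~$R_t^n$ on a ${\frac{1}{n}\log\|(R_t^n)'\|(z) \le M\log|t|^{-1}+O(1)}$. En choisissant ${\delta = \epsilon/2}$, et en notant~$\eta_{t,n}$ la proportion des points fixes de~$R_t^n$ satisfaisant la minoration recherchée, on décompose la moyenne précédente entre ces points et les autres (bornés eux par le seuil ${(\chi(\aR)-\epsilon)\log|t|^{-1}}$) pour obtenir
\[
  (\chi(\aR)-\epsilon/2)\log|t|^{-1}
  \le
  \eta_{t,n}(M\log|t|^{-1}+O(1))
  +
  (1-\eta_{t,n})(\chi(\aR)-\epsilon)\log|t|^{-1}~,
\]
ce qui fournit une minoration uniforme ${\eta_{t,n} \ge \eta > 0}$ pour~$t$ assez proche de~$0$ et~$n$ assez grand, comme souhaité.

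L'obstacle principal sera d'invoquer la version quantitative de~\cite[Theorem~3.1]{GOV1} de façon suffisamment uniforme en~$t$, de telle sorte que le contrôle du terme d'erreur dans l'approximation de~$\chi(R_t)$ soit indépendant de~$t$ dans un voisinage épointé de~$0$. Ce contrôle uniforme devrait se déduire de la sous-harmonicité en~$t$ des quantités en jeu et de la majoration uniforme de~$\|R_t'\|$ en termes de~$\log|t|^{-1}$, jointes aux estimations explicites fournies par~\cite{GOV1}.
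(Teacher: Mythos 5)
Votre démarche est essentiellement celle du papier~: mêmes ingrédients (\cite[Theorem~3.1]{GOV1} combiné à \cite[Theorem~C]{favre-degeneration}), même majoration uniforme $\sup_{\PC} \log \|R_t'\| \le C \log|t|^{-1}$ pour $t$ petit, et même argument de moyenne pour extraire une proportion positive de points fixes à grand multiplicateur. Seule retouche nécessaire~: il faut travailler avec $\log^+ = \max\{0, \log\}$ et non avec $\log$, car la moyenne $\frac{1}{n d^n} \sum_{R_t^n(z) = z} \log \|(R_t^n)'\|(z)$ peut valoir $-\infty$ (cycles superattractifs) et l'approximation de~\cite{GOV1} porte sur la version $\log^+$~; comme le seuil $(\chi(\aR) - \epsilon) \log|t|^{-1}$ est positif (on peut supposer $\epsilon < \chi(\aR)$), ni le décompte des points fixes ni votre décomposition n'en sont affectés. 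Enfin, l'uniformité en~$t$ que vous signalez comme obstacle est directement fournie par la forme quantitative de \cite[Theorem~3.1]{GOV1} (appliqué avec $r = 1$), dont le terme d'erreur est contrôlé par $\sup_{\PC} \log \|R_t'\| = O(\log|t|^{-1})$ divisé par~$n$~: c'est exactement le mécanisme que vous suggérez, et la sous-harmonicité en~$t$ n'intervient pas à ce stade.
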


\begin{proof}
  Supposons ${\chi(\aR) > 0}$ et choisissons une constante~$C$ de telle sorte que
  \[
    C
    >
    \chi(\aR)
    \text{ et }
    \sup_{t \in \D^*, |t| \le 1/2} \sup_{\PC} \log \|R_t\|
    \le
    C \log|t|^{-1}~.
  \]
  De plus, pour chaque~$t$ dans~$\D^*$ et~$n$ dans~$\N^*$ posons
  \[
    K_t(n)
    \=
    \# \left\{ z, \, R_t^n(z) =z \text{ et } \frac1n \, \log^+ \|R^n_t\|(z) \ge (\chi(\aR) - \epsilon) \log|t|^{-1} \right\}
  \]
  et
  \[
    L^+_n(R_t)
    \=
    \frac1{d^n} \sum_{z \in \PC, R_t^n(z) =z} \frac1n \, \log^+ \|R^n_t\|(z)~,
  \]
  o\`u ${\log^+ \= \max \{0, \log \}}$.
  On tire de~\cite[Theorem~3.1]{GOV1} appliqu\'e \`a $r=1$ et de~\cite[Theorem~C]{favre-degeneration}
  \[
    \left| L^+_n(R_t) - \chi(\aR)\, \log|t|^{-1} \right|
    \le
    \left| L^+_n(R_t) - \chi(R_t) \right|
    +
    \frac{\epsilon}{3} \log|t|^{-1}
    \le \frac{2 \epsilon}{3} \log|t|^{-1}~, \]
  pour tout $n$ assez grand et pour tout $t$ assez petit (ind\'ependant de $n$).
  On obtient
  \begin{equation*}
    \chi(\aR) - \frac{2 \epsilon}{3}
    \le
    \frac{L^+_n(R_t)}{\log|t|^{-1}}
    \le
    C \frac{K_t(n)}{d^n} + (\chi(\aR) - \epsilon) \times \left(1 - \frac{K_t(n)}{d^n} \right)
  \end{equation*}
  et
  \[
    \frac{K_t(n)}{d^n}
    \ge
    \frac{\epsilon}{3(C - \chi(\aR) + \epsilon)}
    >
    0~.
    \qedhere
  \]
\end{proof}

\begin{proof}[D\'emonstration du Th\'eor\`eme~\ref{thm:main6}]
  L'implication (1)$\Rightarrow$(2) est une cons{\'e}quence directe de~\cite[Theorem~C]{favre-degeneration}, et l'implication (3)$\Rightarrow$(2) d{\'e}coule de l'in{\'e}galit{\'e} ${\chi(R) \ge 0}$ donn{\'e} par le Th{\'e}or{\`e}me~\ref{thm:main2} et la Proposition~\ref{p:famille-positive}.
  Pour montrer les implications (2)$\Rightarrow$(1) et (2)$\Rightarrow$(3), supposons que ${\chi(\aR) = 0}$.
  Alors on a $\chi(R_t) =o(\log|t|^{-1})$ par~\cite[Theorem~C]{favre-degeneration}.
  Il s'ensuit que pour tout $\epsilon>0$ la fonction ${t \mapsto \chi(R_t) + \epsilon \log|t|}$ est sous-harmonique sur~$\D^*$ et localement born\'ee sup\'erieurement.
  Elle se prolonge donc en une fonction sous-harmonique d{\'e}finie sur~$\D$.
  Par le principe du maximum, on a donc $\chi(R_t) + \epsilon \log|t| \le C'$ pour une constante~$C'$ ad\'equate et pour tout~$\epsilon$ et~$t$ v{\'e}rifiant ${|\epsilon| \le 1}$ et ${|t| \le 1/2}$.
  En faisant $\epsilon \to0$, on en d\'eduit que $\chi(R_t)$ est born\'ee sup\'erieurement et se prolonge donc aussi en une fonction sous-harmonique d{\'e}finie sur~$\D$.
  Elle est donc born\'ee car positive.
  Cela montre l'assertion~(1).
  Pour montrer l'assertion~(3), notons que soit~$\aR$ est affine Bernoulli, soit elle a bonne r{\'e}duction potentielle par le Th{\'e}or{\`e}me~\ref{thm:main2}.
  En particulier, tout cycle p\'eriodique de~$\aR$ dans~$\PPu$ est indiff\'erent ou attractif.
  Il s'ensuit que pour tout~$n$ dans~$\N^*$, la fonction ${t\mapsto\lambda_n(R_t)}$ se prolonge en une fonction sous-harmonique born\'ee d{\'e}finie sur~$\D$.
  Le principe du maximum implique alors que pour tout~$r$ dans~$\mathopen] 0, 1 \mathclose[$ on a
  $$
  \sup_{n \in \N^*} \sup_{t \in \D^*, |t|\le r} \lambda_n(R_t)
  \le
  \sup_{t \in \D, |t| = r} \lambda_n(R_t)
  \le
  \sup_{t \in \D, |t| = r} \sup_{\PC} \log \|R_t\|~.$$
  Le dernier membre de droite \'etant ind\'ependant de $n$, on obtient (3).
\end{proof}

\begin{proof}[D\'emonstration du Corollaire~\ref{cor:degen}]
  Si~$\chi(\aR)$ est non nul, alors on a ${\chi(\aR) > 0}$ par le Th{\'e}or{\`e}me~\ref{thm:main2}, et l'assertion d{\'e}sir{\'e}e est une cons{\'e}quence de la Proposition~\ref{p:famille-positive}.

  Supposons que ${\chi(\aR) = 0}$.
  On peut conjuguer~$\aR$ par un \'el\'ement~$\phi$ de~$\PGL(2, \C(\!(t)\!))$ \`a une fraction ayant bonne r\'eduction, voir le Th{\'e}or{\`e}me~\ref{thm:main2}.
  Pour chaque~$N$ dans~$\N^*$, notons~$\phi_N$ la fraction rationnelle \`a coefficients dans $\C[t]$ obtenue en tronquant les coefficients de~$\phi$ \`a l'ordre $N$.
  Pour~$N$ assez grand, c'est une transformation de M\"obius, et les coefficients de $\phi_N^{-1} \circ \aR \circ \phi_N$ sont proches dans~$\Pu$ des coefficients de $\phi^{-1} \circ \aR \circ \phi$.
  Il s'ensuit que~$\phi_N^{-1} \circ \aR \circ \phi_N$ a bonne r\'eduction.
  Comme les coefficients de~$\phi_N$ sont holomorphes sur~$\D$, on en d{\'e}duit que~$\phi_N(t) \circ R_t \circ \phi_N(t)^{-1}$ converge vers la r{\'e}duction de~$\phi_N^{-1} \circ \aR \circ \phi_N$, lorsque ${t \to 0}$, qui est une fraction rationnelle complexe de m{\^e}me degr{\'e} que~$\{ R_t \}_{t \in \D}$.
  Par cons{\'e}quent, cette famille ne diverge pas dans l'espace des modules.
\end{proof}

\begin{Rem*}
 L’article~\cite{favre25} rend le Corollaire~\ref{cor:degen} effectif en degré $3$ et démontre une version plus forte de la Proposition~\ref{p:famille-positive}. 
 D’autre part, Gong~\cite{gong25} établit une version du Théorème~\ref{thm:main6} pour les suites.
\end{Rem*}

%%%%%%%%%%%%%%%%%%%%%%%%%%%%%%%%%%%%%%%%%%%%%%%%%%%%%%%%%%%%%%%%%%% 

\section{Exemples et conjectures}\label{sec:open}

Nous présentons quelques exemples (\S~\ref{ss:exemples}) ainsi que des conjectures (\S~\ref{ss:questions}).

\subsection{Exemples}
\label{ss:exemples}

Notre premier exemple concerne l'exposant de \mbox{Lyapunov} des fractions rationnelles ayant une bonne r{\'e}duction et des fractions rationnelles affines Bernoulli.
Les exemples suivants pr{\'e}sentent un exposant de \mbox{Lyapunov} strictement n{\'e}gatif, l'un dont l'ensemble de Julia est un ensemble de Cantor contenu dans~$\HK$ (Exemple~\ref{ex:negativement-vide}) et l'autre dont l'ensemble de Julia intersecte~$\PK$ (Exemple~\ref{ex:negativement-non-vide}).
Nous poursuivons avec un exemple d'un polyn{\^o}me qui ne satisfait aucun des points~(1), (2) ou~(3) du Th\'eor\`eme~\ref{thm:main3} (Exemple~\ref{ex:atypique}), ainsi qu'un polyn{\^o}me non mod{\'e}r{\'e} sans aucun point critique ins{\'e}parable dans~$\PK$ (Exemple~\ref{ex:rigidiment-non-sauvage}).

\begin{Exemple}
  Soit~$R$ une fraction rationnelle {\`a} coefficients dans~$K$ et de degr{\'e} au moins deux.
  Supposons tout d'abord que~$R$ a bonne r\'eduction.
  La r{\'e}duction~$\tR$ de~$R$ dans~$\tK$ est donc de m\^eme degr\'e que~$R$.
  Comme la mesure d'{\'e}quilibre de~$R$ est la masse de Dirac au point~$\xcan$, le Corollaire~\ref{c:tres-wild} entra{\^{\i}}ne qu'on a ${\chi(R) \le 0}$ avec {\'e}galit{\'e} si et seulement si~$\tR$ est s{\'e}parable.

  Si~$R$ est affine Bernoulli, alors le Th{\'e}or{\`e}me~\ref{thm:main3}~(1, 3) entra{\^{\i}}ne que ${\chi(R) \le 0}$, avec {\'e}galit{\'e} si et seulement si aucun des facteurs de dilatation de~$R$ n'est disivible par la caract\'eristique r{\'e}siduelle de~$K$.
\end{Exemple}

\begin{Exemple}
  \label{ex:negativement-vide}
  Supposons que la caract\'eristique de~$K$ soit nulle et que sa caract\'eristique r{\'e}siduelle~$p$ soit strictement positive.
  Soit~$P_0$ le polyn{\^o}me {\`a} coefficients dans~$K$ d{\'e}fini par
  \begin{equation}
    \label{eq:66}
    P_0(z)
    \=
    (z^p - z^{p^2})/p~.
  \end{equation}
  Alors~$J_{P_0}$ est un ensemble de Cantor o{\`u} ${\diam = p^{1/(p-1)}}$ et ${| \cdot | \le 1}$, voir \cite[Exemple~6.3]{R1} et \cite[Proposition~5.6]{theorie-ergo}.
  Par ailleurs, un calcul montre que ${\| P_0' \| = | \cdot |^{p - 1}}$ sur~$J_{P_0}$, d'o{\`u} l'on d{\'e}duit que ${\chi(P_0) < 0}$ car~$\rho_{P_0}$ charge~$\{ x \in \AKber, |x| < 1 \}$.
  En effet, on a ${\chi(P_0) = \frac{1}{p + 1} \log |p|}$ par la formule de Przytycki, voir~\cite[\S5]{okuyama} ou la Proposition~\ref{p:pr}.
\end{Exemple}

\begin{Exemple}
  \label{ex:negativement-non-vide}
  Supposons que la caract\'eristique de~$K$ soit nulle et que sa caract\'eristique r{\'e}siduelle~$p$ soit strictement positive.
  Fixons~$\lambda$ dans~$K$ tel que ${|p|^{-1} < |\lambda| < |p|^{-(p^2 - 1)/p)}}$ et soit~$P_1$ le polyn{\^o}me d{\'e}fini par
  \begin{equation}
    \label{eq:67}
    P_1(z)
    \=
    \lambda (z^{p^2 + p} - z^{p^2})~.
  \end{equation}
  On va montrer que ${\chi(P_1) < 0}$ et cependant que ${J_{P_1} \not\subseteq \HK}$.
  En effet, une analyse du polyg{\^o}ne de Newton de ${P_1(z) - z}$ montre que~$P_1$ poss{\`e}de~$p$ points fixes dans~$K$ de norme {\'e}gale {\`a}~$1$.
  Comme ${|P_1'| = |p\lambda| > 1}$ sur~$\{ z \in K \: |z| = 1\}$, chacun de ces points fixes est r{\'e}pulsif et, par cons{\'e}quent, ${J_{P_1} \not\subseteq \HK}$.
  Par ailleurs, un calcul direct montre que chaque point critique~$c$ de~$P_1$ dans~$K$ diff{\'e}rent de~$0$ satisfait ${c^p = p/(p + 1)}$.
  Cela implique que tous les points critiques de~$P_1$ dans~$K$ appartiennent {\`a} la boule ouverte~$B_0$ de~$K$ de centre~$0$ et diam{\`e}tre~$|\lambda|^{-1/(p^2 - 1)}$.
  Combin{\'e} {\`a} ${P_1(B_0) = B_0}$ et {\`a} la formule de Przytycki, voir~\cite[\S5]{okuyama} ou la Proposition~\ref{p:pr}, cela entra{\^{\i}}ne ${\chi(P_1) = \log |p^2 + p| = \log |p| < 0}$.
\end{Exemple}

\begin{Exemple}
  \label{ex:atypique}
  Supposons que la caract\'eristique de~$K$ soit nulle, et que sa caract\'eristique r{\'e}siduelle~$p$ est strictement positive.
  Pour un param{\`e}tre~$a$ dans~$K$ bien choisi, le polyn\^ome~$P_2$ {\`a} coefficients dans~$K$ d{\'e}fini par
  \begin{equation}
    \label{eq:68}
    P_2(z)
    \=
    (1-a) z^{p+1} + a z^p~,
  \end{equation}
  admet~$1$ comme point fixe r{\'e}pulsif et v{\'e}rifie ${\chi(P_2) = 0}$.
  Ce polyn{\^o}me, introduit par \mbox{Benedetto} dans~\cite{benedetto}, ne satisfait donc aucun des points~(1), (2) ou~(3) du Th\'eor\`eme~\ref{thm:main3}.
\end{Exemple}

\begin{Exemple}
  \label{ex:rigidiment-non-sauvage}
  Supposons que la caract\'eristique de~$K$ soit nulle, et que sa caract\'eristique r{\'e}siduelle~$p$ soit un nombre premier impair.
  Pour chaque~$t$ dans~$K$, soit~$P_t$ le polyn{\^o}me {\`a} coefficients dans~$K$ d{\'e}fini par
  \begin{equation}
    \label{eq:69}
    P_t(z)
    \=
    p z^{p + 1} - z^p + ptz~.
  \end{equation}
  Lorsque ${|t| < 1}$, le polyn{\^o}me~$P_t$ a une r{\'e}duction ins{\'e}parable et, par cons{\'e}quent, il n'est pas mod{\'e}r{\'e}.
  Par ailleurs, le lemme de Hensel montre que, lorsque~$|t|$ est assez petit mais non nul, $P_t$ poss{\`e}se ${p + 1}$ points critiques dans~$K$ distincts deux {\`a} deux~: l'infini, un proche de~$1/(p + 1)$, et un proche de chaque racine $(p - 1)$-i{\`e}me de~$t$.
  Le degr{\'e} local de~$P_t$ \`a l'infini est ${p + 1}$, et celui en chaque un de ses points critiques finis est~$2$.
  Cela montre que~$P_t$ n'a aucun point critique ins{\'e}parable, bien que~$P_t$ ne soit pas mod{\'e}r{\'e}.
\end{Exemple}

\subsection{Conjectures}
\label{ss:questions}

Fixons une fraction rationnelle~$R$ {\`a} coefficients dans~$K$ et de degr{\'e} au moins deux.
Rappelons qu'un point fixe~$x$ est dit r\'epulsif s'il appartient \`a ${\crit_R \cap \HK}$, ou s'il appartient \`a~$\PK$ et le multiplicateur de~$R$ en $x$ est de norme strictement plus grand que~$1$.
De plus, pour chaque~$n$ dans~$\N^*$, nous d{\'e}signons par~$\RFix(R^n)$ l'ensemble des points fixes r\'epulsifs de~$R^n$.

Lorsque~$K$ est un corps de caract\'eristique nulle, l'\'equidistribution des points p\'eriodi\-ques dans~$\PK$ de~$R$ est donn\'ee par~\cite[Th\'eor\`eme~B]{theorie-ergo}, voir aussi~\cite[Theorem~1.2]{okuyama2}.
Pour~$K$ arbitraire et lorsque ${\chi(R) > 0}$, le Th\'eo\-r\`eme~\ref{thm:main3}~(3) raffine ce r\'esultat et {\'e}tablit l'{\'e}quidistribution des points p\'eriodiques r\'epulsifs dans~$\PK$.

\begin{conj}
  \label{conj:rep-equi}
  On a
  \[
    \frac1{d^n} \sum_{\RFix(R^n)} \deg_{R^n}(x) \delta_x \to \rho_R
    \text{ lorsque }
    n \to +\infty~.
  \]
\end{conj}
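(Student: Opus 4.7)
Nous proc\'ederions par analyse de cas selon le signe de $\chi(R)$ et les th\'eor\`emes d\'ej\`a \'etablis dans cet article. Lorsque $R$ est affine Bernoulli, la conclusion est exactement la Proposition~\ref{prop:affine Bernoulli}~(5). Lorsque $R$ a bonne r\'eduction potentielle, apr\`es normalisation le point canonique $\xcan$ est un point fixe de degr\'e local $d$, de sorte que le seul terme $\deg_{R^n}(\xcan) \delta_{\xcan}/d^n$ produit d\'ej\`a $\rho_R = \delta_{\xcan}$; il suffit alors de montrer que les autres points p\'eriodiques r\'epulsifs contribuent un $o(d^n)$ dans la somme pond\'er\'ee. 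Lorsque $\chi(R) > 0$, le Th\'eor\`eme~\ref{thm:main3}~(3) traite les points de $\RFix(R^n, K)$ (o\`u $\deg_{R^n} = 1$), et il reste \`a contr\^oler la contribution de $\RFix(R^n) \cap \HK$. Pour un point p\'eriodique de type~II $x$ de p\'eriode minimale $p$, l'\'egalit\'e $\deg_{R^n}(x) = \deg_{R^p}(x)^{n/p}$ combin\'ee \`a la majoration $\deg_{R^p}(x) \le d^p - 1$ (stricte car $R$ n'a pas bonne r\'eduction potentielle) et \`a un comptage de type Riemann-Hurwitz sur $\PKber$ devrait fournir la d\'ecroissance $o(d^n)$ voulue.

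\textbf{Cas restants : $\chi(R) \le 0$ hors des r\'egimes de bonne r\'eduction potentielle et affine Bernoulli.} Ceci inclut $\chi(R) < 0$ et le r\'egime anormal $\chi(R) = 0$ de l'Exemple~\ref{ex:atypique}. La strat\'egie naturelle est d'adapter la construction de Briend-Duval de \S~\ref{sec:briendduval} : produire suffisamment de branches inverses \'etales de $R^n$ le long d'orbites $\rho_R$-g\'en\'eriques, chacune donnant un point p\'eriodique r\'epulsif par un lemme de Schwarz. La fonction $\wf_R$ (ou, de mani\`ere \'equivalente, le sous-cobord g\'eom\'etrique de la Proposition~\ref{prop:cocycle}) joue le r\^ole du d\'efaut sauvage dans la d\'eriv\'ee : on remplacerait $\|R'\|$ par $\diam \circ R / \diam$ sur $\HK$ et reformulerait la contraction des branches inverses dans la topologie de Berkovich plut\^ot que dans la m\'etrique hyperbolique. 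La convergence $d^{-n} R^{n*} \delta_y \to \rho_R$ pour $y$ g\'en\'erique alimenterait alors le sch\'ema, comme dans la preuve du Th\'eor\`eme~\ref{thm:main3}~(3).

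\textbf{Obstacle principal.} Le cas authentique $\chi(R) < 0$ est le plus d\'elicat. Ici $\rho_R$ est support\'ee sur $\HK$ et charge $\wcrit_R$, donc les orbites pertinentes pour l'\'equidistribution visitent ind\'efiniment les points critiques sauvages, et aucun lemme de Schwarz m\'etrique n'est disponible pour produire un point p\'eriodique dans chaque "petite" r\'egion. Une voie plausible consiste \`a descendre \`a un quotient arboreal dans l'esprit de \S~\ref{sec:zdunik}, en effondrant les orbites qui s'accumulent sur $\wcrit_R$, puis \`a ex\'ecuter l'argument de Briend-Duval sur ce quotient ; l'obstacle est que la construction de \S~\ref{sec:zdunik} utilisait de mani\`ere essentielle que $\rho_R$ charge un segment, ce qui fait pr\'ecis\'ement d\'efaut ici. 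Les exemples de~\cite{benedetto-connected}, pour lesquels l'ensemble de Julia est un arbre infini explicite, sugg\`erent que d\'emontrer la conjecture en toute g\'en\'eralit\'e n\'ecessitera une extension substantielle de la machinerie du quotient arboreal pour accommoder des arbres \`a une infinit\'e de points de branchement.
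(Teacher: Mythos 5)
Attention~: l'\'enonc\'e que vous traitez n'est pas un th\'eor\`eme du papier mais la Conjecture~\ref{conj:rep-equi}, que les auteurs laissent explicitement ouverte. Le papier ne contient aucune d\'emonstration \`a comparer~; il signale seulement que les cas connus sont ceux que vous relevez~: le cas mod\'er\'e avec ${\chi(R)=0}$ (affine Bernoulli ou bonne r\'eduction potentielle) d\'ecoule du Th\'eor\`eme~\ref{thm:main2}~(2,\,3) et de la Proposition~\ref{prop:affine Bernoulli}~(5), et le cas ${\chi(R)>0}$ serait r\'esolu par le Th\'eor\`eme~\ref{thm:main3}~(3) \emph{\`a condition} de d\'emontrer la Conjecture~\ref{conj:equi}, c'est-\`a-dire que la contribution pond\'er\'ee de ${\RFix(R^n)\cap\HK}$ est un~$o(d^n)$. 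Votre proposition reproduit correctement cette cartographie, mais elle ne constitue pas une preuve~: les cas restants sont pr\'ecis\'ement ceux qui sont ouverts.

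Concr\`etement, deux \'etapes de votre sch\'ema ne tiennent pas en l'\'etat. Pour ${\chi(R)>0}$, votre argument de comptage via ${\deg_{R^p}(x)\le d^p-1}$ ne donne une d\'ecroissance exponentielle que pour les p\'eriodes~$p$ born\'ees~; pour des cycles de p\'eriode~$p$ comparable \`a~$n$ la majoration est vide, et il faudrait de plus borner le \emph{nombre} de tels cycles dans~$\HK$ pond\'er\'es par leur degr\'e local --- c'est exactement le contenu de la Conjecture~\ref{conj:equi}, que le papier ne sait \'etablir que pour les polyn\^omes et, via Kiwi, pour les fractions de degr\'e~$2$ sur une extension du corps des s\'eries de Puiseux. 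Pour ${\chi(R)\le 0}$ hors des r\'egimes mod\'er\'es (le cas ${\chi(R)<0}$ et le cas atypique de l'Exemple~\ref{ex:atypique}), vous reconnaissez vous-m\^eme qu'aucun lemme de Schwarz ni aucune construction de branches inverses \`a la Briend--Duval n'est disponible, et que le quotient arborescent de \S~\ref{sec:zdunik} exige que~$\rho_R$ charge un segment, hypoth\`ese qui fait d\'efaut~; ce constat d'obstacle est lucide mais laisse la d\'emonstration inachev\'ee. En r\'esum\'e~: pas d'erreur dans ce que vous affirmez, mais un \'ecart essentiel entre un programme et une preuve --- ce qui est in\'evitable ici, puisque l'\'enonc\'e est conjectural dans le papier lui-m\^eme.
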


\begin{conj}
  \label{conj:maxi}
  La fraction rationnelle~$R$ admet une unique mesure d'entropie m{\'e}trique maximale~$\mu_R$, et on a
  \[ \frac1{\# \RFix(R^n)} \sum_{\RFix(R^n)}\delta_x \to \mu_R
    \text{ lorsque }
    n \to +\infty~.
  \]
\end{conj}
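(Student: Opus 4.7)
Le plan est de distinguer plusieurs cas selon le signe de l'exposant de \mbox{Lyapunov} et la structure de la mesure d'{\'e}quilibre, en s'appuyant sur les Th{\'e}or{\`e}mes~\ref{thm:main1}, \ref{thm:main3} et~\ref{thm:main2} d{\'e}j{\`a} {\'e}tablis.

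Dans le cas o{\`u} $\chi(R) > 0$, le Th{\'e}or{\`e}me~\ref{thm:main3}~(3) assure que les entropies m{\'e}trique de $\rho_R$ et topologique de $R$ valent toutes deux $\log d$. Suivant la Remarque qui suit le Th{\'e}or{\`e}me~\ref{thm:main3}, l'adaptation de la m{\'e}thode de \mbox{Ljubich}~\cite{lyubich}, reprise par \mbox{Briend} et \mbox{Duval}~\cite{briend:IHES}, devrait {\'e}tablir que $\rho_R$ est l'unique mesure d'entropie m{\'e}trique maximale, d'o{\`u} $\mu_R = \rho_R$. L'{\'e}quidistribution non pond{\'e}r{\'e}e d{\'e}coule alors de celle du Th{\'e}or{\`e}me~\ref{thm:main3}~(3) combin{\'e}e avec~\eqref{eq:71}, apr{\`e}s avoir v{\'e}rifi{\'e} que les points fixes r{\'e}pulsifs de $R^n$ dans $\HK$ ainsi que ceux de~$\PK$ de degr{\'e} local strictement sup{\'e}rieur {\`a}~$1$ sont en nombre $o(d^n)$.

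Dans le cas o{\`u} $\rho_R$ charge un segment ou poss{\`e}de un atome, le Th{\'e}or{\`e}me~\ref{thm:main1} et~\cite[Th{\'e}or{\`e}me~E]{theorie-ergo} entra{\^\i}nent que $R$ est soit affine Bernoulli, soit {\`a} bonne r{\'e}duction potentielle. Le premier sous-cas est trait{\'e} par la Proposition~\ref{prop:affine Bernoulli}~(4, 5). Dans le second, $J_R$ est r{\'e}duit {\`a} un point fixe r{\'e}pulsif de type~II et le probl{\`e}me se ram{\`e}ne {\`a} une analyse de la dynamique de la r{\'e}duction $\tR$ sur $\tK$, via une correspondance entre les cycles r{\'e}pulsifs de~$R$ dans un voisinage du point fixe de type~II et ceux de~$\tR$.

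Reste enfin le cas d{\'e}licat o{\`u} $\chi(R) \le 0$, $\rho_R$ ne charge aucun segment et $R$ n'a pas bonne r{\'e}duction potentielle. Par le Th{\'e}or{\`e}me~\ref{thm:main2}, ce cas n'appara{\^\i}t qu'en caract{\'e}ristique r{\'e}siduelle strictement positive et pour des fractions rationnelles non mod{\'e}r{\'e}es, comme l'exemple de~\cite{benedetto-connected}. Le support de $\rho_R$ peut alors {\^e}tre un arbre infini avec une infinit{\'e} de points de branchement, et la strat{\'e}gie serait de g{\'e}n{\'e}raliser la construction de l'arbre quotient $\cT$ du \S~\ref{sec:zdunik} {\`a} ce cadre, d'obtenir une repr{\'e}sentation symbolique du syst{\`e}me sur un sous-d{\'e}calage ad{\'e}quat, puis d'appliquer les techniques d'{\'e}tats d'{\'e}quilibre de~\cite{cli-thom}. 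Le principal obstacle r{\'e}side dans cette derni{\`e}re {\'e}tape~: la ramification sauvage, mesur{\'e}e par la fonction $\wf_R$ du Th{\'e}or{\`e}me~\ref{t:exponsants}, interagit de fa{\c{c}}on d{\'e}licate avec la g{\'e}om{\'e}trie branchue du support de $\rho_R$, et la construction d'une partition g{\'e}n{\'e}ratrice finie assurant la conjugaison avec un sous-d{\'e}calage de type fini para{\^\i}t demander de nouveaux outils.
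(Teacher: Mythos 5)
Il faut d'abord relever un point essentiel~: l'énoncé que vous traitez est la Conjecture~\ref{conj:maxi} du papier, c'est-à-dire un problème ouvert, et le papier n'en contient aucune démonstration. Les auteurs ne la résolvent que dans des cas particuliers~: lorsque~$R$ est modérée avec ${\chi(R) = 0}$, via le Théorème~\ref{thm:main2} et la Proposition~\ref{prop:affine Bernoulli}~(4,~5), et ils indiquent seulement qu'une solution affirmative de la Conjecture~\ref{conj:equi}, combinée au Théorème~\ref{thm:main3}~(3), réglerait le cas ${\chi(R) > 0}$. Votre découpage en cas recoupe fidèlement cette analyse, mais ce que vous proposez est un programme et non une preuve, et les maillons manquants sont précisément ceux que le papier laisse ouverts.

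Concrètement~: (a) dans le cas ${\chi(R) > 0}$, la vérification que les points fixes répulsifs de~$R^n$ dans~$\HK$ sont en nombre~$o(d^n)$ est exactement (la version non pondérée de) la Conjecture~\ref{conj:equi}, connue seulement pour les polynômes et pour les fractions quadratiques sur certains corps~\cite{kiwi}~; de plus, l'unicité de la mesure d'entropie maximale repose sur la Remarque de l'introduction, qui est seulement annoncée (\og on peut montrer \fg) et requiert ${\rho_R(\PK) = 1}$ — vous ne pouvez pas l'invoquer comme un résultat acquis. (b) Le sous-cas de bonne réduction potentielle n'est pas traité~: la correspondance entre cycles répulsifs de~$R$ et ceux de la réduction~$\tR$ est affirmée sans être établie, et le Théorème~\ref{thm:main2}~(3) montre que l'entropie topologique s'annule dans le cas modéré, de sorte que l'assertion d'unicité de la mesure d'entropie maximale y est elle-même délicate, voire problématique telle quelle. (c) Le dernier cas (non modéré, $\rho_R$ ne chargeant aucun segment, sans bonne réduction potentielle, comme dans~\cite{benedetto-connected}) est le c{\oe}ur de la conjecture, et vous reconnaissez explicitement ne pas le résoudre~: la généralisation de l'arbre quotient de \S~\ref{sec:zdunik} à un arbre à branchements infinis, le codage symbolique et l'application de~\cite{cli-thom} restent à construire. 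Votre texte est donc une feuille de route raisonnable et honnête, mais il ne constitue pas une démonstration de l'énoncé — ce qui est cohérent avec le fait que celui-ci est conjectural dans le papier.
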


Lorsque~$R$ est mod\'er\'ee et ${\chi(R) =0}$, le Th\'eor\`eme~\ref{thm:main2}~(2, 3) et la Proposition~\ref{prop:affine Bernoulli} r{\'e}solvent ces conjectures de mani{\`e}re affirmative.
Dans le cas non mod\'er\'e, ces conjectures semblent d\'elicates.

\begin{conj}
  \label{conj:equi}
  Si $\chi(R) >0$, alors on a
  \begin{equation}
    \label{eq:70}
    \lim_{n \to +\infty} \frac{1}{d^n} \sum_{x\in \RFix(R^n) \cap \HK} \deg_{R^n}(x)
    =
    0~.
  \end{equation}
\end{conj}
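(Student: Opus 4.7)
The strategy is to combine the equidistribution of repelling periodic points in~$\PK$ from le Théorème~\ref{thm:main3}(3) with a local analysis near each type~II repelling fixed point. Under l'hypothèse $\chi(R) > 0$, on a $\rho_R(\PK) = 1$ et $d^{-n} \# \RFix(R^n, K) \to 1$, et le but est de montrer que la contribution complémentaire venant de $\HK$ est négligeable.

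Je commencerais par décomposer la somme cible comme
\[
\sum_{x \in \RFix(R^n) \cap \HK} \deg_{R^n}(x)
=
\#(\RFix(R^n) \cap \HK) + \sum_{x \in \RFix(R^n) \cap \HK} (\deg_{R^n}(x) - 1)~,
\]
et estimer chaque terme séparément. La formule de Riemann--Hurwitz non-archimédienne appliquée à~$R^n$ entraîne
\[
\sum_{x \in \crit_{R^n} \cap \HK} (\deg_{R^n}(x) - 1)
\le
2 d^n - 2~,
\]
mais cette borne grossière ne donne que~$O(d^n)$, tandis qu'il faut~$o(d^n)$. Le raffinement nécessaire consiste à montrer que le poids critique de~$R^n$ sur~$\HK$ ne peut pas se concentrer sur les points fixes de~$R^n$. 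Je chercherais à l'établir en combinant l'équidistribution vers~$\rho_R$ des orbites directes des points critiques de~$R$ avec le fait que $\rho_R(\HK) = 0$, de sorte que la fraction des points critiques de~$R^n$ situés dans~$\HK$ et coïncidant avec des points fixes de~$R^n$ soit asymptotiquement nulle.

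Une approche complémentaire, probablement plus prometteuse, serait d'adapter la construction de branches inverses à la Briend--Duval utilisée dans~\S~\ref{sec:briendduval}. Pour chaque point fixe répulsif de type~II $x \in \HK$ de~$R^n$ de degré local~$e \ge 2$, on construirait des branches inverses de~$R^n$ sur une petite boule de~$\PK$ rencontrant un voisinage de~$x$, produisant $e$ sous-boules disjointes, chacune contenant un point périodique répulsif distinct de~$R^n$ dans~$\PK$. Ces points étant disjoints de ceux produits loin des points fixes de~$\HK$, et le cardinal total des points périodiques répulsifs de~$R^n$ dans~$\PK$ étant~$d^n + o(d^n)$ par le Théorème~\ref{thm:main3}(3), la somme pondérée sur~$\HK$ serait alors forcée à être~$o(d^n)$.

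L'obstacle principal est la construction locale dans le cas sauvage. Lorsque~$R$ présente de la ramification sauvage au voisinage de~$x$, les arguments à la Schwarz utilisés pour produire branches inverses et leurs points fixes échouent, et la relation combinatoire précise entre le degré local~$e$ et le nombre de points périodiques voisins dans~$\PK$ devient subtile. De plus, il faudra traiter séparément les points fixes de type~III et IV dans~$\HK$ s'ils peuvent apparaître, ce qui exige une analyse fine du comportement de~$R^n$ sur les chemins aboutissant à ces points.
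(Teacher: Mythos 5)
Attention: l'énoncé que vous traitez est la Conjecture~\ref{conj:equi} du texte; le papier ne la démontre pas et la laisse explicitement ouverte (il signale seulement qu'elle est facile pour les polynômes et résolue par Kiwi~\cite{kiwi} pour les fractions quadratiques sur certains corps de caractéristique résiduelle nulle, et la juge délicate dans le cas non modéré). Votre texte est donc une esquisse de stratégie, pas une démonstration, et chacune des deux routes comporte une lacune réelle. Pour la première~: comme vous le notez, Riemann--Hurwitz ne donne que $O(d^n)$, et le raffinement proposé (équidistribution des orbites critiques combinée à $\rho_R(\HK)=0$) n'est ni établi dans ce cadre ni suffisant. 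Les points de $\RFix(R^n)\cap\HK$ sont, par définition même, des points critiques de $R^n$ dans $\HK$, dont la multiplicité totale est réellement de l'ordre de $d^n$; aucun énoncé d'équidistribution des points critiques, même admis, n'empêche a priori une fraction positive de cette multiplicité de se concentrer sur des points fixes~: c'est exactement le contenu de la conjecture, et votre argument ne fait que la reformuler.

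Pour la seconde route, le décompte est circulaire. Le Théorème~\ref{thm:main3}~(3) donne seulement $\#\RFix(R^n,K)\sim d^n$; il ne localise pas ces points \emph{loin} des points fixes répulsifs de $\HK$, donc la disjonction que vous invoquez n'est pas justifiée. Comme le nombre total de points fixes classiques de $R^n$, comptés avec multiplicité, vaut $d^n+1$, produire $e$ points répulsifs de $\PK$ près de chaque point fixe répulsif $x\in\HK$ de degré local $e$ ne force aucune contradiction~: pour conclure il faudrait que les points fixes classiques associés à $x$ soient \emph{non répulsifs} (afin d'être absorbés par les $o(d^n)$ points fixes non répulsifs), en nombre comparable à $e$, et de façon injective en $x$. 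Or c'est précisément là que tout bloque~: en un point fixe de type~II de degré local $e$, la réduction de $R^n$ est de degré $e$ mais peut n'avoir qu'une seule direction fixe, totalement dégénérée (par exemple une réduction de la forme $\zeta\mapsto\zeta+\zeta^{-(e-1)}$ ne fixe que la direction de l'infini), de sorte qu'on ne peut en général extraire qu'un nombre borné de points fixes classiques, indépendamment de $e$; et en présence de ramification sauvage l'argument à la Schwarz pour construire branches inverses et points fixes échoue, obstacle que vous reconnaissez vous-même sans le lever. La difficulté centrale --- contrôler les degrés locaux aux cycles répulsifs de $\HK$ lorsque $\chi(R)>0$ et que la ramification est sauvage --- reste donc entière.
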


Il n'est pas difficile de voir que la Conjecture~\ref{conj:equi} est v{\'e}rifi{\'e}e dans le cas des polyn\^omes.
Un r{\'e}sultat de Kiwi~\cite[Theorem~1]{kiwi} r{\'e}sout cette conjecture de mani{\`e}re affirmative dans le cas des fractions rationnelles de degr\'e~$2$
sur un corps de caractéristique résiduelle nulle extension du corps des s\'eries de Puiseux {\`a} coefficients dans~$\C$.

Une solution affirmative {\`a} la Conjecture~\ref{conj:equi} combin{\'e}e avec le Th{\'e}or{\`e}me~\ref{thm:main3}~(3) permettrait d'apporter une solution affirmative aux Conjectures~\ref{conj:rep-equi} et~\ref{conj:maxi} dans le cas o{\`u} ${\chi(R) > 0}$.

%%%%%%%%%%%%%%%%%%%%%%%%%%%%%%%%%%%%%%%%%%%%%%%%%%%%%%%%%%%%%%%%%%% 

\end{document}